\newtheorem{theorem}{Theorem}[section]
\newtheorem{lemma}[theorem]{Lemma}
\newtheorem{property}[theorem]{Property}
\theoremstyle{definition}
\theoremstyle{remark}
\newtheorem{remark}[theorem]{Remark}
\newtheorem{assumption}{Assumption} % special theorem style
\numberwithin{equation}{section}
\newcommand{\sinush}{\mathop{\rm sinh}}
\newcommand{\cosinh}{\mathop{\rm cosh}}
\newcommand{\arch}{\mathop{\rm arcosh}}
\newcommand{\cl}{\mathop{\rm cl}}
\begin{document}

\title[Boundary metrics of convex compact domains in quasi-Fuchsian manifolds]{Compact domains with prescribed convex boundary metrics in quasi-Fuchsian manifolds}

%    Information for first author
\author{Dmitriy Slutskiy}
%    Address of record for the research reported here
%\address{Institut de Recherche Math\'ematique Avanc\'ee, Universit\'e de Strasbourg, France}
%    Current address
%\curraddr{Department of Mathematics and Statistics,
%Case Western Reserve University, Cleveland, Ohio 43403}
\email{slutskiy@math.unistra.fr}
%    \thanks will become a 1st page footnote.
\thanks{The author acknowledges support from U.S. National Science Foundation grants DMS 1107452,
1107263, 1107367 "RNMS: Geometric Structures and Representation Varieties" (the GEAR Network)."}

%    Information for second author
%\author{Author Two}
%\address{Mathematical Research Section, School of Mathematical Sciences,
%Australian National University, Canberra ACT 2601, Australia}
%\email{two@maths.univ.edu.au}
%\thanks{Support information for the second author.}

%    General info
\subjclass[2010]{Primary 53C45, 20H10; Secondary 53C42, 30F40, 57M10, 57M40}

%\date{January 1, 2001 and, in revised form, June 22, 2001.}

%\dedicatory{This paper is dedicated to our advisors.}

\keywords{quasi-Fuchsian manifold, convex compact domain, Alexandrov space, induced metric}

\begin{abstract}
We show the existence of a convex compact domain in a quasi-Fuchsian manifold such that the induced metric on its boundary coincides with a prescribed surface metric of curvature $K\geq-1$ in the sense of A.~D.~Alexandrov.
\end{abstract}

\maketitle

%% The correct journal style for \specialsection is all uppercase; a known bug
%% in amsart.cls prevents this, so input must be uppercase until it is fixed.
%\specialsection*{This is a Special Section Head}

%\specialsection*{THIS IS A SPECIAL SECTION HEAD}
%This is an example of a special section head%

%%%%%%%%%%%%%%%%%%%%%%%%%%%%%%%%%%%%%%%%%%%%%%%%%%%%%%%%%%%%%%%%%%%%%%%%
%\footnote{Here is an example of a footnote. Notice that this footnote
%text is running on so that it can stand as an example of how a footnote
%with separate paragraphs should be written.
%\par
%And here is the beginning of the second paragraph.}%
%%%%%%%%%%%%%%%%%%%%%%%%%%%%%%%%%%%%%%%%%%%%%%%%%%%%%%%%%%%%%%%%%%%%%%%%

\section{Construction of a quasi-Fuchsian manifold containing a compact convex domain with a prescribed Alexandrov metric of curvature $K\geq-1$ on the boundary}
\label{ch_aaa}

The problem of existence and uniqueness of an isometric realization of a surface with a prescribed metric in a given ambient space is classical in the metric geometry. Initially stated in the Euclidean case, it can be posed for surfaces in other spaces, in particular, in hyperbolic $3$-space $\mathbb{H}^3$.

One of the first fundamental results in this theory is due to A.~D.~Alexandrov. It concerns the realization of polyhedral surfaces in the spaces of constant curvature.

As in~\cite{chaaa_Shi1993}, we denote by $M^m(K)$ the $m$-dimensional complete simply connected space of constant sectional curvature $K$. So, $M^3(K)$ stands for spherical $3$-space of curvature $K$ in the case $K>0$; $M^3(K)$ stands for hyperbolic $3$-space of curvature $K$ when $K<0$; and in the case $K=0$, $M^3(K)$ denotes Euclidean $3$-space.

Then the result of A.~D.~Alexandrov reads as follows:

\begin{theorem} [\cite{chaaa_ADA2006}] \label{chaaa_thm_alexandrov_polyhedra}
Let $h$ be a metric of a constant sectional curvature $K$ with cone singularities on a sphere $S^2$ such that the total angle around every singular point of $h$ do not exceed $2\pi$. Then there exists a closed convex polyhedron in $M^3(K)$ equipped with the metric $h$ which is unique up to the isometries of $M^3(K)$. Here we include the doubly covered convex polygons, which are plane in $M^3(K)$, in the set of convex polyhedra.
\end{theorem}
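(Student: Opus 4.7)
The plan is to follow Alexandrov's classical \emph{continuity} (or \emph{deformation}) method, comparing the moduli space of admissible boundary metrics with the moduli space of convex polyhedra in $M^3(K)$ and showing that the natural ``induced metric'' map between them is a homeomorphism. First I fix the number of cone points $n\geq 3$ and introduce two spaces: the space $\mathcal{M}_n$ of metrics of constant curvature $K$ on $S^2$ with exactly $n$ cone singularities of total angle strictly less than $2\pi$, and the space $\mathcal{P}_n$ of isometry classes of closed convex polyhedra in $M^3(K)$ with exactly $n$ vertices (allowing the degenerate doubly covered polygons). A parameter count shows both are topological manifolds of the same dimension, namely $3n-6$: on the polyhedron side this is $3n$ coordinates for the vertices in $M^3(K)$ minus the $6$-dimensional isometry group; on the metric side it is the dimension of the moduli of marked cone metrics of curvature $K$ on $S^2$ with prescribed combinatorial type of the Delaunay triangulation. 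The induced-metric map $\Phi:\mathcal{P}_n\to\mathcal{M}_n$ is manifestly continuous, and the theorem reduces to showing that $\Phi$ is a homeomorphism onto $\mathcal{M}_n$.

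Next I would establish the three standard ingredients of the continuity method. \textbf{Injectivity (rigidity):} two convex polyhedra in $M^3(K)$ with isometric induced boundary metrics differ by a global isometry of $M^3(K)$; this is Cauchy's rigidity theorem, extended by Alexandrov to the spaces of constant curvature, proved via the combinatorial sign lemma applied to the dihedral angle differences along the edges of a common triangulation. \textbf{Openness:} since $\Phi$ is a continuous injection between manifolds of the same dimension, Brouwer's invariance of domain makes its image open in $\mathcal{M}_n$. \textbf{Closedness (properness):} if a sequence of polyhedra $P_i\in\mathcal{P}_n$ has induced metrics converging in $\mathcal{M}_n$, then, after applying a suitable isometry of $M^3(K)$, the polyhedra $P_i$ subconverge to a convex polyhedron $P_\infty$ of the same combinatorial type. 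This uses uniform diameter bounds coming from the metric convergence, the hypothesis that cone angles stay bounded away from $2\pi$ (so no vertex degenerates into the interior of an edge or a face), and the completeness of convex sets under Hausdorff convergence in $M^3(K)$.

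Combining these three properties, $\Phi(\mathcal{P}_n)$ is nonempty, open, and closed in the connected manifold $\mathcal{M}_n$, hence $\Phi$ is surjective, and it is a homeomorphism by injectivity. This handles the nondegenerate case. To treat an arbitrary metric $h\in\mathcal{M}_n$ allowed by the theorem, in particular one with some cone angles equal to $2\pi$, I would approximate $h$ by a sequence $h_i$ with all angles strictly less than $2\pi$, realize each $h_i$ by a convex polyhedron $P_i$, and pass to the limit using a further compactness argument; the limiting singular points with total angle $2\pi$ simply cease to be genuine vertices of the limiting polyhedron, and the flat (doubly covered polygon) case arises as the limiting object when the polyhedra collapse to a plane.

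The main obstacle is the \emph{rigidity step}, i.e.\ the Cauchy--Alexandrov theorem in $M^3(K)$: the sign-lemma combinatorics on the sphere must be adapted to the hyperbolic and spherical settings, where the elementary geometric lemma relating the sign of the change of a dihedral angle to the sign of the change of the opposite edge becomes considerably more delicate than in the Euclidean case. A secondary difficulty is the careful treatment of the boundary of $\mathcal{P}_n$, where two or more vertices of a polyhedron collide or a vertex migrates into an edge, which must be excluded in the closedness step precisely because the cone-angle hypothesis $\leq 2\pi$ forbids such degenerations.
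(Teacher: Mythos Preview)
The paper does not prove this theorem: it is stated as a classical result of Alexandrov, with a citation to \cite{chaaa_ADA2006}, and serves only as historical background for the paper's own work on quasi-Fuchsian manifolds. There is therefore no ``paper's own proof'' to compare against.

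That said, your outline is a faithful sketch of Alexandrov's original continuity method as presented in \cite{chaaa_ADA2006}: the dimension count $3n-6$, Cauchy-type rigidity for injectivity, invariance of domain for openness, and a compactness argument for properness are exactly the ingredients Alexandrov uses. One small caveat: the ``metric side'' space $\mathcal{M}_n$ is not quite a manifold globally because the Delaunay triangulation can change combinatorial type, so in the actual proof one works over strata or uses a more careful local parametrization by edge lengths of a fixed triangulation; this is a technical point rather than a genuine gap in your plan.
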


Later, A.~D.~Alexandrov and A.~V.~Pogorelov proved the following statement in $\mathbb{H}^3$ \cite{chaaa_Pogor1973}:
\begin{theorem} \label{chaaa_thm_alexandrov-pogorelov}
Let $h$ be a $C^{\infty}$-regular metric of a sectional curvature which is strictly greater than $-1$ on a sphere $S^2$. Then there exists an isometric immersion of the sphere $(S^2,h)$ into hyperbolic $3$-space $\mathbb{H}^3$ which is unique up to the isometries of $\mathbb{H}^3$. Moreover, this immersion bounds a convex domain in $\mathbb{H}^3$.
\end{theorem}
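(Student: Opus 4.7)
The plan is to prove this by the continuity method. Fix a reference metric $h_{0}$ on $S^{2}$ of constant sectional curvature $K_{0}>-1$ that is already realized as the boundary of a convex ball in $\mathbb{H}^{3}$; any hyperbolic geodesic sphere supplies such an $h_{0}$. Connect $h_{0}$ and $h$ by a smooth one-parameter family $(h_{t})_{t\in[0,1]}$ of $C^{\infty}$ metrics on $S^{2}$ all of whose sectional curvatures remain strictly greater than $-1$. Let $T\subset[0,1]$ denote the set of parameters for which $h_{t}$ is realized as the induced metric on the boundary of a convex compact domain in $\mathbb{H}^{3}$. Since $0\in T$, the goal is to show that $T$ is both open and closed in $[0,1]$, hence $T=[0,1]$.

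For openness I would apply an inverse function theorem to the realization map sending an immersion $f\colon S^{2}\to\mathbb{H}^{3}$, modulo $\mathrm{Isom}(\mathbb{H}^{3})$, to its pulled-back metric, modulo $\mathrm{Diff}(S^{2})$. Its linearization at a strictly convex immersion is an elliptic operator whose kernel consists of infinitesimal isometric deformations; its invertibility is therefore equivalent to the infinitesimal rigidity of the associated convex surface. The Gauss equation in $\mathbb{H}^{3}$ reads $K_{\text{ext}}=K_{\text{int}}+1$, so the hypothesis $K_{\text{int}}>-1$ gives uniformly positive extrinsic curvature, and the classical Cohn-Vossen-type rigidity statement transferred to the hyperbolic ambient yields the required isomorphism.

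For closedness I would establish uniform $C^{k,\alpha}$ estimates on a realizing family $f_{t}$. A uniform intrinsic diameter bound on $(S^{2},h_{t})$ gives $C^{0}$ control; the positivity of $K_{\text{ext}}$ lets one represent the surface locally as a graph whose support function satisfies a Monge-Ampère-type equation. Pogorelov's projective substitution converts this hyperbolic Monge-Ampère problem into one to which his Euclidean $C^{2}$ estimates apply, and Evans-Krylov together with Schauder theory then upgrades convergence to $C^{\infty}$. A standard diagonal extraction shows that if $t_{n}\in T$ and $t_{n}\to t_{\infty}$ then $t_{\infty}\in T$, completing the continuity argument. The fact that the limit \emph{bounds} a convex domain (rather than being merely an immersion with positive extrinsic curvature) follows because convexity is preserved under $C^{2}$ limits and the limit immersion of $S^{2}$ is closed and strictly convex, hence embedded.

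Uniqueness up to $\mathrm{Isom}(\mathbb{H}^{3})$ follows from the corresponding global rigidity theorem for closed convex surfaces in $\mathbb{H}^{3}$ with intrinsic curvature $>-1$, again due to Pogorelov. The main obstacle I foresee is the a priori $C^{2}$ estimate in the hyperbolic ambient: the affine invariance that underlies the Euclidean Monge-Ampère theory is absent in $\mathbb{H}^{3}$, which is exactly the difficulty that Pogorelov's projective transformation is designed to overcome. Once that estimate is in hand, the continuity method closes and both existence and uniqueness follow.
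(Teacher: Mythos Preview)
The paper does not prove this theorem. It is stated as a classical result attributed to Alexandrov and Pogorelov, with a citation to \cite{chaaa_Pogor1973}, and it serves purely as historical background and motivation for the paper's main Theorem~\ref{chaaa_theorem_manifolds_with_convex_alexandrov_metric_on_boundary}. There is therefore no proof in the paper to compare your proposal against.

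That said, your outline via the continuity method is indeed the strategy Alexandrov and Pogorelov used for results of this type, so as a reconstruction of the original argument it is on the right track. The main caveats are that several of the steps you describe as routine are in fact the substantive content of Pogorelov's monograph: the a~priori $C^{2}$ estimate in the hyperbolic setting, the infinitesimal rigidity in $\mathbb{H}^{3}$, and the global rigidity for uniqueness each require real work and are not immediate corollaries of their Euclidean counterparts. Your sketch acknowledges the $C^{2}$ difficulty but treats openness and uniqueness somewhat lightly; in a full proof those would need the same care. Also, to connect $h_{0}$ to $h$ by a path staying in the region $K>-1$ you should verify that this set of metrics is path-connected, which it is (one can use convex combinations at the level of the metric tensor, but the curvature condition is not obviously preserved; a safer route is to first deform $h$ conformally to a constant-curvature metric).
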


\textbf{Definition.} \cite[p.~30]{chaaa_MaTa1998}, \cite[p.~11]{chaaa_Otal1996}  A discrete finitely generated subgroup $\Gamma_{F}\subset {PSL}_{2}(\mathbb{R})$ without torsion and such that the quotient $\mathbb{H}^2/\Gamma_{F}$ has a finite volume, is called a \emph{Fuchsian group}.

Given a hyperbolic plane $\mathcal{P}$ in $\mathbb{H}^3$ and a Fuchsian group $\Gamma_{\mathcal{P}}\subset {PSL}_{2}(\mathbb{R})$ acting on $\mathcal{P}$, we can canonically extend the action of the group $\Gamma_{\mathcal{P}}$ on the whole space $\mathbb{H}^3$.

Here we recall another result on the above-mentioned problem considered for a special type of hyperbolic manifolds, namely, for Fuchsian manifolds, which is due to M.~Gromov \cite{chaaa_Gro1986}:
\begin{theorem} \label{chaaa_thm_gromov}
Let $S$ be a compact surface of genus greater than or equal to $2$, equipped with a $C^{\infty}$-regular metric $h$ of a sectional curvature which is greater than $-1$ everywhere. Then there exists a Fuchsian group $\Gamma_{F}$ acting on $\mathbb{H}^3$, such that the surface $(S,h)$ is isometrically embedded in $\mathbb{H}^3/\Gamma_{F}$.
\end{theorem}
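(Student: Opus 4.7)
The plan is to prove the theorem by the continuity method applied to the space $\mathcal{M}$ of $C^{\infty}$ Riemannian metrics on $S$ whose sectional curvature is strictly greater than $-1$, equipped with the $C^{\infty}$ topology. Let $\mathcal{R}\subset\mathcal{M}$ denote the subset of those metrics that arise as the induced metric on a strictly convex embedded copy of $S$, in its natural isotopy class, inside some Fuchsian manifold $\mathbb{H}^{3}/\Gamma_{F}$. Since $\mathcal{M}$ is connected, it is enough to show that $\mathcal{R}$ is non-empty, open, and closed in $\mathcal{M}$.

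For non-emptiness I pick any hyperbolic metric $h_{0}$ on $S$ (which exists since the genus is at least $2$); its holonomy is a Fuchsian group $\Gamma_{h_{0}}$ acting on a totally geodesic plane $\mathcal{P}\subset\mathbb{H}^{3}$, extended canonically to $\mathbb{H}^{3}$. In Fermi coordinates about $\mathcal{P}$ the hyperbolic metric reads $dt^{2}+\cosh^{2}(t)\,g_{\mathcal{P}}$, so for each $t>0$ the equidistant surface $\mathcal{P}_{t}$ is strictly convex, and its induced metric $\cosh^{2}(t)\cdot h_{0}$ has constant sectional curvature $-1/\cosh^{2}(t)\in(-1,0)$. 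This furnishes a large family of metrics already in $\mathcal{R}$. For openness at an $h\in\mathcal{R}$ realized by a convex embedding $\iota$ into $\mathbb{H}^{3}/\Gamma_{F}$, I linearize the induced-metric map along two sets of deformations: normal displacements $u\in C^{\infty}(S)$ of $\iota$, whose first-order contribution to the induced metric is $2u\,\mathrm{II}$ with $\mathrm{II}$ the second fundamental form of $\iota$, and variations of the Fuchsian representation within the $(6g-6)$-dimensional Teichm\"uller space $\mathcal{T}(S)$. Strict convexity of $\iota$ ($\mathrm{II}>0$) renders the associated linear operator elliptic; standard Fredholm and gauge-fixing arguments modulo diffeomorphisms of $S$, combined with the implicit function theorem, then yield a $C^{\infty}$-neighborhood of $h$ inside $\mathcal{R}$.

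The main obstacle is closedness. Given $h_{n}\to h$ in $\mathcal{M}$, each realized by a convex embedding $\iota_{n}\colon S\hookrightarrow\mathbb{H}^{3}/\Gamma_{n}$, I would first show that $\{\Gamma_{n}\}$ stays in a compact subset of $\mathcal{T}(S)$. Convex surfaces in a Fuchsian manifold are graphs over the totally geodesic slice $\Sigma_{n}$, and the bounded $h_{n}$-diameters and injectivity radii force $\iota_{n}(S)$ to lie within bounded Fermi distance of $\Sigma_{n}$; the nearest-point projection then gives a uniform bi-Lipschitz equivalence between $(S,h_{n})$ and $\Sigma_{n}$, so Mumford compactness delivers a subsequential limit $\Gamma_{n}\to\Gamma_{\infty}$ in $\mathcal{T}(S)$. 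Next, the Gauss equation $K_{\mathrm{ext}}=K_{\mathrm{int}}+1>0$ combined with convexity bounds the principal curvatures of $\iota_{n}(S)$ uniformly in terms of the $C^{\infty}$ bounds on $h_{n}$, giving $C^{1,\alpha}$-compactness of the embeddings up to reparametrization. The resulting limit is a convex embedding $\iota\colon S\hookrightarrow\mathbb{H}^{3}/\Gamma_{\infty}$ with induced metric $h$; elliptic regularity for the fully non-linear Monge--Amp\`ere-type PDE encoded by the Gauss--Codazzi equations then upgrades $\iota$ to $C^{\infty}$, completing the argument.
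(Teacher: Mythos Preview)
The paper does not contain a proof of this theorem. Theorem~\ref{chaaa_thm_gromov} is stated as a known result of Gromov, with a bare citation to~\cite{chaaa_Gro1986}, and serves only as historical background leading to Labourie's Theorem~\ref{chaaa_thm_labourie} and the paper's own Theorem~\ref{chaaa_theorem_manifolds_with_convex_alexandrov_metric_on_boundary}. There is therefore nothing in the paper to compare your proposal against.

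On the substance of your outline: the continuity method is indeed the standard template for such realization results, and non-emptiness via equidistant surfaces is correct. But the closedness step has a real gap. The Gauss equation gives only the \emph{product} $\kappa_{1}\kappa_{2}=K_{h_{n}}+1$, and convexity gives $\kappa_{i}>0$; neither prevents one principal curvature from blowing up while the other tends to zero. You cannot get $C^{1,\alpha}$-compactness of the embeddings from this alone. Filling this gap is exactly the hard analytic content of the theorem: one needs either a Pogorelov-type interior estimate for the Monge--Amp\`ere equation, or the pseudo-holomorphic-curve compactness that Labourie used for Theorem~\ref{chaaa_thm_labourie}. Your sentence ``combined with convexity bounds the principal curvatures'' hides precisely the step that carries the weight. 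The openness argument is also thin: you need to check that the linearized induced-metric map, from normal graphs cross Teichm\"uller deformations to symmetric $2$-tensors modulo diffeomorphism, is actually surjective (or of index zero with trivial kernel), and ellipticity of the normal-graph part alone does not give this without a count of dimensions and an infinitesimal-rigidity statement.
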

\begin{remark}\label{chaaa_remark_fuchsian_manifold}
The hyperbolic manifold $\mathbb{H}^3/\Gamma_{F}$ from the statement of Theorem~\ref{chaaa_thm_gromov} is called Fuchsian. Note also that the limit set $\Lambda(\Gamma_{F})\subset\partial_{\infty}\mathbb{H}^3$ of a Fuchsian group $\Gamma_{F}$ is a geodesic circle in projective space $\mathbb{CP}^{1}$ regarded as the boundary at infinity $\partial_{\infty}\mathbb{H}^3$ of the Poincar\'e ball model of hyperbolic $3$-space $\mathbb{H}^3$.
\end{remark}

\textbf{Definition.} \cite{chaaa_Lab1992} A compact hyperbolic manifold $M$
%with the boundary $\partial M$
is said to be \emph{strictly convex} if any two points in $M$ can be joined by a minimizing geodesic which lies inside the interior of $M$. This condition implies that the intrinsic curvature of $\partial M$ is greater than $-1$ everywhere (the term "hyperbolic" means for us "of a constant curvature equal to $-1$ everywhere").

In 1992 F.~Labourie \cite{chaaa_Lab1992} obtained the following result which can be considered as a generalization of Theorems~\ref{chaaa_thm_alexandrov-pogorelov} and~\ref{chaaa_thm_gromov}:
\begin{theorem} \label{chaaa_thm_labourie}
Let $M$ be a compact manifold with boundary (different from the solid torus) which admits a structure of a strictly convex hyperbolic manifold. Let $h$ be a $C^{\infty}$-regular metric on $\partial M$ of a sectional curvature which is strictly greater than $-1$ everywhere. Then there exists a convex hyperbolic metric $g$ on $M$ which induces $h$ on $\partial M$:
\begin{equation*}\label{chaaa_frm_labourie_theorem}
g\mid_{\partial M}=h.
\end{equation*}
\end{theorem}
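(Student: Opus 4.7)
The natural approach is a continuity method. The plan is to consider the space $\mathcal{G}$ of isotopy classes of strictly convex hyperbolic metrics on $M$, and the space $\mathcal{H}$ of $C^{\infty}$-regular Riemannian metrics on $\partial M$ whose sectional curvature is everywhere strictly greater than $-1$. Restriction to the boundary induces a map $\Phi\colon\mathcal{G}\to\mathcal{H}$, and the goal is to prove $\Phi$ surjective. The three ingredients are (i) $\mathcal{G}$ is non-empty by hypothesis, so $\Phi$ has non-empty image; (ii) $\Phi$ is a local diffeomorphism; (iii) $\Phi$ is proper. Once these are established, combined with connectedness of $\mathcal{H}$, a standard path-lifting argument forces $\Phi$ to be onto.

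Local surjectivity reduces to infinitesimal rigidity: if a first-order deformation $\dot{g}$ of a strictly convex hyperbolic metric $g\in\mathcal{G}$ preserves the induced boundary metric to first order, then $\dot{g}$ should be trivial modulo an isotopy of $M$. One writes the Gauss--Codazzi equations for $\partial M$ in $(M,g)$ and linearizes them; the compatibility conditions, together with strict positivity of the second fundamental form and the topological assumption on $M$, should force $\dot{g}$ to be tangent to the orbit of the diffeomorphism group. The hypothesis that $M$ is \emph{different from the solid torus} enters here, since on the solid torus one has a genuine one-parameter family of convex hyperbolic structures (varying the length of the core geodesic) all inducing the same boundary metric, which obstructs any rigidity statement.

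Properness is established using the Gauss equation of $\partial M$ in $M$,
\begin{equation*}
K_{\partial M}=-1+\det(\mathrm{II})/\det(\mathrm{I}),
\end{equation*}
where $\mathrm{I}$ and $\mathrm{II}$ are the first and second fundamental forms. Strict convexity combined with the uniform bound $K_{\partial M}>-1$ forces $\det(\mathrm{II})$ to be bounded away from $0$, and together with uniform control of $h_n$ this yields two-sided bounds on $\mathrm{II}$. Propagating this boundary control into the interior along inward normal geodesics via Jacobi fields, and ruling out collapsing via uniform lower bounds on the injectivity radius (strict convexity prevents short geodesics from forming near $\partial M$), one obtains a Cheeger--Gromov precompact family, from which a convergent subsequence of $g_n$ in $\mathcal{G}$ is extracted.

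With these two properties in hand, given the reference structure $g_{0}\in\mathcal{G}$ with $\Phi(g_{0})=h_{0}$, one chooses a path $h_{t}$ in $\mathcal{H}$ from $h_{0}$ to the target metric $h$; by the local diffeomorphism property the path admits a local lift from $t=0$, and by properness this lift extends to the full interval $[0,1]$, producing the desired $g=g_{1}\in\mathcal{G}$. The hard part will be infinitesimal rigidity in step (ii): the linearized Gauss--Codazzi system with boundary conditions is not a classical elliptic boundary-value problem, and one must exploit the special geometric structure (strict convexity of the boundary, constancy of the ambient curvature, and the topological exclusion of the solid torus) carefully in order to rule out non-trivial interior deformations compatible with a fixed boundary metric.
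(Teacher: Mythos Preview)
The paper does not prove this theorem. Theorem~\ref{chaaa_thm_labourie} is stated as a known result of F.~Labourie \cite{chaaa_Lab1992} and is quoted, together with its later strengthening by Schlenker (Theorem~\ref{chaaa_thm_labourie-schlenker}), purely as background and as a black box: the paper's own contribution is Theorem~\ref{chaaa_theorem_manifolds_with_convex_alexandrov_metric_on_boundary}, and the only place Labourie's theorem enters the argument is through the Labourie--Schlenker Theorem~\ref{chaaa_thm_labourie-schlenker}, invoked in Section~\ref{chaaa_sec_Arzela-Ascoli_theorem_application} to produce the approximating sequence of convex domains $\mathcal{M}_n$. There is therefore no ``paper's own proof'' of this statement to compare your proposal against.

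That said, your sketch is broadly the right shape for Labourie's actual argument in \cite{chaaa_Lab1992}: a continuity (deformation) method with local surjectivity coming from an infinitesimal rigidity statement and closedness coming from a compactness/properness estimate. Two caveats. First, Labourie's infinitesimal step does not proceed via a direct analysis of a linearized boundary-value problem as you describe; it relies on his theory of $k$-surfaces and an associated maximum-principle/holomorphic-data argument, and your paragraph on ``the linearized Gauss--Codazzi system'' is too vague to count as more than a placeholder. Second, your properness paragraph is optimistic: a lower bound on $\det(\mathrm{II})$ does not by itself give two-sided bounds on $\mathrm{II}$, and ruling out degeneration of the interior hyperbolic structure (short geodesics, thin parts) requires substantially more than ``strict convexity prevents short geodesics from forming near $\partial M$''. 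So as an outline of strategy your proposal is on target, but the two hard steps are not yet arguments.
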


\textbf{Definition.} \cite[p.~120]{chaaa_MaTa1998} A \emph{quasi-Fuchsian space} is the quasiconformal deformation space $QH(\Gamma_{F})$ of a Fuchsian group $\Gamma_{F}\subset {PSL}_{2}(\mathbb{R})$.

In other words, the quasi-Fuchsian manifold $QH(\Gamma_{F})$ is a quotient $\mathbb{H}^3/\Gamma_{qF}$ of $\mathbb{H}^3$ by a discrete finitely generated group $\Gamma_{qF}\subset {PSL}_{2}(\mathbb{R})$ of hyperbolic isometries of $\mathbb{H}^3$ such that the limit set $\Lambda(\Gamma)\subset\partial_{\infty}\mathbb{H}^3$ of $\Gamma$ is a Jordan curve which can be obtained from the circle $\Lambda(\Gamma_{F})\subset\partial_{\infty}\mathbb{H}^3$ by a quasiconformal deformation of $\partial_{\infty}\mathbb{H}^3$.

In geometric terms, a quasi-Fuchsian manifold is a complete hyperbolic manifold homeomorphic to $\mathcal{S}\times\mathbb{R}$, where $\mathcal{S}$ is a closed connected surface of genus at least $2$, which contains a convex compact subset.

Let us also recall the A.~D.~Alexandrov notion of curvature which does not require a metric of a surface to be regular.

Let $X$ be a complete locally compact length space and let $\mathrm{d}_{X}(\cdot,\cdot)$ stands for the distance between points in $X$. For a triple of points $p,q,r\in X$ a geodesic triangle $\triangle(pqr)$ is a triple of geodesics joining these three points.
For a geodesic triangle $\triangle(pqr)\subset X$ we denote by $\triangle(\tilde{p}\tilde{q}\tilde{r})$ a geodesic triangle sketched in $M^2(K)$ whose corresponding edges have the same lengths as $\triangle(pqr)$.

\textbf{Definition.} \cite[p.~7]{chaaa_Shi1993} $X$ is said to have \emph{curvature bounded below by} $K$ iff every point $x\in X$ has an open neighborhood $U_x\subset X$ such that for every geodesic triangle $\triangle(pqr)$ whose edges are contained entirely in $U_x$ the corresponding geodesic triangle $\triangle(\tilde{p}\tilde{q}\tilde{r})$ sketched in $M^2(K)$ has the following property: for every point $z\in qr$ and for $\tilde{z}\in \tilde{q}\tilde{r}$ with $\mathrm{d}_{X}(q,z)=\mathrm{d}_{M^2(K)}(\tilde{q},\tilde{z})$
we have
\begin{equation*}\label{chaaa_def_alexandrov_space}
\mathrm{d}_{X}(p,z)\geq\mathrm{d}_{M^2(K)}(\tilde{p},\tilde{z}).
\end{equation*}

Our main goal is to prove the following extension of Theorem~\ref{chaaa_thm_labourie}:

\begin{theorem}\label{chaaa_theorem_manifolds_with_convex_alexandrov_metric_on_boundary}
Let $\mathcal{M}$ be a compact connected $3$-manifold with boundary of the type $\mathcal{S}\times[-1,1]$ where $\mathcal{S}$ is a closed connected surface of genus at least $2$.
Let $h$ be a metric on $\partial\mathcal{M}$ of curvature $K\geq-1$ in Alexandrov sense. Then there exists a hyperbolic metric $g$ in $\mathcal{M}$ with a convex boundary $\partial\mathcal{M}$ such that the metric induced on $\partial\mathcal{M}$ is $h$.
\end{theorem}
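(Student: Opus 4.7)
My plan is to prove Theorem~\ref{chaaa_theorem_manifolds_with_convex_alexandrov_metric_on_boundary} by approximation. First, I would replace the Alexandrov metric $h$ on $\partial\mathcal{M}$ by a sequence of smooth Riemannian metrics $h_{n}$ of sectional curvature strictly greater than $-1$, next apply Labourie's Theorem~\ref{chaaa_thm_labourie} to each $h_{n}$ to obtain strictly convex hyperbolic metrics $g_{n}$ on $\mathcal{M}$ inducing $h_{n}$ on $\partial\mathcal{M}$, and finally extract a limit $g_{n}\to g$ whose induced boundary metric is $h$.

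The approximation step rests on a standard smoothing theorem for two-dimensional Alexandrov spaces: a surface metric of curvature bounded below by $-1$ in Alexandrov sense can be approximated uniformly by $C^{\infty}$-Riemannian metrics whose Gaussian curvatures satisfy $K\geq -1-\varepsilon_{n}$ with $\varepsilon_{n}\to 0$. A small homothety of the target then yields admissible data $h_{n}\to h$ meeting the strict inequality required by Theorem~\ref{chaaa_thm_labourie}. An alternative route is via polyhedral approximation (in the spirit of Theorem~\ref{chaaa_thm_alexandrov_polyhedra}) followed by smoothing near edges and vertices; either way, the outcome is a sequence of smooth boundary data with the prescribed convexity.

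Labourie's theorem then furnishes convex hyperbolic structures $g_{n}$ on $\mathcal{M}$ with $g_{n}|_{\partial\mathcal{M}}=h_{n}$, each determined by a discrete faithful representation $\rho_{n}:\pi_{1}(\mathcal{S})\to {PSL}_{2}(\mathbb{C})$ whose image is quasi-Fuchsian (by the topological hypothesis on $\mathcal{M}$). The main obstacle, and the crux of the argument, is to establish a non-collapsing compactness statement for this sequence: I must rule out both that $\rho_{n}$ escapes to infinity in the character variety and that $(\mathcal{M},g_{n})$ develops thin parts by pinching a closed curve. Uniform convergence $h_{n}\to h$ provides uniform bounds on the diameter and on the lengths of all closed geodesics of $(\partial\mathcal{M},h_{n})$; since $\partial\mathcal{M}$ is convex in $(\mathcal{M},g_{n})$, any closed geodesic of $\mathcal{M}$ is freely homotopic to one on $\partial\mathcal{M}$ whose length majorises it, so translation lengths of $\rho_{n}$ are bounded above, and $\mathcal{M}$ itself has uniformly bounded diameter. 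A lower bound on the injectivity radius should follow because a pinching essential curve in $\mathcal{M}$ would manifest itself on $\partial\mathcal{M}$, contradicting the fact that $(\partial\mathcal{M},h)$ is a genuine non-degenerate length space.

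With these bounds in hand, an Arzel\`a--Ascoli/Gromov--Hausdorff extraction yields a subsequential limit $(\mathcal{M},g)$ that is a hyperbolic manifold on the interior (by smooth convergence in the thick part) and whose boundary is the Gromov--Hausdorff limit of the $(\partial\mathcal{M},h_{n})$, hence isometric to $(\partial\mathcal{M},h)$. Convexity passes to such limits, so $\partial\mathcal{M}$ remains convex in $(\mathcal{M},g)$, completing the proof. Everything outside the compactness argument is essentially mechanical; the delicate point is to exclude degenerations of the representations $\rho_{n}$, and that is where I expect the real work to lie.
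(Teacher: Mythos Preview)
Your overall strategy coincides with the paper's: approximate $h$ by smooth metrics of curvature strictly greater than $-1$ (the paper does this via hyperbolic polyhedral metrics and then smoothing, as you suggest), apply the Labourie--Schlenker theorem to obtain convex domains $\mathcal{M}_n$ in quasi-Fuchsian manifolds, and pass to a limit. The serious work is indeed the compactness step, and here your proposal contains genuine gaps.

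First, your claim that $(\mathcal{M},g_n)$ has uniformly bounded diameter is false. The paper explicitly records (after Lemma~\ref{chaaa_lemma_ubound_of_diameters_of_Mn}) Brock's examples of quasi-Fuchsian manifolds in which the diameters of each boundary component of the convex hull and the distance between them are uniformly bounded, yet the diameter of the convex hull tends to infinity. Bounded translation lengths for a generating set do not prevent this. Second, your injectivity-radius argument runs in the wrong direction: convexity of $\partial\mathcal{M}$ in $(\mathcal{M},g_n)$ implies that a curve on $\partial\mathcal{M}$ freely homotopic to an interior closed geodesic is \emph{longer} than that geodesic, not shorter. Hence a pinching curve inside $\mathcal{M}$ need not ``manifest itself'' on the boundary, and a uniform lower bound on the lengths of boundary geodesics gives you nothing about the injectivity radius of $(\mathcal{M},g_n)$.

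What the paper does instead is avoid both diameter and injectivity-radius bounds altogether. The key technical input is Theorem~\ref{chbb_theorem_distance_between_boundaries} (and its corollary Theorem~\ref{chbb_theorem_distance_between_pairs_of_boundaries_is_unibounded}): a uniform upper bound on the \emph{distance between the two boundary components} $\mathcal{S}^+_n$ and $\mathcal{S}^-_n$, obtained by a delicate argument with the Margulis lemma applied to explicit cylinders interpolating between homotopic curves on $\mathcal{S}^+_n$ and $\mathcal{S}^-_n$. This, combined with the uniform bound on the diameters of the boundary surfaces, lets one trap images of fixed fundamental-domain neighbourhoods under the developing maps inside a fixed ball of $\mathbb{H}^3$; Arzel\`a--Ascoli then gives convergence of the developing maps, from which convergence of the holonomies $\rho_n$ and the quasi-Fuchsian property of the limit follow. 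Finally, convergence of the induced boundary metrics uses a hyperbolic version of Alexandrov's theorem on convergence of convex surfaces (Section~\ref{chaaa_sec_adaptation_of_Alexandrov_theorem}). Your proposal correctly locates the difficulty but substitutes an incorrect heuristic for the actual mechanism; the distance bound of Section~\ref{ch_dist_bb} is precisely the missing idea.
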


In particular, the following result proved in \cite{chaaa_Slu2013} immediately follows from Theorem~\ref{chaaa_theorem_manifolds_with_convex_alexandrov_metric_on_boundary}.

\begin{theorem}\label{chaaa_theorem_manifolds_with_polyhedral_metric_on_boundary}
Let $\mathcal{M}$ be a compact connected $3$-manifold with boundary of the type $\mathcal{S}\times[-1,1]$ where $\mathcal{S}$ is a closed connected surface of genus at least $2$.
Let $h$ be a hyperbolic metric with cone singularities of angle less than $2\pi$ on $\partial\mathcal{M}$ such that every singular point of $h$ possesses a neighborhood in $\partial\mathcal{M}$ which does not contain other singular points of $h$. Then there exists a hyperbolic metric $g$ in $\mathcal{M}$ with a convex boundary $\partial\mathcal{M}$ such that the metric induced on $\partial\mathcal{M}$ is $h$.
\end{theorem}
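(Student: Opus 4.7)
The plan is to deduce Theorem~\ref{chaaa_theorem_manifolds_with_polyhedral_metric_on_boundary} from Theorem~\ref{chaaa_theorem_manifolds_with_convex_alexandrov_metric_on_boundary} by checking that the hyperbolic cone metric $h$ on $\partial\mathcal{M}$ falls into the class of Alexandrov metrics of curvature $\geq -1$ to which the main theorem applies. Once this verification is in place, Theorem~\ref{chaaa_theorem_manifolds_with_convex_alexandrov_metric_on_boundary} applied directly to $(\partial\mathcal{M},h)$ produces the hyperbolic metric $g$ on $\mathcal{M}$ with convex boundary inducing $h$, which is exactly the desired conclusion.

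Away from its singular locus, $(\partial\mathcal{M},h)$ is locally isometric to $\mathbb{H}^2 = M^2(-1)$. On a sufficiently small neighborhood $U_x$ of any regular point $x$, every geodesic triangle $\triangle(pqr)\subset U_x$ coincides isometrically with its comparison triangle $\triangle(\tilde{p}\tilde{q}\tilde{r})\subset M^2(-1)$, so the inequality $\mathrm{d}_{(\partial\mathcal{M},h)}(p,z)\geq \mathrm{d}_{M^2(-1)}(\tilde{p},\tilde{z})$ of the Alexandrov definition holds as equality. The condition of curvature $\geq -1$ is thus automatic at every regular point.

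The key local step is the verification at a cone point $p_0$ of angle $\theta < 2\pi$. The isolation hypothesis lets us choose a neighborhood $U_{p_0}$ disjoint from all other singularities and isometric to a truncated hyperbolic cone of angle $\theta$. A triangle in $U_{p_0}$ that does not loop around $p_0$ develops isometrically into $\mathbb{H}^2$, so comparison is trivial. For a triangle that encloses $p_0$ or has $p_0$ as a vertex, we invoke the classical Alexandrov unrolling: cut the cone along a geodesic ray issuing from $p_0$ and unfold the resulting simply connected piece into a sector of $\mathbb{H}^2$ of total opening $\theta$. Because the angular deficit $2\pi-\theta>0$ is strictly positive, the cone is uniformly ``thicker'' than $M^2(-1)$: distances between points on the two geodesic rays out of $p_0$ are at least as large as the corresponding distances in the model space, and a straightforward Toponogov-style argument then delivers the required inequality for an arbitrary triangle in $U_{p_0}$. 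Equivalently, this is Reshetnyak's gluing theorem applied to the two hyperbolic half-disks obtained after the cut. This shows that $(\partial\mathcal{M},h)$ is a surface of Alexandrov curvature $\geq -1$.

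The hard content of Theorem~\ref{chaaa_theorem_manifolds_with_polyhedral_metric_on_boundary} is entirely contained in Theorem~\ref{chaaa_theorem_manifolds_with_convex_alexandrov_metric_on_boundary}; the only additional work, and hence the only potential obstacle, is the local curvature verification at the cone singularities, which is standard Alexandrov-geometric material. Once the curvature bound is established, Theorem~\ref{chaaa_theorem_manifolds_with_convex_alexandrov_metric_on_boundary} applies and yields $g$, completing the proof.
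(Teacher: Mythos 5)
Your proposal is correct and follows exactly the route the paper takes: Theorem~\ref{chaaa_theorem_manifolds_with_polyhedral_metric_on_boundary} is obtained as an immediate special case of Theorem~\ref{chaaa_theorem_manifolds_with_convex_alexandrov_metric_on_boundary}, since a hyperbolic cone metric with cone angles less than $2\pi$ has curvature $K\geq-1$ in the Alexandrov sense. Your local verification at the cone points is precisely the standard fact the paper itself relies on (cf.\ Lemma~\ref{chaaa_lemme_curvature_of_a_comparison_polyhedron} and Remark~\ref{chaaa_remark_polyhedral_surface_of_curvature_geq_-1}), so the only difference is that you spell out the comparison argument that the paper cites.
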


Theorem~\ref{chaaa_theorem_manifolds_with_polyhedral_metric_on_boundary} can also be considered as an analogue of Theorem~\ref{chaaa_thm_alexandrov_polyhedra} for the convex hyperbolic manifolds with polyhedral boundary.

\textbf{Definition.} \cite{chaaa_CaEpGr2006}\emph{A pleated surface} in a hyperbolic $3$-manifold $\mathcal{M}$ is a complete hyperbolic surface $\mathcal{S}$ together with an isometric map $f:\mathcal{S}\rightarrow\mathcal{M}$ such that every $s\in\mathcal{S}$ is in the interior of some geodesic arc which is mapped by $f$ to a geodesic arc in $\mathcal{M}$.

A pleated surface resembles a polyhedron in the sense that it has flat faces that meet along edges. Unlike a polyhedron, a pleated surface has no corners, but it may have infinitely many edges that form a lamination.

\begin{remark}\label{chaaa_remark_surfaces_in_key_theorem_can_be_pleated}
The surfaces serving as the connected components of the boundary $\partial\mathcal{M}$ of the manifold $\mathcal{M}$ from the statement of Theorem~\ref{chaaa_theorem_manifolds_with_polyhedral_metric_on_boundary}, which are equipped by assumption with hyperbolic polyhedral metrics, do not necessarily have to be polyhedra embedded in $\mathcal{M}$: these surfaces can be partially pleated.
\end{remark}

\textbf{Definition.} \cite{chaaa_MS2009} Let $\mathcal{M}$ be the interior of
a compact manifold with boundary. A complete hyperbolic metric $g$
on $\mathcal{M}$ is convex co-compact if $\mathcal{M}$ contains a compact subset $\mathcal{K}$ which
is convex: any geodesic segment $c$ in $(\mathcal{M},g)$ with endpoints in $\mathcal{K}$ is
contained in $\mathcal{K}$.

In 2002 J.-M.~Schlenker \cite{chaaa_Sch2006} proved uniqueness of the metric $g$ in Theorem~\ref{chaaa_thm_labourie}. Thus, he obtained
\begin{theorem} \label{chaaa_thm_labourie-schlenker}
Let $M$ be a compact connected $3$-manifold with boundary (different from the solid torus) which admits a complete hyperbolic convex co-compact metric. Let $g$ be a hyperbolic metric on $M$ such that $\partial M$ is $C^{\infty}$-regular and strictly convex. Then the induced metric $I$ on $\partial M$ has curvature $K>-1$. Each $C^{\infty}$-regular metric on $\partial M$ with $K>-1$ is induced on $\partial M$ for a unique choice of $g$.
\end{theorem}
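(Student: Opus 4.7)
The plan is to treat the two assertions separately. For the first, that strict convexity of a smooth boundary in a hyperbolic metric forces $K>-1$ on the induced metric, I would use the Gauss equation in $\mathbb{H}^3$. If $\partial M$ is $C^\infty$ and strictly convex with shape operator $B$, then for the induced metric $I$ we have
\[ K_I \;=\; -1 + \det B \]
pointwise on $\partial M$. Strict convexity means $B$ is positive definite, so $\det B > 0$ and hence $K_I > -1$. This is a purely local computation inherited from the Gauss equation in hyperbolic space, and it requires no global input.

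The substance of the theorem is the existence-and-uniqueness statement for each prescribed $C^\infty$ metric $h$ with $K>-1$. Existence is exactly Theorem~\ref{chaaa_thm_labourie}. For uniqueness, I would follow the strategy of showing that the \emph{induced-metric map}
\[ \Phi: \mathcal{G}(M) \longrightarrow \mathcal{H}(\partial M), \qquad g\mapsto g|_{\partial M}, \]
where $\mathcal{G}(M)$ denotes the space of isotopy classes of convex co-compact hyperbolic metrics on $M$ with smooth strictly convex boundary and $\mathcal{H}(\partial M)$ the space of $C^\infty$ metrics on $\partial M$ with $K>-1$, is a $C^\infty$ homeomorphism. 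Labourie gives surjectivity; what is needed is local injectivity (infinitesimal rigidity) together with a global upgrade.

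The technical heart of the proof — and, I expect, the main obstacle — is infinitesimal rigidity: every first-order deformation of $g$ through convex co-compact hyperbolic metrics preserving the induced boundary metric must be trivial modulo isotopy. I would establish this via a \emph{Pogorelov-type identity} that converts an infinitesimal isometric deformation of a smooth strictly convex surface in $\mathbb{H}^3$ into an infinitesimal isometric deformation of an auxiliary convex surface in Euclidean $3$-space, and then invoke the classical infinitesimal rigidity of smooth strictly convex surfaces in $\mathbb{E}^3$ (Blaschke--Cohn-Vossen--Weyl). Lifting to the universal cover and using $\pi_1(M)$-equivariance passes the conclusion to the quotient. Granted infinitesimal rigidity, the implicit function theorem makes $\Phi$ a $C^\infty$ local diffeomorphism between the two Fr\'echet manifolds.

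To pass from local to global injectivity, I would combine the local diffeomorphism property with connectedness of $\mathcal{H}(\partial M)$ and properness of $\Phi$: if $h_n\to h_\infty$ in $\mathcal{H}(\partial M)$ then the realizations $g_n$ stay in a compact region of $\mathcal{G}(M)$, because a $C^\infty$ bound on $h_n$ together with a uniform lower curvature bound $K\geq -1+\varepsilon$ on compact families controls the second fundamental form of $\partial M$ and hence the geometry of the convex co-compact end. A local diffeomorphism that is proper onto a connected target is a covering; following a continuity path from $h$ back to a Fuchsian (doubled) base point, where the realization is manifestly unique, shows that this covering has a single sheet. This yields the required uniqueness and completes the proof.
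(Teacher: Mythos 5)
First, a point of orientation: the paper does not prove this statement at all. Theorem~\ref{chaaa_thm_labourie-schlenker} is quoted as a known result --- existence is Labourie's Theorem~\ref{chaaa_thm_labourie} (\cite{chaaa_Lab1992}) and uniqueness is Schlenker's theorem (\cite{chaaa_Sch2006}) --- and it is used in Section~\ref{chaaa_sec_Arzela-Ascoli_theorem_application} as a black box. So there is no ``paper's own proof'' to compare with; what you have written is a sketch of the external result itself. On its merits: your first step is correct and complete --- the Gauss equation in $\mathbb{H}^3$ gives $K_I=-1+\det B$, and strict convexity makes $\det B>0$, so $K_I>-1$. Your outline of the existence-and-uniqueness part does track the strategy actually used in the literature (induced-metric map $\Phi$, local rigidity, properness, topological upgrade), so the plan is not wrong-headed.

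However, as a proof it has genuine gaps, because every hard step is asserted rather than established. (i) The infinitesimal rigidity statement --- that a deformation of a convex co-compact hyperbolic structure preserving the induced boundary metric is trivial --- is the technical heart; the Pogorelov-type reduction to Euclidean rigidity is delicate here precisely because $M$ is a quotient and the deformation need not be equivariant with respect to a \emph{fixed} representation: one must handle the variation of the holonomy, not just an equivariant vector field on a fixed surface in $\mathbb{H}^3$, and the classical Blaschke--Cohn-Vossen rigidity is for closed surfaces in $\mathbb{E}^3$, not for invariant noncompact surfaces. (ii) Properness of $\Phi$ is exactly the compactness theory of Labourie and Schlenker and is nontrivial; the claim that $C^\infty$ bounds on $h_n$ plus $K\geq-1+\varepsilon$ ``control the second fundamental form and hence the geometry of the convex co-compact end'' is the statement to be proved, not an argument (compare the Brock-type examples recalled after Lemma~\ref{chaaa_lemma_ubound_of_diameters_of_Mn}, which show how the geometry of quasi-Fuchsian convex cores can degenerate even with controlled boundary data). (iii) The covering-space endgame needs either simple connectivity of $\mathcal{H}(\partial M)$ or a point known to have exactly one preimage; you appeal to a ``Fuchsian (doubled) base point, where the realization is manifestly unique,'' but uniqueness there is itself a theorem (it is not manifest), so the one-sheet conclusion is not yet justified. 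In short: the skeleton matches the published proof of \cite{chaaa_Sch2006}, but the three load-bearing steps are missing their content, so the proposal does not constitute a proof.
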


It would be natural to conjecture that the metric $g$ in the statements of Theorems~\ref{chaaa_theorem_manifolds_with_convex_alexandrov_metric_on_boundary} and~\ref{chaaa_theorem_manifolds_with_polyhedral_metric_on_boundary} is unique. The methods used in their demonstration do not presently allow to attack this problem.

At last, recalling that the convex quasi-Fuchsian manifolds are special cases of the convex co-compact manifolds, we can guess that Theorems~\ref{chaaa_theorem_manifolds_with_convex_alexandrov_metric_on_boundary} and~\ref{chaaa_theorem_manifolds_with_polyhedral_metric_on_boundary} remain valid in the case when $\mathcal{M}$ is a convex co-compact manifold. It would be interesting to verify this hypothesis in the future.

\subsection{Proof of Theorem~\ref{chaaa_theorem_manifolds_with_convex_alexandrov_metric_on_boundary}}
\label{chaaa_sec_proof_of_main_theorem}

A compact connected $3$-manifold $\mathcal{M}$ of the type $\mathcal{S}\times[-1,1]$ from the statement of Theorem~\ref{chaaa_theorem_manifolds_with_convex_alexandrov_metric_on_boundary}, where $\mathcal{S}$ is a closed connected surface of genus at least $2$,
can be regarded as a convex compact
$3$-dimensional domain of an unbounded quasi-Fuchsian manifold $\mathcal{M}^{\circ}=\mathbb{H}^{3}/\Gamma_{QF}$ where $\Gamma_{QF}$ stands for a quasi-Fuchsian group of isometries of hyperbolic space $\mathbb{H}^3$. Note that the boundary $\partial\mathcal{M}$ of such domain $\mathcal{M}$ consists of two distinct locally convex compact $2$-surfaces in $\mathcal{M}^{\circ}$. Thus, the metric $h$ from the statement of Theorem~\ref{chaaa_theorem_manifolds_with_convex_alexandrov_metric_on_boundary} is a pair of Alexandrov metrics of curvature $K\geq-1$ at every point defined on a couple of compact connected surfaces of the same genus as $\mathcal{M}$, and our aim is to find such quasi-Fuchsian subgroup $\Gamma_{QF}$ of isometries of hyperbolic space $\mathbb{H}^3$ and such convex compact domain $\mathcal{M}\subset\mathcal{M}^{\circ}$ that the induced metric of its boundary $\partial\mathcal{M}$ coincides with $h$.

The main idea of the proof of Theorem~\ref{chaaa_theorem_manifolds_with_convex_alexandrov_metric_on_boundary} is
\begin{itemize}
\item[$(1)$] to approximate the Alexandrov metric $h$ by a sequence $\{h_{n}\}_{n\in\mathbb{N}}$ of $C^{\infty}$-regular metrics for which the Labourie-Schlenker Theorem \ref{chaaa_thm_labourie-schlenker} is applicable, and therefore, there are such quasi-Fuchsian groups $\Gamma_{n}$ of isometries of $\mathbb{H}^3$ and such convex compact domains $\mathcal{M}_{n}$ in the quasi-Fuchsian manifolds $\mathcal{M}^{\circ}_{n}=\mathbb{H}^{3}/\Gamma_{n}$ that the induced metrics of the boundaries $\partial\mathcal{M}_{n}$ of the sets $\mathcal{M}_{n}$ are exactly $h_{n}$, $n\in\mathbb{N}$;
\item[$(2)$] to find a sequence of positive integers $n_{k}\xrightarrow[k\rightarrow\infty]{}\infty$ such that the subsequences of groups $\{\Gamma_{n_{k}}\}_{k\in\mathbb{N}}$ and of domains $\{\mathcal{M}_{n_{k}}\}_{k\in\mathbb{N}}$ converge (the types of convergence will be precised later);
\item[$(3)$] and to show that the induced metric on the boundary of the limit domain $\mathcal{M}$ coincides with $h$.
\end{itemize}

For convenience, let us introduce new notation of some entities that we considered before: we redefine the domain $\mathcal{M}$ and the quasi-Fuchsian manifold $\mathcal{M}^{\circ}$ by the symbols $\mathcal{M}_{\infty}$ and $\mathcal{M}^{\circ}_{\infty}$, correspondingly. Also, let us denote the connected components of the boundary $\partial\mathcal{M}_{\infty}$ of the limit domain $\mathcal{M}_{\infty}$ by $\mathcal{S}^{+}_{\infty}$ and $\mathcal{S}^{-}_{\infty}$, and the induced metrics on the surfaces $\mathcal{S}^{+}_{\infty}$ and $\mathcal{S}^{-}_{\infty}$ by $h^{+}_{\infty}$ and $h^{-}_{\infty}$, respectively. Therefore, to define the metric $h$ from the statement of Theorem~\ref{chaaa_theorem_manifolds_with_convex_alexandrov_metric_on_boundary} means to give a pair of Alexandrov metrics $h^{+}_{\infty}$ and $h^{-}_{\infty}$ of curvature $K\geq-1$ at every point.

\subsubsection{Construction of sequences of metrics converging to the prescribed metrics}
\label{chaaa_sec_metrics_converging_to_the_prescribed_ones}

\textbf{Definition.} We say that a sequence of metrics $\{h_{n}\}_{n\in\mathbb{N}}$ on a compact surface $\mathcal{S}$ converges to a metric $h$ if for any $\varepsilon>0$ there exists such $N(\varepsilon)\in\mathbb{N}$ that all integers $n\geq N(\varepsilon)$ and for any pair of points $x$ and $y$ on $\mathcal{S}$ the following inequality holds:
\begin{equation}\label{chaaa_frm_convergence_of_metrics_alexandrov_sense}
|\mathrm{d}_{h_{n}}(x,y)-\mathrm{d}_{h}(x,y)|<\varepsilon.
\end{equation}

First, we shall learn to approximate an Alexandrov metric of curvature $K\geq-1$ on a compact connected surface by a sequence of hyperbolic polyhedral metrics (i.e. of the sectional curvature $-1$ everywhere except at a discrete set of points with conic singularities of angles less than $2\pi$). Next, we shall learn to approximate any hyperbolic polyhedral metric by a sequence of $C^{\infty}$-regular metrics of curvature $K>-1$. Thus, we will be able to find a sequence of $C^{\infty}$-regular metrics of curvature $K>-1$ on a compact connected surface converging to the given metric of curvature $K\geq-1$ at every point in Alexandrov sense.

\subsubsection*{Construction of a sequence of hyperbolic polyhedral metrics converging to a metric in Alexandrov sense}
\label{chaaa_sec_polyhedral_metrics_converging_to_an_alexandrov_one}

A.~D.~Alexandrov in \cite{chaaa_ADA2006} developed a way to approximate an Alexandrov metric of curvature $K\geq0$ on a compact connected surface by a sequence of Euclidean polyhedral metrics. Recently T.~Richard \cite[Annex A]{chaaa_Ri2012} adapted the Alexandrov method to the case of Alexandrov metrics of curvature $K\geq-1$.

Here we give a more detailed description of what T.~Richard proved in the annex of his PhD thesis.

In terms of \cite[Annex A]{chaaa_Ri2012} let us recall the following definition due to A.~D.~Ale\-xandrov.

\textbf{Definition.} Let $(X,d)$ be an Alexandrov compact surface of curvature $K\geq-1$ everywhere. A \emph{triangulation} $\mathcal{T}$ of $(X,d)$ is a family of geodesic triangles $\{T_{i}\}_{i\in\emph{I}}$ with disjoint interiors each homeomorphic to an open disc and such that the family $\{T_{i}\}_{i\in\emph{I}}$ covers $X$. Note that in this definition two triangles can have edges intersecting in more than one point that do not coincide though.

T.~Richard verifies that the following proposition proved in \cite[Section~6, p.~88]{chaaa_ADA2006} is valid for an Alexandrov surface of curvature $K\geq-1$.

\begin{lemma}[Lemma~A.1.2 in~\cite{chaaa_Ri2012}]\label{chaaa_lemme_triangulation_of_alexandrov_space}
For every $\varepsilon>0$, $(X,d)$ admits a triangulation (in Alexandrov sense) by convex triangles which diameters are inferior than $\varepsilon$.
\end{lemma}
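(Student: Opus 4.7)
The plan is to adapt Alexandrov's classical argument (used for curvature $K\geq 0$ in \cite{chaaa_ADA2006}) by replacing Euclidean comparison with hyperbolic comparison. The key analytic input is that on a compact Alexandrov surface of curvature $K\geq -1$ the \emph{convexity radius} is positive: every point $x\in X$ has an open geodesic ball $B(x,r(x))$ which is strictly convex, in which geodesics between any two points are unique and depend continuously on the endpoints. By compactness of $X$, one obtains a uniform lower bound $r_0>0$ for this radius.

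Fix $\varepsilon>0$ and pick $\delta>0$ with $\delta<\tfrac{1}{10}\min(\varepsilon,r_0)$. I would first choose a finite $\delta$-net $\{p_1,\dots,p_N\}\subset X$, so that the balls $B(p_i,\delta)$ already cover $X$. For every pair $p_i,p_j$ with $\mathrm{d}(p_i,p_j)<2\delta$, both points lie in a common strictly convex ball of radius ${<}r_0$, so there exists a unique shortest geodesic $\gamma_{ij}$ joining them. A small generic perturbation of the net (which does not affect the coverage property if $\delta$ is shrunk slightly) ensures that the collection $\{\gamma_{ij}\}$ forms an embedded finite graph $G\subset X$, i.e.\ distinct edges meet only at common endpoints.

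Since $X$ is a topological surface and $G$ is a connected embedded graph containing a $\delta$-net, each connected component of $X\setminus G$ is an open $2$-cell of diameter $O(\delta)$. I would subdivide each such cell whose closure is not already a geodesic triangle by adding geodesic diagonals from one vertex to the others; the diagonals remain inside a single convex ball, so they are well defined and embedded. This yields a family $\{T_i\}_{i\in I}$ of closed geodesic triangles with disjoint interiors covering $X$, each homeomorphic to a closed disc, and each of diameter bounded by a universal multiple of $\delta$, hence $<\varepsilon$.

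The main obstacle, and the step requiring the most care, is the verification that each triangle $T_i$ is \emph{convex} in the Alexandrov sense, not merely bounded by three geodesics. The argument is that $T_i$ lies in a strictly convex ball $B\subset X$ of radius $<r_0$; inside $B$, comparison with $M^2(-1)$ (where a region bounded by three geodesic arcs is convex) combined with the uniqueness of geodesics forces the Alexandrov geodesic between any two points of $T_i$ to remain inside $T_i$. Here one uses the hyperbolic analogue of Alexandrov's lemma on the comparison of adjacent triangles (valid for $K\geq -1$), exactly as in \cite[Annex~A]{chaaa_Ri2012}. The diameter bound and the covering property are then immediate from Step 2, completing the construction.
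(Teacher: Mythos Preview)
The paper does not supply its own proof of this lemma; it merely records that T.~Richard checked that Alexandrov's original triangulation argument from \cite[Section~6, p.~88]{chaaa_ADA2006} (written for curvature $K\ge 0$) goes through verbatim for curvature $K\ge -1$. So there is no detailed argument in the paper to compare against, but your sketch can still be assessed on its own merits.

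The central difficulty is your opening claim about a positive \emph{convexity radius}: you assert that every point has a ball in which ``geodesics between any two points are unique and depend continuously on the endpoints.'' This is a theorem for spaces with curvature bounded \emph{above} (CAT$(K)$ spaces), not for spaces with curvature bounded \emph{below}. On an Alexandrov surface of curvature $K\ge -1$ it is false in general: already on a Euclidean cone of total angle $\theta<2\pi$ (curvature $\ge 0$, hence $\ge -1$) one finds, arbitrarily close to the apex, pairs of points joined by two distinct minimizing geodesics of equal length. Thus there is no uniform $r_0>0$ with the property you need, and every subsequent step that invokes ``the unique shortest geodesic $\gamma_{ij}$,'' the ``small generic perturbation'' making the edges disjoint, and the convexity argument transferring convexity from a comparison triangle in $M^2(-1)$ back to $T_i$, rests on this unavailable uniqueness.

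Alexandrov's actual construction (and Richard's hyperbolic adaptation) is considerably more delicate for exactly this reason: one cannot simply join net points by canonical geodesics, and the convexity of the resulting triangles is obtained through careful angle-comparison arguments rather than by appealing to a convex ambient ball. Your outline captures the right \emph{shape} of the argument (fine net, geodesic edges, subdivide cells, check convexity), but the analytic engine you propose to drive it---local uniqueness of geodesics---is not available under a lower curvature bound, and this is precisely the place where the real work in \cite{chaaa_ADA2006} and \cite{chaaa_Ri2012} lies.
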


After T.~Richard let us fix $\varepsilon>0$, denote by $\mathcal{T}_{\varepsilon}$ a triangulation of $(X,d)$ provided by Lemma~\ref{chaaa_lemme_triangulation_of_alexandrov_space}, and construct a polyhedral surface with hyperbolic faces $(\overline{X}_{\varepsilon},\bar{d}_{\varepsilon})$ as it follows: for every triangle $T\in\mathcal{T}_{\varepsilon}$ we associate a comparison triangle $\overline{T}$ sketched on a hyperbolic plane $\mathbb{H}^2$ ($=M^2(-1)$) such that all corresponding edges of $T$ and $\overline{T}$ have equal lengths, then we glue together the collection of hyperbolic comparison triangles following the same combinatorics as one of $\mathcal{T}_{\varepsilon}$, and thus we obtain a polyhedral surface $\overline{X}_{\varepsilon}$.

We must note the following property of $\overline{X}_{\varepsilon}$:

\begin{lemma}[Lemma~A.2.1 in~\cite{chaaa_Ri2012}]\label{chaaa_lemme_curvature_of_a_comparison_polyhedron}
$(\overline{X}_{\varepsilon},\bar{d}_{\varepsilon})$ has curvature $K\geq-1$ everywhere in Alexandrov sense.
\end{lemma}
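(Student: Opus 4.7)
The plan is to reduce the Alexandrov bound $K\geq-1$ on $(\overline{X}_{\varepsilon},\bar{d}_{\varepsilon})$ to the classical criterion for a piecewise hyperbolic surface: its curvature in the Alexandrov sense is $\geq-1$ if and only if the total cone angle at each vertex does not exceed $2\pi$. The verification then splits naturally according to the type of the point of $\overline{X}_{\varepsilon}$ one is analysing: an interior point of a face, an interior point of an edge, or a vertex of $\mathcal{T}_{\varepsilon}$.

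Interior points of the faces are immediate: by construction such a neighborhood is isometric to an open subset of $\mathbb{H}^{2}$, which has constant curvature $-1$. At an interior point of an edge shared by two hyperbolic comparison triangles $\overline{T}_{i},\overline{T}_{j}$, one unfolds $\overline{T}_{i}\cup\overline{T}_{j}$ isometrically into $\mathbb{H}^{2}$; a sufficiently small neighborhood of the point is then locally isometric to $\mathbb{H}^{2}$, and the comparison inequality holds with equality.

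All the work is at a vertex $v$ of $\mathcal{T}_{\varepsilon}$. Let $T_{1},\dots,T_{k}$ be the triangles of $\mathcal{T}_{\varepsilon}$ incident to $v$, with Alexandrov angles $\alpha_{1},\dots,\alpha_{k}$ at $v$ computed in $(X,d)$, and let $\bar{\alpha}_{1},\dots,\bar{\alpha}_{k}$ be the angles at the image of $v$ in the corresponding hyperbolic comparison triangles $\overline{T}_{i}$. Two facts then combine: the angle comparison theorem in the space $(X,d)$ of curvature $\geq-1$ gives $\alpha_{i}\geq\bar{\alpha}_{i}$ for every $i$, while the structure theory of two-dimensional Alexandrov spaces of curvature $\geq-1$ forces $\sum_{i=1}^{k}\alpha_{i}\leq 2\pi$, since the space of directions at any point of such a surface is a metric circle of length at most $2\pi$. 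Combining these gives $\sum_{i=1}^{k}\bar{\alpha}_{i}\leq 2\pi$, so the cone angle of $\bar{d}_{\varepsilon}$ at $v$ is at most $2\pi$. A standard computation (via the hyperbolic law of cosines applied to triangles crossing the apex, or by exhibiting the hyperbolic cone as a limit of branched covers of hyperbolic disks) then shows that a hyperbolic cone of angle $\theta\leq 2\pi$ satisfies the Alexandrov comparison condition for $K=-1$, which completes the argument.

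The most delicate step is the inequality $\sum_{i}\alpha_{i}\leq 2\pi$ at vertices of $\mathcal{T}_{\varepsilon}$ that are genuinely singular points of $(X,d)$: it must be derived from the structure theory of two-dimensional Alexandrov spaces (tangent cones and spaces of directions) rather than from any explicit local chart. The small diameter and convexity of the triangles of $\mathcal{T}_{\varepsilon}$ provided by Lemma~\ref{chaaa_lemme_triangulation_of_alexandrov_space} are precisely what ensure that the Alexandrov angles $\alpha_{i}$ and the angle comparison inequalities $\alpha_{i}\geq\bar{\alpha}_{i}$ can be applied locally at $v$.
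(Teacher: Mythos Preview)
Your proposal is correct. The paper does not supply its own proof of this lemma but merely cites it from T.~Richard's thesis; the argument you outline---angle comparison $\alpha_i\geq\bar\alpha_i$ from the lower curvature bound on $(X,d)$, combined with $\sum_i\alpha_i\leq 2\pi$ from the fact that the space of directions at a point of a two-dimensional Alexandrov space with curvature bounded below is a circle of length at most $2\pi$---is the standard route, and the paper's subsequent Remark confirms that the cone-angle condition $\sum_i\bar\alpha_i\leq 2\pi$ is exactly the content of the lemma.
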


\begin{remark}\label{chaaa_remark_polyhedral_surface_of_curvature_geq_-1}
By construction, the curvature of $\overline{X}_{\varepsilon}$ is equal to $-1$ everywhere with the exception of vertices of the triangles forming $\overline{X}_{\varepsilon}$. Therefore, Lemma~\ref{chaaa_lemme_curvature_of_a_comparison_polyhedron} means that the above mentioned vertices are conic singularities of angles $\leq 2\pi$ of the hyperbolic polyhedral metric on $\overline{X}_{\varepsilon}$.
\end{remark}

At last, T.~Richard~\cite[pp.~87--91]{chaaa_Ri2012} proves that for any $\varepsilon>0$ there exists a real number $\varepsilon'>0$ (depending only on $(X,d)$ and verifying the property $\varepsilon'\rightarrow0$ as $\varepsilon\rightarrow0$) such that for any pair of points $v$ and $w$ in $\overline{X}$ and for a pair of corresponding points $\bar{v}$ and $\bar{w}$ in $\overline{X}_{\varepsilon}$ the following inequality holds:
\begin{equation}\label{chaaa_richard_convergence_of_metrics}
|\bar{d}_{\varepsilon}(\bar{v},\bar{w})-d(v,w)|<\varepsilon'.
\end{equation}
T.~Richard calls this way of convergence of hyperbolic polyhedral surfaces $(\overline{X}_{\varepsilon},\bar{d}_{\varepsilon})$ to the Alexandrov surface $(X,d)$ as $\varepsilon\rightarrow0$ a Gromov-Hausdorff convergence.

Let us rewrite the results of T.~Richard described above in the language developed in Section~\ref{chaaa_sec_proof_of_main_theorem}. We consider an Alexandrov compact surface $(X,d)$ as a topological surface $\mathcal{S}$ endowed with a metric $h$ of curvature $K\geq-1$ in Alexandrov sense and we note that the construction of a hyperbolic polyhedral surface $\overline{X}_{\varepsilon}$ based on a triangulation $\mathcal{T}_{\varepsilon}$ of $(X,d)$ ($=(\mathcal{S},h)$) is equivalent to a construction of a hyperbolic polyhedral metric $h_{\varepsilon}$ on $\mathcal{S}$ as follows: leaving the lengths of all edges of the triangulation $\mathcal{T}_{\varepsilon}$ unchanged, we replace the metric $h$ restricted on the interior of each triangle $T\in\mathcal{T}_{\varepsilon}$ by a hyperbolic metric (i.e. of curvature $-1$ everywhere) inside $T$. Thus, the inequality~(\ref{chaaa_richard_convergence_of_metrics}) becomes equivalent to the following one:
\begin{equation*}\label{chaaa_def_convergence_of_metrics_alexandrov_sense}
|\mathrm{d}_{h_{\varepsilon}}(v,w)-\mathrm{d}_{h}(v,w)|<\varepsilon'
\end{equation*}
for all pairs of points $v$ and $w$ in $\mathcal{S}$ (compare it with~(\ref{chaaa_frm_convergence_of_metrics_alexandrov_sense})).

Therefore, choosing a sequence of positive real numbers $\varepsilon_{n}\rightarrow0$ as $n\rightarrow\infty$ and then applying the argument of T.~Richard for each $\varepsilon_{n}$, we state

\begin{lemma}\label{chaaa_lemme_richard_approximation_by_polyhedral_metrics}
Let $\mathcal{S}$ be a closed compact surface endowed with a metric $h$ of curvature $K\geq-1$ in Alexandrov sense, there exists a sequence of hyperbolic polyhedral metrics $\{h_{n}\}_{n\in\mathbb{N}}$ converging to $h$ \emph{(}hereinafter we mean  on default the convergence of metrics in the sense of inequality~\emph{(\ref{chaaa_frm_convergence_of_metrics_alexandrov_sense}))}.
\end{lemma}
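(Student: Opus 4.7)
The plan is to invoke directly the construction of T.~Richard recalled just above the statement of the lemma and package it into the sequential form required. First I would fix any sequence of positive real numbers $\varepsilon_{n}\to 0$ as $n\to\infty$. For each $n\in\mathbb{N}$, applying Lemma~\ref{chaaa_lemme_triangulation_of_alexandrov_space} to the Alexandrov surface $(X,d)=(\mathcal{S},h)$ with parameter $\varepsilon_{n}$, I obtain a triangulation $\mathcal{T}_{\varepsilon_{n}}$ of $(\mathcal{S},h)$ by convex geodesic triangles of diameter strictly less than $\varepsilon_{n}$.

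Next, following Richard's recipe, I would associate to $\mathcal{T}_{\varepsilon_{n}}$ a hyperbolic polyhedral metric $h_{n}:=h_{\varepsilon_{n}}$ on $\mathcal{S}$: keeping both the combinatorics of $\mathcal{T}_{\varepsilon_{n}}$ and the $h$-lengths of its edges, I replace the restriction of $h$ to the interior of each triangle $T\in\mathcal{T}_{\varepsilon_{n}}$ by the hyperbolic metric carried by the comparison triangle $\overline{T}\subset\mathbb{H}^{2}$ whose corresponding sides have the same lengths as those of $T$. By Lemma~\ref{chaaa_lemme_curvature_of_a_comparison_polyhedron} together with Remark~\ref{chaaa_remark_polyhedral_surface_of_curvature_geq_-1}, the resulting surface $(\mathcal{S},h_{n})$ is a hyperbolic polyhedral surface whose only singularities are cone points of total angle at most $2\pi$, located at the vertices of $\mathcal{T}_{\varepsilon_{n}}$.

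Finally, I would appeal to Richard's Gromov--Hausdorff estimate~(\ref{chaaa_richard_convergence_of_metrics}): for every $n$ there exists $\varepsilon_{n}'>0$, depending only on $(\mathcal{S},h)$ and satisfying $\varepsilon_{n}'\to 0$ as $\varepsilon_{n}\to 0$, such that for every pair of points $v,w\in\mathcal{S}$ one has $|\mathrm{d}_{h_{n}}(v,w)-\mathrm{d}_{h}(v,w)|<\varepsilon_{n}'$. Given any $\varepsilon>0$ in the sense of~(\ref{chaaa_frm_convergence_of_metrics_alexandrov_sense}), it then suffices to pick $N(\varepsilon)\in\mathbb{N}$ with $\varepsilon_{n}'<\varepsilon$ for every $n\geq N(\varepsilon)$ to obtain the desired convergence $h_{n}\to h$.

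Since all the technical work, in particular the verification of the estimate~(\ref{chaaa_richard_convergence_of_metrics}), has already been carried out in \cite{chaaa_Ri2012} and quoted in the discussion preceding the lemma, no genuine obstacle remains; the statement is essentially a reformulation of Richard's result in the notation adopted in Subsection~\ref{chaaa_sec_proof_of_main_theorem}. The only mildly delicate point worth flagging is that $\varepsilon_{n}'$ must be chosen uniformly in the pair $(v,w)$, depending only on $(\mathcal{S},h)$ and $\varepsilon_{n}$; this uniformity is precisely what is supplied by Richard's argument and what is needed to match the definition of convergence of metrics introduced at the beginning of Subsection~\ref{chaaa_sec_metrics_converging_to_the_prescribed_ones}.
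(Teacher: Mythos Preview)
Your proposal is correct and follows essentially the same approach as the paper: both choose a sequence $\varepsilon_{n}\to 0$, apply Richard's construction for each $\varepsilon_{n}$ to obtain the polyhedral metric $h_{n}=h_{\varepsilon_{n}}$, and then use the estimate~(\ref{chaaa_richard_convergence_of_metrics}) to conclude convergence in the sense of~(\ref{chaaa_frm_convergence_of_metrics_alexandrov_sense}). In fact, the paper simply states this in one sentence preceding the lemma, while you have spelled out the argument in full.
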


\subsubsection*{Construction of a sequence of $C^{\infty}$-regular metrics converging to a hyperbolic polyhedral metric}
\label{chaaa_sec_regular_metrics_converging_to_a_polyhedral_one}

In this Section, we prove the following

\begin{lemma}\label{chaaa_lemma_approximation_of_a_polyhedral_by_metrics_of_positive_curvature}
Let $\mathcal{S}$ be a surface with a hyperbolic polyhedral metric $h$.
Then there is a sequence of $C^{\infty}$-regular metrics $\{h_{n}\}_{n\in\mathbb{N}}$ with sectional curvatures strictly greater than $-1$ everywhere, converging to the metric $h$.
\end{lemma}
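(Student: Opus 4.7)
My plan is to smooth each cone singularity of $h$ independently inside a shrinking geodesic disk by replacing the local hyperbolic cone model with a rotationally symmetric smooth cap of curvature strictly greater than $-1$, and then to control distance distortion by the diameters of these caps.

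By compactness of $\mathcal{S}$, $h$ has only finitely many cone points $p_{1},\dots,p_{N}$ with cone angles $\theta_{i}<2\pi$, and is smooth of constant curvature $-1$ elsewhere. Fix $r_{0}>0$ smaller than half the minimum mutual $h$-distance between the $p_{i}$'s, so that the closed disks $\overline{D}(p_{i},r_{0})$ are pairwise disjoint. In geodesic polar coordinates $(r,\phi)\in[0,r_{0}]\times[0,2\pi)$ centered at $p_{i}$, the metric reads $h=dr^{2}+c_{i}^{2}\sinh^{2}(r)\,d\phi^{2}$ with $c_{i}:=\theta_{i}/(2\pi)<1$. A rotationally symmetric model $dr^{2}+f(r)^{2}\,d\phi^{2}$ has Gaussian curvature $-f''(r)/f(r)$, so the condition $K>-1$ is equivalent to $f''<f$.

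For each large $n$, I would construct a smooth profile $f_{i,n}\colon[0,r_{0}]\to\mathbb{R}_{\geq 0}$ with $f_{i,n}(0)=0$ and $f_{i,n}'(0)=1$ (extending smoothly across $p_{i}$), $f_{i,n}(r)=c_{i}\sinh(r)$ on $[1/n,r_{0}]$ (matching $h$ outside a cap of radius $1/n$), and $f_{i,n}''(r)<f_{i,n}(r)$ throughout $(0,r_{0}]$. Such a profile exists because the angle defect $2\pi-\theta_{i}>0$ concentrated at $p_{i}$ can be smeared into a smooth strictly positive Gaussian curvature density on $(0,1/n)$: pick a smooth function $K_{n}\colon[0,r_{0}]\to(-1,\infty)$ with $K_{n}\equiv-1$ outside a compact subinterval of $(0,1/n)$, solve the Jacobi equation $f''+K_{n}f=0$ with $f(0)=0$, $f'(0)=1$, and tune the shape of $K_{n}$ via a two-parameter shooting so that $(f,f')=(c_{i}\sinh,c_{i}\cosh)$ at $r=1/n$; smoothness at $r=1/n$ then propagates automatically because both branches satisfy $f''=f$ in a collar around that radius. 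Define $h_{n}$ to equal $dr^{2}+f_{i,n}(r)^{2}\,d\phi^{2}$ inside each disk $D(p_{i},r_{0})$ and to equal $h$ on the complement; the two definitions agree on the collar $\{1/n<r<r_{0}\}$, so $h_{n}$ is $C^{\infty}$ on $\mathcal{S}$ with sectional curvature strictly greater than $-1$ everywhere.

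For the convergence, fix $x,y\in\mathcal{S}$. Since $h_{n}=h$ outside $\bigcup_{i=1}^{N}D(p_{i},1/n)$, and each such disk has $h$- and $h_{n}$-diameter bounded by some $C/n$ (using $f_{i,n}\leq\sinh$, which follows from $f''<f$ by Sturm comparison), an $h$-shortest path from $x$ to $y$ enters at most $N$ of these disks; replacing the segment inside each disk by an $h_{n}$-shortest path between the same entry and exit points alters the total length by at most $2CN/n$. Hence $\mathrm{d}_{h_{n}}(x,y)\leq\mathrm{d}_{h}(x,y)+C'/n$, and the symmetric argument gives $|\mathrm{d}_{h_{n}}(x,y)-\mathrm{d}_{h}(x,y)|<C'/n$ uniformly in $x,y$, verifying~(\ref{chaaa_frm_convergence_of_metrics_alexandrov_sense}). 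The only genuine obstacle is the existence of the profile $f_{i,n}$: since $c_{i}<1$ the slope must drop from $1$ at $r=0$ to $c_{i}\cosh(1/n)<1$ at $r=1/n$, forcing strictly positive curvature somewhere on $(0,1/n)$, and one must verify that the shooting simultaneously achieves the endpoint matching and preserves the strict inequality $f''<f$ throughout. The remaining steps---gluing, smoothness across the collar, and the distance estimate---are routine.
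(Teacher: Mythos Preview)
Your construction has a genuine gap: the metrics $h_{n}$ you build have curvature equal to $-1$, not strictly greater than $-1$, on most of the surface. Indeed, you set $h_{n}=h$ on the complement of the disks $D(p_{i},r_{0})$, where $h$ is hyperbolic of curvature exactly $-1$; and on each collar $\{1/n<r<r_{0}\}$ your profile is $f_{i,n}=c_{i}\sinh$, which satisfies $f''=f$, not $f''<f$. So your stated requirement ``$f_{i,n}''(r)<f_{i,n}(r)$ throughout $(0,r_{0}]$'' is already violated on $[1/n,r_{0}]$, and the conclusion ``sectional curvature strictly greater than $-1$ everywhere'' is false for these $h_{n}$. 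What you have actually proved is (a version of) the preliminary Lemma~\ref{chaaa_lemma_approximation_of_a_polyhedral_by_regular_metrics}: approximation by smooth metrics with $K\geq -1$.

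The paper closes exactly this gap with a one-line scaling trick that you are missing. After obtaining smooth metrics $\hbar_{n}$ with $K_{\hbar_{n}}\geq -1$ converging to $h$ (Lemma~\ref{chaaa_lemma_approximation_of_a_polyhedral_by_regular_metrics}), it chooses $\lambda_{n}\searrow 1$ and sets $h_{n}:=\lambda_{n}\hbar_{n}$; by Lemma~\ref{chaaa_lemma_homothety_of_metrics} one has $K_{h_{n}}=K_{\hbar_{n}}/\lambda_{n}\geq -1/\lambda_{n}>-1$, and convergence to $h$ is preserved since $\lambda_{n}\to 1$. Your cap construction is a reasonable alternative route to the $K\geq -1$ approximation (the paper instead embeds small cones in $\mathbb{H}^{3}$ and smooths by convolution, citing \cite{chaaa_Gho2002} and \cite[Lemma~3.10]{chaaa_Slu2013}), and your shooting argument for the profile is plausible though not fully justified; but without the final rescaling step the strict inequality required by the lemma simply does not hold.
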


First, let us state two preliminary results.

\begin{lemma}\label{chaaa_lemma_approximation_of_a_polyhedral_by_regular_metrics}
Let $\mathcal{S}$ be a surface with a hyperbolic polyhedral metric $h$.
Then there is a sequence of $C^{\infty}$-regular metrics $\{h_{n}\}_{n\in\mathbb{N}}$ with sectional curvatures greater than or equal to $-1$ everywhere, converging to the metric $h$.
\end{lemma}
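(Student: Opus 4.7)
The plan is purely local: the polyhedral metric $h$ is already $C^{\infty}$ and hyperbolic away from its finitely many cone singularities $p_1,\ldots,p_N$ (with cone angles $\alpha_j\in(0,2\pi)$), so it suffices to modify $h$ inside disjoint small disks $U_j\ni p_j$. In geodesic polar coordinates $(r,\theta)$, $\theta\in\mathbb{R}/2\pi\mathbb{Z}$, around $p_j$ the cone metric takes the warped-product form
\[
dr^2+c_j^2\sinh^2(r)\,d\theta^2,\qquad c_j=\alpha_j/(2\pi)\in(0,1).
\]
In such coordinates the Gaussian curvature is $-f''(r)/f(r)$ (where $f$ is the warping factor), so the bound $K\geq-1$ is equivalent to the differential inequality $f''\leq f$, while smoothness of the metric at the origin requires $f$ to be smooth and odd with $f(0)=0$ and $f'(0)=1$.

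For each $n$ I fix radii $\rho_n\to 0$ with the balls $\overline{B(p_j,\rho_n)}\subset U_j$ pairwise disjoint, and inside each ball replace $c_j\sinh r$ by the solution $f_{j,n}$ of the linear ODE
\[
f_{j,n}''(r)=\kappa_n(r)\,f_{j,n}(r),\qquad f_{j,n}(0)=0,\quad f_{j,n}'(0)=1,
\]
where $\kappa_n$ is a smooth even function with $\kappa_n\leq 1$ everywhere and $\kappa_n(r)\equiv 1$ for $r\geq\rho_n$. Any such $f_{j,n}$ is automatically smooth, odd, and satisfies $f_{j,n}''\leq f_{j,n}$; moreover on $[\rho_n,\infty)$ it has the form $A\sinh r+B\cosh r$ for constants $(A,B)$ depending on the profile of $\kappa_n$. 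The freedom in $\kappa_n$ is to be used to arrange $(A,B)=(c_j,0)$, so that $f_{j,n}$ smoothly matches $c_j\sinh r$ across $r=\rho_n$.

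Given such $f_{j,n}$, I define $h_n$ to equal $h$ outside $\bigcup_j B(p_j,\rho_n)$ and $dr^2+f_{j,n}(r)^2\,d\theta^2$ inside each ball; by construction $h_n$ is $C^{\infty}$ on $\mathcal{S}$, with Gaussian curvature $-\kappa_n\geq -1$ on the modified balls and $-1$ elsewhere. The convergence $\mathrm{d}_{h_n}\to\mathrm{d}_h$ in the sense of~(\ref{chaaa_frm_convergence_of_metrics_alexandrov_sense}) then follows from a triangle-inequality argument: the metrics agree outside a set of diameter $O(\rho_n)$, and the $h_n$-diameter of each $B(p_j,\rho_n)$ is also $O(\rho_n)$, so every distance changes by at most $O(\rho_n)$.

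The main obstacle is the matching step: one must realize the pair $(c_j\sinh\rho_n,c_j\cosh\rho_n)$ as $(f_{j,n}(\rho_n),f_{j,n}'(\rho_n))$ under the constraint $\kappa_n\leq 1$. A concrete recipe is to set $\kappa_n\equiv-K_n$ on $[0,\rho_n/2]$ (a small spherical cap of curvature $K_n>0$ at the apex), $\kappa_n\equiv 1$ on $[\rho_n,\infty)$, and interpolate smoothly on $[\rho_n/2,\rho_n]$; varying $K_n$ over $(0,\infty)$ and adjusting the interpolation profile, a standard two-parameter continuity/shooting argument shows that the image of the shooting map sweeps out a neighborhood containing the target pair, which lies in the accessible region precisely because $c_j<1$. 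Geometrically, this simply says that the positive concentrated curvature $2\pi-\alpha_j$ at the cone point can be redistributed as smooth positive curvature, producing a $C^{\infty}$ metric with $K\geq-1$ everywhere.
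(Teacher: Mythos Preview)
The matching step is where the argument breaks. You claim that by choosing $\kappa_n\le 1$ one can arrange $(A,B)=(c_j,0)$, i.e.\ $f_{j,n}(\rho_n)=c_j\sinh\rho_n$ and $f_{j,n}'(\rho_n)=c_j\cosh\rho_n$. This is impossible. Set $B(r):=f(r)\cosh r-f'(r)\sinh r$; then $B(0)=0$ and
\[
B'(r)=\bigl(1-\kappa_n(r)\bigr)\,f(r)\,\sinh r\;\ge\;0
\]
as long as $f>0$ on $(0,\rho_n]$ (which is needed anyway for the metric to be nondegenerate). Hence $B(\rho_n)\ge 0$, with equality only if $\kappa_n\equiv 1$ on $[0,\rho_n]$; but that forces $f=\sinh$ and $A=1\neq c_j$. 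Equivalently, whenever the trajectory $(A(r),B(r))$ actually moves it does so with slope $dB/dA=-\tanh r\in(-1,0)$, so from $(1,0)$ it enters the open region $\{A<1,\ 0<B<1-A\}$ and never returns to the axis $B=0$. The target $(c_j,0)$ sits precisely on that forbidden axis, so no shooting argument can hit it.

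The paper avoids this obstruction by working extrinsically: it realizes each cone neighborhood as a genuine convex cone in $\mathbb H^3$ and smooths that surface by convolution \`a la Ghomi~\cite{chaaa_Gho2002}, so that convexity---hence $K\ge -1$ via the Gauss equation---is automatic, with details deferred to~\cite{chaaa_Slu2013}. In arc-length polar coordinates on the smoothed cap the warping factor equals $c_j\sinh(s+r_0)$ beyond the smoothing zone, where $r_0>0$ is the extra arc length picked up by rounding the tip; the corresponding $B$-value is $c_j\sinh r_0>0$, which \emph{is} reachable under $\kappa\le 1$. Your intrinsic construction can be repaired in the same spirit: aim for $f_{j,n}(r)=c_j\sinh(r+r_{0,n})$ near $r=\rho_n$ with some small $r_{0,n}>0$, glue the resulting disk (of $h_n$-radius $\rho_n-r_{0,n}$) into $\mathcal S\setminus B(p_j,\rho_n)$ along their isometric collars, and pull back to $\mathcal S$ by a diffeomorphism supported near $p_j$ that moves points by $O(\rho_n)$. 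The convergence $\mathrm d_{h_n}\to\mathrm d_h$ then still follows by your triangle-inequality argument.
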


To prove Lemma~\ref{chaaa_lemma_approximation_of_a_polyhedral_by_regular_metrics}, we construct small conic surfaces in $\mathbb{H}^3$ whose induced metrics coincide with the restrictions of the metric $h$ on neighborhoods of the conic singularities of $h$, and then we convolute these conic surfaces with $C^{\infty}$-smooth functions as in~\cite{chaaa_Gho2002}. A full explanation of this idea is given in \cite[Lemma~3.10]{chaaa_Slu2013}.

Also, a direct calculation shows the validity of the following statement (see \cite[Lemma~3.11]{chaaa_Slu2013} for the detailed proof).

\begin{lemma}\label{chaaa_lemma_homothety_of_metrics}
Consider a regular metric surface $(\mathcal{S},h)$, where $\mathcal{S}$ stands for a $2$-di\-men\-si\-o\-nal surface, $h$ is a metric provided on $\mathcal{S}$, and $K_{h}(x)$ denotes the sectional curvature of $(\mathcal{S},h)$ at a point $x\in\mathcal{S}$. If we consider another metric surface $(\mathcal{S},g)$, where the metric $g=\lambda h$ is a multiple of $h$ and $\lambda>0$ is a positive constant, then the sectional curvature $K_{g}(x)$ of $(\mathcal{S},g)$ at a point $x\in\mathcal{S}$ is related to $K_{h}(x)$ as follows:
\begin{equation}\label{chaaa_frm_homothety_of_metrics_and_sectional_curvatures}
K_{g}(x)=\frac{1}{\lambda}K_{h}(x).
\end{equation}
\end{lemma}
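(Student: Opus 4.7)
My plan is to establish the identity $K_{g}(x) = \frac{1}{\lambda}K_{h}(x)$ by tracking how each ingredient in the sectional curvature formula transforms under the constant conformal rescaling $g = \lambda h$. The essential observation is that a \emph{constant} rescaling leaves all Christoffel symbols unchanged, so the $(1,3)$-Riemann tensor is identical for both metrics; the only rescaling that survives is in the inner product used to contract it and in the area denominator, and these two rescalings do not cancel.

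Concretely, I would first work in local coordinates on $\mathcal{S}$ and write the Christoffel symbols of $h$ as $\Gamma^{k}_{ij}[h] = \tfrac{1}{2}h^{kl}(\partial_{i}h_{jl} + \partial_{j}h_{il} - \partial_{l}h_{ij})$. Substituting $g_{ij} = \lambda h_{ij}$ and $g^{ij} = \lambda^{-1} h^{ij}$, the factor $\lambda$ coming from the derivatives of $g_{ij}$ (since $\lambda$ is constant, $\partial_{k}g_{ij} = \lambda\,\partial_{k}h_{ij}$) cancels the factor $\lambda^{-1}$ from $g^{kl}$, giving $\Gamma^{k}_{ij}[g] = \Gamma^{k}_{ij}[h]$. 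Since the $(1,3)$-Riemann curvature tensor is built algebraically from the Christoffel symbols and their derivatives, one concludes $R[g]^{i}{}_{jkl} = R[h]^{i}{}_{jkl}$.

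Next, I would plug into the definition of sectional curvature for a tangent plane spanned by linearly independent vectors $X, Y$ at $x$:
\begin{equation*}
K_{g}(x) = \frac{g(R[g](X,Y)Y,\,X)}{g(X,X)\,g(Y,Y) - g(X,Y)^{2}}.
\end{equation*}
The numerator equals $\lambda\, h(R[h](X,Y)Y, X)$, while the denominator equals $\lambda^{2}\bigl(h(X,X)h(Y,Y) - h(X,Y)^{2}\bigr)$. The ratio is therefore $\lambda^{-1}$ times the corresponding ratio for $h$, which is exactly $K_{h}(x)$; this yields~(\ref{chaaa_frm_homothety_of_metrics_and_sectional_curvatures}).

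There is no substantive obstacle here; the argument is essentially tensor-algebraic bookkeeping. The only point that deserves emphasis is the hypothesis that $\lambda$ is a positive \emph{constant}: if $\lambda$ were a function, terms involving $\partial_{k}\lambda$ would appear in $\Gamma^{k}_{ij}[g]$ and the Riemann tensor would pick up additional contributions (the usual conformal transformation law), and the clean identity $K_{g} = \lambda^{-1}K_{h}$ would fail. For the application in the paper only the constant case is needed, so I would merely remark that positivity of $\lambda$ ensures $g$ is a genuine Riemannian metric and that the formula is well-defined.
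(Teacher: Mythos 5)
Your argument is correct: a constant rescaling leaves the Christoffel symbols and hence the $(1,3)$-Riemann tensor unchanged, and the factor $\lambda/\lambda^{2}=\lambda^{-1}$ from the contraction and area denominator gives~(\ref{chaaa_frm_homothety_of_metrics_and_sectional_curvatures}). The paper itself gives no proof here, simply invoking a ``direct calculation'' and citing \cite[Lemma~3.11]{chaaa_Slu2013}; your computation is precisely that standard direct calculation, including the pertinent remark that constancy of $\lambda$ is what keeps the conformal correction terms from appearing.
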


We are now ready to give a demonstration of Lemma~\ref{chaaa_lemma_approximation_of_a_polyhedral_by_metrics_of_positive_curvature}.
\begin{proof}
Let $h$ be a hyperbolic polyhedral metric on a closed compact surface $\mathcal{S}$ of genus $g$.
By Lemma~\ref{chaaa_lemma_approximation_of_a_polyhedral_by_regular_metrics}, there is a sequences of $C^{\infty}$-smooth metrics $\{\hbar_{n}\}_{n\in\mathbb{N}}$ on $\mathcal{S}$, with sectional curvature $\geq-1$ everywhere, converging to $h$ as $n\rightarrow\infty$.

Next, let us choose a monotonically decreasing sequence of real numbers $\lambda_{n}\xrightarrow[n\rightarrow\infty]{}1$ and let us define the metrics $h_{n}\stackrel{\mathrm{def}}{=}\lambda_{n}\hbar_{n}$ on $\mathcal{S}$, $n\in\mathbb{N}$. Thus, by Lemma~\ref{chaaa_lemma_homothety_of_metrics}, the sectional curvatures of the metrics $h_{n}$, $n\in\mathbb{N}$ are strictly greater than $-1$ everywhere on $\mathcal{S}$, and, by construction, the sequence of $C^{\infty}$-smooth metrics $\{h_{n}\}_{n\in\mathbb{N}}$ converges to $h$ as $n\rightarrow\infty$.
\end{proof}

\subsubsection{Convergence of convex surfaces in a compact domain in $\mathbb{H}^{3}$} \label{chaaa_sec_Arzela-Ascoli_theorem_application}

\begin{figure}[!h]
\begin{center}
\input{./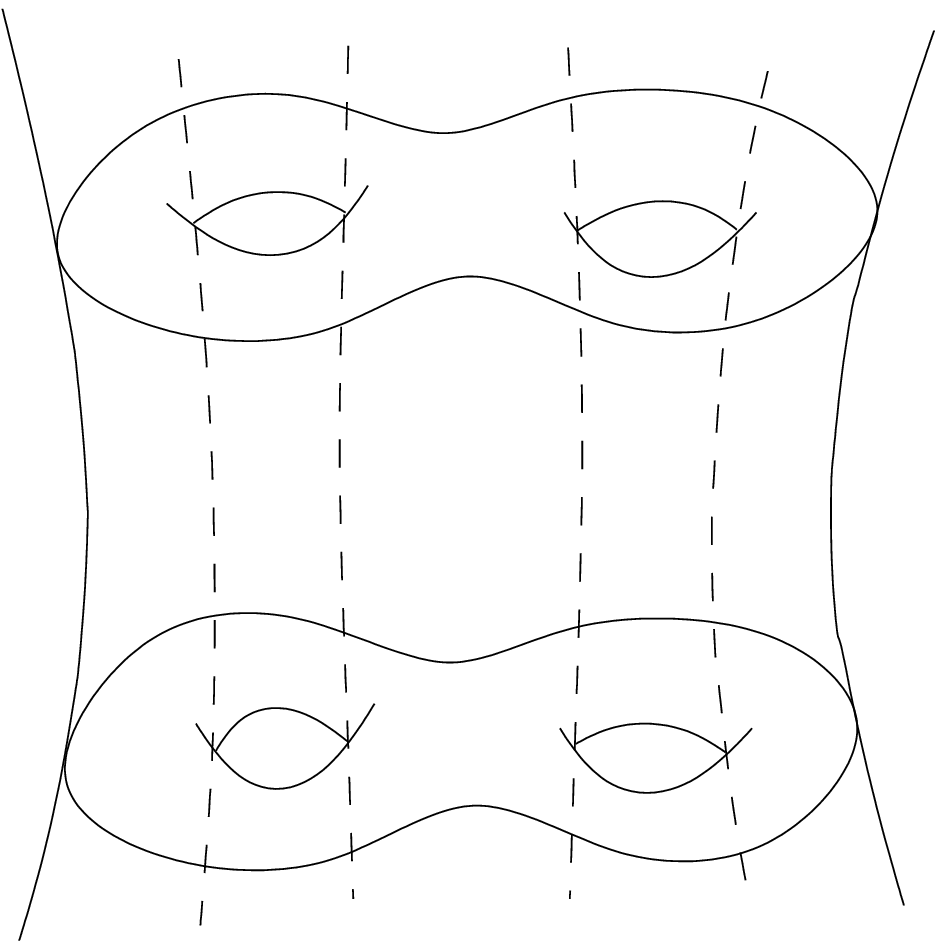tex_t}
\caption{The surfaces $\mathcal{S}^{+}_{n}$ and $\mathcal{S}^{-}_{n}$ in the quasi-Fuchsian manifold $\mathcal{M}^{\circ}_{n}$.}\label{chaaa_aaa_hyperbolic_manifold}
\end{center}
\end{figure}

Let $h^{+}_{\infty}$ and $h^{-}_{\infty}$ be two metrics of curvature $K\geq-1$ in Alexandrov sense everywhere on a closed compact surface $\mathcal{S}$ of genus $g$. To be able to apply the Labourie-Schlenker Theorem~\ref{chaaa_thm_labourie-schlenker}, we shall construct two sequences of $C^{\infty}$-regular metrics of curvature strictly greater than $-1$, converging to $h^{+}_{\infty}$ and $h^{-}_{\infty}$.
By Lemma~\ref{chaaa_lemme_richard_approximation_by_polyhedral_metrics}, there are two sequences of hyperbolic polyhedral metrics $\{\hbar^{+}_{n}\}_{n\in\mathbb{N}}$ and
$\{\hbar^{-}_{n}\}_{n\in\mathbb{N}}$ on $\mathcal{S}$, converging to
$h^{+}_{\infty}$ and $h^{-}_{\infty}$ as $n\rightarrow\infty$.
Also, by Lemma~\ref{chaaa_lemma_approximation_of_a_polyhedral_by_metrics_of_positive_curvature}, for each $n\in\mathbb{N}$ there are sequences $\{\hbar^{+}_{n,k}\}_{k\in\mathbb{N}}$ and $\{\hbar^{-}_{n,k}\}_{k\in\mathbb{N}}$ of $C^{\infty}$-smooth metrics of curvature $K>-1$ everywhere on $\mathcal{S}$, converging to the hyperbolic polyhedral metrics $\hbar^{+}_{n}$ and $\hbar^{-}_{n}$, respectively, as $k\rightarrow\infty$.
Thus, we are now able to extract sequences of $C^{\infty}$-smooth metrics $\{h^{+}_{n}\}_{n\in\mathbb{N}}$ and $\{h^{-}_{n}\}_{n\in\mathbb{N}}$ of curvature $K>-1$, converging to the Alexandrov metrics $h^{+}_{\infty}$ and $h^{-}_{\infty}$, respectively (where $h^{+}_{n}\in\{\hbar^{+}_{n,k}\}_{k\in\mathbb{N}}$ and $h^{-}_{n}\in\{\hbar^{-}_{n,k}\}_{k\in\mathbb{N}}$, $n\in\mathbb{N}$).

By the Labourie-Schlenker Theorem~\ref{chaaa_thm_labourie-schlenker}, for each $n\in\mathbb{N}$ there is a unique compact convex domain $\mathcal{M}_{n}$ of a quasi-Fuchsian manifold $\mathcal{M}^{\circ}_{n}$ with hyperbolic metric $g_{n}$ such that the induced metrics of the components $\mathcal{S}^{+}_{n}$ and $\mathcal{S}^{-}_{n}$ of the boundary $\partial \mathcal{M}_{n}\stackrel{\mathrm{def}}{=}\mathcal{S}^{+}_{n}\cup \mathcal{S}^{-}_{n}$ are equal to $h^{+}_{n}$ and $h^{-}_{n}$ (see also Fig.~\ref{chaaa_aaa_hyperbolic_manifold}). It means that, for each $n\in\mathbb{N}$ there exist isometric embeddings $f_{\mathcal{S}^{+}_{n}}:(\mathcal{S},h^{+}_{n})\rightarrow\mathcal{M}^{\circ}_{n}$ and $f_{\mathcal{S}^{-}_{n}}:(\mathcal{S},h^{-}_{n})\rightarrow\mathcal{M}^{\circ}_{n}$ such that $f_{\mathcal{S}^{+}_{n}}(\mathcal{S})=\mathcal{S}^{+}_{n}\subset\mathcal{M}^{\circ}_{n}$ and $f_{\mathcal{S}^{-}_{n}}(\mathcal{S})=\mathcal{S}^{-}_{n}\subset\mathcal{M}^{\circ}_{n}$.

As $\mathcal{M}^{\circ}_{n}$ can be retracted by deformation
on $\mathcal{S}^{+}_{n}$ and $\mathcal{S}^{-}_{n}$,
we conclude that their fundamental groups are homomorphic:
\begin{equation*}\label{chaaa_frm_homo_of_fund_grps_S+n_S-n_Mn}
{\pi}_{1}(\mathcal{S}^{+}_{n})\simeq {\pi}_{1}(\mathcal{M}^{\circ}_{n})\simeq{\pi}_{1}(\mathcal{S}^{-}_{n}).
\end{equation*}
Also, by construction,
\begin{equation*}\label{chaaa_frm_homo_of_fund_grps_S+n_S-n_S}
{\pi}_{1}(\mathcal{S}^{+}_{n})\simeq {\pi}_{1}(\mathcal{S})\simeq{\pi}_{1}(\mathcal{S}^{-}_{n}).
\end{equation*}
Hence, for all $n\in\mathbb{N}$
\begin{equation}\label{chaaa_frm_homo_of_fund_grps_Mn_S}
{\pi}_{1}(\mathcal{M}^{\circ}_{n})\simeq {\pi}_{1}(\mathcal{S}).
\end{equation}

Since the manifolds $\mathcal{M}^{\circ}_{n}$, $n\in\mathbb{N}$, are hyperbolic, their universal coverings $\widetilde{\mathcal{M}}^{\circ}_{n}$ are actually copies of hyperbolic $3$-space $\mathbb{H}^3$. Moreover, as each $\mathcal{M}^{\circ}_{n}$ is quasi-Fuchsian, there exists a holonomy representation $\rho_{n}:{\pi}_{1}(\mathcal{M}^{\circ}_{n})\rightarrow\mathcal{I}(\widetilde{\mathcal{M}}^{\circ}_{n})(=\mathcal{I}(\mathbb{H}^3))$ of the fundamental group of $\mathcal{M}^{\circ}_{n}$ in the group of isometries of the universal covering $\widetilde{\mathcal{M}}^{\circ}_{n}(=\mathbb{H}^3)$ such that $\mathcal{M}^{\circ}_{n}=\widetilde{\mathcal{M}}^{\circ}_{n}/[\rho_{n}({\pi}_{1}(\mathcal{M}^{\circ}_{n}))]
=\mathbb{H}^3/[\rho_{n}({\pi}_{1}(\mathcal{M}^{\circ}_{n}))]$ and the limit set $\Lambda_{\rho_{n}}\subset\partial_{\infty}\mathbb{H}^3$ of $\rho_{n}({\pi}_{1}(\mathcal{M}^{\circ}_{n}))$ is homotopic to a circle. By (\ref{chaaa_frm_homo_of_fund_grps_Mn_S}), we can also speak about the holonomy representation $\rho^{\mathcal{S}}_{n}:{\pi}_{1}(\mathcal{S})\rightarrow\mathcal{I}(\widetilde{\mathcal{M}}^{\circ}_{n})(=\mathcal{I}(\mathbb{H}^3))$ of the fundamental group of $\mathcal{S}$ in the group of isometries of the universal covering $\widetilde{\mathcal{M}}^{\circ}_{n}(=\mathbb{H}^3)$ such that $\rho^{\mathcal{S}}_{n}({\pi}_{1}(\mathcal{S}))=\rho_{n}({\pi}_{1}(\mathcal{M}^{\circ}_{n}))$. Thus we have that $\mathcal{M}^{\circ}_{n}=\widetilde{\mathcal{M}}^{\circ}_{n}/[\rho^{\mathcal{S}}_{n}({\pi}_{1}(\mathcal{S}))]
=\mathbb{H}^3/[\rho^{\mathcal{S}}_{n}({\pi}_{1}(\mathcal{S}))]$ and the limit set $\Lambda_{\rho^{\mathcal{S}}_{n}}$ of $\rho^{\mathcal{S}}_{n}({\pi}_{1}(\mathcal{S}))$ is just $\Lambda_{\rho_{n}}$, $n\in\mathbb{N}$. We also suppose that ${\pi}_{1}(\mathcal{S})$ is generated by the elements $\{\gamma_{1},...,\gamma_{l}\}$.

\begin{figure}[!h]
\begin{center}
\input{./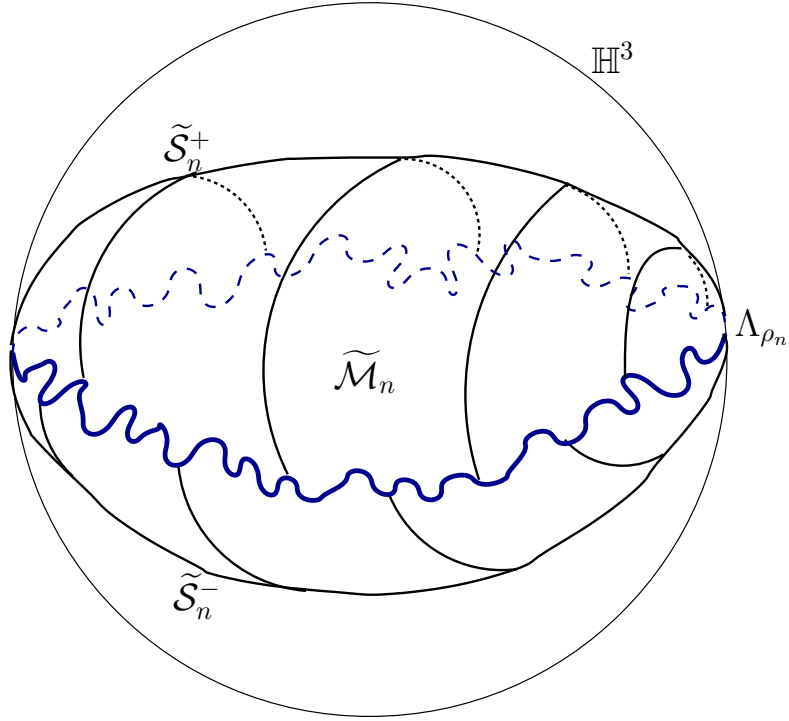tex_t}
\caption{The universal coverings $\widetilde{\mathcal{S}}^{+}_{n}$ and $\widetilde{\mathcal{S}}^{-}_{n}$ in the Kleinian model $\mathbb{K}^3$ of hyperbolic space $\mathbb{H}^{3}$.}\label{chaaa_universal_cover}
\end{center}
\end{figure}

Inside $\widetilde{\mathcal{M}}^{\circ}_{n}(=\mathbb{H}^3)$, $n\in\mathbb{N}$, we can find a convex set $\widetilde{\mathcal{M}}_{n}$ serving as a universal covering of the domain $\mathcal{M}_{n}\subset\mathcal{M}^{\circ}_{n}$, i.e. such that $\mathcal{M}_{n}=\widetilde{\mathcal{M}}_{n}/[\rho^{\mathcal{S}}_{n}({\pi}_{1}(\mathcal{S}))]$, and a pair of convex surfaces $\widetilde{\mathcal{S}}^{+}_{n}$ and $\widetilde{\mathcal{S}}^{-}_{n}$ serving as universal coverings of the surfaces $\mathcal{S}^{+}_{n}\subset\mathcal{M}^{\circ}_{n}$ and $\mathcal{S}^{-}_{n}\subset\mathcal{M}^{\circ}_{n}$ (see Fig.~\ref{chaaa_universal_cover}), i.e. such that $\mathcal{S}^{+}_{n}=\widetilde{\mathcal{S}}^{+}_{n}/[\rho^{\mathcal{S}}_{n}({\pi}_{1}(\mathcal{S}))]$ and $\mathcal{S}^{-}_{n}=\widetilde{\mathcal{S}}^{-}_{n}/[\rho^{\mathcal{S}}_{n}({\pi}_{1}(\mathcal{S}))]$. By construction, $\partial\widetilde{\mathcal{M}}_{n}=\widetilde{\mathcal{S}}^{+}_{n}\cup\widetilde{\mathcal{S}}^{-}_{n}$ and the boundaries at infinity $\partial_{\infty}\widetilde{\mathcal{M}}_{n}=\partial_{\infty}\widetilde{\mathcal{S}}^{+}_{n}
=\partial_{\infty}\widetilde{\mathcal{S}}^{-}_{n}=\Lambda_{\rho^{\mathcal{S}}_{n}}$. Denote by $p_{n}:\widetilde{\mathcal{M}}_{n}\rightarrow\mathcal{M}_{n}$ the projection of $\widetilde{\mathcal{M}}_{n}$ on $\mathcal{M}_{n}$, $n\in\mathbb{N}$. By construction, $\mathcal{S}^{+}_{n}=p_{n}(\widetilde{\mathcal{S}}^{+}_{n})$ and $\mathcal{S}^{-}_{n}=p_{n}(\widetilde{\mathcal{S}}^{-}_{n})$, $n\in\mathbb{N}$.

For every $n\in\mathbb{N}$ we lift the metric $g_{n}$ of the manifold $\mathcal{M}_{n}$ to the metric $\tilde{g}_{n}$ of the universal covering $\widetilde{\mathcal{M}}_{n}$ in such a way that for any $\gamma\in\pi_{1}(\mathcal{S})$ and for $x\in \mathcal{M}_{n}$ and $\tilde{x}\in\widetilde{\mathcal{M}}_{n}$ satisfying the relation  $x=p_{n}(\tilde{x})$, we have $\tilde{g}_{n}(\tilde{x})={p_{n}}^{*}{g}_{n}(x)$, i.e. the metric $\tilde{g}_{n}(\tilde{x})\in T^{*}_{\tilde{x}}\widetilde{\mathcal{M}}_{n}$ is a pull-back of the metric ${g}_{n}(x)\in T^{*}_{x}\mathcal{M}_{n}$. We have already remarked that, since $g_{n}$ is hyperbolic, $\tilde{g}_{n}$ is hyperbolic too. Denote by $\tilde{h}^{+}_{n}$ the restriction of the metric $\tilde{g}_{n}$ on the surface $\widetilde{\mathcal{S}}^{+}_{n}$ and by $\tilde{h}^{-}_{n}$ the restriction of the metric $\tilde{g}_{n}$ on the surface $\widetilde{\mathcal{S}}^{-}_{n}$, $n\in\mathbb{N}$. By construction, the metric $\tilde{h}^{+}_{n}$ is the lift of $h^{+}_{n}$ from the surface $\mathcal{S}^{+}_{n}$ to its universal covering $\widetilde{\mathcal{S}}^{+}_{n}$ and the metric $\tilde{h}^{-}_{n}$ is the lift of $h^{-}_{n}$ from $\mathcal{S}^{-}_{n}$ to $\widetilde{\mathcal{S}}^{-}_{n}$, $n\in\mathbb{N}$.

\textbf{Definition.} The \emph{diameter} $\delta$ of a set $S$ with a metric $h$ is the following quantity: $\delta\stackrel{\mathrm{def}}{=}\sup\{\mathrm{d}_{h}(u,v)|u,v\in{S}\}$ where $\mathrm{d}_{h}(u,v)$ stands for the distance between points $u$ and $v$ in the metric $h$.

\begin{lemma}\label{chaaa_lemma_ubound_of_diameters_of_surfaces_Sn}
There exists a positive constant $\delta_{\mathcal{S}}<\infty$ which bounds from above the diameters $\delta^{+}_{n}$ and $\delta^{-}_{n}$ of the surfaces $(\mathcal{S},h^{+}_{n})$ and $(\mathcal{S},h^{-}_{n})$ for all $n\in\mathbb{N}$.
\end{lemma}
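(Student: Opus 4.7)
The plan is to exploit the definition of convergence of metrics introduced in Section~\ref{chaaa_sec_metrics_converging_to_the_prescribed_ones} (uniform convergence of the distance functions on $\mathcal{S}\times\mathcal{S}$), together with the elementary observation that each limit metric $h^{\pm}_{\infty}$ has finite diameter. No deep geometric input is required: the argument is essentially the triangle inequality plus a separation of ``large $n$'' from the finitely many ``small $n$''.

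First, I would note that $h^{\pm}_{\infty}$ is an Alexandrov length metric of curvature $K\geq-1$ on the compact topological surface $\mathcal{S}$, and as such it induces a compact metric space $(\mathcal{S},h^{\pm}_{\infty})$. Hence the diameters
\begin{equation*}
\delta^{\pm}_{\infty}\stackrel{\mathrm{def}}{=}\sup\{\mathrm{d}_{h^{\pm}_{\infty}}(x,y)\mid x,y\in\mathcal{S}\}
\end{equation*}
are finite. Next, applying the definition of convergence recalled in Section~\ref{chaaa_sec_metrics_converging_to_the_prescribed_ones} with, say, $\varepsilon=1$, I obtain an integer $N=N(1)\in\mathbb{N}$ such that for every $n\geq N$ and every pair $x,y\in\mathcal{S}$,
\begin{equation*}
\mathrm{d}_{h^{\pm}_{n}}(x,y)<\mathrm{d}_{h^{\pm}_{\infty}}(x,y)+1\leq \delta^{\pm}_{\infty}+1.
\end{equation*}
Taking the supremum over $(x,y)\in\mathcal{S}\times\mathcal{S}$ yields $\delta^{\pm}_{n}\leq\delta^{\pm}_{\infty}+1$ for all $n\geq N$.

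For each of the remaining finitely many indices $n<N$, the surface $(\mathcal{S},h^{\pm}_{n})$ is a closed $C^{\infty}$-regular Riemannian surface, hence compact, hence of finite diameter $\delta^{\pm}_{n}<\infty$. It suffices then to set
\begin{equation*}
\delta_{\mathcal{S}}\stackrel{\mathrm{def}}{=}\max\!\left\{\delta^{+}_{\infty}+1,\ \delta^{-}_{\infty}+1,\ \max_{1\leq n<N}\delta^{+}_{n},\ \max_{1\leq n<N}\delta^{-}_{n}\right\}\!,
\end{equation*}
which is a finite positive constant bounding all the $\delta^{\pm}_{n}$. There is essentially no obstacle to this argument; the only point worth being slightly careful about is the finiteness of $\delta^{\pm}_{\infty}$, but this follows immediately from the hypothesis that $h^{\pm}_{\infty}$ is a (length) metric on the compact surface $\mathcal{S}$ inducing its topology.
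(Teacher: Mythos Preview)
Your proof is correct and takes a genuinely different, more elementary route than the paper. You use only the uniform convergence of the distance functions (the very definition of convergence in Section~\ref{chaaa_sec_metrics_converging_to_the_prescribed_ones}) to bound the diameters for all large $n$, and handle the finitely many small $n$ by compactness. The paper instead traces through the explicit three-step construction of $h^{\pm}_{n}$ (triangulation replacement, smoothing, rescaling by $\lambda_k$) to produce a single fixed metric $h^{\pm}_{\lambda}\stackrel{\mathrm{def}}{=}\lambda_{1}h^{\pm}_{\infty}$ that \emph{pointwise dominates} every $h^{\pm}_{n}$, i.e.\ $\mathrm{d}_{h^{\pm}_{n}}(x,y)\leq \mathrm{d}_{h^{\pm}_{\lambda}}(x,y)$ for all $n$ and all $x,y\in\mathcal{S}$.

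Your argument is shorter and more robust (it does not depend on how the approximating metrics were built). The trade-off is that the paper's proof is doing double duty: the dominating metric $h^{\pm}_{\lambda}$ reappears crucially in the sequel. It is used to define the Dirichlet fundamental domains $\Delta^{\pm}$ (which must be fixed, independent of $n$), and the pointwise domination $\mathrm{d}_{h^{\pm}_{n}}\leq \mathrm{d}_{h^{\pm}_{\lambda}}$ is exactly what gives the $1$-Lipschitz bound needed for equicontinuity of the developing maps in Lemma~\ref{chaaa_lemma_convergence_of_domains}. Your diameter bound alone does not supply this uniform Lipschitz control, so if you adopt your proof here you would need a separate argument at those later points.
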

\begin{proof}
Recall the way of construction of the metric $h^{+}_{n}$ on $\mathcal{S}$, $n\in\mathbb{N}$.

First we applied Lemma~\ref{chaaa_lemme_richard_approximation_by_polyhedral_metrics}, and thus obtained the sequence of hyperbolic polyhedral metrics $\{\hbar^{+}_{n}\}_{n\in\mathbb{N}}$ converging to the Alexandrov metric
$h^{+}_{\infty}$. Every metric $\hbar^{+}_{n}$ is obtained from $h^{+}_{\infty}$ by choosing a geodesic triangulation on $(\mathcal{S},h^{+}_{\infty})$ and by replacing the metric $h^{+}_{\infty}$ of curvature $K\geq -1$ in the interior of each triangle by a hyperbolic plane metric (i.e., of curvature $K=-1$) while keeping the lengths of the edges of a considered triangulation unchanged. Therefore, by construction, the length of any curve on $\mathcal{S}$ measured in the metric $\hbar^{+}_{n}$ does not exceed the corresponding length measured in $h^{+}_{\infty}$.

Next, for each $n\in\mathbb{N}$ we constructed the sequence of $C^{\infty}$-regular metrics $\{\hbar^{+}_{n,k}\}_{k\in\mathbb{N}}$ of curvature $K>-1$ converging to the hyperbolic polyhedral metric $\hbar^{+}_{n}$ by applying Lemma~\ref{chaaa_lemma_approximation_of_a_polyhedral_by_metrics_of_positive_curvature}, and the metric $h^{+}_{n}$ belongs to the set $\{\hbar^{+}_{n,k}\}_{k\in\mathbb{N}}$. The application of Lemma~\ref{chaaa_lemma_approximation_of_a_polyhedral_by_metrics_of_positive_curvature} consists of two stages. The first step is the construction of a sequence of $C^{\infty}$-regular metrics $\{\overline{\hbar}^{+}_{n,k}\}_{k\in\mathbb{N}}$ of curvature $K\geq -1$ converging to $\hbar^{+}_{n}$ due to Lemma~\ref{chaaa_lemma_approximation_of_a_polyhedral_by_regular_metrics}, with the help of smoothing of the conic singularities of $\hbar^{+}_{n}$ by convolution. This procedure does not increase the distance between any two points on the surface $\mathcal{S}$. At the second stage, we considered a sequence of positive real numbers $\{\lambda_{k}\}_{k\in\mathbb{N}}$ decreasing to $1$ and then, by multiplying the metric $\overline{\hbar}^{+}_{n,k}$ by the constant $\lambda_{k}(>1)$, we obtained the metric $\hbar^{+}_{n,k}$ for each $k\in\mathbb{N}$ and for every $n\in\mathbb{N}$, and thus, we increased all distances on $\mathcal{S}$ by $\sqrt{\lambda_{k}}$.

Since $\lambda_{1}\geq\lambda_{k}$ for every $k\in\mathbb{N}$, the distances on $\mathcal{S}$ measured in the metric $h^{+}_{\lambda}\stackrel{\mathrm{def}}{=}\lambda_{1}h^{+}_{\infty}$ are not smaller than the corresponding distances measured in the metrics $h^{+}_{n}$ for all $n\in\mathbb{N}$. Similarly, the distances on $\mathcal{S}$ measured in the metric $h^{-}_{\lambda}\stackrel{\mathrm{def}}{=}\lambda_{1}h^{-}_{\infty}$ are not smaller than the corresponding distances measured in the metrics $h^{-}_{n}$ for all $n\in\mathbb{N}$.

The diameters $\delta^{+}_{\lambda}$ and $\delta^{-}_{\lambda}$ of the surfaces $(\mathcal{S},h^{+}_{\lambda})$ and $(\mathcal{S},h^{-}_{\lambda})$ are finite numbers because $\mathcal{S}$ is compact. We can pose $\delta_{\mathcal{S}}=\max(\delta^{+}_{\lambda},\delta^{-}_{\lambda})$.
\end{proof}

\begin{lemma}\label{chaaa_lemma_ubound_of_diameters_of_Mn}
There exists a positive constant $\delta_{\mathcal{M}}<\infty$ such that for each $n\in\mathbb{N}$ and for every pair of points $u\in\mathcal{S}^{+}_{n}\subset\mathcal{M}^{\circ}_{n}$ and $v\in\mathcal{S}^{-}_{n}\subset\mathcal{M}^{\circ}_{n}$ the distance $\mathrm{d}_{g_{n}}(u,v)$ between $u$ and $v$ in the manifold $\mathcal{M}^{\circ}_{n}$ is less than $\delta_{\mathcal{M}}$.
\end{lemma}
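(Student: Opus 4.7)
The plan is to reduce the statement to bounding the minimum distance $w_n := \mathrm{d}_{g_n}(\mathcal{S}^+_n, \mathcal{S}^-_n)$ between the two boundary components of $\mathcal{M}_n$, and then to bound $w_n$ via a coarea/Gauss--Bonnet argument on equidistant surfaces. The reduction is immediate from Lemma~\ref{chaaa_lemma_ubound_of_diameters_of_surfaces_Sn}: since each inclusion $(\mathcal{S}, h^{\pm}_n) \hookrightarrow \mathcal{M}^{\circ}_n$ is an isometric embedding, $\mathrm{d}_{g_n}|_{\mathcal{S}^{\pm}_n} \leq \mathrm{d}_{h^{\pm}_n} \leq \delta_{\mathcal{S}}$, so fixing a nearest pair $(u_0, v_0) \in \mathcal{S}^+_n \times \mathcal{S}^-_n$ realising $w_n$, the triangle inequality gives $\mathrm{d}_{g_n}(u, v) \leq w_n + 2\delta_{\mathcal{S}}$ for all $u \in \mathcal{S}^+_n$ and $v \in \mathcal{S}^-_n$. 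Hence it is enough to find a uniform upper bound on $w_n$.

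For that width bound, I would first establish a uniform area bound $\mathrm{area}(\mathcal{S}^{\pm}_n) \leq A_{\mathcal{S}} := 4\pi \sinh^{2}(\delta_{\mathcal{S}}/2)$ via Bishop--Gromov comparison applied to the Alexandrov spaces $(\mathcal{S}, h^{\pm}_n)$ of curvature $\geq -1$ and diameter $\leq \delta_{\mathcal{S}}$. Next, for $t \in (0, w_n)$, the equidistant hypersurface $\Sigma_t := \{x \in \mathcal{M}_n : \mathrm{d}_{g_n}(x, \mathcal{S}^+_n) = t\}$ is a closed convex surface in $\mathcal{M}_n$ homotopic to $\mathcal{S}^+_n$ (before any focal singularity is reached); by the Gauss equation its intrinsic sectional curvature satisfies $K^{\mathrm{int}}_{\Sigma_t} \geq -1$, whence Gauss--Bonnet yields $\mathrm{area}(\Sigma_t) \geq -2\pi\chi(\mathcal{S}) = 4\pi(g-1)$. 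The coarea formula applied to the $1$-Lipschitz function $x \mapsto \mathrm{d}_{g_n}(x, \mathcal{S}^+_n)$ then gives
$$\mathrm{Vol}(\mathcal{M}_n) \;\geq\; \int_0^{w_n} \mathrm{area}(\Sigma_t)\, dt \;\geq\; 4\pi(g-1)\, w_n,$$
so obtaining a uniform bound on $\mathrm{Vol}(\mathcal{M}_n)$ would close the argument, yielding $\delta_{\mathcal{M}} := \mathrm{Vol}(\mathcal{M}_n)/(4\pi(g-1)) + 2\delta_{\mathcal{S}} + 1$ as a common upper bound.

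The principal difficulty is precisely this uniform volume bound. The classical hyperbolic isoperimetric inequality $2\,\mathrm{Vol}(K) \leq \mathrm{area}(\partial K)$ for convex $K \subset \mathbb{H}^3$ does \emph{not} descend to non-contractible convex subsets of quotient hyperbolic manifolds, as illustrated by thick convex cores of quasi-Fuchsian manifolds: their two pleated boundary components have fixed total area $8\pi(g-1)$ by Gauss--Bonnet, yet the enclosed volume can be arbitrarily large. One natural strategy is to exploit the first variation of area along the inward equidistant flow from $\mathcal{S}^+_n$, $\tfrac{d}{dt}\mathrm{area}(\Sigma_t) = -2\int_{\Sigma_t} H_t\, dA_t$, combined with the pointwise AM--GM bound $H_t \geq \sqrt{K^{\mathrm{ext}}_{\Sigma_t}}$ (valid because $\Sigma_t$ is convex with both principal curvatures of the same sign) and the integrated identity $\int_{\Sigma_t} K^{\mathrm{ext}}_{\Sigma_t}\, dA_t = \mathrm{area}(\Sigma_t) - 4\pi(g-1)$, to force a quantitative decay of $\mathrm{area}(\Sigma_t)$ in $t$; this is delicate because the pointwise extrinsic curvature is allowed to vanish. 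A more robust alternative I would pursue as a back-up is a compactness argument in the character variety: the convergence $h^{\pm}_n \to h^{\pm}_\infty$ yields uniformly bounded lengths of geodesic representatives of each fixed element of $\pi_{1}(\mathcal{S})$, and hence uniformly bounded translation lengths $\ell(\rho^{\mathcal{S}}_n(\gamma_i))$ for the generators $\gamma_{1},\ldots,\gamma_{l}$; after conjugating so that the axis of $\rho^{\mathcal{S}}_n(\gamma_{1})$ is pinned, the representations $\rho^{\mathcal{S}}_{n}$ then lie in a compact subset of $\mathrm{Isom}(\mathbb{H}^{3})^{l}$, from which uniform geometric control on the convex domains $\mathcal{M}_n$ (and in particular on $w_n$) can be extracted.
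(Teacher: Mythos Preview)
Your reduction in the first paragraph is correct and is precisely how the paper begins: with $w_n := d_{g_n}(\mathcal{S}^+_n,\mathcal{S}^-_n)$ and $\delta_{\mathcal{S}}$ from Lemma~\ref{chaaa_lemma_ubound_of_diameters_of_surfaces_Sn}, the paper takes $\delta_{\mathcal{M}} = \sigma_{\mathcal{S}} + 2\delta_{\mathcal{S}}$ for a uniform bound $\sigma_{\mathcal{S}}$ on $w_n$.

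The gap is in the bound on $w_n$. Your coarea/Gauss--Bonnet computation correctly reduces the question to a uniform bound on $\mathrm{Vol}(\mathcal{M}_n)$, but, as you yourself observe, no such bound follows from boundary-area control alone, and neither of your workarounds closes it. The first-variation idea cannot: from $H_t \ge \sqrt{K^{\mathrm{ext}}_t}$ and $\int_{\Sigma_t} K^{\mathrm{ext}}_t = \mathrm{area}(\Sigma_t)-4\pi(g-1)$ you get no useful lower bound on $\int_{\Sigma_t} H_t$, since $K^{\mathrm{ext}}_t$ may concentrate on a set of small measure while $\mathrm{area}(\Sigma_t)$ stays just above $4\pi(g-1)$ for arbitrarily long $t$-intervals---precisely the behaviour inside thick convex cores. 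The character-variety backup is circular: bounded translation lengths for the generators give subsequential convergence of the $\rho^{\mathcal{S}}_n$, but the limit may lie on the boundary of quasi-Fuchsian space, and extracting a bound on $w_n$ from compactness of representations still requires an independent a~priori bound on the thickness of the embedded convex domains, which is the statement in question.

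The paper bounds $w_n$ by a completely different mechanism (Theorems~\ref{chbb_theorem_distance_between_pairs_of_boundaries_is_unibounded} and~\ref{chbb_theorem_distance_between_boundaries}): fix two intersecting, homotopically independent simple closed curves $c_1,c_2$ on $\mathcal{S}$; in each $\mathcal{M}_n$ build piecewise totally geodesic cylinders interpolating between the copies of $c_i$ on $\mathcal{S}^+_n$ and $\mathcal{S}^-_n$; then explicit planar hyperbolic trigonometry on these cylinders shows that if $w_n$ exceeds a concrete function of the four boundary lengths $l_i^{\pm}$ and the Margulis constant $\varepsilon_3$, the two cylinders carry homotopically distinct loops of length $<\varepsilon_3$ through a common midpoint, contradicting the Margulis lemma. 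Since the $l_i^{\pm}$ are uniformly controlled by $h^{\pm}_n\to h^{\pm}_\infty$, this yields the uniform bound on $w_n$ with no appeal to volume.
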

\begin{proof}
By Theorem~\ref{chbb_theorem_distance_between_pairs_of_boundaries_is_unibounded} in Section~\ref{ch_dist_bb}, the distances ${\sigma}^{\mathcal{S}}_{n}$ between the surfaces $\mathcal{S}^{+}_{n}$ and $\mathcal{S}^{-}_{n}$, $n\in\mathbb{N}$, are uniformly bounded by a constant ${\sigma}_{\mathcal{S}}$. Also, by Lemma~\ref{chaaa_lemma_ubound_of_diameters_of_surfaces_Sn}, the diameters of $\mathcal{S}^{+}_{n}$ and $\mathcal{S}^{-}_{n}$ are both bounded by a constant $\delta_{\mathcal{S}}$ which does not depend on $n$. Hence, our assertion is valid if we take $\delta_{\mathcal{M}}$ to be equal to ${\sigma}_{\mathcal{S}}+2\delta_{\mathcal{S}}$.
\end{proof}

Professor Gregory McShane remarked that the existence of a constant $\delta_{\mathcal{M}}>0$ which serves as an common upper bound for the distances between the boundary components $\mathcal{S}^{+}_{n}$ and $\mathcal{S}^{-}_{n}$ of the domains $\mathcal{M}_{n}$, $n\in\mathbb{N}$ does not guarantee that the diameters of $\mathcal{M}_{n}$ are uniformly bounded from above.

Indeed, Jeffrey Brock in his PhD thesis (see also \cite{chaaa_Brock2001}) studied the following example.

Given a pair of homeomorphic Riemann surfaces $X$ and $Y$ of finite type and a "partial pseudo Anosov" mapping class $\phi$, by the Ahlfors-Bers simultaneous uniformization theorem there is a sequence of quasi-Fuchsian manifolds $\{Q({\phi}^{n}X,Y)\}^{\infty}_{n=1}$. The diameters of each of the boundary components of the convex hull of $Q({\phi}^{n}X,Y)$ is uniformly bounded in $n$ and so is the distance between the two boundary components but the diameter of the convex hull of $Q({\phi}^{n}X,Y)$ goes to infinity because of a "cusp growing there" as $n\rightarrow\infty$.

However, the diameters of the domains $\mathcal{M}_{n}$, $n\in\mathbb{N}$ do not play role in the demonstration of Theorem~\ref{chaaa_theorem_manifolds_with_convex_alexandrov_metric_on_boundary}; only the distances between the surfaces $\mathcal{S}^{+}_{n}$ and $\mathcal{S}^{-}_{n}$, $n\in\mathbb{N}$, are of importance here.

Let us now return to the proof of Theorem~\ref{chaaa_theorem_manifolds_with_convex_alexandrov_metric_on_boundary}.

Let us fix an arbitrary point $x\in \mathcal{S}$, which is not, however, a point of singularity for the metrics $h^{+}_{\infty}$ and $h^{-}_{\infty}$ on $\mathcal{S}$, and let us denote $x^{+}_{n}\stackrel{\mathrm{def}}{=}f_{\mathcal{S}^{+}_{n}}(x)\in\mathcal{S}^{+}_{n}\subset\mathcal{M}^{\circ}_{n}$ and $x^{-}_{n}\stackrel{\mathrm{def}}{=}f_{\mathcal{S}^{-}_{n}}(x)\in\mathcal{S}^{-}_{n}\subset\mathcal{M}^{\circ}_{n}$,
$n\in\mathbb{N}$. Denote also the distance between the points $x^{+}_{n}$ and $x^{-}_{n}$ in $\mathcal{M}^{\circ}_{n}$ by ${\sigma}^{x}_{n}$, $n\in\mathbb{N}$. By Lemma~\ref{chaaa_lemma_ubound_of_diameters_of_Mn}, ${\sigma}^{x}_{n}<\delta_{\mathcal{M}}$ for all $n\in\mathbb{N}$.

Let us consider two copies $\widetilde{\mathcal{S}}^{+}$ and $\widetilde{\mathcal{S}}^{-}$ of the universal covering of the surface $\mathcal{S}$ with the projections $p^{+}:\widetilde{\mathcal{S}}^{+}\rightarrow\mathcal{S}$ and $p^{-}:\widetilde{\mathcal{S}}^{-}\rightarrow\mathcal{S}$ and let us fix some points $\tilde{x}^{+}\in\widetilde{\mathcal{S}}^{+}$ and $\tilde{x}^{-}\in\widetilde{\mathcal{S}}^{-}$ such that $p^{+}(\tilde{x}^{+})=x$ and $p^{-}(\tilde{x}^{-})=x$. Without loss of generality we may think that the fundamental group $\pi_{1}(\mathcal{S})$ acts on $\widetilde{\mathcal{S}}^{+}$ and $\widetilde{\mathcal{S}}^{-}$ in the sense that $\mathcal{S}\simeq\widetilde{\mathcal{S}}^{+}/\pi_{1}(\mathcal{S})$ and $\mathcal{S}\simeq\widetilde{\mathcal{S}}^{-}/\pi_{1}(\mathcal{S})$. For every $n\in\mathbb{N}$ we fix an arbitrary pair of points $\tilde{x}^{+}_{n}\in\widetilde{\mathcal{S}}^{+}_{n}\subset\widetilde{\mathcal{M}}^{\circ}_{n}(=\mathbb{H}^3)$ and
$\tilde{x}^{-}_{n}\in\widetilde{\mathcal{S}}^{-}_{n}\subset\widetilde{\mathcal{M}}^{\circ}_{n}$ verifying the conditions $p_{n}(\tilde{x}^{+}_{n})={x}^{+}_{n}$ and $p_{n}(\tilde{x}^{-}_{n})={x}^{-}_{n}$, and such that the distance in ${\mathcal{M}}^{\circ}_{n}$ between $\tilde{x}^{+}_{n}$ and $\tilde{x}^{-}_{n}$ is equal to ${\sigma}^{x}_{n}$. The functions $f_{\mathcal{S}^{+}_{n}}:\mathcal{S}\rightarrow\mathcal{S}^{+}_{n}$ and $f_{\mathcal{S}^{-}_{n}}:\mathcal{S}\rightarrow\mathcal{S}^{-}_{n}$ defined above induce the canonical bijective developing maps $\tilde{f}_{\widetilde{\mathcal{S}}^{+}_{n}}:\widetilde{\mathcal{S}}^{+}\rightarrow\widetilde{\mathcal{S}}^{+}_{n}$ and $\tilde{f}_{\widetilde{\mathcal{S}}^{-}_{n}}:\widetilde{\mathcal{S}}^{-}\rightarrow\widetilde{\mathcal{S}}^{-}_{n}$ with the properties $\tilde{f}_{\widetilde{\mathcal{S}}^{+}_{n}}(\tilde{x}^{+})=\tilde{x}^{+}_{n}$ and $\tilde{f}_{\widetilde{\mathcal{S}}^{-}_{n}}(\tilde{x}^{-})=\tilde{x}^{-}_{n}$ and such that for any  $\gamma\in\pi_{1}(\mathcal{S})$ it is true that $\tilde{f}_{\widetilde{\mathcal{S}}^{+}_{n}}(\gamma.\tilde{x}^{+})=\rho^{\mathcal{S}}_{n}(\gamma).\tilde{x}^{+}_{n}$ and $\tilde{f}_{\widetilde{\mathcal{S}}^{-}_{n}}(\gamma.\tilde{x}^{-})=\rho^{\mathcal{S}}_{n}(\gamma).\tilde{x}^{-}_{n}$, $n\in\mathbb{N}$.

\begin{remark}\label{chaaa_remark_periodicity_of_dev_applications}
The above-mentioned property of developing maps holds for any points $\tilde{y}^{+}\in\widetilde{\mathcal{S}}^{+}$, $\tilde{y}^{-}\in\widetilde{\mathcal{S}}^{-}$ and for every $\gamma\in\pi_{1}(\mathcal{S})$:
\begin{equation*}\label{chaaa_frm_periodicity_of_dev_applications}
\tilde{f}_{\widetilde{\mathcal{S}}^{+}_{n}}(\gamma.\tilde{y}^{+})=
\rho^{\mathcal{S}}_{n}(\gamma).\tilde{f}_{\widetilde{\mathcal{S}}^{+}_{n}}(\tilde{y}^{+})
\quad\mbox{and}\quad \tilde{f}_{\widetilde{\mathcal{S}}^{-}_{n}}(\gamma.\tilde{y}^{-})=
\rho^{\mathcal{S}}_{n}(\gamma).\tilde{f}_{\widetilde{\mathcal{S}}^{-}_{n}}(\tilde{y}^{-}),
\quad n\in\mathbb{N}.
\end{equation*}
\end{remark}

Let the metrics $\tilde{h}^{+}_{\lambda}$ and $\tilde{h}^{-}_{\lambda}$ on the universal coverings $\widetilde{\mathcal{S}}^{+}$ and $\widetilde{\mathcal{S}}^{-}$ of the surface $\mathcal{S}$ be the pull-backs of the metrics $h^{+}_{\lambda}$ and $h^{-}_{\lambda}$ on $\mathcal{S}$ defined in the proof of Lemma~\ref{chaaa_lemma_ubound_of_diameters_of_surfaces_Sn}. We are now able to construct the Dirichlet domains $\Delta^{+}\subset\widetilde{\mathcal{S}}^{+}$ and $\Delta^{-}\subset\widetilde{\mathcal{S}}^{-}$ of $\mathcal{S}$ with respect to the metrics $h^{+}_{\lambda}$ and $h^{-}_{\lambda}$ based in the points $\tilde{x}^{+}\in\widetilde{\mathcal{S}}^{+}$ and $\tilde{x}^{-}\in\widetilde{\mathcal{S}}^{-}$, respectively. In what follows we will work with the fundamental domains $\Delta^{+}\subset\widetilde{\mathcal{S}}^{+}$ and $\Delta^{-}\subset\widetilde{\mathcal{S}}^{-}$ of $\mathcal{S}$.

\begin{lemma}\label{chaaa_lemma_ubound_of_diameters_of_fundomains}
For each $n\in\mathbb{N}$ the domains $\Delta^{+}_{n}\stackrel{\mathrm{def}}{=}\tilde{f}_{\widetilde{\mathcal{S}}^{+}_{n}}(\Delta^{+})
\subset\widetilde{\mathcal{S}}^{+}_{n}\subset\mathbb{H}^3$ and $\Delta^{-}_{n}\stackrel{\mathrm{def}}{=}\tilde{f}_{\widetilde{\mathcal{S}}^{-}_{n}}(\Delta^{-})
\subset\widetilde{\mathcal{S}}^{-}_{n}\subset\mathbb{H}^3$ are included in the hyperbolic balls $B(\tilde{x}^{+}_{n}, \delta_{\mathcal{S}})$ and $B(\tilde{x}^{-}_{n}, \delta_{\mathcal{S}})$ of radius $\delta_{\mathcal{S}}$ centered at the points $\tilde{x}^{+}_{n}$ and $\tilde{x}^{-}_{n}$ respectively.
\end{lemma}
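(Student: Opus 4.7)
The plan is to reduce the claim to an intrinsic-distance bound in $(\widetilde{\mathcal{S}}^{+}_{n},\tilde{h}^{+}_{n})$ and then pass to $\mathbb{H}^{3}$ by a standard convexity argument. Since the $+$ and $-$ cases are entirely symmetric, I would treat only the $+$ side and quote the same reasoning for $\Delta^{-}_{n}$.

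First I would establish the bound $\mathrm{d}_{\tilde{h}^{+}_{\lambda}}(\tilde{x}^{+},\tilde{y}^{+})\leq\delta_{\mathcal{S}}$ for every $\tilde{y}^{+}\in\Delta^{+}$. By the Dirichlet property at $\tilde{x}^{+}$, the point $\tilde{y}^{+}$ is a $\tilde{h}^{+}_{\lambda}$-closest preimage of $p^{+}(\tilde{y}^{+})$ to $\tilde{x}^{+}$. Lifting a length-minimizing path in $(\mathcal{S},h^{+}_{\lambda})$ from $x$ to $p^{+}(\tilde{y}^{+})$, starting at $\tilde{x}^{+}$, produces a curve of $\tilde{h}^{+}_{\lambda}$-length at most the diameter $\delta^{+}_{\lambda}\leq\delta_{\mathcal{S}}$ ending at some preimage $\gamma.\tilde{y}^{+}$; the Dirichlet minimality then gives $\mathrm{d}_{\tilde{h}^{+}_{\lambda}}(\tilde{x}^{+},\tilde{y}^{+})\leq\mathrm{d}_{\tilde{h}^{+}_{\lambda}}(\tilde{x}^{+},\gamma.\tilde{y}^{+})\leq\delta_{\mathcal{S}}$.

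Next I would transfer the estimate from $\tilde{h}^{+}_{\lambda}$ to $\tilde{h}^{+}_{n}$. The proof of Lemma~\ref{chaaa_lemma_ubound_of_diameters_of_surfaces_Sn} establishes that each step producing $h^{+}_{n}$ out of $h^{+}_{\lambda}=\lambda_{1}h^{+}_{\infty}$ — the passage from $h^{+}_{\infty}$ to the hyperbolic-triangulated metric $\hbar^{+}_{n}$, the convolution smoothing yielding $\overline{\hbar}^{+}_{n,k}$, and the final rescaling by $\sqrt{\lambda_{k}}\leq\sqrt{\lambda_{1}}$ — is length-non-increasing on curves of $\mathcal{S}$. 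Lifting to the universal cover, the $\tilde{h}^{+}_{\lambda}$-geodesic $\tilde{\sigma}$ joining $\tilde{x}^{+}$ to $\tilde{y}^{+}$ has $\tilde{h}^{+}_{\lambda}$-length at most $\delta_{\mathcal{S}}$, hence $\tilde{h}^{+}_{n}$-length at most $\delta_{\mathcal{S}}$ as well, and consequently $\mathrm{d}_{\tilde{h}^{+}_{n}}(\tilde{x}^{+},\tilde{y}^{+})\leq\delta_{\mathcal{S}}$.

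Finally I would exchange intrinsic for extrinsic distance. Since $\tilde{f}_{\widetilde{\mathcal{S}}^{+}_{n}}$ is by construction an isometric embedding of $(\widetilde{\mathcal{S}}^{+},\tilde{h}^{+}_{n})$ onto $(\widetilde{\mathcal{S}}^{+}_{n},\tilde{h}^{+}_{n})$ sending $\tilde{x}^{+}$ to $\tilde{x}^{+}_{n}$, the intrinsic distance on $\widetilde{\mathcal{S}}^{+}_{n}$ from $\tilde{x}^{+}_{n}$ to $\tilde{f}_{\widetilde{\mathcal{S}}^{+}_{n}}(\tilde{y}^{+})$ is also at most $\delta_{\mathcal{S}}$. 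Because $\widetilde{\mathcal{S}}^{+}_{n}$ is a convex surface in $\mathbb{H}^{3}$, the ambient hyperbolic distance between any two of its points is dominated by their intrinsic distance along the surface, so $\mathrm{d}_{\mathbb{H}^{3}}(\tilde{x}^{+}_{n},\tilde{f}_{\widetilde{\mathcal{S}}^{+}_{n}}(\tilde{y}^{+}))\leq\delta_{\mathcal{S}}$, and $\Delta^{+}_{n}\subset B(\tilde{x}^{+}_{n},\delta_{\mathcal{S}})$. The delicate point of the argument is the curve-length (rather than merely distance) monotonicity used in the second step; once one accepts this from the construction recalled in Lemma~\ref{chaaa_lemma_ubound_of_diameters_of_surfaces_Sn}, the remainder is a direct unwinding of the definitions.
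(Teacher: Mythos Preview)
Your proof is correct and follows essentially the same three-step approach as the paper: bound the $\tilde{h}^{+}_{\lambda}$-distance from $\tilde{x}^{+}$ to any point of $\Delta^{+}$ by $\delta_{\mathcal{S}}$ via the Dirichlet property, transfer this to $\tilde{h}^{+}_{n}$ using the length-non-increasing construction from Lemma~\ref{chaaa_lemma_ubound_of_diameters_of_surfaces_Sn}, and then pass to the ambient $\mathbb{H}^{3}$-distance. Your explicit use of curve-length (rather than distance) monotonicity in the second step is in fact slightly more careful than the paper's phrasing, since this is what lifts cleanly to the universal cover; one minor remark is that the final inequality $\mathrm{d}_{\mathbb{H}^{3}}\leq\mathrm{d}_{\tilde{h}^{+}_{n}}$ holds for any embedded surface and does not require convexity.
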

\begin{proof}
It suffices to prove this statement for the domain $\Delta^{+}_{n}$.

Assume that the surface $\widetilde{\mathcal{S}}^{+}$ is equipped with the metric $\tilde{h}^{+}_{\lambda}$. It follows from the definition of the Dirichlet domain that the distance from any point $x\in\Delta^{+}\subset\widetilde{\mathcal{S}}^{+}$ to the center $\tilde{x}^{+}$ of $\Delta^{+}$ is not greater than the diameter of the surface $(\mathcal{S},h^{+}_{\lambda})$ which is less than or equal to $\delta_{\mathcal{S}}$ (see the proof of Lemma~\ref{chaaa_lemma_ubound_of_diameters_of_surfaces_Sn}). Recall that the developing map $\tilde{f}_{\widetilde{\mathcal{S}}^{+}_{n}}:\widetilde{\mathcal{S}}^{+}\rightarrow\widetilde{\mathcal{S}}^{+}_{n}$ can be viewed as the identical application from one copy of the surface $\widetilde{\mathcal{S}}^{+}$ equipped with the metric $\tilde{h}^{+}_{\lambda}$ to another copy of $\widetilde{\mathcal{S}}^{+}$ equipped with the metric $\tilde{h}^{+}_{n}$. Also, by the construction made in the proof of Lemma~\ref{chaaa_lemma_ubound_of_diameters_of_surfaces_Sn}, all distances on the surface $\mathcal{S}$ measured in the metric ${h}^{+}_{n}$ do not exceed the corresponding distances on $\mathcal{S}$ in the metric ${h}^{+}_{\lambda}$. Hence, this property is valid for the pull-backs $\tilde{h}^{+}_{n}$ and $\tilde{h}^{+}_{\lambda}$ on $\widetilde{\mathcal{S}}^{+}$ of the metrics $\tilde{h}^{+}_{n}$ and ${h}^{+}_{\lambda}$ on $\mathcal{S}$. Therefore, the distance from any point $v\in\Delta^{+}_{n}=\tilde{f}_{\widetilde{\mathcal{S}}^{+}_{n}}(\Delta^{+})\subset\widetilde{\mathcal{S}}^{+}_{n}$ to the center $\tilde{x}^{+}_{n}=\tilde{f}_{\widetilde{\mathcal{S}}^{+}_{n}}(\tilde{x}^{+})$ of $\Delta^{+}_{n}$ is not greater than $\delta_{\mathcal{S}}$.

To complete the proof we remark that for any couple of points $v_{1},v_{2}\in\widetilde{\mathcal{S}}^{+}_{n}$ the distance between them in the hyperbolic metric of $3$-space $\mathbb{H}^3$ does not exceed the distance between $v_{1}$ and $v_{2}$ in the induced metric $\tilde{h}^{+}_{n}$ on the $2$-surface $\widetilde{\mathcal{S}}^{+}_{n}$: $\mathrm{d}_{\mathbb{H}^3}(v_{1},v_{2})\leq\mathrm{d}_{\tilde{h}^{+}_{n}}(v_{1},v_{2})$.
\end{proof}

Denote by $\widehat{\Delta}^{+}\subset\widetilde{\mathcal{S}}^{+}$ the union of $\Delta^{+}$ with all "neighbor" fundamental domains of $\mathcal{S}$ of the form $\gamma.\Delta^{+}$ for all $\gamma\in\pi_{1}(\mathcal{S})$ such that $\cl\Delta^{+}\cap\cl\gamma.\Delta^{+}\neq\emptyset$. Similarly we define the set $\widehat{\Delta}^{-}\subset\widetilde{\mathcal{S}}^{-}$.

\begin{lemma}\label{chaaa_lemma_ubound_of_diameters_of_fundomain_nhoods}
For each $n\in\mathbb{N}$ the domains $\widehat{\Delta}^{+}_{n}\stackrel{\mathrm{def}}{=}\tilde{f}_{\widetilde{\mathcal{S}}^{+}_{n}}(\widehat{\Delta}^{+})
\subset\widetilde{\mathcal{S}}^{+}_{n}\subset\mathbb{H}^3$ and $\widehat{\Delta}^{-}_{n}\stackrel{\mathrm{def}}{=}\tilde{f}_{\widetilde{\mathcal{S}}^{-}_{n}}(\widehat{\Delta}^{-})
\subset\widetilde{\mathcal{S}}^{-}_{n}\subset\mathbb{H}^3$ are included in the hyperbolic balls $B(\tilde{x}^{+}_{n}, 3\delta_{\mathcal{S}})$ and $B(\tilde{x}^{-}_{n}, 3\delta_{\mathcal{S}})$ of radius $3\delta_{\mathcal{S}}$ centered at the points $\tilde{x}^{+}_{n}$ and $\tilde{x}^{-}_{n}$ correspondingly.
\end{lemma}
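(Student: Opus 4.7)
The plan is to reduce to Lemma~\ref{chaaa_lemma_ubound_of_diameters_of_fundomains} together with the equivariance of the developing maps stated in Remark~\ref{chaaa_remark_periodicity_of_dev_applications}, and to conclude by the triangle inequality in $\mathbb{H}^3$. I treat only the ``$+$'' side, as the ``$-$'' case is identical.

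First, I would unfold the definition: $\widehat{\Delta}^{+}$ is the union of $\Delta^{+}$ with the ``neighbor'' translates $\gamma.\Delta^{+}$ for those $\gamma\in\pi_{1}(\mathcal{S})$ with $\cl\Delta^{+}\cap\cl(\gamma.\Delta^{+})\neq\emptyset$. By Remark~\ref{chaaa_remark_periodicity_of_dev_applications} the developing map intertwines the $\pi_{1}(\mathcal{S})$-action with the holonomy, hence
\[
\tilde{f}_{\widetilde{\mathcal{S}}^{+}_{n}}(\gamma.\Delta^{+})=\rho^{\mathcal{S}}_{n}(\gamma).\tilde{f}_{\widetilde{\mathcal{S}}^{+}_{n}}(\Delta^{+})=\rho^{\mathcal{S}}_{n}(\gamma).\Delta^{+}_{n}.
\]
Since $\rho^{\mathcal{S}}_{n}(\gamma)$ is an isometry of $\mathbb{H}^{3}$, Lemma~\ref{chaaa_lemma_ubound_of_diameters_of_fundomains} implies that each translate $\rho^{\mathcal{S}}_{n}(\gamma).\Delta^{+}_{n}$ is still contained in a hyperbolic ball of radius $\delta_{\mathcal{S}}$ (centered now at $\rho^{\mathcal{S}}_{n}(\gamma).\tilde{x}^{+}_{n}$), so in particular it has hyperbolic diameter at most $2\delta_{\mathcal{S}}$.

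Next, the neighbor condition $\cl\Delta^{+}\cap\cl(\gamma.\Delta^{+})\neq\emptyset$ gives, by continuity of $\tilde{f}_{\widetilde{\mathcal{S}}^{+}_{n}}$, a point
\[
\tilde{z}\in\cl\Delta^{+}_{n}\cap\cl\bigl(\rho^{\mathcal{S}}_{n}(\gamma).\Delta^{+}_{n}\bigr).
\]
By Lemma~\ref{chaaa_lemma_ubound_of_diameters_of_fundomains}, $\mathrm{d}_{\mathbb{H}^{3}}(\tilde{x}^{+}_{n},\tilde{z})\leq\delta_{\mathcal{S}}$; and for any $v\in\rho^{\mathcal{S}}_{n}(\gamma).\Delta^{+}_{n}$ the diameter bound yields $\mathrm{d}_{\mathbb{H}^{3}}(\tilde{z},v)\leq 2\delta_{\mathcal{S}}$. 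The triangle inequality in $\mathbb{H}^{3}$ then gives
\[
\mathrm{d}_{\mathbb{H}^{3}}(\tilde{x}^{+}_{n},v)\leq \mathrm{d}_{\mathbb{H}^{3}}(\tilde{x}^{+}_{n},\tilde{z})+\mathrm{d}_{\mathbb{H}^{3}}(\tilde{z},v)\leq\delta_{\mathcal{S}}+2\delta_{\mathcal{S}}=3\delta_{\mathcal{S}},
\]
so every neighbor translate lies in $B(\tilde{x}^{+}_{n},3\delta_{\mathcal{S}})$. Since $\Delta^{+}_{n}\subset B(\tilde{x}^{+}_{n},\delta_{\mathcal{S}})\subset B(\tilde{x}^{+}_{n},3\delta_{\mathcal{S}})$ as well, taking the union over all such $\gamma$ yields $\widehat{\Delta}^{+}_{n}\subset B(\tilde{x}^{+}_{n},3\delta_{\mathcal{S}})$, as required.

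There is no real obstacle here: the statement is essentially a bookkeeping consequence of Lemma~\ref{chaaa_lemma_ubound_of_diameters_of_fundomains}, once one notices that (i) the developing map transports $\gamma.\Delta^{+}$ to an isometric copy of $\Delta^{+}_{n}$ under the hyperbolic isometry $\rho^{\mathcal{S}}_{n}(\gamma)$, and (ii) a neighbor translate shares a boundary point with $\Delta^{+}_{n}$, so its distance from the center $\tilde{x}^{+}_{n}$ is controlled by the diameter bound plus one extra $2\delta_{\mathcal{S}}$. The only mild point to check is that continuity of the developing map carries the set-theoretic intersection of closures to the intersection of closures of the images, which is immediate since $\tilde{f}_{\widetilde{\mathcal{S}}^{+}_{n}}$ is a homeomorphism.
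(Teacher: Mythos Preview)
Your proof is correct and follows essentially the same approach as the paper: both reduce to Lemma~\ref{chaaa_lemma_ubound_of_diameters_of_fundomains}, use that each neighbor translate $\rho^{\mathcal{S}}_{n}(\gamma).\Delta^{+}_{n}$ sits in a ball of radius $\delta_{\mathcal{S}}$, and conclude by the triangle inequality via a common boundary point. Your write-up is in fact slightly more explicit about the equivariance (Remark~\ref{chaaa_remark_periodicity_of_dev_applications}) and the triangle-inequality step than the paper's, which packages the last step as ``clearly.''
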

\begin{proof}
It suffices to prove this statement for the domain $\widehat{\Delta}^{+}_{n}$.

First, by Lemma~\ref{chaaa_lemma_ubound_of_diameters_of_fundomains}, the domain $\Delta^{+}_{n}$ is inscribed in the ball $B(\tilde{x}^{+}_{n}, \delta_{\mathcal{S}})$. Similarly, for each $\gamma\in\pi_{1}(\mathcal{S})$ the domain $\rho^{\mathcal{S}}_{n}(\gamma).\Delta^{+}_{n}$ (isometric to $\Delta^{+}_{n}$) is inscribed in the ball $B(\rho^{\mathcal{S}}_{n}(\gamma).\tilde{x}^{+}_{n}, \delta_{\mathcal{S}})$. Note that $\widehat{\Delta}^{+}_{n}$ is the union of $\Delta^{+}_{n}$ with the domains of the form $\rho^{\mathcal{S}}_{n}(\gamma).\Delta^{+}_{n}$ such that $\cl \Delta^{+}_{n}\cap\cl \rho^{\mathcal{S}}_{n}(\gamma).\Delta^{+}_{n}\neq\emptyset$, where $\gamma\in\pi_{1}(\mathcal{S})$. Thus, the set $\widehat{\Delta}^{+}_{n}$ is contained in the union $\mathcal{U}_{B}$ of the ball $B(\tilde{x}^{+}_{n}, \delta_{\mathcal{S}})$ and all balls of the type $B(\rho^{\mathcal{S}}_{n}(\gamma).\tilde{x}^{+}_{n}, \delta_{\mathcal{S}})$ such that $B(\rho^{\mathcal{S}}_{n}(\gamma).\tilde{x}^{+}_{n}, \delta_{\mathcal{S}})\cap B(\tilde{x}^{+}_{n}, \delta_{\mathcal{S}})\neq\emptyset$. Clearly, $\mathcal{U}_{B}$ lies entirely inside the ball $B(\tilde{x}^{-}_{n}, 3\delta_{\mathcal{S}})$.
\end{proof}

The following statement is an immediate corollary of Lemmas~\ref{chaaa_lemma_ubound_of_diameters_of_Mn} and~\ref{chaaa_lemma_ubound_of_diameters_of_fundomain_nhoods}.

\begin{lemma}\label{chaaa_lemma_ubound_of_diameters_of_fundomain_nhoods_pair}
For each $n\in\mathbb{N}$ the domains $\widehat{\Delta}^{+}_{n}\stackrel{\mathrm{def}}{=}\tilde{f}_{\widetilde{\mathcal{S}}^{+}_{n}}(\widehat{\Delta}^{+})
\subset\widetilde{\mathcal{S}}^{+}_{n}\subset\mathbb{H}^3$ and $\widehat{\Delta}^{-}_{n}\stackrel{\mathrm{def}}{=}\tilde{f}_{\widetilde{\mathcal{S}}^{-}_{n}}(\widehat{\Delta}^{-})
\subset\widetilde{\mathcal{S}}^{-}_{n}\subset\mathbb{H}^3$ are both included in the hyperbolic balls $B(\tilde{x}^{+}_{n}, 3\delta_{\mathcal{S}}+\delta_{\mathcal{M}})$ and $B(\tilde{x}^{-}_{n}, 3\delta_{\mathcal{S}}+\delta_{\mathcal{M}})$ of radius $3\delta_{\mathcal{S}}+\delta_{\mathcal{M}}$ centered at the points $\tilde{x}^{+}_{n}$ and $\tilde{x}^{-}_{n}$.
\end{lemma}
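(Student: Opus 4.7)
The plan is to observe that this lemma is a direct combination of the two previously established facts: Lemma~\ref{chaaa_lemma_ubound_of_diameters_of_fundomain_nhoods} controls each $\widehat{\Delta}^{\pm}_{n}$ by a ball of radius $3\delta_{\mathcal{S}}$ around its own basepoint $\tilde{x}^{\pm}_{n}$, and Lemma~\ref{chaaa_lemma_ubound_of_diameters_of_Mn} gives a uniform upper bound $\delta_{\mathcal{M}}$ on the distance between the chosen lifts $\tilde{x}^{+}_{n}$ and $\tilde{x}^{-}_{n}$. These two pieces plug into the hyperbolic triangle inequality to produce the four desired inclusions.

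First I would recall the setup: by the choice made when fixing the lifts, the points $\tilde{x}^{+}_{n}\in\widetilde{\mathcal{S}}^{+}_{n}$ and $\tilde{x}^{-}_{n}\in\widetilde{\mathcal{S}}^{-}_{n}$ satisfy $\mathrm{d}_{\mathbb{H}^{3}}(\tilde{x}^{+}_{n},\tilde{x}^{-}_{n})=\sigma^{x}_{n}$, and by Lemma~\ref{chaaa_lemma_ubound_of_diameters_of_Mn} (applied to the corresponding points $x^{\pm}_{n}$ downstairs, whose distance in $\mathcal{M}^{\circ}_{n}$ equals $\sigma^{x}_{n}$) we have $\sigma^{x}_{n}<\delta_{\mathcal{M}}$ uniformly in $n$.

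Then the two self-inclusions $\widehat{\Delta}^{+}_{n}\subset B(\tilde{x}^{+}_{n},3\delta_{\mathcal{S}}+\delta_{\mathcal{M}})$ and $\widehat{\Delta}^{-}_{n}\subset B(\tilde{x}^{-}_{n},3\delta_{\mathcal{S}}+\delta_{\mathcal{M}})$ follow at once from Lemma~\ref{chaaa_lemma_ubound_of_diameters_of_fundomain_nhoods}, since the relevant balls only grow when the radius is enlarged from $3\delta_{\mathcal{S}}$ to $3\delta_{\mathcal{S}}+\delta_{\mathcal{M}}$. The two cross-inclusions $\widehat{\Delta}^{+}_{n}\subset B(\tilde{x}^{-}_{n},3\delta_{\mathcal{S}}+\delta_{\mathcal{M}})$ and $\widehat{\Delta}^{-}_{n}\subset B(\tilde{x}^{+}_{n},3\delta_{\mathcal{S}}+\delta_{\mathcal{M}})$ follow from the triangle inequality in $\mathbb{H}^{3}$: for any $y\in\widehat{\Delta}^{+}_{n}$,
\begin{equation*}
\mathrm{d}_{\mathbb{H}^{3}}(y,\tilde{x}^{-}_{n})\leq \mathrm{d}_{\mathbb{H}^{3}}(y,\tilde{x}^{+}_{n})+\mathrm{d}_{\mathbb{H}^{3}}(\tilde{x}^{+}_{n},\tilde{x}^{-}_{n})<3\delta_{\mathcal{S}}+\delta_{\mathcal{M}},
\end{equation*}
and symmetrically for $y\in\widehat{\Delta}^{-}_{n}$.

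There is no real obstacle here; the only subtle point I would double-check is that the distance bound $\sigma^{x}_{n}<\delta_{\mathcal{M}}$ furnished by Lemma~\ref{chaaa_lemma_ubound_of_diameters_of_Mn} (which a priori measures distance in $\mathcal{M}^{\circ}_{n}$) lifts to the same inequality between the chosen representatives $\tilde{x}^{\pm}_{n}$ in $\mathbb{H}^{3}$. This is precisely ensured by the way the lifts were selected earlier, so the argument goes through as stated and the lemma is indeed, as asserted, an immediate corollary of Lemmas~\ref{chaaa_lemma_ubound_of_diameters_of_Mn} and~\ref{chaaa_lemma_ubound_of_diameters_of_fundomain_nhoods}.
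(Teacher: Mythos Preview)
Your proof is correct and is precisely the immediate corollary argument the paper has in mind: combine the radius-$3\delta_{\mathcal{S}}$ inclusions from Lemma~\ref{chaaa_lemma_ubound_of_diameters_of_fundomain_nhoods} with the bound $\mathrm{d}_{\mathbb{H}^3}(\tilde{x}^{+}_{n},\tilde{x}^{-}_{n})=\sigma^{x}_{n}<\delta_{\mathcal{M}}$ (guaranteed by the choice of lifts and Lemma~\ref{chaaa_lemma_ubound_of_diameters_of_Mn}) via the triangle inequality. The paper itself gives no proof beyond declaring the statement an immediate corollary of those two lemmas, so your write-up is in fact more detailed than the original.
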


It is high time to identify the universal coverings $\widetilde{\mathcal{M}}^{\circ}_{n}$ (which are copies of $\mathbb{H}^3$) by supposing that the points $\tilde{x}^{+}_{n}$ coincide for all $n\in\mathbb{N}$. Let us temporarily forget the $3$-dimensional domains $\widetilde{\mathcal{M}}_{n}$ of hyperbolic space $\mathbb{H}^3$ in order to concentrate our attention on the study of properties of the sequences of surfaces $\{\widetilde{\mathcal{S}}^{+}_{n}\}_{n\in\mathbb{N}}$ and $\{\widetilde{\mathcal{S}}^{-}_{n}\}_{n\in\mathbb{N}}$.

Recall the statement of the classical Arzel\`a-Ascoli Theorem.
\begin{theorem}[Theorem~7.5.7 in~\cite{chaaa_Dieudonne1960}, p.~137]\label{chaaa_theorem_arzela-ascoli}
Suppose $F$ is a Banach space and $E$ a compact metric space. In order that a subset $H$ of the Banach space $\mathcal{C}_{F}(E)$ of continuous functions from $E$ to $F$ be relatively compact, necessary and sufficient conditions are that $H$ be equicontinuous and that, for each $x\in E$ the set $H_x$ of all $f(x)$ such that $f\in H$ be relatively compact in $F$.
\end{theorem}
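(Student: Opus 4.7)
The plan is to prove the two directions separately. For necessity, I would exploit the continuity of the evaluation maps and the total boundedness of a compact set. For sufficiency, which is the substantive direction, I would use separability of the compact metric space $E$ together with a diagonal extraction on a countable dense set, and then upgrade pointwise convergence on that dense set to uniform convergence by feeding equicontinuity into an $\varepsilon/3$ argument.

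\textbf{Necessity.} Suppose $\overline{H}$ is compact in $\mathcal{C}_F(E)$. For each fixed $x\in E$ the evaluation $\mathrm{ev}_x\colon\mathcal{C}_F(E)\to F$, $f\mapsto f(x)$, is continuous (it is $1$-Lipschitz for the sup-norm), so $\mathrm{ev}_x(\overline{H})$ is compact in $F$, hence $H_x$ is relatively compact. For equicontinuity, fix $\varepsilon>0$ and, using total boundedness of $\overline{H}$, cover it by finitely many balls $B(f_i,\varepsilon/3)$, $i=1,\dots,N$. Each $f_i$ is uniformly continuous on $E$ because $E$ is compact, so there is a common $\delta>0$ with $d_E(x,y)<\delta\Rightarrow\|f_i(x)-f_i(y)\|<\varepsilon/3$ for every $i$. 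For arbitrary $f\in H$, pick $i$ with $\|f-f_i\|_\infty<\varepsilon/3$; a triangle-inequality sandwich yields $\|f(x)-f(y)\|<\varepsilon$ whenever $d_E(x,y)<\delta$, uniformly in $f\in H$.

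\textbf{Sufficiency.} Assume $H$ is equicontinuous and every $H_x$ is relatively compact. Since $E$ is a compact metric space it is separable; choose a countable dense subset $\{x_k\}_{k\in\mathbb{N}}\subset E$. Given a sequence $\{f_n\}\subset H$, extract iteratively: from $\{f_n\}$ a subsequence on which $f(x_1)$ converges (possible since $H_{x_1}$ is relatively compact); from that subsequence a further one on which $f(x_2)$ converges; and so on. The diagonal sequence $g_n$ converges in $F$ at every $x_k$. Now I upgrade to uniform convergence: given $\varepsilon>0$, equicontinuity furnishes $\delta>0$ with $d_E(x,y)<\delta\Rightarrow\|f(x)-f(y)\|<\varepsilon/3$ for every $f\in H$; by compactness of $E$, cover $E$ by finitely many balls $B(x_{k_j},\delta)$, $j=1,\dots,M$; pick $N$ so large that $\|g_n(x_{k_j})-g_m(x_{k_j})\|<\varepsilon/3$ for all $j$ and all $n,m\geq N$. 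For any $x\in E$, choose $j$ with $d_E(x,x_{k_j})<\delta$ and apply the triangle inequality to conclude $\|g_n(x)-g_m(x)\|<\varepsilon$ for $n,m\geq N$. Hence $\{g_n\}$ is uniformly Cauchy, and since $\mathcal{C}_F(E)$ is a Banach space it converges to some $g\in\mathcal{C}_F(E)$, which lies in $\overline{H}$.

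\textbf{Main obstacle.} No step is deep, but the heart of the argument is the $\varepsilon/3$ interplay between equicontinuity and compactness of $E$: equicontinuity is exactly the tool that converts pointwise convergence on a countable dense set (secured by the diagonal extraction) into the uniform convergence needed to detect compactness in the sup-norm. Keeping track of the order of quantifiers (choose $\delta$ from equicontinuity first, then the finite cover of $E$, then the uniform $N$ across that finite cover) is where care is required.
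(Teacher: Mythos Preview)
Your proof is correct and is the standard textbook argument for the Arzel\`a--Ascoli theorem. Note, however, that the paper does not supply its own proof of this statement: it is quoted verbatim from Dieudonn\'e's \emph{Foundations of Modern Analysis} as a classical result and then applied as a black box in the proof of Lemma~\ref{chaaa_lemma_convergence_of_domains}. There is therefore nothing in the paper to compare your argument against; your write-up would serve perfectly well as a self-contained appendix if one were desired.
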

We will apply it in the following
\begin{lemma}\label{chaaa_lemma_convergence_of_domains}
There exist subsequences of functions
$\{\tilde{f}_{\widetilde{\mathcal{S}}^{+}_{n_{k}}}:
\widehat{\Delta}^{+}\rightarrow\mathbb{H}^3\}_{k\in\mathbb{N}}$ and
$\{\tilde{f}_{\widetilde{\mathcal{S}}^{-}_{n_{k}}}:
\widehat{\Delta}^{-}\rightarrow\mathbb{H}^3\}_{k\in\mathbb{N}}$
that converge to continuous functions
$\tilde{f}_{\widetilde{\mathcal{S}}^{+}_{\infty}}:
\widehat{\Delta}^{+}\rightarrow\mathbb{H}^3$
and $\tilde{f}_{\widetilde{\mathcal{S}}^{-}_{\infty}}:
\widehat{\Delta}^{-}\rightarrow\mathbb{H}^3$ correspondingly.
\end{lemma}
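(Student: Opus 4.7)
The plan is to apply the Arzel\`a--Ascoli Theorem~\ref{chaaa_theorem_arzela-ascoli} to each of the families $\{\tilde{f}_{\widetilde{\mathcal{S}}^{+}_{n}}\}_{n\in\mathbb{N}}$ and $\{\tilde{f}_{\widetilde{\mathcal{S}}^{-}_{n}}\}_{n\in\mathbb{N}}$, viewed as continuous maps from the compact metric spaces $(\cl\widehat{\Delta}^{+},\tilde{h}^{+}_{\lambda})$ and $(\cl\widehat{\Delta}^{-},\tilde{h}^{-}_{\lambda})$ into $\mathbb{H}^{3}$, and then to extract a common convergent subsequence by a standard diagonal argument. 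Since $\widehat{\Delta}^{\pm}$ is by construction the union of finitely many fundamental domains of the compact surface $\mathcal{S}$, its closure $\cl\widehat{\Delta}^{\pm}$ is indeed compact in the fixed metric $\tilde{h}^{\pm}_{\lambda}$.

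Pointwise relative compactness is immediate from Lemma~\ref{chaaa_lemma_ubound_of_diameters_of_fundomain_nhoods_pair}: once the universal coverings $\widetilde{\mathcal{M}}^{\circ}_{n}$ are identified so that the base points $\tilde{x}^{+}_{n}$ all coincide with a fixed $\tilde{x}^{+}\in\mathbb{H}^{3}$, every image $\tilde{f}_{\widetilde{\mathcal{S}}^{+}_{n}}(\widehat{\Delta}^{+})$ lies inside the compact hyperbolic ball $\cl B(\tilde{x}^{+},\,3\delta_{\mathcal{S}}+\delta_{\mathcal{M}})$. For the $-$ side, Lemma~\ref{chaaa_lemma_ubound_of_diameters_of_Mn} yields $\mathrm{d}_{\mathbb{H}^{3}}(\tilde{x}^{+}_{n},\tilde{x}^{-}_{n})<\delta_{\mathcal{M}}$ uniformly in $n$, so the images $\tilde{f}_{\widetilde{\mathcal{S}}^{-}_{n}}(\widehat{\Delta}^{-})$ all sit inside the slightly larger, still compact, ball $\cl B(\tilde{x}^{+},\,3\delta_{\mathcal{S}}+2\delta_{\mathcal{M}})$.

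The technical heart is equicontinuity. For any $y,z\in\widehat{\Delta}^{+}$ I would chain the inequalities
$$
\mathrm{d}_{\mathbb{H}^{3}}\bigl(\tilde{f}_{\widetilde{\mathcal{S}}^{+}_{n}}(y),\tilde{f}_{\widetilde{\mathcal{S}}^{+}_{n}}(z)\bigr)
\;\leq\;\mathrm{d}_{\tilde{h}^{+}_{n}}(y,z)
\;\leq\;\mathrm{d}_{\tilde{h}^{+}_{\lambda}}(y,z),
$$
in which the first step is the ambient-versus-intrinsic inequality already used at the end of the proof of Lemma~\ref{chaaa_lemma_ubound_of_diameters_of_fundomains}, while the second step is the distance comparison built into the proof of Lemma~\ref{chaaa_lemma_ubound_of_diameters_of_surfaces_Sn}, lifted to the universal cover. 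This shows that every member of the family is $1$-Lipschitz from $(\widehat{\Delta}^{+},\tilde{h}^{+}_{\lambda})$ into $\mathbb{H}^{3}$, so the family is uniformly equicontinuous; an identical argument handles the $-$ side.

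With the two hypotheses of Theorem~\ref{chaaa_theorem_arzela-ascoli} verified, one first extracts a subsequence $\{n_{j}\}$ along which $\tilde{f}_{\widetilde{\mathcal{S}}^{+}_{n_{j}}}$ converges uniformly on $\cl\widehat{\Delta}^{+}$, and then a further subsequence $\{n_{k}\}\subset\{n_{j}\}$ along which $\tilde{f}_{\widetilde{\mathcal{S}}^{-}_{n_{k}}}$ converges uniformly on $\cl\widehat{\Delta}^{-}$; the uniform limits of $1$-Lipschitz maps are themselves $1$-Lipschitz, and in particular continuous, yielding the desired $\tilde{f}_{\widetilde{\mathcal{S}}^{\pm}_{\infty}}$. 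The only mildly delicate point is that Theorem~\ref{chaaa_theorem_arzela-ascoli} is stated in a Banach-space framework whereas the target here is $\mathbb{H}^{3}$; this is circumvented by embedding $\mathbb{H}^{3}$ as a hyperboloid in Minkowski space $\mathbb{R}^{3,1}$, where our fixed closed hyperbolic ball maps to a compact subset of a Banach space and the cited version applies verbatim.
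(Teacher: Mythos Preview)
Your proof is correct and follows essentially the same approach as the paper: apply Arzel\`a--Ascoli on $(\widehat{\Delta}^{\pm},\tilde{h}^{\pm}_{\lambda})$, using Lemma~\ref{chaaa_lemma_ubound_of_diameters_of_fundomain_nhoods_pair} for pointwise boundedness and the $1$-Lipschitz chain $\mathrm{d}_{\mathbb{H}^3}\leq\mathrm{d}_{\tilde{h}^{\pm}_{n}}\leq\mathrm{d}_{\tilde{h}^{\pm}_{\lambda}}$ for equicontinuity. You are in fact slightly more careful than the paper on two points: you make the diagonal extraction of a common subsequence explicit, and you address the mismatch between the Banach-space target in Theorem~\ref{chaaa_theorem_arzela-ascoli} and $\mathbb{H}^{3}$ via the hyperboloid embedding, whereas the paper simply calls $\mathbb{H}^{3}$ a Banach space.
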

\begin{proof}

It suffices to find a converging subsequence of the sequence of functions $\{\tilde{f}_{\widetilde{\mathcal{S}}^{+}_{n}}:
\widehat{\Delta}^{+}\rightarrow\mathbb{H}^3\}_{n\in\mathbb{N}}$. To this purpose we will apply the Arzel\`a-Ascoli Theorem~\ref{chaaa_theorem_arzela-ascoli}.

Let us equip the domain $\widehat{\Delta}^{+}\subset\widetilde{\mathcal{S}}^{+}$ with the restriction $\tilde{h}^{+}_{\lambda}\mid_{\widehat{\Delta}^{+}}$ of the metric $\tilde{h}^{+}_{\lambda}$. Consider the domain $(\widehat{\Delta}^{+},\tilde{h}^{+}_{\lambda}\mid_{\widehat{\Delta}^{+}})$ as a compact metric space $E$ from the statement of Theorem~\ref{chaaa_theorem_arzela-ascoli}; hyperbolic space $\mathbb{H}^3$ as a Banach space $F$; the sequence of functions $\{\tilde{f}_{\widetilde{\mathcal{S}}^{+}_{n}}:
\widehat{\Delta}^{+}\rightarrow\mathbb{H}^3\}_{n\in\mathbb{N}}$ in the space of continuous functions from $(\widehat{\Delta}^{+},\tilde{h}^{+}_{\lambda}\mid_{\widehat{\Delta}^{+}})$ to $\mathbb{H}^3$ as the set $H\subset\mathcal{C}_{F}(E)$.

By Lemma~\ref{chaaa_lemma_ubound_of_diameters_of_fundomain_nhoods_pair}, the images $\widehat{\Delta}^{+}_{n}=\tilde{f}_{\widetilde{\mathcal{S}}^{+}_{n}}(\widehat{\Delta}^{+})
\subset\widetilde{\mathcal{S}}^{+}_{n}\subset\mathbb{H}^3$ of the maps $\tilde{f}_{\widetilde{\mathcal{S}}^{+}_{n}}$, $n\in\mathbb{N}$, are all included in the ball $B(\tilde{x}^{+}_{n}, 3\delta_{\mathcal{S}}+\delta_{\mathcal{M}})$ (recall that we identified all points $\tilde{x}^{+}_{n}\in\mathbb{H}^3$, $n\in\mathbb{N}$). Thus, for each $x\in E$ the set $H_x$ is relatively compact in $F$.

As it was already done in the proof of Lemma~\ref{chaaa_lemma_ubound_of_diameters_of_fundomains}, we consider every developing map $\tilde{f}_{\widetilde{\mathcal{S}}^{+}_{n}}:\widehat{\Delta}^{+}\rightarrow\widetilde{\mathcal{S}}^{+}_{n}$ as the inclusion of the domain $\widehat{\Delta}^{+}$ equipped with the metric $\tilde{h}^{+}_{\lambda}\mid_{\widehat{\Delta}^{+}}$ to the surface $\widetilde{\mathcal{S}}^{+}$ with the metric $\tilde{h}^{+}_{n}$, $n\in\mathbb{N}$. So, for any $\varepsilon>0$ if we pose $\delta:=\varepsilon$ then for every pair of points $x,y\in \widehat{\Delta}^{+}$ such that ${\mathrm{d}}_{\tilde{h}^{+}_{\lambda}}(x,y)<\delta$ it is true that ${\mathrm{d}}_{\mathbb{H}^3}
(\tilde{f}_{\widetilde{\mathcal{S}}^{+}_{n}}(x),\tilde{f}_{\widetilde{\mathcal{S}}^{+}_{n}}(y))\leq
{\mathrm{d}}_{\tilde{h}^{+}_{n}}
(\tilde{f}_{\widetilde{\mathcal{S}}^{+}_{n}}(x),\tilde{f}_{\widetilde{\mathcal{S}}^{+}_{n}}(y))<\varepsilon$ (recall that, by construction, distances measured in the metric $\tilde{h}^{+}_{\lambda}$ are not smaller than the corresponding distances measured in the metric $\tilde{h}^{+}_{n}$), $n\in\mathbb{N}$. Thus, the functions $\{\tilde{f}_{\widetilde{\mathcal{S}}^{+}_{n}}:
\widehat{\Delta}^{+}\rightarrow\mathbb{H}^3\}_{n\in\mathbb{N}}$ are equicontinuous.

Therefore, by the Arzel\`a-Ascoli Theorem~\ref{chaaa_theorem_arzela-ascoli}, there exists a subsequence of functions
$\{\tilde{f}_{\widetilde{\mathcal{S}}^{+}_{n_{k}}}:
\widehat{\Delta}^{+}\rightarrow\mathbb{H}^3\}_{k\in\mathbb{N}}$ that converges to some continuous function
$\tilde{f}_{\widetilde{\mathcal{S}}^{+}_{\infty}}:
\widehat{\Delta}^{+}\rightarrow\mathbb{H}^3$. Similarly we obtain that there exists a subsequence of functions
$\{\tilde{f}_{\widetilde{\mathcal{S}}^{-}_{n_{k}}}:
\widehat{\Delta}^{-}\rightarrow\mathbb{H}^3\}_{k\in\mathbb{N}}$ that converges to some continuous function
$\tilde{f}_{\widetilde{\mathcal{S}}^{-}_{\infty}}:
\widehat{\Delta}^{-}\rightarrow\mathbb{H}^3$.
\end{proof}

\begin{assumption}\label{chaaa_aspt_convergence_of_dev_applications_in_delta}
Further we assume that the sequences of functions $\{\tilde{f}_{\widetilde{\mathcal{S}}^{+}_{n}}:
\widehat{\Delta}^{+}\rightarrow\mathbb{H}^3\}_{n\in\mathbb{N}}$ and
$\{\tilde{f}_{\widetilde{\mathcal{S}}^{-}_{n}}:\widehat{\Delta}^{-}\rightarrow\mathbb{H}^3\}_{n\in\mathbb{N}}$
converge to continuous functions
$\tilde{f}_{\widetilde{\mathcal{S}}^{+}_{\infty}}:\widehat{\Delta}^{+}\rightarrow\mathbb{H}^3$
and $\tilde{f}_{\widetilde{\mathcal{S}}^{-}_{\infty}}:\widehat{\Delta}^{-}\rightarrow\mathbb{H}^3$.
\end{assumption}

\subsubsection{Convergence of the holonomy representations $\{\rho^{\mathcal{S}}_{n}\}_{n\in\mathbb{N}}$ and of the developing maps $\{\tilde{f}_{\widetilde{\mathcal{S}}^{+}_{n}}:
\widetilde{\mathcal{S}}^{+}\rightarrow\mathbb{H}^3\}_{n\in\mathbb{N}}$ and $\{\tilde{f}_{\widetilde{\mathcal{S}}^{-}_{n}}:
\widetilde{\mathcal{S}}^{-}\rightarrow\mathbb{H}^3\}_{n\in\mathbb{N}}$} \label{chaaa_sec_convergence_of_isometries_of_H3}

Now we need to derive several properties of the holonomy representations $\rho^{\mathcal{S}}_{n}({\pi}_{1}(\mathcal{S}))$, $n\in\mathbb{N}$.

\begin{lemma}\label{chaaa_lemma_unique_determination_of_an_isometry}
Given two points $y^{1},y^{2}\in\mathbb{H}^3$ together with orthogonal bases $\{e^{1},e^{2},e^{3}\}$
and $\{\hat{e}^{1},\hat{e}^{2},\hat{e}^{3}\}$ of the tangent spaces
$T_{y^{1}}\mathbb{H}^3$ and $T_{y^{2}}\mathbb{H}^3$, there is a unique isometry
$\vartheta\in\mathcal{I}(\mathbb{H}^3)$ such that $y^{2}=\vartheta.y^{1}$ and $\hat{e}^{i}=d_{y^{1}}\vartheta(e^{i})$, $i=1,...,3$.
\end{lemma}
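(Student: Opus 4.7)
The plan is to invoke the standard fact that $\mathcal{I}(\mathbb{H}^3)$ acts simply transitively on the orthonormal frame bundle of $\mathbb{H}^3$; this simultaneously delivers existence and uniqueness.

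For existence, I would proceed in two steps. First, because $\mathbb{H}^3$ is Riemannian-homogeneous (one can take, for instance, the hyperbolic translation along the geodesic joining $y^1$ and $y^2$), pick an isometry $\vartheta_1\in\mathcal{I}(\mathbb{H}^3)$ with $\vartheta_1.y^1=y^2$. The pushed-forward basis $\{d_{y^1}\vartheta_1(e^i)\}_{i=1}^3$ is then orthonormal in $T_{y^2}\mathbb{H}^3$. Second, the stabilizer of $y^2$ in $\mathcal{I}(\mathbb{H}^3)$ is isomorphic to $O(3)$ and acts transitively on orthonormal frames at $y^2$, so there exists $\vartheta_2\in\mathcal{I}(\mathbb{H}^3)$ fixing $y^2$ whose differential carries $\{d_{y^1}\vartheta_1(e^i)\}$ to $\{\hat{e}^i\}$. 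The composition $\vartheta:=\vartheta_2\circ\vartheta_1$ then satisfies the two required conditions.

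For uniqueness, given two candidates $\vartheta,\vartheta'$ satisfying the conclusion, I would consider $\psi:=\vartheta^{-1}\circ\vartheta'\in\mathcal{I}(\mathbb{H}^3)$, which fixes $y^1$ and has $d_{y^1}\psi=\mathrm{id}_{T_{y^1}\mathbb{H}^3}$. Since any Riemannian isometry commutes with the exponential map, for every $v\in T_{y^1}\mathbb{H}^3$
\begin{equation*}
\psi(\exp_{y^1}(v))=\exp_{\psi(y^1)}(d_{y^1}\psi(v))=\exp_{y^1}(v).
\end{equation*}
By the Cartan--Hadamard theorem the map $\exp_{y^1}:T_{y^1}\mathbb{H}^3\rightarrow\mathbb{H}^3$ is a global diffeomorphism, hence $\psi=\mathrm{id}$ on all of $\mathbb{H}^3$ and $\vartheta=\vartheta'$.

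There is no genuine obstacle: the lemma is a classical consequence of the homogeneity of $\mathbb{H}^3$, the $O(3)$-isotropy at each point, and the rigidity of isometries on a simply connected space of non-positive curvature. The only point requiring mild care is justifying that pointwise agreement of the differentials at a single point forces global agreement of isometries, which is exactly what the exponential-map argument above provides.
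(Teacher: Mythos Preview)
Your proof is correct, but it follows a different route from the paper's. The paper works concretely in the hyperboloid model $\mathbb{I}^3\subset\mathbb{R}^{3,1}$: it observes that for $x\in\mathbb{I}^3$ an orthonormal basis of $T_x\mathbb{I}^3$ extends by adjoining $x$ to an orthonormal basis of $\mathbb{R}^{3,1}$, so the frames $\{y^1,e^1,e^2,e^3\}$ and $\{y^2,\hat e^1,\hat e^2,\hat e^3\}$ are each the image of the standard basis under some $\vartheta_1,\vartheta_2\in O'(3,1)$, and one takes $\vartheta=\vartheta_2\vartheta_1^{-1}$. Uniqueness is then implicit in the linear algebra (a linear map is determined by its values on a basis). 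Your argument instead appeals to abstract Riemannian facts---homogeneity, the $O(3)$ isotropy, and rigidity via the exponential map and Cartan--Hadamard.

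Your approach is cleaner and more general (it works verbatim for any simply connected homogeneous isotropic Riemannian manifold), whereas the paper's matrix realisation has a practical payoff for what comes next: the explicit formula $\vartheta=\vartheta_2\vartheta_1^{-1}$ with $\vartheta_j$ given as column matrices makes the continuous dependence of $\vartheta$ on the input data $(y^1,y^2,\{e^i\},\{\hat e^i\})$ immediate, which is exactly what is exploited in the subsequent convergence lemma. Your route would require a separate (though easy) remark to recover that continuity.
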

\begin{proof} Following Chapter~1, \S~1.5 in \cite[p.~13]{chaaa_Vinberg1988} let us recall the construction of the hyperboloid model $\mathbb{I}^3$ of hyperbolic space $\mathbb{H}^3$. Denoting the coordinates in space $\mathbb{R}^{4}$ by $x_{0}, x_{1}, x_{2}, x_{3}$, we introduce the Minkowski scalar product in $\mathbb{R}^{4}$ by the formula
\begin{equation}\label{chaaa_frm_minkowski_scalar_product}
(x,y)_{M}=-x_{0}y_{0}+x_{1}y_{1}+x_{2}y_{2}+x_{3}y_{3},
\end{equation}
which turns $\mathbb{R}^{4}$ into a pseudo-Euclidean vector space, denoted by $\mathbb{R}^{3,1}$.

A basis $\{u^{0},u^{1},u^{2},u^{3}\}\subset\mathbb{R}^{3,1}$ is said to be \emph{orthonormal} if $(u^{0},u^{0})_{M}=-1$, $(u^{i},u^{i})_{M}=1$ for $i\neq0$, and $(u^{i},u^{j})_{M}=0$ for $i\neq j$. For example, the standard basis
\begin{equation}\label{chaaa_frm_minkowski_standard_basis}
\{\epsilon^{0},\epsilon^{1},\epsilon^{2},\epsilon^{3}\}=
\Bigg{\{} \begin{pmatrix} 1\\0\\0\\0 \end{pmatrix},
\begin{pmatrix} 0\\1\\0\\0 \end{pmatrix},
\begin{pmatrix} 0\\0\\1\\0 \end{pmatrix},
\begin{pmatrix} 0\\0\\0\\1 \end{pmatrix}\Bigg{\}}\subset\mathbb{R}^{3,1}
\end{equation}
is orthonormal.

Each pseudo-orthogonal (i.e. preserving the above scalar product) transformation of $\mathbb{R}^{3,1}$ takes an open cone of time-like vectors
\begin{equation*}\label{chaaa_frm_cone_of_timelike_vectors}
\mathfrak{C}=\{x\in\mathbb{R}^{3,1}: (x,x)_{M}<0\}
\end{equation*}
consisting of two connected components
\begin{equation*}\label{chaaa_frm_timelike_cone_components}
\mathfrak{C}^{+}=\{x\in\mathfrak{C}: x_{0}>0\},\quad\mathfrak{C}^{-}=\{x\in\mathfrak{C}: x_{0}<0\}
\end{equation*}
onto itself. Denote by $O(3,1)$ the group of all pseudo-orthogonal transformations of
space $\mathbb{R}^{3,1}$, and by $O'(3,1)$ its subgroup of index $2$ consisting of those pseudo
orthogonal transformations which map each connected component of the cone $\mathfrak{C}$ onto itself.

Using notation developed in \S~A.1 \cite[p.~1]{chaaa_BP2003} we remind that the manifold
\begin{equation*}\label{chaaa_frm_upper_hyperboloid}
\mathbb{I}^3=\{x\in\mathbb{R}^{3,1}: (x,x)_{M}=-1, x_{0}>0\}
\end{equation*}
with the metric induced by the pseudo-Euclidean metric~(\ref{chaaa_frm_minkowski_scalar_product}) is called the hyperboloid model $\mathbb{I}^3$ of hyperbolic space $\mathbb{H}^3$, and the restrictions of the elements of $O'(3,1)$ on $\mathbb{I}^3$ form the group $\mathcal{I}(\mathbb{H}^3)$ of all isometries of $\mathbb{H}^3$.

Again, by Chapter~1, \S~1.5 in \cite[p.~13]{chaaa_Vinberg1988}, for any $x\in\mathbb{I}^3$ we can naturally identify the tangent space $T_{x}\mathbb{I}^3$ with the orthogonal complement of the vector $x$ in space $\mathbb{R}^{3,1}$, which is a $3$-dimensional Euclidean space (with respect to the same scalar product). If $\{u^{1},u^{2},u^{3}\}$ is an orthonormal basis in it, then $\{x, u^{1},u^{2},u^{3}\}$ is an orthonormal basis in the space $\mathbb{R}^{3,1}$.

Obviously, the vector $\epsilon^{0}$ of the standard basis~(\ref{chaaa_frm_minkowski_standard_basis}) $\mathbb{R}^{3,1}$ lies in $\mathbb{I}^3$ and the vectors $\{\epsilon^{1},\epsilon^{2},\epsilon^{3}\}$ defined in~(\ref{chaaa_frm_minkowski_standard_basis}) form an orthonormal basis of the tangent space $T_{\epsilon^{0}}\mathbb{I}^3$. Also, according to a fact mentioned in the previous paragraph, the sets of four vectors
$\{y^{1},e^{1},e^{2},e^{3}\}\subset\mathbb{R}^{3,1}$ and $\{y^{2},\hat{e}^{1},\hat{e}^{2},\hat{e}^{3}\}\subset\mathbb{R}^{3,1}$ from the statement of Lemma~\ref{chaaa_lemma_unique_determination_of_an_isometry} are orthonormal bases of $\mathbb{R}^{3,1}$. Define the linear transformations $\vartheta_{1}$ and $\vartheta_{2}$ of $\mathbb{R}^{3,1}$ determined by their $4\times4$-real matrices $M^{\vartheta}_{1}\stackrel{\mathrm{def}}{=}(y^{1},e^{1},e^{2},e^{3})$ and $M^{\vartheta}_{2}\stackrel{\mathrm{def}}{=}(y^{2},\hat{e}^{1},\hat{e}^{2},\hat{e}^{3})$ with the columns consisting of the coordinates of the corresponding vectors in the standard basis of $\mathbb{R}^{3,1}$. A direct calculation shows the transformations $\vartheta_{1}$ and $\vartheta_{2}$ send the standard base to the orthonormal bases $\{y^{1},e^{1},e^{2},e^{3}\}$ and $\{y^{2},\hat{e}^{1},\hat{e}^{2},\hat{e}^{3}\}$ of $\mathbb{R}^{3,1}$, respectively. Moreover, we know that the vectors $\epsilon^{0}$, $y^{1}$, and $y^{2}$ belong to the upper cone $\mathfrak{C}^{+}$. Hence, $\vartheta_{1}$ and $\vartheta_{2}$ are elements of the group $O'(3,1)$, and we can take the transformation $\vartheta$ from the statement of Lemma~\ref{chaaa_lemma_unique_determination_of_an_isometry} to be equal to $\vartheta_{2}[\vartheta_{1}]^{-1}$.
\end{proof}

\textbf{Definition.} Given a sequence of hyperbolic isometries $\{\vartheta_{n}\in\mathcal{I}(\mathbb{H}^3)\}_{n\in\mathbb{N}}$ determined by points $y^{1}_{n},y^{2}_{n}\in\mathbb{H}^3$ and orthogonal bases $\{e^{1}_{n},e^{2}_{n},e^{3}_{n}\}$, $\{\hat{e}^{1}_{n},\hat{e}^{2}_{n},\hat{e}^{3}_{n}\}$ of the tangent spaces
$T_{y^{1}_{n}}\mathbb{H}^3$ and $T_{y^{2}_{n}}\mathbb{H}^3$, we say that the isometries $\{\vartheta_{n}\}_{n\in\mathbb{N}}$ \emph{converge} to an isometry $\vartheta_{\infty}\in\mathcal{I}(\mathbb{H}^3)$ \emph{in the sense of Lemma}~\ref{chaaa_lemma_unique_determination_of_an_isometry} if the sequences of base points $\{y^{1}_{n}\}_{n\in\mathbb{N}}$, $\{y^{2}_{n}\}_{n\in\mathbb{N}}$ converge to points $y^{1}_{\infty},y^{2}_{\infty}\in\mathbb{H}^3$ and the sequences of orthogonal bases $\{e^{1}_{n},e^{2}_{n},e^{3}_{n}\}_{n\in\mathbb{N}}$, $\{\hat{e}^{1}_{n},\hat{e}^{2}_{n},\hat{e}^{3}_{n}\}_{n\in\mathbb{N}}$ converge to orthogonal bases $\{e^{1}_{\infty},e^{2}_{\infty},e^{3}_{\infty}\}$, $\{\hat{e}^{1}_{\infty},\hat{e}^{2}_{\infty},\hat{e}^{3}_{\infty}\}$ of the tangent spaces
$T_{y^{1}_{\infty}}\mathbb{H}^3$ and $T_{y^{2}_{\infty}}\mathbb{H}^3$, and the above-mentioned limits define uniquely the isometry $\vartheta_{\infty}$. Denote a convergence of isometries in the sense of Lemma~\ref{chaaa_lemma_unique_determination_of_an_isometry} by $\vartheta_{n}\Rightarrow\vartheta_{\infty}$ as $n\rightarrow\infty$.

\textbf{Definition.} We say that hyperbolic isometries $\{\vartheta_{n}\in\mathcal{I}(\mathbb{H}^3)\}_{n\in\mathbb{N}}$ \emph{converge }to an isometry $\vartheta_{\infty}\in\mathcal{I}(\mathbb{H}^3)$ \emph{in a "weak" sense} if for any point $y\in\mathbb{H}^3$ the sequence $\{\vartheta_{n}.y\}_{n\in\mathbb{N}}$ converges to the point $\vartheta_{\infty}.y\in\mathbb{H}^3$ as $n\rightarrow\infty$. Denote a "weak" convergence of isometries by $\vartheta_{n}\xrightarrow[n\rightarrow\infty]{}\vartheta_{\infty}$.

\begin{lemma}\label{chaaa_lemma_types_of_convergence_of_isometries}
Given a collection of hyperbolic isometries $\{\vartheta_{n}\in\mathcal{I}(\mathbb{H}^3)\}_{n=1}^{\infty}$, $\vartheta_{n}\Rightarrow\vartheta_{\infty}$ as $n\rightarrow\infty$ if and only if $\vartheta_{n}\xrightarrow[n\rightarrow\infty]{}\vartheta_{\infty}$.
\end{lemma}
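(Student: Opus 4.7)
The plan is to prove both implications in the hyperboloid model $\mathbb{I}^3 \subset \mathbb{R}^{3,1}$ set up in the proof of Lemma~\ref{chaaa_lemma_unique_determination_of_an_isometry}, in which each $\vartheta \in \mathcal{I}(\mathbb{H}^3)$ is the restriction to $\mathbb{I}^3$ of a pseudo-orthogonal matrix in $O'(3,1)$ acting linearly on $\mathbb{R}^{3,1}$.

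For the implication $\vartheta_{n} \Rightarrow \vartheta_{\infty} \;\Longrightarrow\; \vartheta_{n} \xrightarrow[n\to\infty]{} \vartheta_{\infty}$, I will invoke directly the matrix formula $\vartheta_{n} = M^{\vartheta}_{2,n}\,[M^{\vartheta}_{1,n}]^{-1}$ constructed at the end of the proof of Lemma~\ref{chaaa_lemma_unique_determination_of_an_isometry}, whose columns are the coordinates of $\{y^{1}_{n}, e^{1}_{n}, e^{2}_{n}, e^{3}_{n}\}$ and $\{y^{2}_{n}, \hat{e}^{1}_{n}, \hat{e}^{2}_{n}, \hat{e}^{3}_{n}\}$ in the standard basis~(\ref{chaaa_frm_minkowski_standard_basis}). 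The hypothesis $\vartheta_{n}\Rightarrow\vartheta_{\infty}$ delivers convergence of all these columns, and since the $M^{\vartheta}_{j,n}$ are in $O'(3,1)$ (hence of determinant $\pm 1$), matrix inversion and multiplication are continuous on the locus where they live. Thus the entries of $\vartheta_{n}$ converge to those of $\vartheta_{\infty}$, which immediately yields $\vartheta_{n}.y \to \vartheta_{\infty}.y$ for every $y \in \mathbb{H}^{3}$.

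For the reverse implication, I will manufacture explicit converging base points and frames out of the pointwise hypothesis. Take the constant data $y^{1}_{n} := \epsilon^{0}$ and $\{e^{i}_{n}\} := \{\epsilon^{i}\}_{i=1,2,3}$ from~(\ref{chaaa_frm_minkowski_standard_basis}), and set $y^{2}_{n} := \vartheta_{n}.\epsilon^{0}$, $\hat{e}^{i}_{n} := d_{\epsilon^{0}}\vartheta_{n}(\epsilon^{i})$. Convergence of $\{y^{2}_{n}\}$ is immediate from the weak hypothesis applied at $\epsilon^{0}$. To produce convergence of the frames, introduce the four auxiliary points $q^{i} := \exp_{\epsilon^{0}}(\epsilon^{i}) \in \mathbb{H}^{3}$; since $\vartheta_{n}$ is an isometry and thus maps geodesics to geodesics, $\vartheta_{n}.q^{i} = \exp_{y^{2}_{n}}(\hat{e}^{i}_{n})$. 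The weak hypothesis applied at each of $\epsilon^{0}, q^{1}, q^{2}, q^{3}$, together with the joint continuity of the hyperbolic logarithm map $(p,q) \mapsto \log_{p} q$, then gives
\[
\hat{e}^{i}_{n} \;=\; \log_{y^{2}_{n}}\!\bigl(\vartheta_{n}.q^{i}\bigr) \;\longrightarrow\; \log_{\vartheta_{\infty}.\epsilon^{0}}\!\bigl(\vartheta_{\infty}.q^{i}\bigr) \;=\; d_{\epsilon^{0}}\vartheta_{\infty}(\epsilon^{i}),
\]
so the frames $\{\hat{e}^{1}_{n}, \hat{e}^{2}_{n}, \hat{e}^{3}_{n}\}$ converge to the orthonormal frame determining $\vartheta_{\infty}$ in the sense of Lemma~\ref{chaaa_lemma_unique_determination_of_an_isometry}.

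The main obstacle is the second direction: one has to upgrade mere pointwise convergence of a sequence of isometries into convergence of their differentials at a chosen base point. I expect this to be handled cleanly by the device of testing against just four pointwise evaluations and invoking continuity of the log map, so that no compactness or equicontinuity argument is needed beyond the standard fact that hyperbolic isometries are geodesic-preserving.
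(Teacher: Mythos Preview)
Your proof is correct. The forward implication is handled exactly as in the paper, via the continuous dependence of the matrix $\vartheta = M^{\vartheta}_{2}[M^{\vartheta}_{1}]^{-1}$ on the data $(y^{1},y^{2},\{e^{i}\},\{\hat{e}^{i}\})$.

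For the reverse implication your argument is correct but takes a slightly different route from the paper. You produce the frame convergence by evaluating the weak hypothesis at the four points $\epsilon^{0}$ and $q^{i}=\exp_{\epsilon^{0}}(\epsilon^{i})$ and then pulling back through the continuity of the Riemannian logarithm. The paper instead stays in the ambient linear space $\mathbb{R}^{3,1}$: it evaluates the weak hypothesis at $P_{0}=(1,0,0,0)^{T}$ and $P_{k}=(\sqrt{2},0,\dots,1,\dots,0)^{T}\in\mathbb{I}^{3}$, so that $M^{\vartheta_{n}}P_{0}=\vartheta_{n}^{0}$ and $M^{\vartheta_{n}}P_{k}=\sqrt{2}\,\vartheta_{n}^{0}+\vartheta_{n}^{k}$, and recovers the columns of $M^{\vartheta_{n}}$ by subtraction. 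In effect the paper's test points are your $q^{i}$ written in hyperboloid coordinates (with a specific parameter value), but by exploiting the linearity of the $O'(3,1)$-action it avoids any appeal to the exponential or logarithm map. Your approach is more intrinsically geometric and would transfer verbatim to any Riemannian symmetric space; the paper's is a shade more elementary here since it reduces everything to finite linear algebra in $\mathbb{R}^{3,1}$.
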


\begin{proof} A hyperbolic isometry $\vartheta:\mathbb{H}^3\rightarrow\mathbb{H}^3$ which sends any $y\in\mathbb{H}^3$ to the point $\vartheta.y\in\mathbb{H}^3$ can be interpreted as a linear transformation of Minkowski space $\mathbb{R}^{3,1}$ as it was mentioned in the proof of Lemma~\ref{chaaa_lemma_unique_determination_of_an_isometry}. Therefore, $\vartheta(y)$ depends continuously on $y\in\mathbb{H}^3$.

Suppose that $\vartheta_{n}\Rightarrow\vartheta_{\infty}$ as $n\rightarrow\infty$. By construction, a transformation $\vartheta\in\mathcal{I}(\mathbb{H}^3)$ from Lemma~\ref{chaaa_lemma_unique_determination_of_an_isometry} depends continuously on the parameters $y^{1},y^{2}\in\mathbb{H}^3$, $\{e^{1},e^{2},e^{3}\}\subset T_{y^{1}}\mathbb{H}^3$,
and $\{\hat{e}^{1},\hat{e}^{2},\hat{e}^{3}\}\subset T_{y^{2}}\mathbb{H}^3$. Hence, for any point $y\in\mathbb{H}^3$ the sequence $\{\vartheta_{n}.y\}_{n\in\mathbb{N}}$ converges to the point $\vartheta_{\infty}.y\in\mathbb{H}^3$ as $n\rightarrow\infty$, which means that the convergence of the isometries $\{\vartheta_{n}\}_{n\in\mathbb{N}}$ in the sense of Lemma~\ref{chaaa_lemma_unique_determination_of_an_isometry} implies also the "weak" convergence of these isometries to $\vartheta_{\infty}$.

Suppose now that $\vartheta_{n}\xrightarrow[n\rightarrow\infty]{}\vartheta_{\infty}$. Being a linear transformation of Minkowski space $\mathbb{R}^{3,1}$, the hyperbolic isometries $\{\vartheta_{n}\in\mathcal{I}(\mathbb{H}^3)\}_{n=1}^{\infty}$ are represented in the standard basis of $\mathbb{R}^{3,1}$ by the $4\times4$-real matrices $M^{\vartheta_{n}}\stackrel{\mathrm{def}}{=}({\vartheta}^{0}_{n},{\vartheta}^{1}_{n},{\vartheta}^{2}_{n},{\vartheta}^{3}_{n})$,
where ${\vartheta}^{k}_{n}$, $k=0,1,2,3$, are the columns of $M^{\vartheta}_{n}$.

Let $P_{0}\stackrel{\mathrm{def}}{=}(1,0,0,0)^{T}\in\mathbb{I}^{3}\subset\mathbb{R}^{3,1}$. The "weak" convergence of the isometries $\{\vartheta_{n}\}_{n\in\mathbb{N}}$ at the point $P_{0}$ means that $M^{\vartheta_{n}}.P_{0}\xrightarrow[n\rightarrow\infty]{}M^{\vartheta_{\infty}}.P_{0}$, i.e.
\begin{equation}\label{chaaa_frm_first_column_convergence}
{\vartheta}^{0}_{n}\xrightarrow[n\rightarrow\infty]{}{\vartheta}^{0}_{\infty}.
\end{equation}
Let $P_{1}\stackrel{\mathrm{def}}{=}(\sqrt{2},1,0,0)^{T}\in\mathbb{I}^{3}\subset\mathbb{R}^{3,1}$. The "weak" convergence of the isometries $\{\vartheta_{n}\}_{n\in\mathbb{N}}$ at the point $P_{1}$ means that $M^{\vartheta_{n}}.P_{1}\xrightarrow[n\rightarrow\infty]{}M^{\vartheta_{\infty}}.P_{1}$, i.e. $\sqrt{2}{\vartheta}^{0}_{n}+{\vartheta}^{1}_{n}\xrightarrow[n\rightarrow\infty]{}
\sqrt{2}{\vartheta}^{0}_{\infty}+{\vartheta}^{0}_{\infty}$. Taking into account~(\ref{chaaa_frm_first_column_convergence}), we obtain that ${\vartheta}^{1}_{n}\xrightarrow[n\rightarrow\infty]{}{\vartheta}^{1}_{\infty}$. Similarly we get that ${\vartheta}^{2}_{n}\xrightarrow[n\rightarrow\infty]{}{\vartheta}^{2}_{\infty}$ and ${\vartheta}^{3}_{n}\xrightarrow[n\rightarrow\infty]{}{\vartheta}^{3}_{\infty}$. Thus, the "weak" convergence of the isometries $\{\vartheta_{n}\}_{n\in\mathbb{N}}$ to $\vartheta_{\infty}$ as $n\rightarrow\infty$ implies also their convergence in the sense of Lemma~\ref{chaaa_lemma_unique_determination_of_an_isometry}.
\end{proof}

\begin{lemma}\label{chaaa_lemma_converging_subsequence_of_isometries}
For each $n\in\mathbb{N}$ let a pair of surfaces $\widetilde{\mathcal{S}}^{+}_{n}$ and $\widetilde{\mathcal{S}}^{-}_{n}\subset\mathbb{H}^3$ (which are the images of developing maps $\tilde{f}_{\widetilde{\mathcal{S}}^{+}_{n}}:\widetilde{\mathcal{S}}^{+}\rightarrow\widetilde{\mathcal{S}}^{+}_{n}$ and $\tilde{f}_{\widetilde{\mathcal{S}}^{-}_{n}}:\widetilde{\mathcal{S}}^{-}\rightarrow\widetilde{\mathcal{S}}^{-}_{n}$) be invariant under the actions of a quasi-Fuchsian group $\rho^{\mathcal{S}}_{n}({\pi}_{1}(\mathcal{S}))$ of isometries of $\mathbb{H}^3$. Suppose in addition that the restrictions of the developing maps $\{\tilde{f}_{\widetilde{\mathcal{S}}^{+}_{n}}:
\widehat{\Delta}^{+}\rightarrow\mathbb{H}^3\}_{n\in\mathbb{N}}$ and
$\{\tilde{f}_{\widetilde{\mathcal{S}}^{-}_{n}}:\widehat{\Delta}^{-}\rightarrow\mathbb{H}^3\}_{n\in\mathbb{N}}$
on the domains $\widehat{\Delta}^{+}\subset\widetilde{\mathcal{S}}^{+}$ and $\widehat{\Delta}^{-}\subset\widetilde{\mathcal{S}}^{-}$ defined in Section~\ref{chaaa_sec_Arzela-Ascoli_theorem_application} converge to continuous functions
$\tilde{f}_{\widetilde{\mathcal{S}}^{+}_{\infty}}:\widehat{\Delta}^{+}\rightarrow\mathbb{H}^3$
and $\tilde{f}_{\widetilde{\mathcal{S}}^{-}_{\infty}}:\widehat{\Delta}^{-}\rightarrow\mathbb{H}^3$. Then there is a sequence of positive integers $n_{k}\xrightarrow[k\rightarrow\infty]{}\infty$ such that the morphisms $\{\rho^{\mathcal{S}}_{n_{k}}:{\pi}_{1}(\mathcal{S})\rightarrow\mathcal{I}(\mathbb{H}^3)\}_{k\in\mathbb{N}}$ converge to a morphism $\rho^{\mathcal{S}}_{\infty}:{\pi}_{1}(\mathcal{S})\rightarrow\mathcal{I}(\mathbb{H}^3)$ in the sense of Lemma~\ref{chaaa_lemma_unique_determination_of_an_isometry}, i.e. for every $\gamma\in{\pi}_{1}(\mathcal{S})$ there exists a hyperbolic isometry which we denote by $\rho^{\mathcal{S}}_{\infty}(\gamma)$ such that $\rho^{\mathcal{S}}_{n_{k}}(\gamma)\Rightarrow\rho^{\mathcal{S}}_{\infty}(\gamma)$ as $k\rightarrow\infty$.
\end{lemma}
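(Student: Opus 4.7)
Since $\rho^{\mathcal{S}}_{n_{k}}$ is a homomorphism for each $k$, it suffices to find one subsequence $\{n_{k}\}$ along which $\rho^{\mathcal{S}}_{n_{k}}(\gamma)$ converges in the sense of Lemma~\ref{chaaa_lemma_unique_determination_of_an_isometry} for every element $\gamma$ of some finite generating set of $\pi_{1}(\mathcal{S})$; the limit then extends uniquely to all of $\pi_{1}(\mathcal{S})$ by the homomorphism property. Rather than work with the fixed generators $\{\gamma_{1},\ldots,\gamma_{l}\}$, I would use the (finitely many) side-pairing transformations $\{s_{1},\ldots,s_{p}\}$ of the Dirichlet domain $\Delta^{+}$, namely those $\gamma\in\pi_{1}(\mathcal{S})$ satisfying $\cl\Delta^{+}\cap\cl(\gamma\cdot\Delta^{+})\neq\emptyset$. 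This is a classical generating set for a cocompact action and has the essential property that $s_{j}\cdot\Delta^{+}\subset\widehat{\Delta}^{+}$, so in particular $s_{j}\cdot\tilde{x}^{+}\in\widehat{\Delta}^{+}$ for every $j$.

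For each $j$, the equivariance from Remark~\ref{chaaa_remark_periodicity_of_dev_applications}, together with the paper's identification $\tilde{x}^{+}_{n}=\tilde{x}^{+}$, gives
\[
\rho^{\mathcal{S}}_{n}(s_{j})\cdot\tilde{x}^{+}
 \;=\;\rho^{\mathcal{S}}_{n}(s_{j})\cdot\tilde{f}_{\widetilde{\mathcal{S}}^{+}_{n}}(\tilde{x}^{+})
 \;=\;\tilde{f}_{\widetilde{\mathcal{S}}^{+}_{n}}(s_{j}\cdot\tilde{x}^{+}).
\]
The right-hand side converges to $\tilde{f}_{\widetilde{\mathcal{S}}^{+}_{\infty}}(s_{j}\cdot\tilde{x}^{+})\in\mathbb{H}^{3}$ by the standing hypothesis on the developing maps on $\widehat{\Delta}^{+}$, so the orbit $\{\rho^{\mathcal{S}}_{n}(s_{j})\cdot\tilde{x}^{+}\}_{n\in\mathbb{N}}$ is contained in a compact ball. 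Since the action of $\mathcal{I}(\mathbb{H}^{3})$ on $\mathbb{H}^{3}$ is proper with compact stabilizers (each isomorphic to $O(3)$), the set of isometries sending $\tilde{x}^{+}$ into any fixed compact subset of $\mathbb{H}^{3}$ is itself compact in $\mathcal{I}(\mathbb{H}^{3})$. Equivalently, in the hyperboloid description used in the proof of Lemma~\ref{chaaa_lemma_unique_determination_of_an_isometry}, the first column of the matrix of $\rho^{\mathcal{S}}_{n}(s_{j})$ expressed in the standard basis~(\ref{chaaa_frm_minkowski_standard_basis}) is bounded, and pseudo-orthonormality forces the remaining three columns into a compact locus. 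A subsequence of $\{\rho^{\mathcal{S}}_{n}(s_{j})\}$ therefore converges in $\mathcal{I}(\mathbb{H}^{3})$ to some isometry, and by Lemma~\ref{chaaa_lemma_types_of_convergence_of_isometries} this convergence takes place in the sense of Lemma~\ref{chaaa_lemma_unique_determination_of_an_isometry}.

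Applying this argument to $s_{1}$, then $s_{2}$, and so on up to $s_{p}$, each time passing to a sub-subsequence, I extract after finitely many steps one subsequence $\{n_{k}\}$ along which $\rho^{\mathcal{S}}_{n_{k}}(s_{j})\Rightarrow\rho^{\mathcal{S}}_{\infty}(s_{j})$ for every $j$. For an arbitrary element $\gamma\in\pi_{1}(\mathcal{S})$, choose a word $\gamma=s_{j_{1}}^{\varepsilon_{1}}\cdots s_{j_{m}}^{\varepsilon_{m}}$ and define $\rho^{\mathcal{S}}_{\infty}(\gamma)\stackrel{\mathrm{def}}{=}\rho^{\mathcal{S}}_{\infty}(s_{j_{1}})^{\varepsilon_{1}}\cdots\rho^{\mathcal{S}}_{\infty}(s_{j_{m}})^{\varepsilon_{m}}$. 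Continuity of multiplication and inversion in $\mathcal{I}(\mathbb{H}^{3})$ then yields $\rho^{\mathcal{S}}_{n_{k}}(\gamma)\Rightarrow\rho^{\mathcal{S}}_{\infty}(\gamma)$ for every $\gamma$, in particular for each of the originally distinguished generators $\gamma_{i}$.

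The main obstacle is the compactness step in the second paragraph. The hypothesis delivers only convergence of the base-point orbit $\rho^{\mathcal{S}}_{n}(s_{j})\cdot\tilde{x}^{+}$, whereas Lemma~\ref{chaaa_lemma_unique_determination_of_an_isometry} also requires control of the associated orthonormal frame. Bridging this gap — either by invoking properness of the $\mathcal{I}(\mathbb{H}^{3})$-action, which automatically compactifies the frame via the $O(3)$ stabilizer, or by hand in the hyperboloid model via pseudo-orthonormality of the columns — is the one genuinely analytic input. Once this is in place, everything else is combinatorial bookkeeping: the finite diagonal extraction over the side-pairings and the continuous extension of a homomorphism from a finite generating set.
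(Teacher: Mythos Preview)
Your proof is correct and follows essentially the same strategy as the paper: control the orbit of a finite generating set via the assumed convergence of the developing maps on $\widehat{\Delta}^{+}$, use compactness in $\mathcal{I}(\mathbb{H}^{3})$ to extract a convergent subsequence for each generator, then extend to all of $\pi_{1}(\mathcal{S})$ by the homomorphism property. The only differences are cosmetic: the paper works directly with the fixed generators $\gamma_{1},\ldots,\gamma_{l}$ (asserting ``by construction'' that $\gamma_{i}\cdot\tilde{x}^{+}\in\widehat{\Delta}^{+}$) and handles the frame part by noting that sequences of orthonormal bases consist of unit vectors and hence subconverge, whereas you use the side-pairing generators (for which $s_{j}\cdot\tilde{x}^{+}\in\widehat{\Delta}^{+}$ is transparent) and invoke properness of the $\mathcal{I}(\mathbb{H}^{3})$-action; both packagings of the compactness step are equivalent.
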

\begin{proof}
First, we prove that there is a sequence of positive integers $n_{k}\xrightarrow[k\rightarrow\infty]{}\infty$ such that for any generator $\gamma_{i}$ of the group ${\pi}_{1}(\mathcal{S})$ together with its inverse element $\gamma^{-1}_{i}\in{\pi}_{1}(\mathcal{S})$, $i=1,...,l$, the subsequences of isometries $\rho^{\mathcal{S}}_{n_{k}}(\gamma_{i})\Rightarrow\rho^{\mathcal{S}}_{\infty}(\gamma_{i})$ and $\rho^{\mathcal{S}}_{n_{k}}(\gamma^{-1}_{i})\Rightarrow\rho^{\mathcal{S}}_{\infty}(\gamma^{-1}_{i})$ converge as $k\rightarrow\infty$.

Indeed, since for any $i=1,...,l$ points $\tilde{x}^{+}$, $\gamma_{i}.\tilde{x}^{+}$, and $\gamma^{-1}_{i}.\tilde{x}^{+}$ lie inside $\widehat{\Delta}^{+}\subset\widetilde{\mathcal{S}}^{+}$ by construction, and because of convergence of the developing maps $\{\tilde{f}_{\widetilde{\mathcal{S}}^{+}_{n}}:
\widehat{\Delta}^{+}\rightarrow\mathbb{H}^3\}_{n\in\mathbb{N}}$ to a continuous function $\tilde{f}_{\widetilde{\mathcal{S}}^{+}_{\infty}}:\widehat{\Delta}^{+}\rightarrow\mathbb{H}^3$, we know that the sequences of points $\tilde{x}^{+}_{n}(=\tilde{f}_{\widetilde{\mathcal{S}}^{+}_{n}}(\tilde{x}^{+}))
\xrightarrow[n\rightarrow\infty]{}
\tilde{x}^{+}_{\infty}(=\tilde{f}_{\widetilde{\mathcal{S}}^{+}_{\infty}}(\tilde{x}^{+}))$, $\rho^{\mathcal{S}}_{n}(\gamma_{i}).\tilde{x}^{+}_{n}
(=\rho^{\mathcal{S}}_{n}(\gamma_{i}).\tilde{f}_{\widetilde{\mathcal{S}}^{+}_{n}}(\tilde{x}^{+})
=\tilde{f}_{\widetilde{\mathcal{S}}^{+}_{n}}(\gamma_{i}.\tilde{x}^{+}))\xrightarrow[n\rightarrow\infty]{}
\rho^{\mathcal{S}}_{\infty}(\gamma_{i}).\tilde{x}^{+}_{\infty}
(=\rho^{\mathcal{S}}_{\infty}(\gamma_{i}).\tilde{f}_{\widetilde{\mathcal{S}}^{+}_{\infty}}(\tilde{x}^{+})
=\tilde{f}_{\widetilde{\mathcal{S}}^{+}_{\infty}}(\gamma_{i}.\tilde{x}^{+}))$, and
$[\rho^{\mathcal{S}}_{n}(\gamma_{i})]^{-1}.\tilde{x}^{+}_{n}
(=\rho^{\mathcal{S}}_{n}(\gamma^{-1}_{i}).\tilde{f}_{\widetilde{\mathcal{S}}^{+}_{n}}(\tilde{x}^{+})
=\tilde{f}_{\widetilde{\mathcal{S}}^{+}_{n}}(\gamma^{-1}_{i}.\tilde{x}^{+}))\xrightarrow[n\rightarrow\infty]{}
[\rho^{\mathcal{S}}_{\infty}(\gamma_{i})]^{-1}.\tilde{x}^{+}_{\infty}
(=\rho^{\mathcal{S}}_{\infty}(\gamma^{-1}_{i}).\tilde{f}_{\widetilde{\mathcal{S}}^{+}_{\infty}}(\tilde{x}^{+})
=\tilde{f}_{\widetilde{\mathcal{S}}^{+}_{\infty}}(\gamma^{-1}_{i}.\tilde{x}^{+}))$ converge in $\mathbb{H}^3$.

Also we know that for each $n\in\mathbb{N}$ and for every $i=1,...,l$, the differential $d_{\tilde{x}^{+}_{n}}\rho^{\mathcal{S}}_{n}(\gamma_{i})$
sends an orthonormal base $\{e^{n,i}_{1},e^{n,i}_{2},e^{n,i}_{3}\}$ of the tangent space $T_{\tilde{x}^{+}_{n}}\mathbb{H}^3$ to an orthonormal base $\{\hat{e}^{n,i}_{1},\hat{e}^{n,i}_{2},\hat{e}^{n,i}_{3}\}$
of $T_{\rho^{\mathcal{S}}_{n}(\gamma_{i}).\tilde{x}^{+}_{n}}\mathbb{H}^3$ (recall that, by constructions all the points $\tilde{x}^{+}_{n}$, $n\in\mathbb{N}$ coincide). Since the subsequences $\{{e}^{n,i}_{j}\}_{n\in\mathbb{N}}$,
$\{\hat{e}^{n,i}_{j}\}_{n\in\mathbb{N}}$, $j=1,2,3$, $i=1,...,l$, of unitary vectors are bounded, there exists
a sequence of positive integers $n_{k}\xrightarrow[k\rightarrow\infty]{}\infty$ such that the pairs of subsequences of orthonormal bases $\{e^{n_{k},i}_{1},e^{n_{k},i}_{2},e^{n_{k},i}_{3}\}_{k\in\mathbb{N}}$ and
$\{\hat{e}^{n_{k},i}_{1},\hat{e}^{n_{k},i}_{2},\hat{e}^{n_{k},i}_{3}\}_{k\in\mathbb{N}}$
converge all together ($i=1,...,l$) ensemble to orthonormal bases
$\{e^{\infty,i}_{1},e^{\infty,i}_{2},e^{\infty,i}_{3}\}$ and
$\{\hat{e}^{\infty,i}_{1},\hat{e}^{\infty,i}_{2},\hat{e}^{\infty,i}_{3}\}$.
Hence, by Lemma~\ref{chaaa_lemma_unique_determination_of_an_isometry}, there exists a hyperbolic isometry that we denote by $\rho^{\mathcal{S}}_{\infty}(\gamma_{i})$ which sends the point $\tilde{x}^{+}_{\infty}$ to the point  $\rho^{\mathcal{S}}_{\infty}(\gamma_{i}).\tilde{x}^{+}_{\infty}$ defined above, and which differential $d_{\tilde{x}^{+}_{\infty}}\rho^{\mathcal{S}}_{\infty}(\gamma_{i})$
sends an orthonormal base $\{e^{\infty,i}_{1},e^{\infty,i}_{2},e^{\infty,i}_{3}\}$ of the tangent space $T_{\tilde{x}^{+}_{\infty}}\mathbb{H}^3$ to an orthonormal base $\{\hat{e}^{\infty,i}_{1},\hat{e}^{\infty,i}_{2},\hat{e}^{\infty,i}_{3}\}$
of $T_{\rho^{\mathcal{S}}_{\infty}(\gamma_{i}).\tilde{x}^{+}_{\infty}}\mathbb{H}^3$ such that $\rho^{\mathcal{S}}_{n_{k}}(\gamma_{i})\Rightarrow\rho^{\mathcal{S}}_{\infty}(\gamma_{i})$ as $k\rightarrow\infty$.

Secondly, we derive that for any element $\gamma\in{\pi}_{1}(\mathcal{S})$ the subsequences of isometries $\rho^{\mathcal{S}}_{n_{k}}(\gamma)\Rightarrow\rho^{\mathcal{S}}_{\infty}(\gamma)$ converges as $k\rightarrow\infty$. Indeed, every $\gamma\in{\pi}_{1}(\mathcal{S})$ can be decomposed in a product of generators of ${\pi}_{1}(\mathcal{S})$ together with their inverse elements, for which the demanded convergence has already been shown.
\end{proof}

\begin{assumption}\label{chaaa_aspt_convergence_of_isometry_groups}
Further we assume that the sequence of holonomy representations $\{\rho^{\mathcal{S}}_{n}:{\pi}_{1}(\mathcal{S})\rightarrow\mathcal{I}(\mathbb{H}^3)\}_{n\in\mathbb{N}}$ (where the groups $\rho^{\mathcal{S}}_{n}({\pi}_{1}(\mathcal{S}))$ of isometries of $\mathbb{H}^3$ are quasi-Fuchsian) converges to a holonomy representation $\rho^{\mathcal{S}}_{\infty}:{\pi}_{1}(\mathcal{S})\rightarrow\mathcal{I}(\mathbb{H}^3)$ (where $\rho^{\mathcal{S}}_{\infty}({\pi}_{1}(\mathcal{S}))$ is a discrete group of isometries of $\mathbb{H}^3$) in the sense of Lemma~\ref{chaaa_lemma_unique_determination_of_an_isometry} as $n\rightarrow\infty$.
\end{assumption}

Let us now prove the following property of the functions
$\tilde{f}_{\widetilde{\mathcal{S}}^{+}_{\infty}}:\widehat{\Delta}^{+}\rightarrow\mathbb{H}^3$
and $\tilde{f}_{\widetilde{\mathcal{S}}^{-}_{\infty}}:\widehat{\Delta}^{-}\rightarrow\mathbb{H}^3$ with respect to the group of isometries $\rho^{\mathcal{S}}_{\infty}(\pi_{1}(\mathcal{S}))$ of space $\mathbb{H}^3$.

\begin{remark}\label{chaaa_remark_periodicity_of_limit_dev_applications}
If for a pair of points $\tilde{y}^{+}_{1},\tilde{y}^{+}_{2}\in\widehat{\Delta}^{+}$ there exists a transformation $\gamma^{+}\in\pi_{1}(\mathcal{S})$ such that $\tilde{y}^{+}_{2}=\gamma^{+}.\tilde{y}^{+}_{1}$, then the following equality holds:
\begin{equation}\label{chaaa_frm_periodicity_of_dev+_applications}
\tilde{f}_{\widetilde{\mathcal{S}}^{+}_{\infty}}(\tilde{y}^{+}_{2})=
\rho^{\mathcal{S}}_{\infty}(\gamma^{+}).\tilde{f}_{\widetilde{\mathcal{S}}^{+}_{\infty}}(\tilde{y}^{+}_{1}).
\end{equation}
Similarly, if for a pair of points $\tilde{y}^{-}_{1},\tilde{y}^{-}_{2}\in\widehat{\Delta}^{-}$ there exists a transformation $\gamma^{-}\in\pi_{1}(\mathcal{S})$ such that $\tilde{y}^{-}_{2}=\gamma^{-}.\tilde{y}^{-}_{1}$, then
\begin{equation*}\label{chaaa_frm_periodicity_of_dev-_applications}
\tilde{f}_{\widetilde{\mathcal{S}}^{-}_{\infty}}(\tilde{y}^{-}_{2})=
\rho^{\mathcal{S}}_{\infty}(\gamma^{-}).\tilde{f}_{\widetilde{\mathcal{S}}^{-}_{\infty}}(\tilde{y}^{-}_{1}).
\end{equation*}
\end{remark}
\begin{proof}
It suffices to prove the formula~(\ref{chaaa_frm_periodicity_of_dev+_applications}).

By Remark~\ref{chaaa_remark_periodicity_of_dev_applications}, the relation
\begin{equation}\label{chaaa_frm_periodicity_of_dev_applications_in_delta}
\tilde{f}_{\widetilde{\mathcal{S}}^{+}_{n}}(\tilde{y}^{+}_{2})=
\rho^{\mathcal{S}}_{n}(\gamma^{+}).\tilde{f}_{\widetilde{\mathcal{S}}^{+}_{n}}(\tilde{y}^{+}_{1})
\end{equation}
is valid for all $n\in\mathbb{N}$.

By Assumption~\ref{chaaa_aspt_convergence_of_dev_applications_in_delta},
the sequence $\{\tilde{f}_{\widetilde{\mathcal{S}}^{+}_{n}}(\tilde{y}^{+}_{2})\}_{n\in\mathbb{N}}\subset\mathbb{H}^3$ converges to the point $\tilde{f}_{\widetilde{\mathcal{S}}^{+}_{\infty}}(\tilde{y}^{+}_{2})\in\mathbb{H}^3$. Hence, taking into account the formula~(\ref{chaaa_frm_periodicity_of_dev_applications_in_delta}) we see that in order to prove the equality~(\ref{chaaa_frm_periodicity_of_dev+_applications}) we need to demonstrate the convergence of the sequence $\{\rho^{\mathcal{S}}_{n}(\gamma^{+}).\tilde{f}_{\widetilde{\mathcal{S}}^{+}_{n}}(\tilde{y}^{+}_{1})\}_{n\in\mathbb{N}}
\subset\mathbb{H}^3$ to the point $\rho^{\mathcal{S}}_{\infty}(\gamma^{+}).\tilde{f}_{\widetilde{\mathcal{S}}^{+}_{\infty}}(\tilde{y}^{+}_{1})$, i.e.,
fixing $\varepsilon>0$, we ought to find such $n_{0}\in\mathbb{N}$ that
\begin{equation}\label{chaaa_frm_eps-n_relation_to_prove}
\forall n>n_{0}\quad\mbox{the inequality}\quad\mathrm{d}_{\mathbb{H}^3}
(\rho^{\mathcal{S}}_{n}(\gamma^{+}).\tilde{f}_{\widetilde{\mathcal{S}}^{+}_{n}}(\tilde{y}^{+}_{1}),
\rho^{\mathcal{S}}_{\infty}(\gamma^{+}).\tilde{f}_{\widetilde{\mathcal{S}}^{+}_{\infty}}(\tilde{y}^{+}_{1}))
<\varepsilon\quad\mbox{holds}.
\end{equation}

First, by the above-mentioned Assumption~\ref{chaaa_aspt_convergence_of_dev_applications_in_delta},
the sequence $\{\tilde{f}_{\widetilde{\mathcal{S}}^{+}_{n}}(\tilde{y}^{+}_{1})\}_{n\in\mathbb{N}}\subset\mathbb{H}^3$ converges to the point $\tilde{f}_{\widetilde{\mathcal{S}}^{+}_{\infty}}(\tilde{y}^{+}_{1})\in\mathbb{H}^3$. Therefore,
\begin{equation}\label{chaaa_frm_eps-n_relation_for_y1+}
\exists n_{1}\in\mathbb{N}:\forall n>n_{1}\quad\mbox{the inequality}\quad\mathrm{d}_{\mathbb{H}^3}
(\tilde{f}_{\widetilde{\mathcal{S}}^{+}_{n}}(\tilde{y}^{+}_{1}),
\tilde{f}_{\widetilde{\mathcal{S}}^{+}_{\infty}}(\tilde{y}^{+}_{1}))<\frac{\varepsilon}{2}\quad\mbox{is valid}.
\end{equation}

Also, by Assumption~\ref{chaaa_aspt_convergence_of_isometry_groups}, $\rho^{\mathcal{S}}_{n}(\gamma^{+})\Rightarrow\rho^{\mathcal{S}}_{\infty}(\gamma^{+})$ as $n\rightarrow\infty$. Hence, by Lemma~\ref{chaaa_lemma_types_of_convergence_of_isometries}, the sequence of points $\{\rho^{\mathcal{S}}_{n}(\gamma^{+}).\tilde{f}_{\widetilde{\mathcal{S}}^{+}_{\infty}}(\tilde{y}^{+}_{1})\}_{n\in\mathbb{N}}\subset\mathbb{H}^3$ converges to the point $\rho^{\mathcal{S}}_{\infty}(\gamma^{+}).\tilde{f}_{\widetilde{\mathcal{S}}^{+}_{\infty}}(\tilde{y}^{+}_{1})\in\mathbb{H}^3$, i.e.
\begin{equation}\label{chaaa_frm_eps-n_relation_for_isometry_convergence}
\exists n_{2}\in\mathbb{N}:\forall n>n_{2}\quad\mbox{the inequality}\quad\mathrm{d}_{\mathbb{H}^3}
(\rho^{\mathcal{S}}_{n}(\gamma^{+}).\tilde{f}_{\widetilde{\mathcal{S}}^{+}_{\infty}}(\tilde{y}^{+}_{1}),
\rho^{\mathcal{S}}_{\infty}(\gamma^{+}).\tilde{f}_{\widetilde{\mathcal{S}}^{+}_{\infty}}(\tilde{y}^{+}_{1}))
<\frac{\varepsilon}{2}\quad\mbox{is true}.
\end{equation}

Applying the triangle inequality, we get:
\begin{equation*}\label{chaaa_frm_triangle_inequality_for_dev_apps_periodicity_1}
\mathrm{d}_{\mathbb{H}^3}
(\rho^{\mathcal{S}}_{n}(\gamma^{+}).\tilde{f}_{\widetilde{\mathcal{S}}^{+}_{n}}(\tilde{y}^{+}_{1}),
\rho^{\mathcal{S}}_{\infty}(\gamma^{+}).\tilde{f}_{\widetilde{\mathcal{S}}^{+}_{\infty}}(\tilde{y}^{+}_{1}))\leq
\end{equation*}
\begin{equation}\label{chaaa_frm_triangle_inequality_for_dev_apps_periodicity_2}
\mathrm{d}_{\mathbb{H}^3}
(\rho^{\mathcal{S}}_{n}(\gamma^{+}).\tilde{f}_{\widetilde{\mathcal{S}}^{+}_{n}}(\tilde{y}^{+}_{1}),
\rho^{\mathcal{S}}_{n}(\gamma^{+}).\tilde{f}_{\widetilde{\mathcal{S}}^{+}_{\infty}}(\tilde{y}^{+}_{1}))+
\mathrm{d}_{\mathbb{H}^3}
(\rho^{\mathcal{S}}_{n}(\gamma^{+}).\tilde{f}_{\widetilde{\mathcal{S}}^{+}_{\infty}}(\tilde{y}^{+}_{1}),
\rho^{\mathcal{S}}_{\infty}(\gamma^{+}).\tilde{f}_{\widetilde{\mathcal{S}}^{+}_{\infty}}(\tilde{y}^{+}_{1})).
\end{equation}
The fact that $\rho^{\mathcal{S}}_{n}(\gamma^{+})$ is an isometry of $\mathbb{H}^3$ implies the equality:
\begin{equation}\label{chaaa_frm_isometry property_for_dev_apps_periodicity}
\mathrm{d}_{\mathbb{H}^3}(\rho^{\mathcal{S}}_{n}(\gamma^{+}).\tilde{f}_{\widetilde{\mathcal{S}}^{+}_{n}}(\tilde{y}^{+}_{1}),
\rho^{\mathcal{S}}_{n}(\gamma^{+}).\tilde{f}_{\widetilde{\mathcal{S}}^{+}_{\infty}}(\tilde{y}^{+}_{1}))=
\mathrm{d}_{\mathbb{H}^3}(\tilde{f}_{\widetilde{\mathcal{S}}^{+}_{n}}(\tilde{y}^{+}_{1}),
\tilde{f}_{\widetilde{\mathcal{S}}^{+}_{\infty}}(\tilde{y}^{+}_{1})).
\end{equation}
Therefore, substituting~(\ref{chaaa_frm_isometry property_for_dev_apps_periodicity}) in~(\ref{chaaa_frm_triangle_inequality_for_dev_apps_periodicity_2}), we obtain:
\begin{equation*}\label{chaaa_frm_triangle+isometry_for_dev_apps_periodicity_1}
\mathrm{d}_{\mathbb{H}^3}
(\rho^{\mathcal{S}}_{n}(\gamma^{+}).\tilde{f}_{\widetilde{\mathcal{S}}^{+}_{n}}(\tilde{y}^{+}_{1}),
\rho^{\mathcal{S}}_{\infty}(\gamma^{+}).\tilde{f}_{\widetilde{\mathcal{S}}^{+}_{\infty}}(\tilde{y}^{+}_{1}))\leq
\end{equation*}
\begin{equation}\label{chaaa_frm_triangle+isometry_for_dev_apps_periodicity_2}
\mathrm{d}_{\mathbb{H}^3}
(\tilde{f}_{\widetilde{\mathcal{S}}^{+}_{n}}(\tilde{y}^{+}_{1}),
\tilde{f}_{\widetilde{\mathcal{S}}^{+}_{\infty}}(\tilde{y}^{+}_{1}))+
\mathrm{d}_{\mathbb{H}^3}
(\rho^{\mathcal{S}}_{n}(\gamma^{+}).\tilde{f}_{\widetilde{\mathcal{S}}^{+}_{\infty}}(\tilde{y}^{+}_{1}),
\rho^{\mathcal{S}}_{\infty}(\gamma^{+}).\tilde{f}_{\widetilde{\mathcal{S}}^{+}_{\infty}}(\tilde{y}^{+}_{1})).
\end{equation}
Hence, by~(\ref{chaaa_frm_triangle+isometry_for_dev_apps_periodicity_2}), (\ref{chaaa_frm_eps-n_relation_for_y1+}), and~(\ref{chaaa_frm_eps-n_relation_for_isometry_convergence}), we conclude that it is sufficient to pose $n_{0}=\max(n_{1},n_{2})$ to satisfy the condition~(\ref{chaaa_frm_eps-n_relation_to_prove}).
\end{proof}

Now we are able to extend the functions $\tilde{f}_{\widetilde{\mathcal{S}}^{+}_{\infty}}:
\widehat{\Delta}^{+}\rightarrow\mathbb{H}^3$ and $\tilde{f}_{\widetilde{\mathcal{S}}^{-}_{\infty}}:
\widehat{\Delta}^{-}\rightarrow\mathbb{H}^3$ to the whole domains $\widetilde{\mathcal{S}}^{+}$ and $\widetilde{\mathcal{S}}^{-}$. Let us do it as follows: for arbitrary points $\tilde{y}^{+}\in\widetilde{\mathcal{S}}^{+}$ and $\tilde{y}^{-}\in\widetilde{\mathcal{S}}^{-}$ we find such points $\tilde{y}^{+}_{\Delta}$ and $\tilde{y}^{-}_{\Delta}$ in the fundamental domains $\Delta^{+}\subset\widehat{\Delta}^{+}\subset\widetilde{\mathcal{S}}^{+}$ and $\Delta^{-}\subset\widehat{\Delta}^{-}\subset\widetilde{\mathcal{S}}^{-}$ of the surface $\mathcal{S}$ and such elements $\gamma^{+},\gamma^{-}\in{\pi}_{1}(\mathcal{S})$ that $\tilde{y}^{+}=\gamma^{+}.\tilde{y}^{+}_{\Delta}$ and $\tilde{y}^{-}=\gamma^{-}.\tilde{y}^{-}_{\Delta}$, then we define $\tilde{f}_{\widetilde{\mathcal{S}}^{+}_{\infty}}(\tilde{y}^{+})\stackrel{\mathrm{def}}{=}
\rho^{\mathcal{S}}_{\infty}(\gamma^{+}).\tilde{f}_{\widetilde{\mathcal{S}}^{+}_{\infty}}(\tilde{y}^{+}_{\Delta})$
and $\tilde{f}_{\widetilde{\mathcal{S}}^{-}_{\infty}}(\tilde{y}^{-})\stackrel{\mathrm{def}}{=}
\rho^{\mathcal{S}}_{\infty}(\gamma^{-}).\tilde{f}_{\widetilde{\mathcal{S}}^{-}_{\infty}}(\tilde{y}^{-}_{\Delta})$. By construction, the surfaces
$\widetilde{\mathcal{S}}^{+}_{\infty}\stackrel{\mathrm{def}}{=}
\tilde{f}_{\widetilde{\mathcal{S}}^{+}_{\infty}}(\widetilde{\mathcal{S}}^{+})$ and
$\widetilde{\mathcal{S}}^{-}_{\infty}\stackrel{\mathrm{def}}{=}
\tilde{f}_{\widetilde{\mathcal{S}}^{-}_{\infty}}(\widetilde{\mathcal{S}}^{-})$ are
invariant under the actions of the group $\rho^{\mathcal{S}}_{\infty}(\pi_{1}(\mathcal{S}))$ of isometries of $\mathbb{H}^3$.

Repeating almost literally the demonstration of Remark~\ref{chaaa_remark_periodicity_of_limit_dev_applications}, we can prove

\begin{lemma}\label{chaaa_lemma_convergence_of_infinite_surfaces}
The sequences of developing maps
$\{\tilde{f}_{\widetilde{\mathcal{S}}^{+}_{n}}:\widetilde{\mathcal{S}}^{+}\rightarrow\mathbb{H}^3\}_{n\in\mathbb{N}}$
and $\{\tilde{f}_{\widetilde{\mathcal{S}}^{-}_{n}}:\widetilde{\mathcal{S}}^{-}\rightarrow\mathbb{H}^3\}_{n\in\mathbb{N}}$ converge to continuous functions
$\tilde{f}_{\widetilde{\mathcal{S}}^{+}_{\infty}}:\widetilde{\mathcal{S}}^{+}\rightarrow\mathbb{H}^3$ and $\tilde{f}_{\widetilde{\mathcal{S}}^{-}_{\infty}}:\widetilde{\mathcal{S}}^{-}\rightarrow\mathbb{H}^3$.
\end{lemma}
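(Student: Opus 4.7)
The plan is to reduce the convergence at an arbitrary point of $\widetilde{\mathcal{S}}^{+}$ (respectively $\widetilde{\mathcal{S}}^{-}$) to convergence on the fundamental patch $\widehat{\Delta}^{+}$ (resp.\ $\widehat{\Delta}^{-}$) already established in Assumption~\ref{chaaa_aspt_convergence_of_dev_applications_in_delta}, by exploiting the equivariance of both the finite developing maps (Remark~\ref{chaaa_remark_periodicity_of_dev_applications}) and the limit maps (Remark~\ref{chaaa_remark_periodicity_of_limit_dev_applications}), together with the convergence of the holonomy representations (Assumption~\ref{chaaa_aspt_convergence_of_isometry_groups}).

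More precisely, I would take an arbitrary $\tilde{y}^{+}\in\widetilde{\mathcal{S}}^{+}$ and, using that $\Delta^{+}$ is a fundamental domain for the action of $\pi_{1}(\mathcal{S})$ on $\widetilde{\mathcal{S}}^{+}$, write $\tilde{y}^{+}=\gamma^{+}.\tilde{y}^{+}_{\Delta}$ for some $\tilde{y}^{+}_{\Delta}\in\Delta^{+}\subset\widehat{\Delta}^{+}$ and some $\gamma^{+}\in\pi_{1}(\mathcal{S})$. By Remark~\ref{chaaa_remark_periodicity_of_dev_applications} we have
\begin{equation*}
\tilde{f}_{\widetilde{\mathcal{S}}^{+}_{n}}(\tilde{y}^{+})=
\rho^{\mathcal{S}}_{n}(\gamma^{+}).\tilde{f}_{\widetilde{\mathcal{S}}^{+}_{n}}(\tilde{y}^{+}_{\Delta}),
\end{equation*}
whereas the extension of the limit map was defined precisely by
\begin{equation*}
\tilde{f}_{\widetilde{\mathcal{S}}^{+}_{\infty}}(\tilde{y}^{+})=
\rho^{\mathcal{S}}_{\infty}(\gamma^{+}).\tilde{f}_{\widetilde{\mathcal{S}}^{+}_{\infty}}(\tilde{y}^{+}_{\Delta}).
\end{equation*}
It therefore remains to prove, for this particular $\tilde{y}^{+}_{\Delta}\in\widehat{\Delta}^{+}$ and this particular $\gamma^{+}$, that
$\rho^{\mathcal{S}}_{n}(\gamma^{+}).\tilde{f}_{\widetilde{\mathcal{S}}^{+}_{n}}(\tilde{y}^{+}_{\Delta})$
converges in $\mathbb{H}^{3}$ to
$\rho^{\mathcal{S}}_{\infty}(\gamma^{+}).\tilde{f}_{\widetilde{\mathcal{S}}^{+}_{\infty}}(\tilde{y}^{+}_{\Delta})$.

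This is exactly what was verified in the proof of Remark~\ref{chaaa_remark_periodicity_of_limit_dev_applications}, and I would reproduce that estimate here: setting $A_{n}:=\tilde{f}_{\widetilde{\mathcal{S}}^{+}_{n}}(\tilde{y}^{+}_{\Delta})$ and $A_{\infty}:=\tilde{f}_{\widetilde{\mathcal{S}}^{+}_{\infty}}(\tilde{y}^{+}_{\Delta})$, the triangle inequality combined with the fact that $\rho^{\mathcal{S}}_{n}(\gamma^{+})$ is an isometry of $\mathbb{H}^{3}$ gives
\begin{equation*}
\mathrm{d}_{\mathbb{H}^3}\bigl(\rho^{\mathcal{S}}_{n}(\gamma^{+}).A_{n},\,\rho^{\mathcal{S}}_{\infty}(\gamma^{+}).A_{\infty}\bigr)\leq
\mathrm{d}_{\mathbb{H}^3}(A_{n},A_{\infty})+\mathrm{d}_{\mathbb{H}^3}\bigl(\rho^{\mathcal{S}}_{n}(\gamma^{+}).A_{\infty},\,\rho^{\mathcal{S}}_{\infty}(\gamma^{+}).A_{\infty}\bigr).
\end{equation*}
The first summand tends to $0$ by Assumption~\ref{chaaa_aspt_convergence_of_dev_applications_in_delta} applied to the point $\tilde{y}^{+}_{\Delta}\in\widehat{\Delta}^{+}$; the second tends to $0$ by Assumption~\ref{chaaa_aspt_convergence_of_isometry_groups} together with Lemma~\ref{chaaa_lemma_types_of_convergence_of_isometries}, which translates the convergence $\rho^{\mathcal{S}}_{n}(\gamma^{+})\Rightarrow\rho^{\mathcal{S}}_{\infty}(\gamma^{+})$ into pointwise ("weak") convergence, in particular at the fixed point $A_{\infty}\in\mathbb{H}^{3}$. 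The same argument works verbatim for $\widetilde{\mathcal{S}}^{-}$ using $\widehat{\Delta}^{-}$.

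Continuity of $\tilde{f}_{\widetilde{\mathcal{S}}^{\pm}_{\infty}}$ on the whole of $\widetilde{\mathcal{S}}^{\pm}$ then follows at once: on each translate $\gamma.\Delta^{\pm}$ the map coincides, by its very definition, with the composition of the continuous map $\tilde{f}_{\widetilde{\mathcal{S}}^{\pm}_{\infty}}|_{\widehat{\Delta}^{\pm}}$ (from Lemma~\ref{chaaa_lemma_convergence_of_domains}) with the isometry $\rho^{\mathcal{S}}_{\infty}(\gamma)$ of $\mathbb{H}^{3}$, and the definitions on overlapping patches agree by the equivariance we just established, so the pieces glue continuously. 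I do not expect any real obstacle here; the whole statement is essentially a bookkeeping extension by $\pi_{1}(\mathcal{S})$-equivariance of the convergence already known on $\widehat{\Delta}^{\pm}$, and the only technical point is the triangle inequality already carried out in Remark~\ref{chaaa_remark_periodicity_of_limit_dev_applications}.
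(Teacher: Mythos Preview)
Your proposal is correct and follows essentially the same approach as the paper, which simply states that the lemma is proved by ``repeating almost literally the demonstration of Remark~\ref{chaaa_remark_periodicity_of_limit_dev_applications}''. You have faithfully unpacked that hint: reduce to a point of $\Delta^{+}$ via the $\pi_1(\mathcal{S})$-action, use the equivariance of both the finite developing maps and the extended limit map, and then apply the same triangle-inequality estimate (Assumption~\ref{chaaa_aspt_convergence_of_dev_applications_in_delta}, Assumption~\ref{chaaa_aspt_convergence_of_isometry_groups}, Lemma~\ref{chaaa_lemma_types_of_convergence_of_isometries}) as in Remark~\ref{chaaa_remark_periodicity_of_limit_dev_applications}.
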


Finally, we show

\begin{remark}\label{chaaa_remark_group_rho_infty_is_quasi-fuchsian}
The boundaries at infinity $\partial_{\infty}\widetilde{\mathcal{S}}^{+}_{\infty}\subset\partial_{\infty}\mathbb{H}^3$ and $\partial_{\infty}\widetilde{\mathcal{S}}^{-}_{\infty}\subset\partial_{\infty}\mathbb{H}^3$ of the surfaces $\widetilde{\mathcal{S}}^{+}_{\infty}$ and $\widetilde{\mathcal{S}}^{-}_{\infty}$ coincide with the limit set $\Lambda_{\rho^{\mathcal{S}}_{\infty}}$ of the group $\rho^{\mathcal{S}}_{\infty}({\pi}_{1}(\mathcal{S}))$. Moreover, the group $\rho^{\mathcal{S}}_{\infty}({\pi}_{1}(\mathcal{S}))$ of isometries of $\mathbb{H}^3$ from Lemma~\ref{chaaa_lemma_converging_subsequence_of_isometries} is quasi-Fuchsian.
\end{remark}
\begin{proof}
By Lemma~\ref{chaaa_lemma_convergence_of_infinite_surfaces}, the sequences of surfaces $\{\widetilde{\mathcal{S}}^{+}_{n}\}_{n\in\mathbb{N}}$ and $\{\widetilde{\mathcal{S}}^{-}_{n}\}_{n\in\mathbb{N}}$ bounding the convex connected hyperbolic domains $\{\widetilde{\mathcal{M}}_{n}\}_{n\in\mathbb{N}}$ converge to the surfaces $\widetilde{\mathcal{S}}^{+}_{\infty}$ and $\widetilde{\mathcal{S}}^{-}_{\infty}$ in $\mathbb{H}^3$. Hence, the sets $\{\widetilde{\mathcal{M}}_{n}\}_{n\in\mathbb{N}}$ converge to a convex connected hyperbolic domain $\widetilde{\mathcal{M}}_{\infty}$.
Moreover, the boundaries at infinity $\{\partial_{\infty}\widetilde{\mathcal{S}}^{+}_{n}\}_{n\in\mathbb{N}}$ and $\{\partial_{\infty}\widetilde{\mathcal{S}}^{-}_{n}\}_{n\in\mathbb{N}}$ converge to the curves $\partial_{\infty}\widetilde{\mathcal{S}}^{+}_{\infty}\subset\partial_{\infty}\mathbb{H}^3$ and $\partial_{\infty}\widetilde{\mathcal{S}}^{-}_{\infty}\subset\partial_{\infty}\mathbb{H}^3$. Indeed, our surfaces in the Poincar\'e disc model of $\mathbb{H}^3$ considered as Euclidean surfaces inside a unitary ball converge together with their boundaries.

Recall that, by the Labourie-Schlenker Theorem \ref{chaaa_thm_labourie-schlenker}, for each $n\in\mathbb{N}$ the curves $\partial_{\infty}\widetilde{\mathcal{S}}^{+}_{n}$ and $\partial_{\infty}\widetilde{\mathcal{S}}^{-}_{n}$ coincide with the limit set $\Lambda_{\rho^{\mathcal{S}}_{n}}$ of the quasi-Fuchsian holonomy representations
$\rho^{\mathcal{S}}_{n}({\pi}_{1}(\mathcal{S}))$ which is homotopic to a circle in $\partial_{\infty}\mathbb{H}^3$. On the other hand, by Assumption~\ref{chaaa_aspt_convergence_of_isometry_groups}, $\rho^{\mathcal{S}}_{n}({\pi}_{1}(\mathcal{S}))\Rightarrow\rho^{\mathcal{S}}_{\infty}({\pi}_{1}(\mathcal{S}))$ as $n\rightarrow\infty$, which implies that the sequence of the limit sets $\{\Lambda_{\rho^{\mathcal{S}}_{n}}\}_{n\in\mathbb{N}}$ converges to the limit set $\Lambda_{\rho^{\mathcal{S}}_{\infty}}$ (see, for instance, \cite[p.~323]{chaaa_Mat2004}).

Thus, the boundaries at infinity $\partial_{\infty}\widetilde{\mathcal{S}}^{+}_{\infty}$ and $\partial_{\infty}\widetilde{\mathcal{S}}^{-}_{\infty}$ of the surfaces $\widetilde{\mathcal{S}}^{+}_{\infty}$ and $\widetilde{\mathcal{S}}^{-}_{\infty}$ coincide with the limit set $\Lambda_{\rho^{\mathcal{S}}_{\infty}}$ of the group $\rho^{\mathcal{S}}_{\infty}({\pi}_{1}(\mathcal{S}))$. Furthermore, we conclude that the boundary $\partial\widetilde{\mathcal{M}}_{\infty}$ of the domain $\widetilde{\mathcal{M}}_{\infty}$ consists of the surfaces $\widetilde{\mathcal{S}}^{+}_{\infty}$ and $\widetilde{\mathcal{S}}^{-}_{\infty}$, and the boundary at infinity $\partial_{\infty}\widetilde{\mathcal{M}}_{\infty}$ of $\widetilde{\mathcal{M}}_{\infty}$ also coincides with $\Lambda_{\rho^{\mathcal{S}}_{\infty}}$.

Since the surfaces $\widetilde{\mathcal{S}}^{+}_{\infty}$ and $\widetilde{\mathcal{S}}^{-}_{\infty}$ are topological discs embedded in $\mathbb{H}^3$, their common boundary at infinity is homotopic to a circle. Therefore, by definition, the group $\rho^{\mathcal{S}}_{\infty}({\pi}_{1}(\mathcal{S}))$ is quasi-Fuchsian.
\end{proof}

Note that the domain $\widetilde{\mathcal{M}}_{\infty}$ which appeared during the demonstration of Remark~\ref{chaaa_remark_group_rho_infty_is_quasi-fuchsian}, is invariant under the actions of the quasi-Fuchsian group $\rho^{\mathcal{S}}_{\infty}({\pi}_{1}(\mathcal{S}))$ of isometries of $\mathbb{H}^3$.

\subsubsection{Adaptation of a classical theorem of A.~D.~Alexandrov to the hyperbolic case} \label{chaaa_sec_adaptation_of_Alexandrov_theorem}

Recall a classical result due to A.~D.~Alexandrov:
\begin{theorem}[Theorem~1 in Sec.~1 of Chapter III \cite{chaaa_ADA2006}, p.~91]\label{chaaa_theorem_alexandrov_convergence_of_convex_surfaces}
If a sequence of closed convex surfaces $\mathcal{F}_{n}$ converges to a closed convex surface $\mathcal{F}$ and if two sequences of points $X_n$ and $Y_n$ on $\mathcal{F}_{n}$ converge to two points $X$ and $Y$ of $\mathcal{F}$, respectively, then the distances between the points $X_n$ and $Y_n$ measured on the surfaces $\mathcal{F}_{n}$ converge to the distance between the points $X$ and $Y$ measured on $\mathcal{F}$, i.e., $\mathrm{d}_{\mathcal{F}}(X,Y)={\lim}_{n\rightarrow\infty}\mathrm{d}_{\mathcal{F}_{n}}(X_{n},Y_{n})$.
\end{theorem}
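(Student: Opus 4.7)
The plan is to prove the two semicontinuity inequalities
\[
\liminf_{n\to\infty}\mathrm{d}_{\mathcal{F}_{n}}(X_{n},Y_{n})\;\geq\;\mathrm{d}_{\mathcal{F}}(X,Y)
\qquad\text{and}\qquad
\limsup_{n\to\infty}\mathrm{d}_{\mathcal{F}_{n}}(X_{n},Y_{n})\;\leq\;\mathrm{d}_{\mathcal{F}}(X,Y)
\]
separately. Two ingredients will be essential: the Hausdorff convergence of the surfaces in the ambient $\mathbb{H}^{3}$-metric, which allows finite chains of reference points on $\mathcal{F}$ to be transplanted onto $\mathcal{F}_{n}$; and the convexity of each $\mathcal{F}_{n}$ and of $\mathcal{F}$, which, by a cap argument of Alexandrov type, forces the intrinsic distance to be comparable to the extrinsic one on sufficiently small scales, uniformly in $n$.

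For the upper bound, I would fix $\varepsilon>0$, pick a shortest arc $\gamma$ on $\mathcal{F}$ from $X$ to $Y$, and subdivide it into a chain $X=z_{0},z_{1},\ldots,z_{k}=Y$ with consecutive ambient distances at most a small $\eta>0$. Using Hausdorff convergence, I would select $z_{i}^{n}\in\mathcal{F}_{n}$ with $z_{i}^{n}\to z_{i}$ in $\mathbb{H}^{3}$, imposing $z_{0}^{n}=X_{n}$, $z_{k}^{n}=Y_{n}$. The crux is an auxiliary local lemma: for closed convex surfaces converging in Hausdorff sense and all sufficiently large $n$, any two points $p,q\in\mathcal{F}_{n}$ with $\mathrm{d}_{\mathbb{H}^{3}}(p,q)\leq\eta$ can be joined within $\mathcal{F}_{n}$ by an arc of length at most $\mathrm{d}_{\mathbb{H}^{3}}(p,q)\cdot(1+\omega(\eta))$, where $\omega(\eta)\to 0$ as $\eta\to 0$ independently of $n$. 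Summing over the chain, then letting $n\to\infty$, $\eta\to 0$, and $\varepsilon\to 0$, produces the upper bound.

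For the lower bound, let $\gamma_{n}\subset\mathcal{F}_{n}$ be a shortest arc joining $X_{n}$ to $Y_{n}$, parametrized proportionally to arc length on $[0,1]$. The upper bound already proved supplies a uniform $L(\gamma_{n})\leq C<\infty$, so the curves are equi-$C$-Lipschitz into $\mathbb{H}^{3}$ and their images lie in a common compact region of $\mathbb{H}^{3}$ by Hausdorff convergence of $\mathcal{F}_{n}$. The Arzel\`a-Ascoli Theorem~\ref{chaaa_theorem_arzela-ascoli} furnishes a subsequence converging uniformly to a continuous curve $\gamma\colon[0,1]\to\mathcal{F}$ from $X$ to $Y$, and lower semicontinuity of the length functional under uniform convergence yields $\mathrm{d}_{\mathcal{F}}(X,Y)\leq L(\gamma)\leq\liminf L(\gamma_{n})=\liminf\mathrm{d}_{\mathcal{F}_{n}}(X_{n},Y_{n})$.

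The hard part will be the uniform local intrinsic-vs-extrinsic comparison used in the upper bound: a priori nothing forbids the $\mathcal{F}_{n}$ from developing sharp narrow spikes on scales finer than $\eta$. The plan is to adapt Alexandrov's Euclidean cap argument to $\mathbb{H}^{3}$: at each $z_{i}\in\mathcal{F}$ select a supporting totally geodesic plane $\Pi_{i}$; by Hausdorff convergence, a slight perturbation of $\Pi_{i}$ supports $\mathcal{F}_{n}$ near $z_{i}^{n}$ for large $n$. Trapping a small cap of $\mathcal{F}_{n}$ between $\Pi_{i}$ and its $O(\eta)$-translate, convexity together with the hyperbolic perimeter estimate $2\pi\sinh(r)=O(r)$ for small $r$ bounds the intrinsic diameter of the cap by $O(\sqrt{\eta})$, and the uniformity in $n$ follows from the common supporting data.
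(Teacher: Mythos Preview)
Your lower-bound half is fine and matches the classical argument (Arzel\`a--Ascoli plus lower semicontinuity of length).

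The upper-bound half has a genuine gap. The ``auxiliary local lemma'' you need, namely that nearby points $p,q\in\mathcal{F}_{n}$ satisfy $\mathrm{d}_{\mathcal{F}_{n}}(p,q)\leq(1+\omega(\eta))\,\mathrm{d}_{\mathbb{H}^{3}}(p,q)$ with $\omega(\eta)\to 0$ uniformly in $n$, is simply false for general convex surfaces: at an edge or cone point the ratio of intrinsic to extrinsic distance stays bounded away from~$1$ at all scales (unfold a right-angled edge and you get the ratio $\sqrt{2}$, for sharper cones it is worse). Your own cap argument confirms this: you only extract an $O(\sqrt{\eta})$ bound on the intrinsic diameter of the cap, and summing $\sim L/\eta$ segments with that estimate gives $\sim L/\sqrt{\eta}\to\infty$, not $L$. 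Choosing the chain points $z_{i}$ to be smooth points of $\mathcal{F}$ does not help, because $\mathcal{F}_{n}$ may well have its own cone points arbitrarily close to each $z_{i}^{n}$.

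The paper (following Alexandrov) bypasses any local intrinsic/extrinsic comparison. Instead it shrinks each $\mathcal{F}_{n}$ by a homothety with coefficient $\lambda_{n}\to 1$ centred at an interior point of~$\mathcal{F}$, so that $\lambda_{n}\mathcal{F}_{n}$ lies \emph{inside} the convex body bounded by~$\mathcal{F}$. Then the nearest-point projection onto the convex body bounded by $\lambda_{n}\mathcal{F}_{n}$ is $1$-Lipschitz (Lemma~\ref{chaaa_lemma_no-1_used_in_alexandrov_theorem}), so a shortest arc on $\mathcal{F}$ from $X$ to $Y$ projects to a curve on $\lambda_{n}\mathcal{F}_{n}$ of length at most $\mathrm{d}_{\mathcal{F}}(X,Y)$ joining the projections $X_{n}',Y_{n}'$. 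The only remaining work is to show $\mathrm{d}_{\lambda_{n}\mathcal{F}_{n}}(\lambda_{n}X_{n},X_{n}')\to 0$ (Lemma~\ref{chaaa_lemma_no-2_used_in_alexandrov_theorem}, which \emph{is} the correct use of the cap estimate: it gives $o(1)$, not $(1+o(1))\eta$) and to control how the homothety distorts hyperbolic lengths; in the Klein model this is a computation yielding a factor $\frac{\lambda_{n}(1-\rho_{\mathcal{B}}^{2})}{1-\lambda_{n}^{2}\rho_{\mathcal{B}}^{2}}\to 1$. This global projection device is what replaces---and renders unnecessary---the uniform local comparison you were trying to prove.
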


A.~D.~Alexandrov demonstrated this theorem in Euclidean $3$-space. Slightly modifying his proof, here we show the validity of Theorem~\ref{chaaa_theorem_alexandrov_convergence_of_convex_surfaces} in hyperbolic space $\mathbb{H}^3$. We will largely use this result in Section~\ref{chaaa_sec_induced_metrics_of_limit_surfaces}.

First we remark that the proof of Theorem~\ref{chaaa_theorem_alexandrov_convergence_of_convex_surfaces} in the Euclidean case is based on the two following lemmas which hold true in all Hadamard spaces (i.e. in the hyperbolic space as well), and it uses the mentioned below properties of the arc length in any complete metric space:

\begin{lemma}[Lemma~2 in Sec.~1 of Chapter III \cite{chaaa_ADA2006}, p.~93]\label{chaaa_lemma_no-1_used_in_alexandrov_theorem} If a curve $L$ lies outside a closed convex surface $\mathcal{F}$, then the length of this curve is not less than the distance on $\mathcal{F}$ between the projections of its endpoints to the surface $\mathcal{F}$. In particular, if the ends $A$ and $B$ of the curve $L$ lie on $\mathcal{F}$, then the length of the curve $L$ is not less than the length of the shortest arc $AB$ on the surface $\mathcal{F}$.
\end{lemma}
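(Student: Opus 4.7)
The plan is to adapt A.~D.~Alexandrov's original Euclidean argument to $\mathbb{H}^{3}$ by replacing the Euclidean nearest-point projection with the hyperbolic one. Let $\mathcal{K}$ denote the closed convex body bounded by the closed convex surface $\mathcal{F}$, and introduce the nearest-point projection $\pi:\mathbb{H}^{3}\rightarrow\mathcal{K}$, assigning to every $p\in\mathbb{H}^{3}$ the unique point $\pi(p)\in\mathcal{K}$ minimising $\mathrm{d}_{\mathbb{H}^{3}}(p,\cdot)$ on $\mathcal{K}$; existence and uniqueness of $\pi(p)$ follow from the facts that $\mathbb{H}^{3}$ is a Hadamard space and $\mathcal{K}$ is closed and convex. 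Points lying outside $\mathcal{K}$ are sent by $\pi$ onto $\partial\mathcal{K}=\mathcal{F}$.

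The first step, and the main technical point of the proof, is to verify that $\pi$ is $1$-Lipschitz, i.e.\ that $\mathrm{d}_{\mathbb{H}^{3}}(\pi(p),\pi(q))\leq\mathrm{d}_{\mathbb{H}^{3}}(p,q)$ for all $p,q\in\mathbb{H}^{3}$. This is the standard non-expansion property of nearest-point projection onto a convex subset of a $\mathrm{CAT}(0)$-space, and since $\mathbb{H}^{3}$ is even $\mathrm{CAT}(-1)$, it applies \emph{a~fortiori}. Two standard ways to establish it are either the convexity of the function $p\mapsto\mathrm{d}_{\mathbb{H}^{3}}(p,\mathcal{K})$ along geodesics combined with the first variation of arc-length at $\pi(p)$ and $\pi(q)$, or a direct use of the hyperbolic law of cosines applied to a suitable comparison triangle. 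The negative curvature actually yields a strict inequality, but the weak inequality is all that we need.

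The second step is to apply $\pi$ to the given curve $L:[0,1]\rightarrow\mathbb{H}^{3}$ lying outside $\mathcal{F}$, setting $\tilde{L}\stackrel{\mathrm{def}}{=}\pi\circ L$. By the $1$-Lipschitz property of $\pi$, the curve $\tilde{L}$ is continuous and its length in $\mathbb{H}^{3}$ does not exceed the length of $L$. Since every point $L(t)$ lies outside $\mathcal{K}$, one has $\tilde{L}([0,1])\subset\mathcal{F}$, so $\tilde{L}$ is a continuous curve on the surface $\mathcal{F}$ joining the projections $\pi(L(0))$ and $\pi(L(1))$ of the endpoints of $L$.

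Finally, by definition of the intrinsic distance on $\mathcal{F}$, the distance on $\mathcal{F}$ between the two projected endpoints is the infimum of the lengths of curves on $\mathcal{F}$ joining them; this infimum is therefore bounded from above by the length of $\tilde{L}$, which in turn is bounded from above by the length of $L$, yielding the first assertion. The second assertion is the specialisation to the case where the endpoints of $L$ lie on $\mathcal{F}$, since then $\pi$ fixes these endpoints and the shortest arc from $A$ to $B$ on $\mathcal{F}$ realises the intrinsic distance. The only delicate ingredient is the $1$-Lipschitz property of $\pi$ in $\mathbb{H}^{3}$; once it is granted, the argument is a direct length comparison identical in spirit to Alexandrov's Euclidean treatment.
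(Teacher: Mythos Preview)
Your argument is correct. Note, however, that the paper does not supply its own proof of this lemma: it is quoted from Alexandrov's book and the paper merely asserts in one line that it, together with Lemma~\ref{chaaa_lemma_no-2_used_in_alexandrov_theorem}, ``hold[s] true in all Hadamard spaces (i.e.\ in the hyperbolic space as well)''. Your write-up is therefore a correct fleshing-out of that assertion rather than an alternative to a proof given in the paper. The key ingredient you isolate --- that nearest-point projection onto a closed convex subset of a $\mathrm{CAT}(0)$ space is $1$-Lipschitz --- is exactly the standard fact underlying the paper's claim, and indeed the paper invokes this contracting property elsewhere (in the proof of Remark~\ref{chbb_remark_TT'_inside_OO'PP'}, citing \cite{chbb_BaGrSc1985} and \cite{chaaa_CaEpGr2006}).
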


\begin{lemma}[Lemma~3 in Sec.~1 of Chapter III \cite{chaaa_ADA2006}, p.~93]\label{chaaa_lemma_no-2_used_in_alexandrov_theorem} If a sequence of closed convex surfaces $\mathcal{F}_{n}$ converges to a nondegenerate surface $\mathcal{F}$ and if points $X_n$ and $Y_n$ converge to the same point $X$ on $\mathcal{F}$, then the distance between $X_n$ and $Y_n$ on $\mathcal{F}_{n}$ converges to zero: ${\lim}_{n\rightarrow\infty}\mathrm{d}_{\mathcal{F}_{n}}(X_{n},Y_{n})=0$.
\end{lemma}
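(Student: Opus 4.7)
The strategy is to adapt A.~D.~Alexandrov's Euclidean proof of Lemma~3 to $\mathbb{H}^{3}$ by replacing Euclidean rays by hyperbolic geodesic rays and controlling intrinsic distances through a radial parametrization from an interior point. Since $\mathcal{F}$ is nondegenerate, it bounds a convex body $K\subset\mathbb{H}^{3}$ with nonempty interior; I fix $O\in\operatorname{int}K$ and $r>0$ with $\overline{B(O,r)}\subset\operatorname{int}K$. By the convergence $\mathcal{F}_{n}\to\mathcal{F}$, the ball $\overline{B(O,r/2)}$ lies in the interior of the convex body $K_{n}$ bounded by $\mathcal{F}_{n}$ for all $n$ sufficiently large, and all $\mathcal{F}_{n}$ are contained in some fixed compact ball $\overline{B(O,R)}$ of $\mathbb{H}^{3}$.

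For each such $n$ I define a radial projection $\phi_{n}\colon\mathcal{F}_{n}\to\partial B(O,r/2)$ by sending $Z\in\mathcal{F}_{n}$ to the unique intersection point of the hyperbolic segment $[OZ]$ with $\partial B(O,r/2)$. Convexity of $K_{n}$ together with the inclusion $\overline{B(O,r/2)}\subset\operatorname{int}K_{n}$ makes $\phi_{n}$ a homeomorphism whose inverse $\psi_{n}\colon\partial B(O,r/2)\to\mathcal{F}_{n}$ is the radial parametrization of $\mathcal{F}_{n}$ from $O$, encoded by a positive function $t_{n}(v)=\mathrm{d}_{\mathbb{H}^{3}}(O,\psi_{n}(v))$ on $\partial B(O,r/2)$.

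The heart of the proof is then a \emph{uniform local Lipschitz bound} for the maps $\psi_{n}$: I claim that there exist constants $C,\delta_{0}>0$ independent of $n$ such that, for all $n$ large and all $v,w\in\partial B(O,r/2)$ with $\mathrm{d}(v,w)<\delta_{0}$,
\begin{equation*}
\mathrm{d}_{\mathcal{F}_{n}}(\psi_{n}(v),\psi_{n}(w))\leq C\,\mathrm{d}(v,w).
\end{equation*}
This reduces to two classical facts about convex bodies that transfer to $\mathbb{H}^{3}$: first, a convex body sandwiched between the concentric balls $B(O,r/2)$ and $B(O,R)$ has a radial function $t_{n}$ which is Lipschitz on $\partial B(O,r/2)$ with constant depending only on $r$ and $R$ (a planar two-chord argument in the hyperbolic plane through $O$, $v$, $w$, or equivalently in the Klein model, where hyperbolic geodesics appear as Euclidean segments and the estimate reduces to the Euclidean one); second, given this Lipschitz bound, a short arc $\gamma$ on $\partial B(O,r/2)$ of length $\ell$ has image $\psi_{n}\circ\gamma$ of length at most $C\ell$ on $\mathcal{F}_{n}$, the tangent vector along $\psi_{n}\circ\gamma$ decomposing into an angular piece controlled by $R/r$ and a radial piece controlled by $\mathrm{Lip}(t_{n})$.

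Once this estimate is in hand, the conclusion is immediate: since $X_{n},Y_{n}\to X$ in $\mathbb{H}^{3}$, both $\phi_{n}(X_{n})$ and $\phi_{n}(Y_{n})$ converge to the point $\phi(X)\in\partial B(O,r/2)$, so $\mathrm{d}(\phi_{n}(X_{n}),\phi_{n}(Y_{n}))\to 0$ and hence $\mathrm{d}_{\mathcal{F}_{n}}(X_{n},Y_{n})\to 0$. I expect the main technical obstacle to be precisely the uniform Lipschitz estimate for the radial functions $t_{n}$: while the underlying planar convexity argument is classical in $\mathbb{R}^{3}$, in $\mathbb{H}^{3}$ one must verify carefully that the Lipschitz constant depends only on $r$ and $R$ and not on finer geometric features of the surfaces $\mathcal{F}_{n}$, which is most cleanly handled by transporting the problem to the Klein model of $\mathbb{H}^{3}$.
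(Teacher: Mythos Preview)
The paper does not supply its own proof of this lemma: it is quoted verbatim from Alexandrov's monograph, accompanied only by the remark that this lemma and its companion ``hold true in all Hadamard spaces (i.e.\ in the hyperbolic space as well).'' There is thus no detailed argument in the paper to compare against.

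Your outline is a correct and self-contained proof, and it is essentially Alexandrov's own argument transported to $\mathbb{H}^{3}$. The radial parametrization from an interior point, together with the uniform sandwich $B(O,r/2)\subset K_{n}\subset B(O,R)$, does yield a Lipschitz constant for the maps $\psi_{n}$ depending only on $r$ and $R$; your suggestion to carry out the planar convexity estimate in the Klein model---where geodesics are Euclidean segments, convex sets are Euclidean-convex, and the hyperbolic and Euclidean metrics are bi-Lipschitz equivalent on the compact ball $\overline{B(O,R)}$---is the cleanest way to reduce the hyperbolic Lipschitz bound to the classical Euclidean one. In short, you have supplied the proof that the paper merely cites.
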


\begin{property}[Theorem~3 in Sec.~2 of Chapter II \cite{chaaa_ADA2006}, p.~66]\label{chaaa_property_no-1_used_in_alexandrov_theorem} There is a shortest arc of every two points on a manifold with complete intrinsic metric.
\end{property}

\begin{property}[Theorem~4 in Sec.~1 of Chapter II \cite{chaaa_ADA2006}, p.~59]\label{chaaa_property_no-2_used_in_alexandrov_theorem} We can choose a convergent subsequence from each infinite set of curves in a compact domain of length not exceeding a given one.
\end{property}

\begin{property}[Theorem~5 in Sec.~1 of Chapter II \cite{chaaa_ADA2006}, p.~59]\label{chaaa_property_no-2_used_in_alexandrov_theorem} If curves $L_n$ converge to a curve $L$, then the length of $L$ is not greater than the lower limit of the lengths of $L_n$.
\end{property}

However, there is a place in the proof of Theorem~\ref{chaaa_theorem_alexandrov_convergence_of_convex_surfaces} which uses some particular properties of Euclidean space, specifically, of the Euclidean homothety. In the following statement we formulate what is shown there:

\begin{lemma}\label{chaaa_lemma_inequality_in_alexandrov_theorem}
If a sequence of closed convex surfaces $\mathcal{F}_{n}$ converges to a nondegenerate closed convex surface $\mathcal{F}$ and if two sequences of points $X_n$ and $Y_n$ on $\mathcal{F}_{n}$ converge to two points $X$ and $Y$ of $\mathcal{F}$, respectively, then
\begin{equation}\label{chaaa_frm_inequality_in_alexandrov_theorem}
{\limsup}_{n\rightarrow\infty}\mathrm{d}_{\mathcal{F}_{n}}(X_{n},Y_{n})\leq\mathrm{d}_{\mathcal{F}}(X,Y).
\end{equation}
\end{lemma}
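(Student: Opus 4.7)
The plan is to mimic A.~D.~Alexandrov's Euclidean argument, substituting a hyperbolic ``radial expansion'' for the Euclidean homothety which is, as the authors note, the single step that does not transfer automatically. By Property~\ref{chaaa_property_no-1_used_in_alexandrov_theorem} applied to $\mathcal{F}$ with its intrinsic metric, there exists a shortest arc $\gamma$ on $\mathcal{F}$ from $X$ to $Y$ of length $d:=\mathrm{d}_{\mathcal{F}}(X,Y)$. Since $\mathcal{F}$ is nondegenerate, I fix a point $O$ in the interior of the convex body $\Omega\subset\mathbb{H}^3$ bounded by $\mathcal{F}$.

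For each $\mu>1$ I define the \emph{radial expansion} $\psi_{\mu}\colon\mathbb{H}^3\rightarrow\mathbb{H}^3$ with center $O$ and coefficient $\mu$ by placing $\psi_{\mu}(P)$ on the geodesic ray from $O$ through $P$ at hyperbolic distance $\mu\cdot\mathrm{d}_{\mathbb{H}^3}(O,P)$ from $O$. The image $\psi_{\mu}(\mathcal{F})$ is a convex surface strictly containing $\Omega$ in its interior, and $\psi_{\mu}(P)\rightarrow P$ uniformly on compact sets as $\mu\rightarrow 1^{+}$. From the explicit form of the $\mathbb{H}^3$ metric in geodesic polar coordinates around $O$, together with the fact that $\gamma$ stays in a fixed compact region at bounded distance from $O$, one obtains a length estimate of the form
\begin{equation*}
L(\psi_{\mu}(\gamma))\le d+\eta(\mu),\qquad \eta(\mu)\xrightarrow[\mu\rightarrow 1^{+}]{}0.
\end{equation*}
This estimate replaces the Euclidean identity $L(\psi_{\mu}(\gamma))=\mu d$ and is the only place where a specifically hyperbolic calculation enters.

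Next, by a diagonal argument I choose a sequence $\mu_n\rightarrow 1^{+}$ slowly enough that the outward-displaced curve $\psi_{\mu_n}(\gamma)$ lies entirely outside $\mathcal{F}_n$ for all $n$ large enough; this is possible because the outward displacement produced by $\psi_{\mu_n}$ can be kept strictly larger than the Hausdorff distance between $\mathcal{F}_n$ and $\mathcal{F}$. Denote by $\pi_n$ the nearest-point projection onto $\Omega_n$; it is $1$-Lipschitz on $\mathbb{H}^3$ by CAT$(0)$-convexity. Set $X^{\mu_n}_n:=\pi_n(\psi_{\mu_n}(X))$ and $Y^{\mu_n}_n:=\pi_n(\psi_{\mu_n}(Y))$. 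The hyperbolic version of Lemma~\ref{chaaa_lemma_no-1_used_in_alexandrov_theorem}, applied to the curve $\psi_{\mu_n}(\gamma)$ lying outside $\mathcal{F}_n$, yields
\begin{equation*}
\mathrm{d}_{\mathcal{F}_n}(X^{\mu_n}_n,Y^{\mu_n}_n)\le L(\psi_{\mu_n}(\gamma))\le d+\eta(\mu_n).
\end{equation*}

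To conclude, I observe that $X^{\mu_n}_n\rightarrow X$ and $Y^{\mu_n}_n\rightarrow Y$: indeed $\psi_{\mu_n}(X)\rightarrow X\in\mathcal{F}$ and $\mathcal{F}_n\rightarrow\mathcal{F}$, so the projected points converge to $X$ and $Y$ respectively. Since by hypothesis also $X_n\rightarrow X$ on $\mathcal{F}_n$, Lemma~\ref{chaaa_lemma_no-2_used_in_alexandrov_theorem} gives $\mathrm{d}_{\mathcal{F}_n}(X_n,X^{\mu_n}_n)\rightarrow 0$, and analogously $\mathrm{d}_{\mathcal{F}_n}(Y_n,Y^{\mu_n}_n)\rightarrow 0$. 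The triangle inequality on $\mathcal{F}_n$ then yields
\begin{equation*}
\mathrm{d}_{\mathcal{F}_n}(X_n,Y_n)\le\mathrm{d}_{\mathcal{F}_n}(X_n,X^{\mu_n}_n)+\mathrm{d}_{\mathcal{F}_n}(X^{\mu_n}_n,Y^{\mu_n}_n)+\mathrm{d}_{\mathcal{F}_n}(Y^{\mu_n}_n,Y_n)\le d+\eta(\mu_n)+o(1),
\end{equation*}
and passing to the $\limsup$ establishes~(\ref{chaaa_frm_inequality_in_alexandrov_theorem}). The main obstacle is precisely the length estimate $L(\psi_{\mu}(\gamma))\le d+\eta(\mu)$: Alexandrov's Euclidean proof uses the exact scaling homogeneity $L(\psi_{\mu}(\gamma))=\mu L(\gamma)$, which is unavailable in $\mathbb{H}^3$, and the remainder of the argument transfers mechanically since Lemmas~\ref{chaaa_lemma_no-1_used_in_alexandrov_theorem} and~\ref{chaaa_lemma_no-2_used_in_alexandrov_theorem}, as well as the arclength properties invoked, hold in any Hadamard space.
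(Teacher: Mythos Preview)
Your argument is correct and follows the same overall skeleton as the paper (a one-parameter ``scaling'' that makes the projection lemma applicable, followed by Lemmas~\ref{chaaa_lemma_no-1_used_in_alexandrov_theorem} and~\ref{chaaa_lemma_no-2_used_in_alexandrov_theorem} and the triangle inequality), but the realisation of the scaling step is genuinely different. The paper works in the Klein model and \emph{shrinks} the surfaces $\mathcal{F}_n$ by a Euclidean homothety with coefficient $\lambda_n\le 1$ so that $\lambda_n\mathcal{F}_n$ sits inside $\mathcal{F}$; it then computes the explicit distortion factor $\lambda_n(1-\rho_{\mathcal{B}}^{2})/(1-\lambda_n^{2}\rho_{\mathcal{B}}^{2})$ that this Euclidean map produces on hyperbolic lengths, and uses this to pass from $\mathrm{d}_{\lambda_n\mathcal{F}_n}$ back to $\mathrm{d}_{\mathcal{F}_n}$. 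You instead leave $\mathcal{F}_n$ alone and \emph{expand} the geodesic arc $\gamma\subset\mathcal{F}$ outward by the intrinsic hyperbolic radial map $\psi_{\mu_n}$, chosen via a diagonal argument so that the displaced curve clears $\mathcal{F}_n$; the distortion estimate $L(\psi_\mu(\gamma))\le d+\eta(\mu)$ then comes from the polar form $ds^2=dr^2+\sinh^2 r\,d\sigma^2$ on a fixed compact shell. Your route is more intrinsic (no model is invoked) and slightly shorter; the paper's route is model-dependent but yields an explicit quantitative factor.

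Two small remarks. First, your claim that $\psi_\mu(\mathcal{F})$ is itself convex is neither obvious in $\mathbb{H}^3$ (radial expansion does not preserve convexity there) nor needed: you only use that $\psi_{\mu_n}(\gamma)$ lies outside $\Omega_n$, which follows from $\mathrm{dist}(\psi_\mu(\gamma),\Omega)>0$ together with Hausdorff convergence $\Omega_n\to\Omega$. Second, the convergence $X_n^{\mu_n}\to X$ that you invoke is justified by $d(\pi_n(\psi_{\mu_n}(X)),\psi_{\mu_n}(X))=d(\psi_{\mu_n}(X),\Omega_n)\to 0$; it may be worth making this explicit.
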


\emph{Proof of Lemma~\ref{chaaa_lemma_inequality_in_alexandrov_theorem} in the Euclidean case} \cite[pp.~95--96]{chaaa_ADA2006}. Take a point $O$ inside the surface $\mathcal{F}$ and perform the homothety transform with the center at $O$ of the surfaces $\mathcal{F}_{n}$ so that all these surfaces turn out to be inside $\mathcal{F}$. Note that if the initial surface $\mathcal{F}_{n}$ lies inside $\mathcal{F}$ then we do not need to apply the homothety, so we pose the coefficient of homothety $\lambda_{n}=1$; otherwise we perform the scaling back homothety transform with $\lambda_{n}<1$. Since the surfaces $\mathcal{F}_{n}$ converge to $\mathcal{F}$, the coefficients $\lambda_{n}$ can be taken closer and closer to $1$ as $n$ increases and $\lambda_{n}\rightarrow 1$ as $n\rightarrow\infty$. The surfaces and points, which are obtained from the surfaces $\mathcal{F}_{n}$ and the points $X_n$ and $Y_n$ as a result of this transformation, will be denoted by $\lambda_{n}\mathcal{F}_{n}$, $\lambda_{n}{X}_{n}$, and $\lambda_{n}{Y}_{n}$. Since $\lambda_{n}\rightarrow 1$ and the points $X_n$ and $Y_n$ tend to $X$ and $Y$, the points $\lambda_{n}{X}_{n}$ and $\lambda_{n}{Y}_{n}$ also converge to $X$ and $Y$, respectively.

Let $X_{n}'$ and $Y_{n}'$ be the projections of the points $X$ and $Y$ to the surfaces $\lambda_{n}\mathcal{F}_{n}$. By Lemma~\ref{chaaa_lemma_no-1_used_in_alexandrov_theorem},
\begin{equation}\label{chaaa_frm_dist_inequality_for_projections}
\mathrm{d}_{\lambda_{n}\mathcal{F}_{n}}(X_{n}',Y_{n}')\leq\mathrm{d}_{\mathcal{F}}(X,Y).
\end{equation}

Obviously, the points $X_{n}'$ converge to $X$ as $n\rightarrow\infty$, and at the same time, the points $\lambda_{n}{X}_{n}$ also converge to $X$. Therefore, by Lemma~\ref{chaaa_lemma_no-2_used_in_alexandrov_theorem},
\begin{equation}\label{chaaa_frm_dist_between_homothety_and_projection_of_X_on_lambdaF}
\mathrm{d}_{\lambda_{n}\mathcal{F}_{n}}(\lambda_{n}{X}_{n},X_{n}')\rightarrow 0,
\end{equation}
and, by the same arguments,
\begin{equation}\label{chaaa_frm_dist_between_homothety_and_projection_of_Y_on_lambdaF}
\mathrm{d}_{\lambda_{n}\mathcal{F}_{n}}(Y_{n}',\lambda_{n}{Y}_{n})\rightarrow 0.
\end{equation}
By the "triangle inequality",
\begin{equation}\label{chaaa_frm_triangle_inequality_on_lambdaF}
\mathrm{d}_{\lambda_{n}\mathcal{F}_{n}}(\lambda_{n}{X}_{n},\lambda_{n}{Y}_{n})\leq
\mathrm{d}_{\lambda_{n}\mathcal{F}_{n}}(\lambda_{n}{X}_{n},X_{n}')+
\mathrm{d}_{\lambda_{n}\mathcal{F}_{n}}(X_{n}',Y_{n}')+
\mathrm{d}_{\lambda_{n}\mathcal{F}_{n}}(Y_{n}',\lambda_{n}{Y}_{n}).
\end{equation}

Using the inequality~(\ref{chaaa_frm_dist_inequality_for_projections}) and the relations~(\ref{chaaa_frm_dist_between_homothety_and_projection_of_X_on_lambdaF}) and~(\ref{chaaa_frm_dist_between_homothety_and_projection_of_Y_on_lambdaF}) and passing to the limit in~(\ref{chaaa_frm_triangle_inequality_on_lambdaF}) as $n\rightarrow\infty$, we obtain
\begin{equation}\label{chaaa_frm_inequality_for_homotheties_in_alexandrov_theorem}
{\limsup}_{n\rightarrow\infty}\mathrm{d}_{\lambda_{n}\mathcal{F}_{n}}(\lambda_{n}{X}_{n},\lambda_{n}{Y}_{n})
\leq\mathrm{d}_{\mathcal{F}}(X,Y).
\end{equation}
But under the homothety with coefficient $\lambda_{n}$, all distances change by $\lambda_{n}$ times, and, therefore,
\begin{equation}\label{chaaa_frm_euclidean_homothety_property}
\mathrm{d}_{\lambda_{n}\mathcal{F}_{n}}(\lambda_{n}{X}_{n},\lambda_{n}{Y}_{n})=
\lambda_{n}\mathrm{d}_{\mathcal{F}_{n}}({X}_{n},{Y}_{n});
\end{equation}
since $\lambda_{n}\rightarrow 1$, the formula~(\ref{chaaa_frm_inequality_for_homotheties_in_alexandrov_theorem}) implies~(\ref{chaaa_frm_inequality_in_alexandrov_theorem}).
$\square$

Let us adapt the proof of Lemma~\ref{chaaa_lemma_inequality_in_alexandrov_theorem} for hyperbolic $3$-space.

\emph{Modification of the proof of Lemma~\ref{chaaa_lemma_inequality_in_alexandrov_theorem} for the hyperbolic case}.
Further we will use the notation developed in the proof of the Euclidean version of Lemma~\ref{chaaa_lemma_inequality_in_alexandrov_theorem}. Considering the surfaces $\mathcal{F}\subset\mathbb{H}^3$ and $\mathcal{F}_{n}\subset\mathbb{H}^3$ ($n\in\mathbb{N}$) in the projective model $\mathbb{K}^3$ of hyperbolic space $\mathbb{H}^3$ as surfaces of Euclidean space $\mathbb{R}^3$ and supposing in addition that the center $O_{\mathbb{K}}$ of the Kleinian model $\mathbb{K}^3$ lies inside the surface $\mathcal{F}$, as previously, let us perform the Euclidean homothety transforms with the center at $O_{\mathbb{K}}$ of the surfaces $\mathcal{F}_{n}$ so that all resulting surfaces $\lambda_{n}\mathcal{F}_{n}$ turn out to be inside $\mathcal{F}$ (here $\lambda_{n}$ are the Euclidean homothety coefficients, $n\in\mathbb{N}$).
Below we will call \emph{Euclidean homothety transform} any transformation of hyperbolic space $\mathbb{H}^3$ which corresponds to a homothety transformation of Euclidean space $\mathbb{R}^3$ when we identify $\mathbb{R}^3$ with the projective model $\mathbb{K}^3$ of $\mathbb{H}^3$.
We already know that in the Euclidean case the distances between corresponding pairs of points ${X}_{n},{Y}_{n}\in\mathcal{F}_{n}$ and $\lambda_{n}{X}_{n},\lambda_{n}{Y}_{n}\in\lambda_{n}\mathcal{F}_{n}$ in the induced metrics of the surfaces $\mathcal{F}_{n}$ and $\lambda_{n}\mathcal{F}_{n}$ satisfy the relation~(\ref{chaaa_frm_euclidean_homothety_property}). Let us now find a similar condition in the case when $\mathcal{F}_{n}$ and $\lambda_{n}\mathcal{F}_{n}$ are regarded as surfaces of hyperbolic space $\mathbb{H}^3$.

All closed convex surfaces $\mathcal{F}_{n}$ together with their limit surface $\mathcal{F}$ can be included into a sufficiently large ball $\mathcal{B}\subset\mathbb{H}^3$ centered at $O_{\mathbb{K}}$. Let us put $\mathcal{B}$ into the Kleinian model $\mathbb{K}^3$ of $\mathbb{H}^3$ and let $\rho_{\mathcal{B}}<1$ stands for the Euclidean radius of $\mathcal{B}$ in $\mathbb{K}^3$.

An Euclidean homothety transform $\tau$ centered at $O_{\mathbb{K}}\in\mathbb{K}^3$ with a coefficient $\lambda\leq1$ sends any point $Z$ inside $\mathcal{B}$ to the point $\lambda Z$. Denote by $\rho(<\rho_{\mathcal{B}})$ the length of the Euclidean radius-vector connecting the points $O_{\mathbb{K}}$ and $Z$ in the projective model $\mathbb{K}^3$ of $\mathbb{H}^3$. The differential $d\tau$ of the hyperbolic transformation $\tau$ sends any vector $v_{Z}\in T_{Z}\mathbb{H}^3$ codirectional with the geodesic $L_{Z}$ which contains the points $O_{\mathbb{K}}$, $Z$, and $\lambda Z$, to the vector $v_{\lambda Z}\in T_{\lambda Z}\mathbb{H}^3$ also codirectional with $L_{Z}$. A direct calculation shows that the norms of the vectors $v_{Z}$ and $v_{\lambda Z}$ are related as follows:
\begin{equation}\label{chaaa_frm_radial_vectors_norms_after_homothety}
\|v_{\lambda Z}\|=\frac{\lambda(1-{\rho}^2)}{1-{\lambda}^2{\rho}^2}\|v_{Z}\|.
\end{equation}
It is easy to verify that for $\lambda\leq 1$ the function $f_{\lambda}(\rho)\stackrel{\mathrm{def}}{=}\frac{\lambda(1-{\rho}^2)}{1-{\lambda}^2{\rho}^2}$ in $\rho$ is monotonically decreasing in the segment $[0,\rho_{\mathcal{B}}]$. Together with~(\ref{chaaa_frm_radial_vectors_norms_after_homothety}), this fact implies:
\begin{equation}\label{chaaa_frm_ineq_radial_vectors_norms_after_homothety}
\|v_{\lambda Z}\|\geq\frac{\lambda(1-{\rho_{\mathcal{B}}}^2)}{1-{\lambda}^2{\rho_{\mathcal{B}}}^2}\|v_{Z}\|.
\end{equation}
Similarly, the differential $d\tau$ sends any vector $v^{\perp}_{Z}\in T_{Z}\mathbb{H}^3$ perpendicular to the geodesic $L_{Z}$, to the vector $v^{\perp}_{\lambda Z}\in T_{\lambda Z}\mathbb{H}^3$ also perpendicular to $L_{Z}$. A direct calculation shows that the norms of the vectors $v^{\perp}_{Z}$ and $v^{\perp}_{\lambda Z}$ are related as follows:
\begin{equation}\label{chaaa_frm_perpend_vectors_norms_after_homothety}
\|v^{\perp}_{\lambda Z}\|=\frac{\lambda\sqrt{1-{\rho}^2}}{\sqrt{1-{\lambda}^2{\rho}^2}}\|v^{\perp}_{Z}\|.
\end{equation}
It is easy to verify that for $\lambda\leq 1$ the function $g_{\lambda}(\rho)\stackrel{\mathrm{def}}{=}\frac{\lambda\sqrt{1-{\rho}^2}}{\sqrt{1-{\lambda}^2{\rho}^2}}$ in $\rho$ is monotonically decreasing in the segment $[0,\rho_{\mathcal{B}}]$. Together with~(\ref{chaaa_frm_perpend_vectors_norms_after_homothety}), it implies:
\begin{equation}\label{chaaa_frm_ineq_perpend_vectors_norms_after_homothety}
\|v^{\perp}_{\lambda Z}\|\geq
\frac{\lambda\sqrt{1-{\rho_{\mathcal{B}}}^2}}{\sqrt{1-{\lambda}^2{\rho_{\mathcal{B}}}^2}}\|v^{\perp}_{Z}\|.
\end{equation}

Any vector $u\in T_{Z}\mathbb{H}^3$ can be decomposed as the sum of two vectors $u=v+v^{\perp}$, $v,v^{\perp}\in T_{Z}\mathbb{H}^3$, such that the vector $v$ is codirectional with the geodesic $L_{Z}$, and the vector $v^{\perp}$ is perpendicular to $L_{Z}$. Hence, (\ref{chaaa_chaaa_frm_ineq_radial_vectors_norms_after_homothety}) and (\ref{chaaa_frm_ineq_perpend_vectors_norms_after_homothety}) imply that the norms of the vectors $u\in T_{Z}\mathbb{H}^3$ and $u_{\lambda}\stackrel{\mathrm{def}}{=}d\tau(Z).u\in T_{\lambda Z}\mathbb{H}^3$ satisfy the following inequality:
\begin{equation}\label{chaaa_frm_ineq_arbirtary_vectors_norms_after_homothety}
\|u_{\lambda}\|\geq\min\bigg{\{}\frac{\lambda(1-{\rho_{\mathcal{B}}}^2)}{1-{\lambda}^2{\rho_{\mathcal{B}}}^2},
\frac{\lambda\sqrt{1-{\rho_{\mathcal{B}}}^2}}{\sqrt{1-{\lambda}^2{\rho_{\mathcal{B}}}^2}}\bigg{\}}\|u\|
=\frac{\lambda(1-{\rho_{\mathcal{B}}}^2)}{1-{\lambda}^2{\rho_{\mathcal{B}}}^2}\|u\|
\end{equation}
as $0<\lambda\leq1$.

Recall that the length of a curve $c:[0,1]\rightarrow\mathbb{H}^3$ which is $C^{1}$-smooth almost everywhere is given by the formula $l(c)\stackrel{\mathrm{def}}{=}\int^{1}_{0}\|c'(t)\|dt$ where $c'(t)\in T_{c(t)}\mathbb{H}^3$ for almost all $t\in[0,1]$. Suppose in addition that the curve $c$ lies in the interior of the ball $\mathcal{B}$, apply the Euclidean homothety transform $\tau$ to $c$, and denote the resulting curve by $c_{\lambda}$. Hence, taking into account the inequality~(\ref{chaaa_frm_ineq_arbirtary_vectors_norms_after_homothety}), we see that the lengths of the curves $c$ and $c_{\lambda}$ are related as follows:
\begin{equation*}\label{chaaa_frm_ineq_arbirtary_vectors_norms_after_homothety}
l(c_{\lambda})\geq\frac{\lambda(1-{\rho_{\mathcal{B}}}^2)}{1-{\lambda}^2{\rho_{\mathcal{B}}}^2}l(c).
\end{equation*}

Thus, returning to the consideration of the distances between the pairs of points ${X}_{n},{Y}_{n}\in\mathcal{F}_{n}$ and $\lambda_{n}{X}_{n},\lambda_{n}{Y}_{n}\in\lambda_{n}\mathcal{F}_{n}$ in the induced metrics of the surfaces $\mathcal{F}_{n}$ and $\lambda_{n}\mathcal{F}_{n}$, we conclude that in the hyperbolic case the inequality
\begin{equation}\label{chaaa_frm_kleinian_homothety_property}
\mathrm{d}_{\lambda_{n}\mathcal{F}_{n}}(\lambda_{n}{X}_{n},\lambda_{n}{Y}_{n})\geq
\frac{\lambda_{n}(1-{\rho_{\mathcal{B}}}^2)}{1-\lambda_{n}^2{\rho_{\mathcal{B}}}^2}
\mathrm{d}_{\mathcal{F}_{n}}({X}_{n},{Y}_{n})
\end{equation}
holds.
Substituting~(\ref{chaaa_frm_kleinian_homothety_property}) in the formula~(\ref{chaaa_frm_inequality_for_homotheties_in_alexandrov_theorem}) which is valid in both Euclidean and hyperbolic situations, we get:
\begin{equation}\label{chaaa_frm_last_hyperbolic_step_in_alexandrov_theorem}
{\limsup}_{n\rightarrow\infty}\frac{\lambda_{n}(1-{\rho_{\mathcal{B}}}^2)}{1-\lambda_{n}^2{\rho_{\mathcal{B}}}^2}
\mathrm{d}_{\mathcal{F}_{n}}({X}_{n},{Y}_{n})\leq\mathrm{d}_{\mathcal{F}}(X,Y).
\end{equation}
Since the expression $\frac{\lambda_{n}(1-{\rho_{\mathcal{B}}}^2)}{1-\lambda_{n}^2{\rho_{\mathcal{B}}}^2}$ tends to $1$ as the numbers $\lambda_{n}$ approach to $1$, the formula~(\ref{chaaa_frm_last_hyperbolic_step_in_alexandrov_theorem}) implies~(\ref{chaaa_frm_inequality_in_alexandrov_theorem}).
$\square$

We have just adapted to the hyperbolic situation the only place in the proof of Theorem~\ref{chaaa_theorem_alexandrov_convergence_of_convex_surfaces} largely depending on properties of Euclidean space. Therefore, Theorem~\ref{chaaa_theorem_alexandrov_convergence_of_convex_surfaces} remains valid in hyperbolic $3$-space.

When the present work was already written, the author found that A.~D.~Alexandrov proved the hyperbolic version of Theorem~\ref{chaaa_theorem_alexandrov_convergence_of_convex_surfaces} using different methods long ago in 1945 (see his paper~\cite[Theorem 3]{chaaa_ADA1945} in Russian).

\subsubsection{Induced metrics of the surfaces $\widetilde{\mathcal{S}}^{+}_{\infty}$ and $\widetilde{\mathcal{S}}^{-}_{\infty}$} \label{chaaa_sec_induced_metrics_of_limit_surfaces}

Return to consideration of the family of convex domains $\{\widetilde{\mathcal{M}}_{n}\}_{n=1}^{\infty}$ with the boundaries $\partial\widetilde{\mathcal{M}}_{n}=\widetilde{\mathcal{S}}^{+}_{n}\cup\widetilde{\mathcal{S}}^{-}_{n}$ (see Sections~\ref{chaaa_sec_Arzela-Ascoli_theorem_application} and~\ref{chaaa_sec_convergence_of_isometries_of_H3}) in
hyperbolic space $\mathbb{H}^3$. Assume in addition that the marked points $\tilde{x}^{+}_{n}\in\widetilde{\mathcal{S}}^{+}_{n}$, $n=1,...,\infty$, are all identified with an arbitrary point $O_{\mathbb{H}}\in\mathbb{K}^3$.

Consider a ball $\mathcal{\hat{B}}\subset\mathbb{H}^3$ centered at $O_{\mathbb{H}}$ of a sufficiently big hyperbolic radius $\hat{\rho}$ (it will be enough to put $\hat{\rho}=9\delta_{\mathcal{S}}+\delta_{\mathcal{M}}$, where the constants $\delta_{\mathcal{S}}$ and $\delta_{\mathcal{M}}$ are defined in Lemmas~\ref{chaaa_lemma_ubound_of_diameters_of_surfaces_Sn} and~\ref{chaaa_lemma_ubound_of_diameters_of_Mn}). Define the convex compact hyperbolic sets $\mathcal{M}^{\mathcal{B}}_{n}\stackrel{\mathrm{def}}{=}\widetilde{\mathcal{M}}_{n}\cap\mathcal{\hat{B}}$, and denote by $\mathcal{\hat{S}}^{+}_{n}\stackrel{\mathrm{def}}{=}\partial\mathcal{M}^{\mathcal{B}}_{n}
\cap\widetilde{\mathcal{S}}^{+}_{n}$ and $\mathcal{\hat{S}}^{-}_{n}\stackrel{\mathrm{def}}{=}\partial\mathcal{M}^{\mathcal{B}}_{n}
\cap\widetilde{\mathcal{S}}^{-}_{n}$ the intersections of the boundary $\partial\mathcal{M}^{\mathcal{B}}_{n}$ of the domain $\mathcal{M}^{\mathcal{B}}_{n}$ with the surfaces $\widetilde{\mathcal{S}}^{+}_{n}$ and $\widetilde{\mathcal{S}}^{-}_{n}$, $n=1,...,\infty$. By construction, the sets $\widehat{\Delta}^{+}_{n}$ and $\widehat{\Delta}^{-}_{n}$ defined in Lemma~\ref{chaaa_lemma_ubound_of_diameters_of_fundomain_nhoods} are subsets of $\mathcal{\hat{S}}^{+}_{n}$ and $\mathcal{\hat{S}}^{-}_{n}$ correspondingly, $n=1,...,\infty$.

\begin{remark}\label{chaaa_remark_radius_of_restricting_ball}
The ball $\mathcal{\hat{B}}$ is taken big enough in order to provide the following property: for an arbitrary pair of points $A^{+},B^{+}\in\widehat{\Delta}^{+}_{n}$ there exists a path $\zeta^{+}\subset\widehat{\Delta}^{+}_{n}$ connecting $A^{+}$ and $B^{+}$ which is shorter than any path $\xi^{+}\subset\partial\mathcal{M}^{\mathcal{B}}_{n}$ connecting $A^{+}$ and $B^{+}$ and such that $\xi^{+}\cap(\partial\mathcal{M}^{\mathcal{B}}_{n}\setminus\mathcal{\hat{S}}^{+}_{n})\neq\emptyset$. Similarly, for points $A^{-},B^{-}\in\widehat{\Delta}^{-}_{n}$ there exists a path $\zeta^{-}\subset\widehat{\Delta}^{-}_{n}$ connecting $A^{-}$ and $B^{-}$ which is shorter than any path $\xi^{-}\subset\partial\mathcal{M}^{\mathcal{B}}_{n}$ connecting $A^{-}$ and $B^{-}$ and such that $\xi^{-}\cap(\partial\mathcal{M}^{\mathcal{B}}_{n}\setminus\mathcal{\hat{S}}^{-}_{n})\neq\emptyset$. For this purpose, radius $\hat{\rho}=9\delta_{\mathcal{S}}+\delta_{\mathcal{M}}$ of the ball $\mathcal{\hat{B}}$ is sufficient although not optimal.
\end{remark}

Recall that, by Lemma~\ref{chaaa_lemma_convergence_of_infinite_surfaces},
the sequences of developing maps
$\{\tilde{f}_{\widetilde{\mathcal{S}}^{+}_{n}}:\widetilde{\mathcal{S}}^{+}\rightarrow\mathbb{H}^3\}_{n\in\mathbb{N}}$
and $\{\tilde{f}_{\widetilde{\mathcal{S}}^{-}_{n}}:\widetilde{\mathcal{S}}^{-}\rightarrow\mathbb{H}^3\}_{n\in\mathbb{N}}$ converge to continuous functions
$\tilde{f}_{\widetilde{\mathcal{S}}^{+}_{\infty}}:\widetilde{\mathcal{S}}^{+}\rightarrow\mathbb{H}^3$ and $\tilde{f}_{\widetilde{\mathcal{S}}^{-}_{\infty}}:\widetilde{\mathcal{S}}^{-}\rightarrow\mathbb{H}^3$, and the images of the maps $\tilde{f}_{\widetilde{\mathcal{S}}^{+}_{n}}$ and $\tilde{f}_{\widetilde{\mathcal{S}}^{-}_{n}}$ are convex surfaces $\widetilde{\mathcal{S}}^{+}_{n}$ and $\widetilde{\mathcal{S}}^{+}_{n}$ respectively, $n=1,...,\infty$.
Therefore, by construction, the surfaces $\{\widehat{\Delta}^{+}_{n}\}_{n\in\mathbb{N}}$ and $\{\widehat{\Delta}^{-}_{n}\}_{n\in\mathbb{N}}$ converge to $\widehat{\Delta}^{+}_{\infty}$ and $\widehat{\Delta}^{-}_{\infty}$, and moreover, the sequence of closed convex nondegenerate surfaces $\{\partial\mathcal{M}^{\mathcal{B}}_{n}\}_{n\in\mathbb{N}}$ converges to the closed convex nondegenerate surface $\partial\mathcal{M}^{\mathcal{B}}_{\infty}$ in $\mathbb{H}^3$. Applying the hyperbolic version of Theorem~\ref{chaaa_theorem_alexandrov_convergence_of_convex_surfaces} to the family of surfaces $\{\partial\mathcal{M}^{\mathcal{B}}_{n}\}_{n\in\mathbb{N}}$ which converges to $\partial\mathcal{M}^{\mathcal{B}}_{\infty}$ we conclude that the sequence of induced metrics on $\partial\mathcal{M}^{\mathcal{B}}_{n}$ tends to the induced metric on $\partial\mathcal{M}^{\mathcal{B}}_{\infty}$ as $n\rightarrow\infty$. In particular, given any two sequences of points $A^{+}_{n}$ and $B^{+}_{n}$ in $\widehat{\Delta}^{+}_{n}\subset\partial\mathcal{M}^{\mathcal{B}}_{n}$ converging to two points $A^{+}_{\infty}$ and $B^{+}_{\infty}$ in $\widehat{\Delta}^{+}_{\infty}\subset\partial\mathcal{M}^{\mathcal{B}}_{n}$, respectively, the distances between the points $A^{+}_{n}$ and $B^{+}_{n}$ measured on the surfaces $\partial\mathcal{M}^{\mathcal{B}}_{n}$ converge to the distance between the points $A^{+}_{\infty}$ and $B^{+}_{\infty}$ measured on $\partial\mathcal{M}^{\mathcal{B}}_{\infty}$, i.e.
\begin{equation}\label{chaaa_frm_convergence_of_boundary_metrics_MnB}
\mathrm{d}_{\partial\mathcal{M}^{\mathcal{B}}_{\infty}}(A^{+}_{\infty},B^{+}_{\infty})=
{\lim}_{n\rightarrow\infty}\mathrm{d}_{\partial\mathcal{M}^{\mathcal{B}}_{n}}(A^{+}_{n},B^{+}_{n}).
\end{equation}
By Remark~\ref{chaaa_remark_radius_of_restricting_ball}, the distance between the points $A^{+}_{n}$ and $B^{+}_{n}$ measured on $\partial\mathcal{M}^{\mathcal{B}}_{n}$ is equal to the distance between these points measured on $\mathcal{\hat{S}}^{+}_{n}$; also, by construction, $\mathcal{\hat{S}}^{+}_{n}$ is a convex subset of the surface $\widetilde{\mathcal{S}}^{+}_{n}$ with the induced metric $\tilde{h}^{+}_{n}$, therefore
\begin{equation}\label{chaaa_frm_coincidence_of_boundary_metrics_MnB_and_h+n}
\mathrm{d}_{\partial\mathcal{M}^{\mathcal{B}}_{n}}(A^{+}_{n},B^{+}_{n})=
\mathrm{d}_{\tilde{h}^{+}_{n}}(A^{+}_{n},B^{+}_{n}),
\end{equation}
$n=1,...,\infty$.
Substituting~(\ref{chaaa_frm_coincidence_of_boundary_metrics_MnB_and_h+n}) in~(\ref{chaaa_frm_convergence_of_boundary_metrics_MnB}), we get:
\begin{equation*}\label{chaaa_frm_convergence_of_restricted_h+n}
\mathrm{d}_{\tilde{h}^{+}_{\infty}}(A^{+}_{\infty},B^{+}_{\infty})=
{\lim}_{n\rightarrow\infty}\mathrm{d}_{\tilde{h}^{+}_{n}}(A^{+}_{n},B^{+}_{n}).
\end{equation*}
Hence, the sequence of the induced metrics $\tilde{h}^{+}_{n}$ of the surfaces $\widetilde{\mathcal{S}}^{+}_{n}$ restricted on the sets $\widehat{\Delta}^{+}_{n}$ converges to the induced metric $\tilde{h}^{+}_{\infty}$ of the surface $\widetilde{\mathcal{S}}^{+}_{\infty}$ restricted on $\widehat{\Delta}^{+}_{\infty}$ as $n\rightarrow\infty$. By analogy, the sequence of the induced metrics $\{\tilde{h}^{-}_{n}|_{\widehat{\Delta}^{-}_{n}}\}_{n\in\mathbb{N}}$  converges to the induced metric $\tilde{h}^{-}_{\infty}|_{\widehat{\Delta}^{-}_{\infty}}$.

In Sections~\ref{chaaa_sec_Arzela-Ascoli_theorem_application} and~\ref{chaaa_sec_convergence_of_isometries_of_H3} we constructed the surfaces $\widetilde{\mathcal{S}}^{+}_{n}$ and $\widetilde{\mathcal{S}}^{-}_{n}$ to be invariant under the actions of the discrete group $\rho^{\mathcal{S}}_{n}(\pi_{1}(\mathcal{S}))$ of isometries of $\mathbb{H}^3$ for each $n=1,...,\infty$.
Hence, the induced metrics $\tilde{h}^{+}_{n}$ and $\tilde{h}^{-}_{n}$ on the surfaces $\widetilde{\mathcal{S}}^{+}_{n}$ and $\widetilde{\mathcal{S}}^{-}_{n}$, respectively, are periodic with respect to the group $\rho^{\mathcal{S}}_{n}(\pi_{1}(\mathcal{S}))$, $n=1,...,\infty$. We have just proved that the metrics $\tilde{h}^{+}_{n}$ and $\tilde{h}^{-}_{n}$ converge to $\tilde{h}^{+}_{\infty}$ and $\tilde{h}^{-}_{\infty}$, correspondingly, in the neighborhoods $\widehat{\Delta}^{+}_{n}\subset\widetilde{\mathcal{S}}^{+}_{n}$ and $\widehat{\Delta}^{-}_{n}\subset\widetilde{\mathcal{S}}^{-}_{n}$ of the fundamental domains ${\Delta}^{+}_{n}\subset\widetilde{\mathcal{S}}^{+}_{n}$ and ${\Delta}^{-}_{n}\subset\widetilde{\mathcal{S}}^{-}_{n}$ of the surfaces ${\mathcal{S}}^{+}_{n}$ and ${\mathcal{S}}^{-}_{n}$. Since, by Assumption~\ref{chaaa_aspt_convergence_of_isometry_groups} and Remark~\ref{chaaa_remark_group_rho_infty_is_quasi-fuchsian},
the sequence of quasi-Fuchsian groups $\{\rho^{\mathcal{S}}_{n}({\pi}_{1}(\mathcal{S}))\}_{n\in\mathbb{N}}$ converges to a quasi-Fuchsian group $\rho^{\mathcal{S}}_{\infty}({\pi}_{1}(\mathcal{S}))$ of isometries of $\mathbb{H}^3$, we now conclude that the metrics $\tilde{h}^{+}_{n}$ and $\tilde{h}^{-}_{n}$ converge to $\tilde{h}^{+}_{\infty}$ and $\tilde{h}^{-}_{\infty}$ everywhere on $\widetilde{\mathcal{S}}^{+}_{n}$ and $\widetilde{\mathcal{S}}^{-}_{n}$ as $n\rightarrow\infty$.

To complete the proof of Theorem~\ref{chaaa_theorem_manifolds_with_convex_alexandrov_metric_on_boundary} let us consider the convex compact hyperbolic domain $\mathcal{M}_{\infty}\stackrel{\mathrm{def}}{=}
\widetilde{\mathcal{M}}_{\infty}/[\rho^{\mathcal{S}}_{\infty}({\pi}_{1}(\mathcal{S}))]$ with the boundary
\begin{equation*}\label{chaaa_frm_boundary_of_M_infty}
\partial\mathcal{M}_{\infty}\stackrel{\mathrm{def}}{=}
\mathcal{S}^{+}_{\infty}\cup\mathcal{S}^{-}_{\infty}\stackrel{\mathrm{def}}{=} \big{(}\widetilde{\mathcal{S}}^{+}_{\infty}/[\rho^{\mathcal{S}}_{\infty}({\pi}_{1}(\mathcal{S}))]\big{)}\bigcup
\big{(}\widetilde{\mathcal{S}}^{-}_{\infty}/[\rho^{\mathcal{S}}_{\infty}({\pi}_{1}(\mathcal{S}))]\big{)}
\end{equation*}
in the unbounded hyperbolic manifold $\mathcal{M}^{\circ}_{\infty}\stackrel{\mathrm{def}}{=}
\mathbb{H}^3/[\rho^{\mathcal{S}}_{\infty}({\pi}_{1}(\mathcal{S}))]$. The metric $\tilde{h}^{+}_{\infty}$ on the universal covering $\widetilde{\mathcal{S}}^{+}_{\infty}$ of the boundary component $\mathcal{S}^{+}_{\infty}$ of the domain $\mathcal{M}_{\infty}$ induces the metric $\breve{h}^{+}_{\infty}$ on the compact surface $\mathcal{S}^{+}_{\infty}$. We have recently showed that the pull-backs $\tilde{h}^{+}_{n}$ of the metrics ${h}^{+}_{n}$ (see Section~\ref{chaaa_sec_Arzela-Ascoli_theorem_application}) converge to the pull-back $\tilde{h}^{+}_{\infty}$ of the metric $\breve{h}^{+}_{\infty}$. Hence, the sequence of metrics $\{h^{+}_{n}\}_{n\in\mathbb{N}}$ tends to the metric $\breve{h}^{+}_{\infty}$ as $n\rightarrow\infty$. But in the very beginning of Section~\ref{chaaa_sec_Arzela-Ascoli_theorem_application} the $C^{\infty}$-smooth metrics $\{h^{+}_{n}\}_{n\in\mathbb{N}}$ were constructed in order to approximate the Alexandrov metric $h^{+}_{\infty}$. Therefore, the induced metric $\breve{h}^{+}_{\infty}$ on $\mathcal{S}^{+}_{\infty}$ coincides with the prescribed metric $h^{+}_{\infty}$. Similarly we obtain that the metric on the surface $\mathcal{S}^{-}_{\infty}$ is exactly $h^{-}_{\infty}$.

We sum up that the convex hyperbolic bounded domain $\mathcal{M}_{\infty}$ with the boundary $\partial\mathcal{M}_{\infty}=\mathcal{S}^{+}_{\infty}\cup\mathcal{S}^{-}_{\infty}$ in the quasi-Fuchsian manifold $\mathcal{M}^{\circ}_{\infty}$ was constructed in such a way that the induced metrics of the boundary components $\mathcal{S}^{+}_{\infty}$ and $\mathcal{S}^{-}_{\infty}$ coincide with the prescribed Alexandrov metrics $h^{+}_{\infty}$ and $h^{-}_{\infty}$. Theorem~\ref{chaaa_theorem_manifolds_with_convex_alexandrov_metric_on_boundary} is proved. $\square$

\section{Distance between boundary components of a convex compact domain in a quasi-Fuchsian manifold.}
\label{ch_dist_bb}

Consider a sequence of convex bounded domains $\mathcal{M}_n$ with the upper boundaries $\mathcal{S}^{+}_{n}$ and the lower boundaries $\mathcal{S}^{-}_{n}$ in quasi-Fuchsian manifolds $\mathcal{M}^{\circ}_n$, such that for all $n$ the convex regular metric surfaces $\mathcal{S}^{+}_{n}$ and $\mathcal{S}^{-}_{n}$ with the induced metrics $h^{+}_{n}$ and $h^{-}_{n}$, respectively, are topologically the same surface $\mathcal{S}$.

\textbf{Definition.} The \emph{distance} $d(\mathcal{K},\mathcal{L})$ \emph{between subsets} $\mathcal{K}$ \emph{and} $\mathcal{L}$ \emph{of a set} $\mathcal{N}$ is defined as follows: $d(\mathcal{K},\mathcal{L})\stackrel{\mathrm{def}}{=}\inf\{\mathrm{d}_{\mathcal{N}}(u,v)|u\in\mathcal{K},v\in\mathcal{L}\}$, where $\mathrm{d}_{\mathcal{N}}(u,v)$ stands for the distance between points $u$ and $v$ in $\mathcal{N}$.

In this section, we prove the following result which is essentially used in the demonstration of Theorem~\ref{chaaa_theorem_manifolds_with_polyhedral_metric_on_boundary} from the first part of this paper:
\begin{theorem}\label{chbb_theorem_distance_between_pairs_of_boundaries_is_unibounded}
Let the metrics $h^{+}_{n}$ tend to some metric $h^{+}_{\infty}$ (correspondingly, $h^{-}_{n}$ tend to $h^{-}_{\infty}$) as $n$ goes to $\infty$.
Then there is a common upper bound for the distances between $\mathcal{S}^{+}_{n}$ and $\mathcal{S}^{-}_{n}$ in $\mathcal{M}^{\circ}_n$ which does not depend on $n$.
\end{theorem}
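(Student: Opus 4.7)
The plan is to argue by contradiction: assume that along some subsequence $\sigma_n := d_{\mathcal{M}^{\circ}_n}(\mathcal{S}^{+}_{n}, \mathcal{S}^{-}_{n}) \to \infty$. Convergence of the induced metrics $h^{\pm}_{n} \to h^{\pm}_{\infty}$ yields uniform bounds on the intrinsic diameters and (via Gauss-Bonnet together with $K \geq -1$) on the areas of $(\mathcal{S}, h^{\pm}_{n})$; the task is to show that these bounds are incompatible with the divergence of $\sigma_n$. Since $\mathcal{M}_n$ is convex and homotopy equivalent to $\mathcal{S}$, convexity forces $\mathcal{M}_n$ to contain every closed geodesic of $\mathcal{M}^{\circ}_n$, hence the whole convex core $C_n$. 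The key inequality is then
$\sigma_n \leq d(\mathcal{S}^{+}_{n}, \partial C^{+}_{n}) + \mathrm{diam}(C_n) + d(\mathcal{S}^{-}_{n}, \partial C^{-}_{n})$,
and I would bound each of the three terms separately.

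For the middle term, the nearest-point projection $\pi_n: \mathcal{M}^{\circ}_n \to C_n$ is $1$-Lipschitz and homotopy-preserving, and sends $\mathcal{S}^{\pm}_{n}$ onto the pleated boundary $\partial C^{\pm}_{n}$; denoting the intrinsic hyperbolic metric on the latter by $m^{\pm}_{n}$, one has $\ell_{m^{\pm}_{n}}(\alpha) \leq \ell_{h^{\pm}_{n}}(\alpha)$ for every closed curve $\alpha$ on $\mathcal{S}$, giving a uniform upper bound on the marked length spectrum of $(\mathcal{S}, m^{\pm}_{n})$. Combined with a no-pinching argument ruling out $\ell_{m^{\pm}_{n}}(\alpha) \to 0$ on a simple closed curve (such pinching would produce a short closed geodesic in $\mathcal{M}^{\circ}_n$, and the associated Margulis tube of radius $\sim \log(1/\ell_{m^{\pm}_{n}}(\alpha))$ would force a simple closed curve homotopic to $\alpha$ on $\mathcal{S}^{\pm}_{n}$ of vanishing length, contradicting convergence of $h^{\pm}_{n}$ to the nondegenerate limit $h^{\pm}_{\infty}$), this confines the $m^{\pm}_{n}$ to a compact subset of Teichm\"uller space. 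Then the Sullivan-Epstein-Marden Lipschitz comparison between $m^{\pm}_{n}$ and the uniformised hyperbolic metric at infinity places $\rho_n$ in a compact subset of the Bers parameterisation of quasi-Fuchsian space, so $\mathrm{diam}(C_n)$ is uniformly bounded.

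For the outer terms $d(\mathcal{S}^{\pm}_{n}, \partial C^{\pm}_{n})$, if this quantity exceeded $t$ then $\mathcal{S}^{\pm}_{n}$ would lie outside the $t$-tubular neighbourhood of the convex set $C_n$; by the standard exponential growth of area of equidistant surfaces from a convex body in $\mathbb{H}^3$, one would then have $\mathrm{area}(\mathcal{S}^{\pm}_{n}) \geq \cosh^2(t) \cdot \mathrm{area}(\partial C^{\pm}_{n}) = 4\pi(g-1)\cosh^2(t)$, contradicting the uniform area bound on $\mathcal{S}^{\pm}_{n}$. Combining these three bounds gives a uniform upper bound on $\sigma_n$, contradicting the assumption. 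The hardest step is certainly the no-pinching claim: the convex surfaces $\mathcal{S}^{\pm}_{n}$ and the pleated surfaces $\partial C^{\pm}_{n}$ are a priori quite different, and transferring the degeneration from a short geodesic in $\partial C^{\pm}_{n}$ to a detectable pinch in the metric $h^{\pm}_{n}$ requires a careful analysis of how the convex surface $\mathcal{S}^{\pm}_{n}$ intersects the surrounding Margulis tube.
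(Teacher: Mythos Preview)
Your route is genuinely different from the paper's. The paper does not pass through the convex core or Teichm\"uller theory at all: it proves an explicit quantitative estimate (Theorem~\ref{chbb_theorem_distance_between_boundaries}) bounding $d(\mathcal{S}^{+},\mathcal{S}^{-})$ by an elementary function of the lengths $l^{\pm}_1,l^{\pm}_2$ of two pairs of homotopic simple closed curves on $\mathcal{S}^{\pm}$ that intersect on each side, together with the Margulis constant $\varepsilon_3$. The mechanism is direct hyperbolic geometry: one spans piecewise--totally-geodesic cylinders $Cyl_1,Cyl_2$ between the two curve pairs, meeting along a segment $P^{+}P^{-}$; if this segment is long enough one exhibits at its midpoint two intersecting loops, one on each cylinder, both of length $<\varepsilon_3$ but in distinct free-homotopy classes, contradicting the Margulis lemma. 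Theorem~\ref{chbb_theorem_distance_between_pairs_of_boundaries_is_unibounded} then follows in a few lines by fixing $c_1,c_2$ on $\mathcal{S}$ and using the uniform convergence $h^{\pm}_n\to h^{\pm}_\infty$ to bound their lengths above and below. Your convex-core decomposition, backed by Sullivan--Epstein--Marden and Bers simultaneous uniformisation, also works, but trades an elementary, effective bound for a soft compactness argument with substantially heavier prerequisites.

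Two technical corrections to your sketch. First, Gauss--Bonnet together with $K\geq -1$ gives only the \emph{lower} bound $\mathrm{area}(\mathcal{S},h^{\pm}_n)\geq 4\pi(g-1)$; the upper bound you need for the equidistant-surface step comes instead from Bishop--Gromov volume comparison, using the uniform diameter bound supplied by $h^{\pm}_n\to h^{\pm}_\infty$. Second, the no-pinching step is much easier than you suggest, and your Margulis-tube mechanism is the wrong tool (nothing a priori forces the tube to meet $\mathcal{S}^{\pm}_n$). The nearest-point retraction $\mathcal{S}^{\pm}_n\to\partial C^{\pm}_n$ is $1$-Lipschitz for the \emph{intrinsic} path metrics and is onto, hence $\mathrm{diam}(\mathcal{S},m^{\pm}_n)\leq\mathrm{diam}(\mathcal{S},h^{\pm}_n)$, which is uniformly bounded; the collar lemma then immediately forbids $\ell_{m^{\pm}_n}(\alpha)\to 0$. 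Combined with the upper length bounds $\ell_{m^{\pm}_n}(\alpha)\leq\ell_{h^{\pm}_n}(\alpha)$ for a filling finite family of curves, this pins $m^{\pm}_n$ to a compact subset of Teichm\"uller space, and the rest of your argument goes through.
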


The proof of Theorem~\ref{chbb_theorem_distance_between_pairs_of_boundaries_is_unibounded} is essentially based on
\begin{theorem}\label{chbb_theorem_distance_between_boundaries}
Given a convex bounded domain $\mathcal{M}$ with the upper boundary $\mathcal{S}^{+}$ and the lower boundary $\mathcal{S}^{-}$ in a quasi-Fuchsian manifold $\mathcal{M}^{\circ}$. If the metric surface $\mathcal{S}^{+}$ possesses two homotopically different nontrivial closed simple intersecting curves $c^{+}_{1}$ and $c^{+}_{2}$ of the lengths $l^{+}_{1}$ and $l^{+}_{2}$, and $\mathcal{S}^{-}$ possesses two homotopically different nontrivial closed simple intersecting curves $c^{-}_{1}$ and $c^{-}_{2}$ of the lengths $l^{-}_{1}$ and $l^{-}_{2}$ such that $c^{+}_{1}$ and $c^{-}_{1}$, as well as $c^{+}_{2}$ and $c^{-}_{2}$, are homotopically equivalent pairs of curves in $\mathcal{M}$, then the distance $d(\mathcal{S}^{+},\mathcal{S}^{-})$ between $\mathcal{S}^{+}$ and $\mathcal{S}^{-}$ is bounded from above by the constant
\begin{equation*}\label{chbb_frm1_thm_dist_bb_estimates}
d(\mathcal{S}^{+},\mathcal{S}^{-})<\max\bigg{\{}\bigg{(}l^{+}_{1}+l^{-}_{1}+\ln\frac{2l^{+}_{1}}{l^{-}_{1}}\bigg{)}, \bigg{(}l^{+}_{1}+l^{-}_{1}+\ln\frac{2l^{-}_{1}}{l^{+}_{1}}\bigg{)},\bigg{(}l^{+}_{2}+l^{-}_{2}+
\ln\frac{2l^{+}_{2}}{l^{-}_{2}}\bigg{)}, \bigg{(}l^{+}_{2}+l^{-}_{2}+\ln\frac{2l^{-}_{2}}{l^{+}_{2}}\bigg{)},
\end{equation*}
\begin{equation*}\label{chbb_frm2_thm_dist_bb_estimates}
2\arch\bigg{[}\cosinh{l}^{+}_{1}\cosinh \bigg{(}l^{+}_{1}+\arch\frac{e^{l^{+}_{1}}(l^{+}_{1})^{2}}{\varepsilon_{3}^{2}}\bigg{)}\bigg{]},
2\arch\bigg{[}\cosinh{l}^{-}_{1}\cosinh \bigg{(}l^{-}_{1}+\arch\frac{e^{l^{-}_{1}}(l^{-}_{1})^{2}}{\varepsilon_{3}^{2}}\bigg{)}\bigg{]},
\end{equation*}
\begin{equation*}\label{chbb_frm3_thm_dist_bb_estimates}
2\arch\bigg{[}\cosinh{l}^{+}_{2}\cosinh \bigg{(}l^{+}_{2}+\arch\frac{e^{l^{+}_{2}}(l^{+}_{2})^{2}}{\varepsilon_{3}^{2}}\bigg{)}\bigg{]},
2\arch\bigg{[}\cosinh{l}^{-}_{2}\cosinh \bigg{(}l^{-}_{2}+\arch\frac{e^{l^{-}_{2}}(l^{-}_{2})^{2}}{\varepsilon_{3}^{2}}\bigg{)}\bigg{]}\bigg{\}},
\end{equation*}
where the symbol $\varepsilon_{3}$ stands for the Margulis constant of hyperbolic space $\mathbb{H}^3$ (this constant will be defined shortly).
\end{theorem}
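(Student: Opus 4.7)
The plan is to reduce the statement to a single-pair estimate: for each pair $(c^+_i, c^-_i)$ of homotopic boundary curves, bound $d(c^+_i, c^-_i)$ by the four terms of the maximum that carry the index~$i$, and then conclude with $d(\mathcal{S}^+, \mathcal{S}^-) \leq \min_i d(c^+_i, c^-_i) \leq \max\{\text{all eight terms}\}$. Two intersecting pairs are needed to guarantee that whichever curves turn out to be short in the hyperbolic metric of $\mathcal{M}^{\circ}$, at least one pair remains controllable.

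Fixing a pair and lifting to the universal cover $\mathbb{H}^3$, both lifts $\tilde c^+_i$ and $\tilde c^-_i$ are invariant under a common loxodromic element $g_i \in \Gamma_{QF}$ with axis $\tilde\gamma_i$ and translation length $\ell_i$, and $\ell_i \leq \min(l^+_i, l^-_i)$ because $\tilde\gamma_i$ projects to the shortest curve in its free homotopy class. The key hyperbolic trigonometry input is that a point $p \in \tilde c^\pm_i$ at distance $d$ from $\tilde\gamma_i$ satisfies $\mathrm{d}_{\mathbb{H}^3}(p, g_i.p) \geq \ell_i \cosinh d$, so one period of $\tilde c^\pm_i$ has length at least $\ell_i \cosinh d$ and thus $d \leq \arch(l^\pm_i/\ell_i)$. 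After aligning the foot-points of $p \in \tilde c^+_i$ and $q \in \tilde c^-_i$ on $\tilde\gamma_i$ by a suitable power of $g_i$, a right-angled hyperbolic triangle computation yields an estimate of the form $\mathrm{d}_{\mathbb{H}^3}(p,q) \leq \arch[\cosinh d^+ \cosinh d^-]$, which is the structural source of the nested $\arch$-$\cosinh$ expressions in the statement.

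The argument now splits via the Margulis constant $\varepsilon_3$. When $\ell_i \geq \varepsilon_3$ (thick case), discreteness of $\Gamma_{QF}$ caps how far a point on $\tilde c^\pm_i$ can drift from $\tilde\gamma_i$: an arc of length comparable to $\varepsilon_3$ at distance $d$ from the axis would otherwise combine with $g_i$ to produce nearly-commuting loxodromic pairs violating the Margulis lemma. Unwinding this discreteness obstruction, together with the nested triangle estimate of the previous paragraph, produces the bounds $2\arch[\cosinh l^\pm_i \cdot \cosinh(l^\pm_i + \arch(e^{l^\pm_i}(l^\pm_i)^2/\varepsilon_3^2))]$. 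When $\ell_i < \varepsilon_3$ (thin case), both curves lie inside the Margulis tube around $\tilde\gamma_i$; here one exploits $\arch(x) \leq \ln(2x)$ in the direct estimate $d(\tilde c^+_i, \tilde c^-_i) \leq d^+ + d^- + \ell_i/2$ and absorbs the travel along $c^\pm_i$ needed to select optimal representatives into the additive terms $l^+_i + l^-_i$, yielding the logarithmic bounds $l^+_i + l^-_i + \ln(2 l^\pm_i/l^\mp_i)$.

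The main obstacle will be the thin-axis regime: as $\ell_i \to 0$, the crude estimate $\arch(l^\pm_i/\ell_i)$ diverges, and removing $\ell_i$ from the final bound is delicate. This is precisely where the hypothesis of two \emph{intersecting} homotopic pairs becomes indispensable. The generators $g_1$ and $g_2$ have distinct axes, since the corresponding simple closed curves cross transversally on $\mathcal{S}$, and the Margulis lemma then forbids both translation lengths $\ell_1, \ell_2$ from being simultaneously arbitrarily small — otherwise their axes would be forced to coincide, contradicting non-commutativity. Consequently at least one index $i \in \{1,2\}$ falls into the thick case and contributes a finite $\arch$-$\cosinh$ bound to the maximum, which is what secures the global upper bound on $d(\mathcal{S}^+, \mathcal{S}^-)$.
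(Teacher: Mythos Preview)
Your proposal has the right toolkit (Margulis lemma, axis-distance estimates, hyperbolic trigonometry) but the architecture is off, and the final paragraph contains a genuine error.

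\textbf{The dichotomy is not thick/thin.} The paper does not split on whether the translation length $\ell_i$ lies above or below $\varepsilon_3$. Instead it builds, for each pair $(c^+_i,c^-_i)$, a piecewise-totally-geodesic cylinder $Cyl_i$ in $\mathcal{M}^{\circ}$ whose fundamental domain is a hyperbolic quadrilateral in $\mathbb{H}^2$, and asks whether the axis segment $O_0O_1$ lies \emph{inside} that quadrilateral (Situation~1) or \emph{outside} it (Situation~2). Situation~2 gives the logarithmic bound $l^+_i+l^-_i+\ln(2l^\pm_i/l^\mp_i)$ directly (Lemma~\ref{chbb_lemma_situation2_gen}), with no reference to $\ell_i$ or $\varepsilon_3$; Situation~1 is where Margulis enters. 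Your ``thin case'' computation still carries $\ell_i$ in the denominator and does not recover the stated logarithmic terms.

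\textbf{The Margulis step is misapplied.} You claim the Margulis lemma forbids both $\ell_1$ and $\ell_2$ from being small simultaneously. It does not: two closed geodesics in $\mathcal{M}^{\circ}$ can both be arbitrarily short provided their Margulis tubes are disjoint, and nothing in the hypotheses forces the geodesics $\gamma_1,\gamma_2$ (as opposed to the surface curves $c^\pm_i$) to intersect. The paper's use of Margulis is different and more delicate: the two cylinders $Cyl_1,Cyl_2$ are constructed so that they share the geodesic segment $P^+P^-$ (from a chosen intersection point on $\mathcal{S}^+$ to a nearest lift on $\mathcal{S}^-$), and when Situation~1 holds for \emph{both} cylinders and $h$ exceeds the $\arch$--$\cosh$ threshold, one produces on each $Cyl_i$ an equidistant loop of length $<\varepsilon_3$ passing through the common midpoint $P^{mid}$ of $P^+P^-$. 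Now Margulis applies at $P^{mid}$: two intersecting loops shorter than $\varepsilon_3$ must be homotopic, contradicting that $c_1,c_2$ represent distinct free homotopy classes. The shared segment $P^+P^-$ is the idea you are missing; without it there is no common basepoint at which to invoke Margulis.
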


This result is of independent interest as well. Note that we do not require the regularity of surface metrics in Theorems~\ref{chbb_theorem_distance_between_pairs_of_boundaries_is_unibounded} and~\ref{chbb_theorem_distance_between_boundaries}.

Let us show how Theorem~\ref{chbb_theorem_distance_between_boundaries} implies Theorem~\ref{chbb_theorem_distance_between_pairs_of_boundaries_is_unibounded}.

\emph{Proof of Theorem~\ref{chbb_theorem_distance_between_pairs_of_boundaries_is_unibounded}.}

Consider two homotopically different nontrivial closed curves $c_{1}$ and $c_{2}$ on the surface $\mathcal{S}$ such that they intersect each other but do not intersect with the singular points of the metrics $h^{+}_{\infty}$ and $h^{-}_{\infty}$ on $\mathcal{S}$. Since the sequence of metrics $\{h^{+}_{n}\}_{n\in\mathbb{N}}$ converges to the metric $h^{+}_{\infty}$, the lengths $l^{+,n}_{1}$ of the curve $c_{1}\in\mathcal{S}$ measured in the metrics $h^{+}_{n}$, $n\in\mathbb{N}$, tend to the length $l^{+,\infty}_{1}>0$ of $c_{1}$ measured in the metric $h^{+}_{\infty}$ as $n\rightarrow\infty$. The converging sequence of the positive real numbers $\{l^{+,n}_{1}\}_{n\in\mathbb{N}}$ is bounded from below by a real number $\omega^{+}_{1}>0$ and from above by a real number $\Omega^{+}_{1}>0$. Similarly, the lengths $l^{-,n}_{1}$ of the curve $c_{1}\in\mathcal{S}$ measured in the metrics $h^{-}_{n}$, $n\in\mathbb{N}$, are bounded from below by some $\omega^{-}_{1}>0$ and from above by some $\Omega^{-}_{1}>0$; the lengths $l^{+,n}_{2}$ of the curve $c_{2}\in\mathcal{S}$ measured in the metrics $h^{+}_{n}$, $n\in\mathbb{N}$, are bounded from below by some $\omega^{+}_{2}>0$ and from above by some $\Omega^{+}_{2}>0$; and the lengths $l^{-,n}_{2}$ of the curve $c_{2}\in\mathcal{S}$ measured in the metrics $h^{-}_{n}$, $n\in\mathbb{N}$, are bounded from below by some $\omega^{-}_{2}>0$ and from above by some $\Omega^{-}_{2}>0$.

By Theorem~\ref{chbb_theorem_distance_between_boundaries}, the distance $d(\mathcal{S}^{+}_{n},\mathcal{S}^{-}_{n})$ between the surfaces $\mathcal{S}^{+}_{n}$ and $\mathcal{S}^{-}_{n}$ in the quasi-Fuchsian manifold $\mathcal{M}^{\circ}_{n}$ is uniformly bounded from above for any $n\in\mathbb{N}$:
\begin{equation*}\label{chbb_frm1_thm_uniform_dist_bb_estimates}
d(\mathcal{S}^{+}_{n},\mathcal{S}^{-}_{n})<
\max\bigg{\{}\bigg{(}\Omega^{+}_{1}+\Omega^{-}_{1}+\ln\frac{2\Omega^{+}_{1}}{\omega^{-}_{1}}\bigg{)}, \bigg{(}\Omega^{+}_{1}+\Omega^{-}_{1}+\ln\frac{2\Omega^{-}_{1}}{\omega^{+}_{1}}\bigg{)},
\bigg{(}\Omega^{+}_{2}+\Omega^{-}_{2}+
\ln\frac{2\Omega^{+}_{2}}{\omega^{-}_{2}}\bigg{)},
\end{equation*}
\begin{equation*}\label{chbb_frm2_thm_uniform_dist_bb_estimates} \bigg{(}\Omega^{+}_{2}+\Omega^{-}_{2}+\ln\frac{2\Omega^{-}_{2}}{\omega^{+}_{2}}\bigg{)},
2\arch\bigg{[}\cosinh{\Omega}^{+}_{1}\cosinh \bigg{(}\Omega^{+}_{1}+\arch\frac{e^{\Omega^{+}_{1}}(\Omega^{+}_{1})^{2}}{\varepsilon_{3}^{2}}\bigg{)}\bigg{]},
\end{equation*}
\begin{equation*}\label{chbb_frm3_thm_uniform_dist_bb_estimates}
2\arch\bigg{[}\cosinh{\Omega}^{-}_{1}\cosinh \bigg{(}\Omega^{-}_{1}+\arch\frac{e^{\Omega^{-}_{1}}(\Omega^{-}_{1})^{2}}{\varepsilon_{3}^{2}}\bigg{)}\bigg{]},
\end{equation*}
\begin{equation*}\label{chbb_frm4_thm_uniform_dist_bb_estimates}
2\arch\bigg{[}\cosinh{\Omega}^{+}_{2}\cosinh \bigg{(}\Omega^{+}_{2}+\arch\frac{e^{\Omega^{+}_{2}}(\Omega^{+}_{2})^{2}}{\varepsilon_{3}^{2}}\bigg{)}\bigg{]},
\end{equation*}
\begin{equation*}\label{chbb_frm5_thm_uniform_dist_bb_estimates}
2\arch\bigg{[}\cosinh{\Omega}^{-}_{2}\cosinh \bigg{(}\Omega^{-}_{2}+\arch\frac{e^{\Omega^{-}_{2}}(\Omega^{-}_{2})^{2}}{\varepsilon_{3}^{2}}\bigg{)}\bigg{]}\bigg{\}}.
\end{equation*}
$\square$

Our aim now is to demonstrate Theorem~\ref{chbb_theorem_distance_between_boundaries}. We will widely use the Margulis lemma to prove this fact. In the most general case the Margulis lemma reads as follows \cite[Theorem D.1.1, p.~134]{chaaa_BP2003}:

\textbf{General Margulis Lemma.} \emph{For every} $m\in\mathbb{N}$ \emph{there exists a constant} $\varepsilon_m\geq0$ \emph{such that for any properly discontinuous subgroup} $\Gamma$ \emph{of the group} $\mathcal{I}(\mathbb{H}^m)$ \emph{of isometries of} $\mathbb{H}^m$ \emph{and for any} $x\in\mathbb{H}^m$, \emph{the group} $\Gamma_{\varepsilon_m}(x)$ \emph{generated by the set} $F_{\varepsilon_m}(x)=\{\gamma\in\Gamma : \mathrm{d}_{\mathbb{H}^m}(x,\gamma(x))\leq\varepsilon_m\}$ \emph{is almost-nilpotent}, \emph{where} $\mathrm{d}_{\mathbb{H}^m}(\cdot,\cdot)$ \emph{stands for the distance in hyperbolic space} $\mathbb{H}^m$.

If we restrict the General Margulis Lemma to the case of the quasifuchsian isometries of hyperbolic $3$-space $\mathbb{H}^3$ which is interesting to us, then the lemma can be rewritten in this way \cite[Theorem B, p.~100]{chbb_Otal2001}:

\textbf{Margulis Lemma.} \emph{There is a universal constant} $\varepsilon_{3}>0$ \emph{such that for any properly discontinuous subgroup} $\Gamma$ \emph{of the group} $\mathcal{I}(\mathbb{H}^3)$ \emph{of isometries of} $\mathbb{H}^3$ \emph{if two closed simple intersecting curves} $\tilde{\gamma}_{1}$ \emph{and} $\tilde{\gamma}_{2}$ \emph{of the manifold} $\mathbb{H}^3/\Gamma$ \emph{have lengths less than} $\varepsilon_{3}$, \emph{then} $\tilde{\gamma}_{1}$ \emph{and} $\tilde{\gamma}_{2}$ \emph{are homotopically equivalent in} $\mathbb{H}^3/\Gamma$.

Hence, the main idea of the proof of Theorem~\ref{chbb_theorem_distance_between_boundaries} is to find a pair of closed simple intersecting curves inside $\mathcal{M}$ of lengths less than the Margulis constant $\varepsilon_{3}$ and such that they are not homotopically equivalent once the distance between $\mathcal{S}^{+}$ and $\mathcal{S}^{-}$ is big enough. Then, by the Margulis lemma, the curves under consideration ought to be homotopically equivalent, which leads us to a contradiction.
\begin{figure}[!h]
\begin{center}
\input{./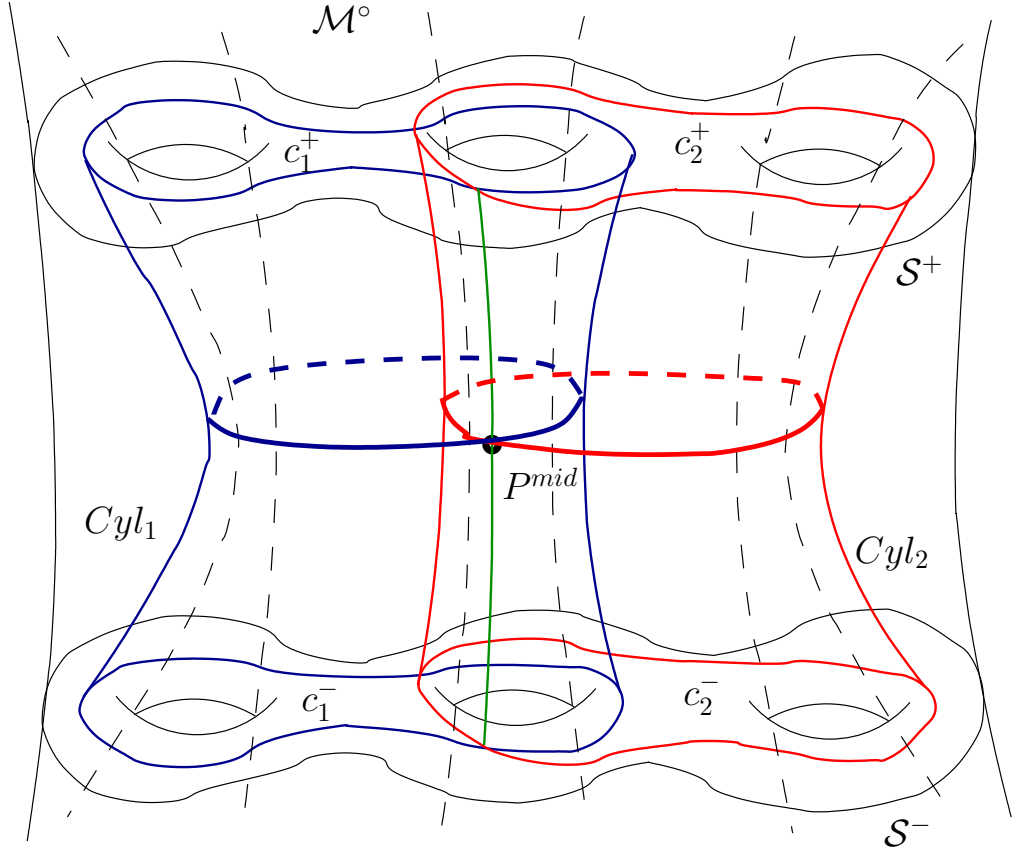tex_t}
\caption{The cylinders $Cyl_1$ and $Cyl_2$ in the manifold $\mathcal{M}^{\circ}$.}\label{chbb_dist_bb_cylinders}
\end{center}
\end{figure}
Let us now give a more detailed plan of the proof of Theorem~\ref{chbb_theorem_distance_between_boundaries}:
\begin{itemize}
\item[$\bullet$] Suppose that the curves $c^{+}_{1}$ and $c^{+}_{2}$ intersect at a point $P^{+}$ (this point is not necessarily unique), and the curves $c^{-}_{1}$ and $c^{-}_{2}$ intersect at a point $P^{-}$. We will construct cylinders $Cyl_1$ and $Cyl_2$ in $\mathcal{M}$ that realize homotopies between $c^{+}_{1}$ and $c^{-}_{1}$ and between $c^{+}_{2}$ and $c^{-}_{2}$ correspondingly. Then the intersection of $Cyl_1$ and $Cyl_2$ contains a (curved) line with ends $P^{+}$ and $P^{-}$. Denote the midpoint of this line by $P^{mid}$.

\item[$\bullet$] We will find a constant based on $l^{+}_{1}$, $l^{-}_{1}$, $l^{+}_{2}$, $l^{-}_{2}$, and $\varepsilon_{3}$, and we will construct curves on $Cyl_1$ and $Cyl_2$ (see Fig.~\ref{chbb_dist_bb_cylinders}) passing through $P^{mid}$ such that if the distance between $\mathcal{S}^{+}$ and $\mathcal{S}^{-}$ is greater than the constant mentioned above then both constructed curves are shorter than $\varepsilon_{3}$.
\end{itemize}

\subsection{Construction of the cylinders $Cyl_1$ and $Cyl_2$}
\label{chbb_sec_construction_of_the_cylinders_Cyl_1_and_Cyl_2}

We consider a quasifuchsian manifold $\mathcal{M}^{\circ}$. By definition, it means that $\mathcal{M}^{\circ}$ is a quotient $\mathbb{H}^3/\Gamma^{\circ}$ where $\Gamma^{\circ}$ is a quasifuchsian subgroup of the group  $\mathcal{I}(\mathbb{H}^3)$ of isometries of hyperbolic $3$-space. Note that $\Gamma^{\circ}$ is homomorphic to the fundamental group $\pi_1(\mathcal{M}^{\circ})$.

Denote by $\gamma_1$ the closed geodesic of $\mathcal{M}^{\circ}$ homotopically equivalent to $c^{+}_{1}$ and $c^{-}_{1}$. Similarly, denote by $\gamma_2$ the closed geodesic of $\mathcal{M}^{\circ}$ homotopically equivalent to $c^{+}_{2}$ and $c^{-}_{2}$. By abuse of notation, we denote by $\gamma_1$ and $\gamma_2$ the elements of $\pi_1(\mathcal{M}^{\circ})$ corresponding to the closed geodesics under consideration. The universal covering of the domain $\mathcal{M}\subset\mathcal{M}^{\circ}$ is a convex simply connected subset $\widetilde{\mathcal{M}}$ of $\mathbb{H}^3$. Denote by $\tilde{\gamma}_1$ and $\tilde{\gamma}_2$ the isometries of $\mathbb{H}^3$ corresponding to the elements $\gamma_1$ and $\gamma_2$ of $\pi_1(\mathcal{M}^{\circ})$.

\begin{figure}[!h]
\begin{center}
\input{./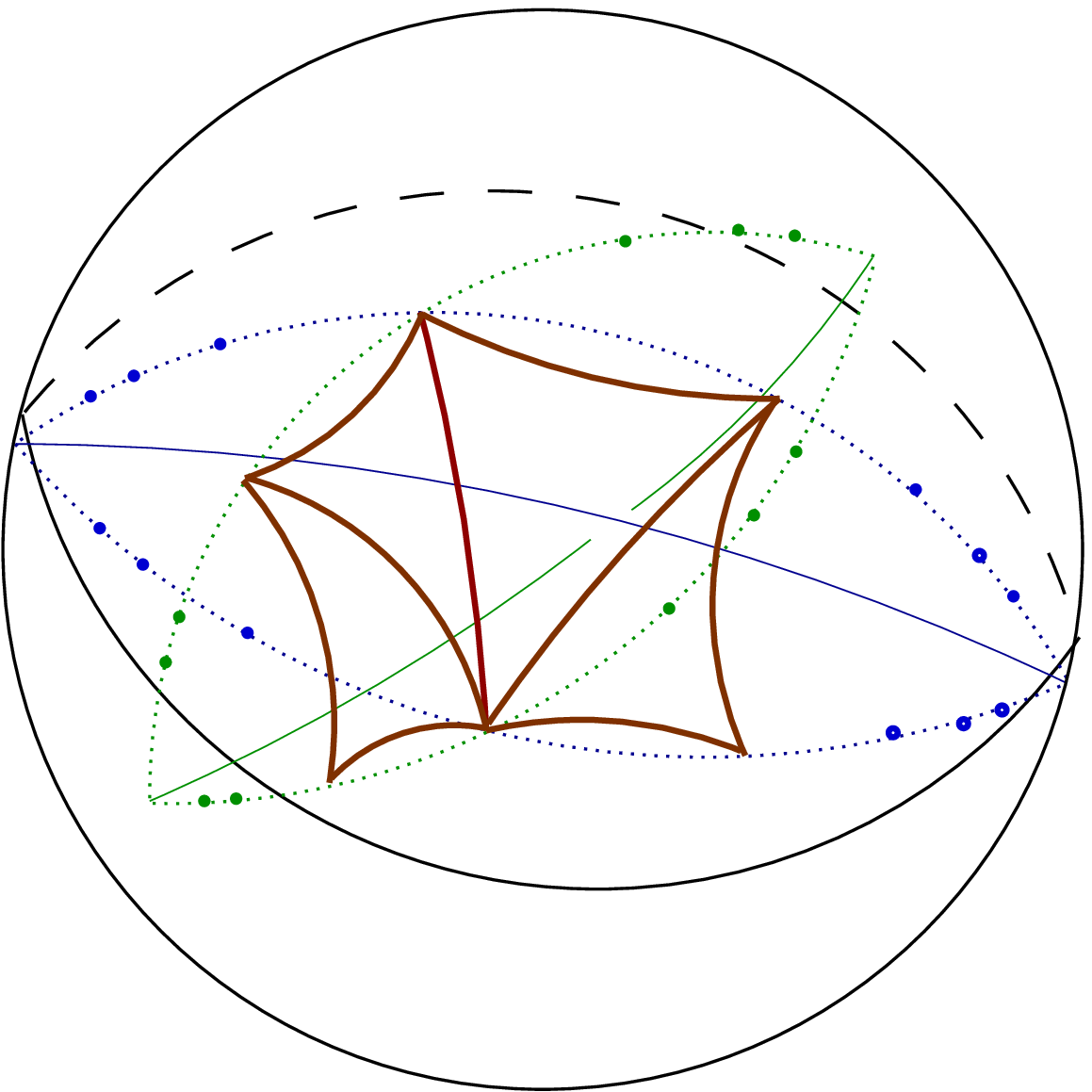tex_t}
\caption{Construction of fundamental domains of the cylinders $Cyl_1$ and $Cyl_2$ in the Poincar\'e model of $\mathbb{H}^3$.}\label{chbb_dist_bb_constructing_cyl1_cyl2}
\end{center}
\end{figure}
Let us now consider any single point $\widetilde{P}^{+}_{0}\in\mathbb{H}^3$ serving as a pre-image of $P^{+} \in c^{+}_{1}\cap c^{+}_{2}$ in the universal covering $\widetilde{\mathcal{M}}$. Among all the points in the pre-image of $P^{-} \in c^{-}_{1}\cap c^{-}_{2}$ in $\widetilde{\mathcal{M}}$, we choose $\widetilde{P}^{-}_{0}\in\mathbb{H}^3$ to be the closest to $\widetilde{P}^{+}_{0}$ (in case there are several points realizing the minimal distance to $\widetilde{P}^{+}_{0}$, we choose one of them arbitrarily). Denote $\widetilde{P}^{+}_{1}\stackrel{\mathrm{def}}{=}\tilde{\gamma}_{1}.\widetilde{P}^{+}_{0}$, $\widetilde{P}^{-}_{1}\stackrel{\mathrm{def}}{=}\tilde{\gamma}_{1}.\widetilde{P}^{-}_{0}$,
$\widetilde{P}^{+}_{2}\stackrel{\mathrm{def}}{=}\tilde{\gamma}_{2}.\widetilde{P}^{+}_{0}$, $\widetilde{P}^{-}_{2}\stackrel{\mathrm{def}}{=}\tilde{\gamma}_{2}.\widetilde{P}^{-}_{0}$ (recall that for every point $T\in\mathbb{H}^3$ and for every $\tilde{\gamma}\in\mathcal{I}(\mathbb{H}^3)$ the symbol $\tilde{\gamma}.T$ stands for the image of $T$ under the isometry $\tilde{\gamma}$). Then we set the unions of flat hyperbolic triangles $\triangle\widetilde{P}^{+}_{0}\widetilde{P}^{-}_{0}\widetilde{P}^{+}_{1}\cup\triangle\widetilde{P}^{+}_{1}\widetilde{P}^{-}_{1}\widetilde{P}^{-}_{0}$ and $\triangle\widetilde{P}^{+}_{0}\widetilde{P}^{-}_{0}\widetilde{P}^{+}_{2}\cup\triangle\widetilde{P}^{+}_{2}\widetilde{P}^{-}_{2}\widetilde{P}^{-}_{0}$ in $\mathbb{H}^3$ to be fundamental domains of the cylinders $Cyl_1$ and $Cyl_2$ (see Fig.~\ref{chbb_dist_bb_constructing_cyl1_cyl2}).

The fundamental domain $\tilde{c}^{+}_{1}\subset\mathbb{H}^3$ of the curve $c^{+}_{1}$ has the same length $l^{+}_{1}$ as $c^{+}_{1}$. We can choose $\tilde{c}^{+}_{1}$ to connect $\widetilde{P}^{+}_{0}$ and $\widetilde{P}^{+}_{1}$. Hence, the length of the straight (hyperbolic) segment $\widetilde{P}^{+}_{0}\widetilde{P}^{+}_{1}$ is less than or equal to $l^{+}_{1}$. Similarly, $\mathrm{d}_{\mathbb{H}^3}(\widetilde{P}^{-}_{0},\widetilde{P}^{-}_{1})\leq l^{-}_{1}$, $\mathrm{d}_{\mathbb{H}^3}(\widetilde{P}^{+}_{0},\widetilde{P}^{+}_{2})\leq l^{+}_{2}$, and $\mathrm{d}_{\mathbb{H}^3}(\widetilde{P}^{-}_{0},\widetilde{P}^{-}_{2})\leq l^{-}_{2}$. Also, by construction, the midpoints $\widetilde{P}^{mid}_{0}$, $\widetilde{P}^{mid}_{1}$, and $\widetilde{P}^{mid}_{2}$ of the segments $\widetilde{P}^{+}_{0}\widetilde{P}^{-}_{0}$, $\widetilde{P}^{+}_{1}\widetilde{P}^{-}_{1}$, and $\widetilde{P}^{+}_{2}\widetilde{P}^{-}_{2}$ serve as pre-images of the midpoint $P^{mid}$ of the segment $P^{+}P^{-}$ lying in the intersection $Cyl_1 \cap Cyl_2$.

Evidently, $Cyl_1$ and $Cyl_2$ can be prolonged to realize homotopies between the pairs of closed curves $(c^{+}_{1},c^{-}_{1})$ and $(c^{+}_{2},c^{-}_{2})$ as it was announced in our plan, but it will not be needed further.

Let us study properties of the cylinders constructed alike $Cyl_1$ and $Cyl_2$.

\subsection{Properties of the cylinders of the type $Cyl$}
\label{chbb_sec_properties_of_the_cylinders_of_the_type_Cyl}

\textbf{Definition.} A cylinder $Cyl_0$ is said to be \emph{of the type} $Cyl$ if and only if $Cyl_0$ possesses
\begin{itemize}
\item[$1)$] a fundamental domain $FD(Cyl_0)\stackrel{\mathrm{def}}{=}\triangle \widetilde{R}^{+}\widetilde{R}^{-}\widetilde{Q}^{+} \cup\triangle \widetilde{Q}^{+}\widetilde{Q}^{-}\widetilde{R}^{-}$ constructed of two totally geodesic triangles in $\mathbb{H}^3$ such that $\mathrm{d}_{\mathbb{H}^3}(\widetilde{Q}^{+},\widetilde{Q}^{-})
=\mathrm{d}_{\mathbb{H}^3}(\widetilde{R}^{+},\widetilde{R}^{-})$, and
 \item[$2)$] the hyperbolic isometry $\tilde{\gamma}\in\mathcal{I}(\mathbb{H}^3)$ sending the geodesic segment $\widetilde{R}^{+}\widetilde{R}^{-}$ to the geodesic segment $\widetilde{Q}^{+}\widetilde{Q}^{-}$ and such that for every point $\widetilde{R}^{-}_{\sharp}\in
\{\tilde{\gamma}_{\sharp}.{\widetilde{R}}^{-}|\tilde{\gamma}_{\sharp}\in\langle\tilde{\gamma}\rangle\}$ the inequality $\mathrm{d}_{\mathbb{H}^3}(\widetilde{R}^{+},\widetilde{R}^{-})\leq
\mathrm{d}_{\mathbb{H}^3}(\widetilde{R}^{+},\widetilde{R}^{-}_{\sharp})$ holds true (here and below the symbol $\langle\tilde{\gamma}\rangle$ stands for the group generated by the element $\tilde{\gamma}$). Note that $\widetilde{Q}^{-}\in\{\tilde{\gamma}_{\sharp}.{\widetilde{R}}^{-}|\tilde{\gamma}_{\sharp}\in\langle\tilde{\gamma}\rangle\}$ by construction.
\end{itemize}

\begin{figure}[!h]
\begin{center}
\input{./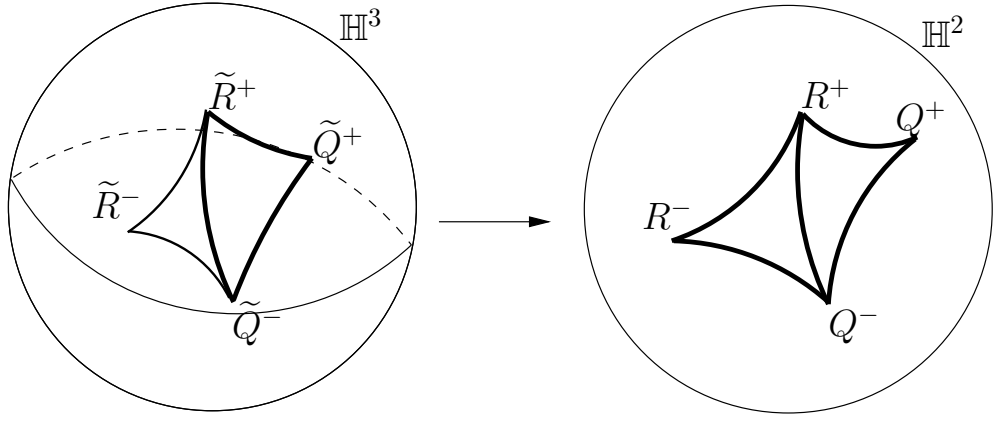tex_t}
\caption{The quadrilaterals $\widetilde{R}^{+}\widetilde{R}^{-}\widetilde{Q}^{+}\widetilde{Q}^{-}$ in $\mathbb{H}^3$ and ${R}^{+}{R}^{-}{Q}^{+}{Q}^{-}$ in $\mathbb{H}^2$.}\label{chbb_dist_bb_flattening}
\end{center}
\end{figure}

Remark that the metric of $Cyl_0$ induced from the ambient space is hyperbolic. Let us flatten $FD(Cyl_0)$ and obtain a hyperbolic quadrilateral ${R}^{+}{R}^{-}{Q}^{+}{Q}^{-}\subset\mathbb{H}^2$ isometric to $FD(Cyl_0)$ such that the vertices with tildes in $\mathbb{H}^3$ correspond to the vertices of the same name but without tildes in $\mathbb{H}^2$ (see Fig.~\ref{chbb_dist_bb_flattening}).

The quadrilateral ${R}^{+}{R}^{-}{Q}^{+}{Q}^{-}$ serves as a fundamental domain of $Cyl_0$ in its universal covering in $\mathbb{H}^2$. Denote by $\chi_{R}$ and $\chi_{Q}$ the hyperbolic straight lines in $\mathbb{H}^2$ containing the segments ${R}^{+}{R}^{-}$ and ${Q}^{+}{Q}^{-}$ correspondingly. Remark that the connected domain of $\mathbb{H}^2$ between $\chi_{R}$ and $\chi_{Q}$ is actually a fundamental domain of the unbounded hyperbolic cylinder $Cyl^{\circ}_{0}$ containing $Cyl_0$. We will call it $FD(Cyl^{\circ}_{0})$. Indeed, the fundamental group $\pi_{1}(Cyl^{\circ}_{0})=\mathbb{Z}$. Hence, $Cyl^{\circ}_{0}$ possesses a closed geodesic $\chi^{\circ}$ and there is a hyperbolic straight line $\chi$ in $\mathbb{H}^2$ serving as a lift of $\chi^{\circ}$ and related to the isometry $\bar{\chi}$ of $\mathbb{H}^2$ such that $Cyl^{\circ}_{0}=\mathbb{H}^2/\langle\bar{\chi}\rangle$. We show the existence of such geodesic $\chi$ in the following
\begin{lemma}\label{chbb_lemma_existence_of chi}
Consider two nonintersecting geodesics $\chi_{R}$ and $\chi_{Q}$ in $\mathbb{H}^2$ which are not asymptotic, with marked points $R\in\chi_{R}$ and $Q\in\chi_{Q}$. There is a unique hyperbolic straight line $\chi$ in $\mathbb{H}^2$ such that the angles of intersection of $\chi$ with $\chi_{R}$ and $\chi_{Q}$ are equal, and moreover, if we denote $R'\stackrel{\mathrm{def}}{=}\chi_{R}\cap\chi$ and $Q'\stackrel{\mathrm{def}}{=}\chi_{Q}\cap\chi$, then $\mathrm{d}_{\mathbb{H}^2}(R,R')=\mathrm{d}_{\mathbb{H}^2}(Q,Q')$ and the points $R$ and $Q$ lie in the same half-plane with respect to $\chi$.
\end{lemma}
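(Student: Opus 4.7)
The plan is to pin down $\chi$ as the unique geodesic through a canonical basepoint, namely the midpoint of the common perpendicular of $\chi_{R}$ and $\chi_{Q}$, by using the distance condition to select the correct member of the pencil of geodesics through that basepoint. Since $\chi_{R}$ and $\chi_{Q}$ are disjoint and non-asymptotic in $\mathbb{H}^{2}$, they admit a unique common perpendicular $\pi$ meeting them at points $M_{R}$ and $M_{Q}$; let $M$ be the hyperbolic midpoint of the segment $M_{R}M_{Q} \subset \pi$ and let $\tau$ denote the half-turn (orientation-preserving involutive isometry) around $M$. Because $\tau$ preserves $\pi$, swaps $M_{R} \leftrightarrow M_{Q}$, and preserves angles and perpendicularity, it sends $\chi_{R}$ to $\chi_{Q}$ and vice versa. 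Moreover, any orientation-preserving involution of $\mathbb{H}^{2}$ swapping $\chi_{R}$ and $\chi_{Q}$ must have its unique fixed point equidistant from the two geodesics and lying on $\pi$, so that fixed point is forced to be $M$; thus $\tau$ is the \emph{only} such involution.

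The next step is to show that any $\chi$ satisfying the three conditions must pass through $M$. Let $M'$ be the hyperbolic midpoint of the segment $R'Q' \subset \chi$ and $\tau'$ the half-turn around $M'$. This $\tau'$ preserves $\chi$ (reversing direction) and exchanges $R' \leftrightarrow Q'$; the equality of the angles at $R'$ and $Q'$, together with the same-half-plane condition (which pins down the correct orientation match), forces $\tau'(\chi_{R}) = \chi_{Q}$. By the uniqueness of the swapping involution just established, $\tau' = \tau$ and therefore $M = M' \in \chi$. To single out $\chi$ within the pencil of geodesics through $M$, set $\widetilde{Q} := \tau(R) \in \chi_{Q}$. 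For any candidate $\chi$ in this pencil, the relation $Q' = \tau(R')$ combined with the isometry property of $\tau$ yields $\mathrm{d}_{\mathbb{H}^{2}}(R, R') = \mathrm{d}_{\mathbb{H}^{2}}(\widetilde{Q}, Q')$, so the condition $\mathrm{d}_{\mathbb{H}^{2}}(R, R') = \mathrm{d}_{\mathbb{H}^{2}}(Q, Q')$ becomes $\mathrm{d}_{\mathbb{H}^{2}}(Q, Q') = \mathrm{d}_{\mathbb{H}^{2}}(\widetilde{Q}, Q')$. This uniquely identifies $Q'$ as the hyperbolic midpoint of the segment $Q\widetilde{Q}$ on $\chi_{Q}$, and then $R' := \tau(Q')$ and $\chi$ are determined.

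Existence is a direct verification: conditions on angles and distances hold by construction via $\tau$, and the same-half-plane condition is automatic because a half-turn at a point of $\chi$ exchanges the two half-planes of $\mathbb{H}^{2} \setminus \chi$; hence $\widetilde{Q} = \tau(R)$ lies opposite to $R$, while $Q$ and $\widetilde{Q}$ lie on opposite sides of $\chi$ (since $\chi_{Q}$ crosses $\chi$ only at the midpoint $Q'$ of $Q\widetilde{Q}$), so $Q$ must lie on the same side of $\chi$ as $R$. The main obstacle I anticipate is the orientation bookkeeping in the passage-through-$M$ step: the equality of \emph{unoriented} angle measures at $R'$ and $Q'$ a priori admits two geometric realizations, related by reflecting $\chi_{Q}$ through $\chi$, and one must argue carefully that the same-half-plane condition is precisely what eliminates the wrong alternative and guarantees $\tau'(\chi_{R}) = \chi_{Q}$ as unoriented geodesics.
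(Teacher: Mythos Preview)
Your proof is correct and takes a genuinely different route from the paper's. The paper works in the Klein model with $\chi_R$ and $\chi_Q$ placed symmetrically about the $x$-axis, observes that every geodesic through the origin meets them at equal angles, and then runs a monotonicity/intermediate-value argument on the one-parameter subfamily lying between the lines $OR$ and $OQ$ to locate the unique member with $\mathrm{d}(R,R')=\mathrm{d}(Q,Q')$. Your approach is coordinate-free: you identify the relevant symmetry intrinsically as the half-turn $\tau$ about the midpoint $M$ of the common perpendicular, argue that any admissible $\chi$ is $\tau$-invariant (hence passes through $M$), and then pin $\chi$ down explicitly by taking $Q'$ to be the midpoint of $Q$ and $\tau(R)$ on $\chi_Q$. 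This buys you a clean global uniqueness statement and an explicit construction rather than existence by continuity. The orientation bookkeeping you flagged does go through: in the alternative $\tau'(\chi_R)=\sigma_\chi(\chi_Q)$ the composite $\sigma_\chi\tau'$ is the reflection in the perpendicular to $\chi$ at $M'$, and since this reflection swaps $\chi_R,\chi_Q$ it must coincide with the reflection in the geodesic $\ell\perp\pi$ at $M$, forcing $\chi\perp\ell$; but then $\sigma_\ell$ preserves the two sides of $\chi$ while carrying $R$ into $\chi_Q$, and the equal-distance condition places $Q$ on the opposite side of $Q'$ from $\sigma_\ell(R)$ along $\chi_Q$, hence on the opposite side of $\chi$ from $R$, contradicting the same-half-plane hypothesis.
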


\begin{proof}
Let us consider the Beltrami-Klein model $\mathbb{K}^2$ of the hyperbolic plane $\mathbb{H}^2$. Recall that $\mathbb{K}^2$ is a unit disc in the Euclidean plane $\mathbb{R}^2$ and all geodesics of $\mathbb{K}^2$ are restrictions of Euclidean straight lines on this disc. Without loss of generality the geodesics $\chi_{R}\subset\mathbb{K}^2$ and $\chi_{Q}\subset\mathbb{K}^2$ can be taken symmetric with respect to the axis $Ox$ of the cartesian coordinate system on $\mathbb{R}^2$, both at an arbitrary distance $\zeta$ from $Ox$. Let $\chi_{R}$ lie in the upper half-space of $\mathbb{R}^2$ with respect to $Ox$ and $\chi_{Q}$ lie in the lower half-space of $\mathbb{R}^2$ with respect to $Ox$. At last we fix arbitrary points $R\in\chi_{R}$ and $Q\in\chi_{Q}$.

By construction, every geodesic in $\mathbb{K}^2$ passing through the origin $O$ of the cartesian coordinate system on $\mathbb{R}^2$ either intersects $\chi_{R}$ and $\chi_{Q}$ at the same angle or does not intersect them. Let us consider a family ${\Phi}_{\tau}$ of such geodesics $R_{\tau}Q_{\tau}$ lying between the straight lines $OR$ and $OQ$ where $R_{\tau}\in\chi_{R}$, $Q_{\tau}\in\chi_{Q}$, $\tau$ stands for the hyperbolic distance between $R$ and $R_{\tau}$, and the line $OQ\in{\Phi}_{\tau}$ corresponds to the value $\hat{\tau}$ of the parameter $\tau$.

Note that
\begin{itemize}
\item $R$ and $Q$ lie in the same half-plane with respect to any $R_{\tau}Q_{\tau}\in{\Phi}_{\tau}$.
\item As $\tau$ grows up monotonically from $0$ to $\hat{\tau}$, the distance $\mathrm{d}_{\mathbb{H}^2}(Q,Q_{\tau})$ decreases monotonically from $\mathrm{d}_{\mathbb{H}^2}(Q,Q_{\hat{\tau}})$ to $0$. Hence, there exists a unique $\tau_{0}\in[0,\hat{\tau}]$ such that $\mathrm{d}_{\mathbb{H}^2}(R,R_{\tau_{0}})=\mathrm{d}_{\mathbb{H}^2}(Q,Q_{\tau_{0}})$.
\end{itemize}
We choose $\chi$ to be $R_{\tau_{0}}Q_{\tau_{0}}\in{\Phi}_{\tau}$. $\chi$ is unique since $\tau_{0}$ is unique.
\end{proof}

\begin{remark}\label{chbb_remark_P+P-_is_short}
Let $Set({R}^{-})\stackrel{\mathrm{def}}{=}\{\bar{\chi}_{\sharp}.{R}^{-}
|\bar{\chi}_{\sharp}\in\langle\bar{\chi}\rangle\}$ \emph{(}by construction, ${Q}^{-}\in Set({R}^{-})$\emph{)}. Then for every point ${R}^{-}_{\sharp}\in Set({R}^{-})$ the inequality $\mathrm{d}_{\mathbb{H}^2}({R}^{+},{R}^{-})\leq
\mathrm{d}_{\mathbb{H}^2}({R}^{+},{R}^{-}_{\sharp})$ holds true.
\end{remark}

\begin{proof}
By construction, $\mathrm{d}_{\mathbb{H}^3}(\widetilde{R}^{+},\widetilde{R}^{-})=\mathrm{d}_{\mathbb{H}^2}({R}^{+},{R}^{-})$, and the surfaces $\langle\bar{\chi}\rangle.{R}^{+}{R}^{-}{Q}^{+}{Q}^{-}\subset\mathbb{H}^2$ (which is the union $\bigcup_{\bar{\chi}_{\sharp}\in\langle\bar{\chi}\rangle}\bar{\chi}_{\sharp}.{R}^{+}{R}^{-}{Q}^{+}{Q}^{-}$ of the quadrilaterals $\bar{\chi}_{\sharp}.{R}^{+}{R}^{-}{Q}^{+}{Q}^{-}$ isometric to ${R}^{+}{R}^{-}{Q}^{+}{Q}^{-}$) and $\langle\bar{\chi}\rangle.FD(Cyl_0)\subset\mathbb{H}^3$ are isometric in their intrinsic metrics. Evidently, for any points $\widetilde{T}_{1}$ and $\widetilde{T}_{2}$ in $\langle\bar{\chi}\rangle.FD(Cyl_0)$ it is true that $\mathrm{d}_{\mathbb{H}^3}(\widetilde{T}_{1},\widetilde{T}_{2})\leq
\mathrm{d}^{int}_{\langle\bar{\chi}\rangle.FD(Cyl_0)}(\widetilde{T}_{1},\widetilde{T}_{2})$, where $\mathrm{d}^{int}_{\langle\bar{\chi}\rangle.FD(Cyl_0)}(\cdot,\cdot)$ stands for the intrinsic metric of $\langle\bar{\chi}\rangle.FD(Cyl_0)$. At last, the part~$2)$ of the definition of a cylinder $Cyl_0$ of the type $Cyl$ allows us to conclude that Remark~\ref{chbb_remark_P+P-_is_short} is valid.
\end{proof}

\begin{remark}\label{chbb_remark_P'Q'_in_or_out_of_quadrangle}
Let $R'Q'$ be a segment of the geodesic $\chi\subset\mathbb{H}^2$ between $\chi_{R}$ and $\chi_{Q}$ serving as a fundamental domain of $\chi^{\circ}\subset Cyl^{\circ}_{0}$ on $\chi$ (here $R'\in\chi_{R}$ and $Q'\in\chi_{Q}$). Then either $R'Q'\subset {R}^{+}{R}^{-}{Q}^{+}{Q}^{-}$ or $R'Q'\cap {R}^{+}{R}^{-}{Q}^{+}{Q}^{-}=\emptyset$.
\end{remark}
\begin{proof}
Recall that the points ${R}^{+}$ and ${Q}^{+}$ are pre-images in $\mathbb{H}^2$ of the same point on $Cyl_{0}$, and one can be obtained from another by applying an isometry of $\mathbb{H}^2$ which is an element of the group $\langle\bar{\chi}\rangle$ preserving the straight hyperbolic line $\chi$. Hence, ${R}^{+}$ and ${Q}^{+}$ lie in one half-plane of $\mathbb{H}^2$ with respect to $\chi$ and, by consequence, the segment ${R}^{+}{Q}^{+}$ does not intersect $\chi$. Similarly, ${R}^{-}{Q}^{-}\cap\chi=\emptyset$.

We conclude that if ${R}^{+}{Q}^{+}$ and ${R}^{-}{Q}^{-}$ lie in the same half-plane of $\mathbb{H}^2$ with respect to $\chi$ then $R'Q'\cap {R}^{+}{R}^{-}{Q}^{+}{Q}^{-}=\emptyset$. Otherwise, if ${R}^{+}{Q}^{+}$ and ${R}^{-}{Q}^{-}$ lie in different half-planes with respect to $\chi$, then $R'Q'\subset {R}^{+}{R}^{-}{Q}^{+}{Q}^{-}$.
\end{proof}

\subsection{$h$-neighborhood of a geodesic in $\mathbb{H}^2$}
\label{chbb_sec_h-neighborhood_of_a_geodesic_in_H_2}

In this section, we study hyperbolic quadrilaterals of one special type and half-neighborhoods of geodesics containing one of the sides of our quadrilaterals which are inscribed in and circumscribed about these quadrilaterals. Properties of these objects will be largely used in obtaining bounds on a possible size of cylinders of the type $Cyl$.

The object of our interest is a quadrilateral $O_{R}O_{Q}{R}{Q}\subset\mathbb{H}^2$ with the sides $\mathrm{d}_{\mathbb{H}^2}(O_{R}O_{Q})=l$, $\mathrm{d}_{\mathbb{H}^2}(R,Q)=l'$, and $\mathrm{d}_{\mathbb{H}^2}(O_{R},R)=\mathrm{d}_{\mathbb{H}^2}(O_{Q},Q)=h'$, such that the edges $O_{R}R$ and $O_{Q}Q$ are perpendicular to $O_{R}O_{Q}$. Draw a curve $\gamma_h$ at a distance $h<h'$ from the geodesic containing $O_{R}O_{Q}$ such that $\gamma_h$ intersects $O_{R}R$ and $O_{Q}Q$ at points $T$ and $T'$ correspondingly. Denote a segment of $\gamma_h$ between $O_{R}R$ and $O_{Q}Q$ by $\widehat{TT'}$, and the hyperbolic length of $\widehat{TT'}$ by $l_{h}$.

A direct calculation shows that

\begin{remark}\label{chbb_remark_length_of_TT'}
The following relation holds true:
\begin{equation*}\label{chbb_frm_length_of_TT'}
l_{h}=l\cosinh h.
\end{equation*}
\end{remark}

\begin{remark}\label{chbb_remark_TT'_outside_OO'PP'}
If $h=h'$ then $T$ and $T'$ coincide with $R$ and $Q$, $\widehat{TT'}$ intersects $O_{R}O_{Q}{R}{Q}$ as a solid body only at its ends $R$ and $Q$, and, evidently, $l_{h'}>l'$ (any path connecting two points can not be shorter then a geodesic segment between them).
\end{remark}

\begin{remark}\label{chbb_remark_TT'_inside_OO'PP'}
Suppose that $h'>l'$. If $h\leq h'-l'$ then $\widehat{TT'}\subset O_{R}O_{Q}{R}{Q}$ and $l_{h}<l'$.
\end{remark}

\begin{proof}
Consider hyperbolic balls $B_{l'}(R)$ and $B_{l'}(Q)$ of the radius $l'$ with the centers $R$ and $Q$. These balls contain the segment $RQ$. Also, $B_{l'}(R)$ and $B_{l'}(Q)$ are perpendicular to $O_{R}{R}$ and $O_{Q}{Q}$ correspondingly. By construction, $\widehat{TT'}$ is perpendicular to $O_{R}{R}$ and $O_{Q}{Q}$ as well. Moreover, $\widehat{TT'}$ is a convex curve. Hence, $\widehat{TT'}$ lies outside the interior of $B_{l'}(R)$ and $B_{l'}(Q)$ for $h\leq h'-l'$. It means that the geodesic segment $RQ$ does not intersect $\widehat{TT'}$, and $\widehat{TT'}\subset O_{R}O_{Q}{R}{Q}$.

Denote by $O_{R}O_{Q}\widehat{TT'}$ the convex domain in $\mathbb{H}^2$ bounded by the segments $O_{R}T$, $O_{R}O_{Q}$, $O_{Q}T'$ and the curve $\widehat{TT'}$. By construction, the orthogonal projection of $RQ$ onto $O_{R}O_{Q}\widehat{TT'}$ is $\widehat{TT'}$. Since the orthogonal projection on the boundary of a convex hyperbolic domain is contracting \cite[p.~9]{chbb_BaGrSc1985} (see also \cite[II.1.3.4, p.~124]{chaaa_CaEpGr2006}), we get $l_{h}<l'$.
\end{proof}

Also, we need
\begin{lemma}\label{chbb_lemma_h_ort_int}
Let us consider a quadrilateral $O_{R}O_{Q}{R}{Q}$ as in Section~\ref{chbb_sec_h-neighborhood_of_a_geodesic_in_H_2} with the fixed length $l_{RQ}$ of the edge $RQ$. There is a constant
\begin{equation*}\label{chbb_frm_h_ort_int_by_l_RQ-sit_1_orth}
h^{ort}_{int}=l_{RQ}+\arch\frac{e^{l_{RQ}}l_{RQ}^{2}}{\varepsilon_{3}^{2}}.
\end{equation*}
such that if the length $h_{RQ}$ of the sides $O_{R}{R}$ and $O_{Q}{Q}$ is greater than $h^{ort}_{int}$ then the length of the path $\widehat{{T}_{R}{T}_{Q}}$ at the distance $h_{T}\stackrel{\mathrm{def}}{=}h_{RQ}/2$ from $O_{R}O_{Q}$ connecting the midpoints ${T}_{R}$ and ${T}_{Q}$ of $O_{R}{R}$ and $O_{Q}{Q}$ is smaller than the Margulis constant $\varepsilon_{3}$.
\end{lemma}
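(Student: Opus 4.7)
The quadrilateral $O_R O_Q R Q$ lies in a totally geodesic copy of $\mathbb{H}^2$ inside $\mathbb{H}^3$, so the lemma reduces to planar hyperbolic trigonometry. My plan has three parts: express the length of $\widehat{T_R T_Q}$ exactly, estimate the base length $l := \mathrm{d}_{\mathbb{H}^2}(O_R, O_Q)$ via the Saccheri identity, and then combine with the hypothesis on $h_{RQ}$ to force the length of $\widehat{T_R T_Q}$ below $\varepsilon_3$.

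For the first two parts, Remark~\ref{chbb_remark_length_of_TT'} already delivers the length of the equidistant arc $\widehat{T_R T_Q}$ at altitude $h_T = h_{RQ}/2$ as $l \cdot \cosinh(h_{RQ}/2)$. For the base, I would use the Saccheri identity
\begin{equation*}
\sinush(l/2) \cdot \cosinh(h_{RQ}) = \sinush(l_{RQ}/2),
\end{equation*}
which falls out of a direct computation in Fermi coordinates along the base geodesic, or equivalently from the Minkowski inner product of the hyperboloid representations of $R$ and $Q$ (in the spirit of the proof of Lemma~\ref{chaaa_lemma_unique_determination_of_an_isometry}). Applying the elementary inequalities $x \leq \sinush(x)$ and $2\sinush(x/2) \leq x\, e^{x/2}$ for $x \geq 0$, and then $\cosinh(h_{RQ}) \geq \cosinh^2(h_{RQ}/2)$ (which follows from the doubling identity $\cosinh(h_{RQ}) = 2\cosinh^2(h_{RQ}/2) - 1$ and $\cosinh \geq 1$), produces the clean bound
\begin{equation*}
l \cdot \cosinh(h_{RQ}/2) \leq \frac{l_{RQ}\, e^{l_{RQ}/2}}{\cosinh(h_{RQ}/2)}.
\end{equation*}

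For the third part, the hypothesis $h_{RQ} > l_{RQ} + \arch\!\left(e^{l_{RQ}} l_{RQ}^2/\varepsilon_3^2\right)$ rearranges to $\cosinh(h_{RQ} - l_{RQ}) > e^{l_{RQ}} l_{RQ}^2/\varepsilon_3^2$; combining this with the hyperbolic addition formula $\cosinh(h_{RQ}) \geq \cosinh(h_{RQ} - l_{RQ}) \cosinh(l_{RQ})$ and the bound $\cosinh(l_{RQ}) \geq e^{l_{RQ}}/2$, then reducing once more via the half-angle identity, yields a lower bound on $\cosinh(h_{RQ}/2)$ that forces the right-hand side of the previous display below $\varepsilon_3$. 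The only substantive geometric input is the Saccheri identity; everything else is elementary hyperbolic bookkeeping. I expect the main difficulty to be matching the specific numerical form of $h^{ort}_{int}$, whose two-layer structure (the outer $l_{RQ}$ and the inner $\arch$-term) is precisely calibrated so that the two-step chain — addition formula for $\cosinh(h_{RQ})$, then half-angle to pass to $\cosinh(h_{RQ}/2)$ — delivers exactly the factor $e^{l_{RQ}} l_{RQ}^2/\varepsilon_3^2$ needed to close the argument.
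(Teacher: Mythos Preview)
Your strategy via the Saccheri identity is sound and in fact slightly more direct than the paper's route, but the chain in your third part does not close as written. After your display you need $\cosinh(h_{RQ}/2) > l_{RQ}\, e^{l_{RQ}/2}/\varepsilon_3$. Your chain --- addition formula, then $\cosinh l_{RQ} \geq e^{l_{RQ}}/2$, then half-angle --- yields only $\cosinh(h_{RQ}/2) > e^{l_{RQ}} l_{RQ}/(2\varepsilon_3)$, and the comparison $e^{l_{RQ}}/2 \geq e^{l_{RQ}/2}$ fails whenever $l_{RQ} < 2\ln 2$. The loss comes from invoking $\cosinh h_{RQ} \geq \cosinh^2(h_{RQ}/2)$ once to produce the display and then the half-angle identity in the reverse direction to extract $\cosinh(h_{RQ}/2)$; the two uses do not cancel cleanly and cost you a factor of~$2$. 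The repair is tiny: drop the addition formula and the bound on $\cosinh l_{RQ}$, use plain monotonicity $\cosinh h_{RQ} \geq \cosinh(h_{RQ} - l_{RQ}) > e^{l_{RQ}} l_{RQ}^2/\varepsilon_3^2$, and apply $\cosinh(h_{RQ}/2) \leq \sqrt{\cosinh h_{RQ}}$ only once. From $l \leq l_{RQ}\,e^{l_{RQ}/2}/\cosinh h_{RQ}$ you then get directly
\[
l\,\cosinh(h_{RQ}/2) \;\leq\; \frac{l_{RQ}\,e^{l_{RQ}/2}}{\sqrt{\cosinh h_{RQ}}} \;<\; \frac{l_{RQ}\,e^{l_{RQ}/2}}{e^{l_{RQ}/2}\,l_{RQ}/\varepsilon_3} \;=\; \varepsilon_3.
\]

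For comparison, the paper does not invoke the Saccheri identity at all. It uses the cruder projection bound of Remark~\ref{chbb_remark_TT'_inside_OO'PP'}, namely $l_O\cosinh(h_{RQ} - l_{RQ}) < l_{RQ}$, but applies it \emph{twice}: once, expanded via the subtraction formula for $\cosinh$, to obtain $\cosinh h_{RQ} < e^{l_{RQ}} l_{RQ}/l_O$, and once more to bound $l_O$ itself by $l_{RQ}/\cosinh(h_{RQ}-l_{RQ})$. Eliminating $l_O$ between these two consequences and combining with $\cosinh^2(h_{RQ}/2)\leq\cosinh h_{RQ}$ produces exactly the constant $h^{ort}_{int}$. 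Your approach is geometrically sharper (Saccheri is an equality, the projection bound is not); the paper's is more self-contained, relying only on the equidistant-arc estimates already recorded in Remarks~\ref{chbb_remark_length_of_TT'}--\ref{chbb_remark_TT'_inside_OO'PP'}.
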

\begin{proof}
Denote by $l_{O}$ the length of $O_{R}O_{Q}$. Once $l_{RQ}$ is fixed, suppose that $h_{RQ}$ can be arbitrarily big, in particular, bigger than $l_{RQ}$.

There are points ${T}_{R}'\in O_{R}{R}$ and ${T}_{Q}'\in O_{Q}{Q}$ at the distance $h_{T}'$ from $O_{R}$ and $O_{Q}$ correspondingly, such that the length of the path $\widehat{{T}_{R}'{T}_{Q}'}$ as in Section~\ref{chbb_sec_h-neighborhood_of_a_geodesic_in_H_2} is equal to $\varepsilon_{3}$. By Remark~\ref{chbb_remark_length_of_TT'},
\begin{equation}\label{chbb_frm_def_h_T_'-sit_1_orth}
l_{O}\cosinh h_{T}'=\varepsilon_{3}.
\end{equation}
Indeed, if ${T}_{R}'$ and ${T}_{Q}'$ do not exist then
\begin{equation}\label{chbb_frm_l_O>e3-sit_1_orth}
l_{O}>\varepsilon_{3}.
\end{equation}
By Remarks~\ref{chbb_remark_length_of_TT'} and~\ref{chbb_remark_TT'_inside_OO'PP'} applied to the quadrilateral $O_{R}O_{Q}{R}{Q}$,
\begin{equation}\label{chbb_frm_h_RQ_by_l_O_and_l_RQ-sit_1_orth}
l_{O}\cosinh (h_{RQ}-l_{RQ})<l_{RQ}.
\end{equation}
Mixing~(\ref{chbb_frm_l_O>e3-sit_1_orth}) and~(\ref{chbb_frm_h_RQ_by_l_O_and_l_RQ-sit_1_orth}), we get
\begin{equation*}\label{chbb_frm1_h_RQ_by_e3_and_l_RQ-sit_1_orth}
\varepsilon_{3}\cosinh (h_{RQ}-l_{RQ})<l_{RQ},
\end{equation*}
\begin{equation*}\label{chbb_frm2_h_RQ_by_e3_and_l_RQ-sit_1_orth}
h_{RQ}<l_{RQ}+\arch\frac{l_{RQ}}{\varepsilon_{3}},
\end{equation*}
which leads us to a contradiction with the unboundedness of $h_{RQ}$.

The length of $\widehat{{T}_{R}{T}_{Q}}$ is less than the length $\varepsilon_{3}$ of $\widehat{{T}_{R}'{T}_{Q}'}$  when the inequality
\begin{equation}\label{chbb_frm_h_T'>_h_RQ/2-sit_1_orth}
h_{T}'>h_{T}\Big(=\frac{h_{RQ}}{2}\Big)
\end{equation}
is satisfied, which is equivalent to the validity of
\begin{equation*}\label{chbb_frm_cosh_h_T'>cosh_h_RQ/2-sit_1_orth}
\cosinh h_{T}'>\cosinh \frac{h_{RQ}}{2},
\end{equation*}
and, by~(\ref{chbb_frm_def_h_T_'-sit_1_orth}), is also equivalent to
\begin{equation}\label{chbb_frm_e3/l_O>cosh_h_RQ/2-sit_1_orth}
\frac{\varepsilon_{3}}{l_{O}}>\cosinh \frac{h_{RQ}}{2}.
\end{equation}

Due to the following property of the hyperbolic cosine: $\cosinh 2x={\cosinh}^2 x+{\sinush}^2 x$, we see that
\begin{equation*}\label{chbb_frm_cosh_2h>cosh^2_h-sit_1_orth}
{\cosinh}^2 \Big(\frac{h_{RQ}}{2}\Big)\leq\cosinh h_{RQ}.
\end{equation*}
Hence, the validity of the formula
\begin{equation}\label{chbb_frm2_e3/l_O>cosh_h_RQ/2-sit_1_orth}
\cosinh{h_{RQ}}<\frac{\varepsilon_{3}^{2}}{l_{O}^{2}}
\end{equation}
implies the validity of~(\ref{chbb_frm_e3/l_O>cosh_h_RQ/2-sit_1_orth}).

Let us exclude $l_{O}$ from~(\ref{chbb_frm2_e3/l_O>cosh_h_RQ/2-sit_1_orth}) with the help of~(\ref{chbb_frm_h_RQ_by_l_O_and_l_RQ-sit_1_orth}).

At first, we perform a series of modifications of~(\ref{chbb_frm_h_RQ_by_l_O_and_l_RQ-sit_1_orth}). By the formula for the hyperbolic cosine of the sum of two angles, we get
\begin{equation*}\label{chbb_frm3_h_RQ_by_l_O_and_l_RQ-sit_1_orth}
\cosinh h_{RQ} \cosinh l_{RQ}-\sinush h_{RQ} \sinush l_{RQ}<\frac{l_{RQ}}{l_{O}}.
\end{equation*}
Then, as $\sinush x>0$ for each $x>0$, and because $\cosinh x>\sinush x$ and $\cosinh x>0$ for all $x\in\mathbb{R}$, we obtain
\begin{equation*}\label{chbb_frm4_h_RQ_by_l_O_and_l_RQ-sit_1_orth}
\cosinh h_{RQ} (\cosinh l_{RQ}-\sinush l_{RQ})<\frac{l_{RQ}}{l_{O}},
\end{equation*}
and the definitions of the hyperbolic sine and cosine,
\begin{equation}\label{chbb_frm_sinh_and_cosh_defs-sit_1_orth}
\sinush x=\frac{e^{x}-e^{-x}}{2} \quad\text{and}\quad \cosinh x=\frac{e^{x}+e^{-x}}{2},
\end{equation}
imply
\begin{equation*}\label{chbb_frm5_h_RQ_by_l_O_and_l_RQ-sit_1_orth}
\cosinh h_{RQ}<\frac{e^{l_{RQ}}l_{RQ}}{l_{O}}.
\end{equation*}
It means that the validity of the formula
\begin{equation}\label{chbb_frm1_l_O_by_l_RQ-sit_1_orth}
\frac{e^{l_{RQ}}l_{RQ}}{l_{O}}<\frac{\varepsilon_{3}^{2}}{l_{O}^{2}}
\end{equation}
implies the validity of~(\ref{chbb_frm2_e3/l_O>cosh_h_RQ/2-sit_1_orth}).
We rewrite the condition~(\ref{chbb_frm1_l_O_by_l_RQ-sit_1_orth}) in a more convenient form:
\begin{equation}\label{chbb_frm2_l_O_by_l_RQ-sit_1_orth}
l_{O}<\frac{\varepsilon_{3}^{2}}{e^{l_{RQ}}l_{RQ}}.
\end{equation}
By~(\ref{chbb_frm_h_RQ_by_l_O_and_l_RQ-sit_1_orth}), we know that
\begin{equation*}\label{chbb_frm6_h_RQ_by_l_O_and_l_RQ-sit_1_orth}
l_{O}<\frac{l_{RQ}}{\cosinh (h_{RQ}-l_{RQ})}.
\end{equation*}
Hence, the validity of
\begin{equation}\label{chbb_frm1_h_RQ_by_l_RQ-sit_1_orth}
\frac{l_{RQ}}{\cosinh (h_{RQ}-l_{RQ})}<\frac{\varepsilon_{3}^{2}}{e^{l_{RQ}}l_{RQ}}
\end{equation}
implies the validity of~(\ref{chbb_frm1_l_O_by_l_RQ-sit_1_orth}).

We can now conclude that the condition
\begin{equation*}\label{chbb_frm2_h_RQ_by_l_RQ-sit_1_orth}
h_{RQ}>h^{ort}_{int}
\end{equation*}
obtained from~(\ref{chbb_frm1_h_RQ_by_l_RQ-sit_1_orth}) implies~(\ref{chbb_frm_h_T'>_h_RQ/2-sit_1_orth}).
\end{proof}

\subsection{Fundamental domains of $Cyl_1$ and $Cyl_2$ in $\mathbb{H}^2$}
\label{chbb_sec_fund_domains_of_Cyl_1_and_Cyl_2_in H^2}

Following the construction of a fundamental domain of a cylinder of the type $Cyl$ in $\mathbb{H}^2$ from Section~\ref{chbb_sec_properties_of_the_cylinders_of_the_type_Cyl}, we define for the cylinder $Cyl_1$ its fundamental domain ${P}^{+}_{0}{P}^{-}_{0}{P}^{+}_{1}{P}^{-}_{1}\subset\mathbb{H}^2_1$, where $\mathbb{H}^2_1$ is just a copy of the hyperbolic plane $\mathbb{H}^2$. We denote by $\chi_{{P}_{0}}$ and $\chi_{{P}_{1}}$ the hyperbolic straight lines in $\mathbb{H}^2_1$ containing the segments ${P}^{+}_{0}{P}^{-}_{0}$ and ${P}^{+}_{1}{P}^{-}_{1}$ correspondingly. Following the content of Section~\ref{chbb_sec_h-neighborhood_of_a_geodesic_in_H_2}, we find the hyperbolic segment $O_{0}O_{1}\subset\mathbb{H}^2_1$ corresponding to the element $\gamma_1$ of the fundamental group $\pi_1(\mathcal{M}^{\circ})$ (see Section~\ref{chbb_sec_construction_of_the_cylinders_Cyl_1_and_Cyl_2}) with the points $O_{0}\in\chi_{{P}_{0}}$ and $O_{1}\in\chi_{{P}_{1}}$.

Similarly, we define the quadrilateral ${P}^{+}_{0}{P}^{-}_{0}{P}^{+}_{2}{P}^{-}_{2}\subset\mathbb{H}^2_2$ to be a fundamental domain of the cylinder $Cyl_2$, where $\mathbb{H}^2_2$ is another copy of $\mathbb{H}^2$. Denote by $\chi_{{P}_{0}}$ and $\chi_{{P}_{2}}$ the geodesics in $\mathbb{H}^2_2$ containing ${P}^{+}_{0}{P}^{-}_{0}$ and ${P}^{+}_{2}{P}^{-}_{2}$ correspondingly. We also find the hyperbolic segment $O_{0}O_{2}\subset\mathbb{H}^2_2$ corresponding to $\gamma_2\in\pi_1(\mathcal{M}^{\circ})$ with the points $O_{0}\in\chi_{{P}_{0}}$ and $O_{2}\in\chi_{{P}_{2}}$.

An attentive reader has already remarked the following abuse of notation: the geodesic $\chi_{{P}_{0}}$ with the points ${P}^{+}_{0}$, ${P}^{-}_{0}$, and $O_{0}$ on it lie both in $\mathbb{H}^2_1$ and $\mathbb{H}^2_2$ as if these copies $\mathbb{H}^2_1$ and $\mathbb{H}^2_2$ of the hyperbolic plane intersect at $\chi_{{P}_{0}}$. It is very logic since the segment ${P}^{+}_{0}{P}^{-}_{0}\subset\chi_{{P}_{0}}$ corresponds to the segment ${P}^{+}{P}^{-}$ in the intersection of the cylinders $Cyl_1$ and $Cyl_2$ related to $\mathbb{H}^2_1$ and $\mathbb{H}^2_2$.

%\begin{figure}
%\input{./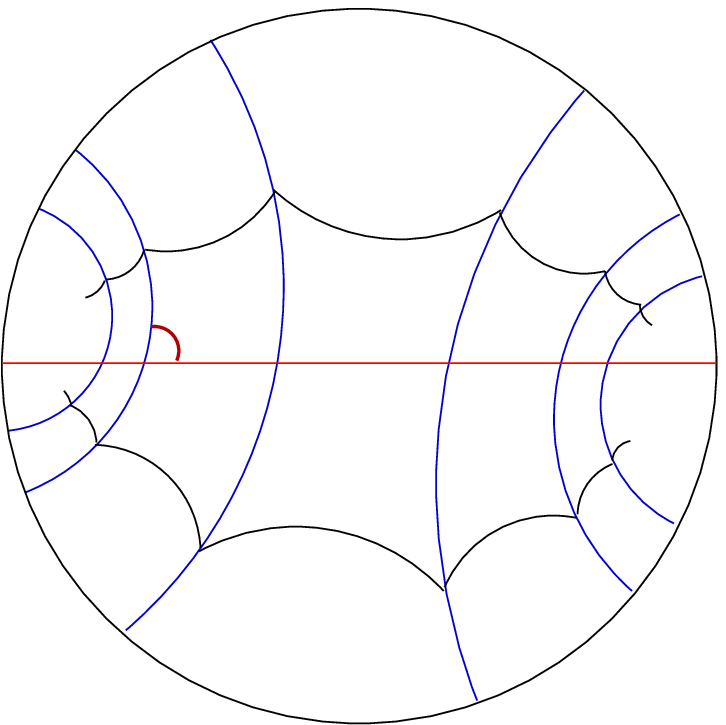tex_t}
%\hfill
%\input{./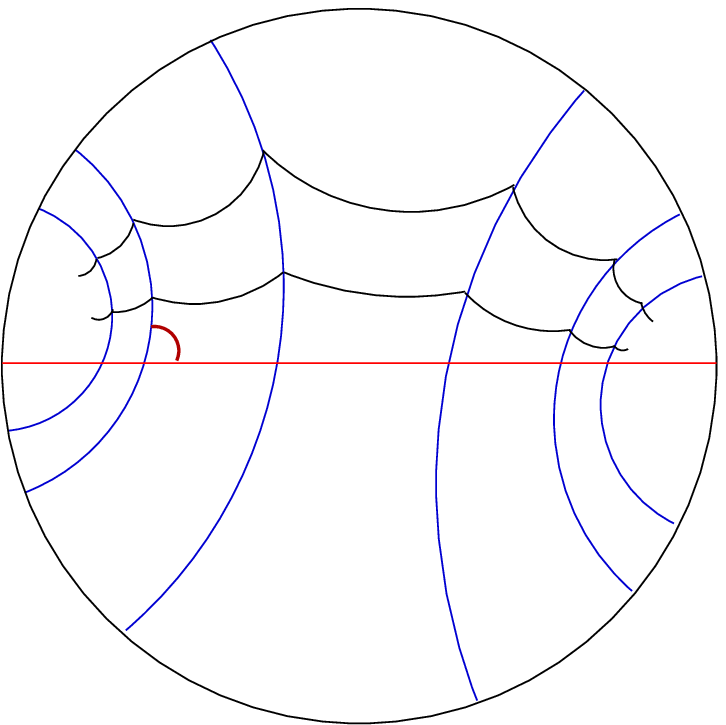tex_t} \\
%\parbox[t]{7cm}{\caption{The quadrilateral ${P}^{+}_{0}{P}^{-}_{0}{P}^{+}_{i}{P}^{-}_{i}$, $i=1,2$, in Situation %1.}\label{dist_bb_situation1}} \hfill
%\parbox[t]{7cm}{\caption{The quadrilateral ${P}^{+}_{0}{P}^{-}_{0}{P}^{+}_{i}{P}^{-}_{i}$, $i=1,2$, in Situation %2.}\label{dist_bb_situation2}}
%\end{figure}

We are now prepared to prove Theorem~\ref{chbb_theorem_distance_between_boundaries}. In order to do this, according to Remark~\ref{chbb_remark_P'Q'_in_or_out_of_quadrangle} we must consider two separate situations.
\begin{itemize}

\item[\textbf{Situation 1.}] If for both cylinders $Cyl_1$ and $Cyl_2$ their fundamental domains ${P}^{+}_{0}{P}^{-}_{0}{P}^{+}_{1}{P}^{-}_{1}\subset\mathbb{H}^2_1$ and ${P}^{+}_{0}{P}^{-}_{0}{P}^{+}_{2}{P}^{-}_{2}\subset\mathbb{H}^2_1$ contain the segments $O_{0}O_{1}$ and $O_{0}O_{2}$ correspondingly (see Fig.~\ref{dist_bb_situation1}), then the distance between the surfaces $\mathcal{S}^{+}$ and $\mathcal{S}^{-}$ from the statement of Theorem~\ref{chbb_theorem_distance_between_boundaries} is bounded from above due to the Margulis lemma.

   Indeed, recall that $P^{mid}$ is the midpoint of the segment ${P}^{+}{P}^{-}\subset Cyl_1\cap Cyl_2$, then the midpoints $P^{mid}_{0}$, $P^{mid}_{1}$, and $P^{mid}_{2}$ of the segments ${P}^{+}_{0}{P}^{-}_{0}\subset\chi_{{P}_{0}}$, ${P}^{+}_{1}{P}^{-}_{1}\subset\chi_{{P}_{1}}$, and ${P}^{+}_{2}{P}^{-}_{2}\subset\chi_{{P}_{2}}$ are the pre-images of $P^{mid}$ in ${P}^{+}_{0}{P}^{-}_{0}{P}^{+}_{1}{P}^{-}_{1}\subset\mathbb{H}^2_1$ or ${P}^{+}_{0}{P}^{-}_{0}{P}^{+}_{2}{P}^{-}_{2}\subset\mathbb{H}^2_2$.  Following the content of Section~\ref{chbb_sec_h-neighborhood_of_a_geodesic_in_H_2}, we construct the paths $\widehat{P^{mid}_{0}P^{mid}_{1}}\subset\mathbb{H}^2_1$ and $\widehat{P^{mid}_{0}P^{mid}_{2}}\subset\mathbb{H}^2_2$ connecting $P^{mid}_{0}$ with $P^{mid}_{1}$ and $P^{mid}_{2}$, and lying at the distance $\mathrm{d}_{\mathbb{H}^2}(P^{mid}_{0},O_{0})$ from $O_{0}O_{1}$ and $O_{0}O_{2}$. We will demonstrate that, once the distance between $\mathcal{S}^{+}$ and $\mathcal{S}^{-}$ (consequently, the hyperbolic length of ${P}^{+}{P}^{-}$) is bigger then a constant depending on ${l}^{+}_{1}$, ${l}^{-}_{1}$, ${l}^{+}_{2}$, and ${l}^{-}_{2}$ (see Section~\ref{chbb_sec_construction_of_the_cylinders_Cyl_1_and_Cyl_2} for definitions), then two intersecting homotopically different curves in $\mathcal{M}$ with fundamental domains $\widehat{P^{mid}_{0}P^{mid}_{1}}\subset\mathbb{H}^2_1$ and $\widehat{P^{mid}_{0}P^{mid}_{2}}\subset\mathbb{H}^2_2$ have the lengths less than the Margulis constant $\varepsilon_{3}$, which is impossible.

\begin{figure}[!h]
\begin{center}
\input{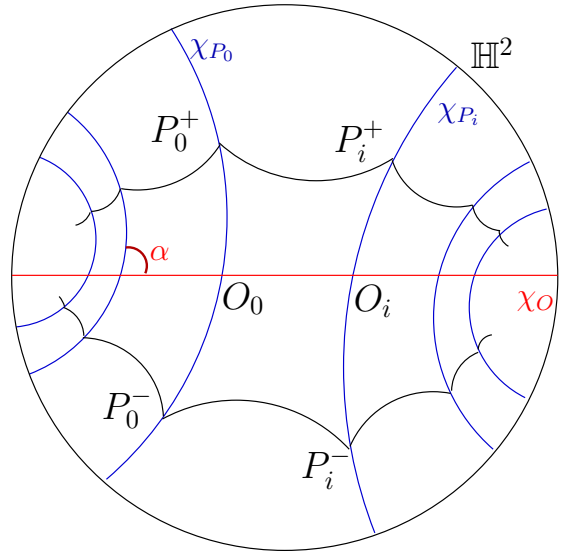}
\caption{The quadrilateral ${P}^{+}_{0}{P}^{-}_{0}{P}^{+}_{i}{P}^{-}_{i}$, $i=1,2$, in Situation 1.}\label{dist_bb_situation1}
\end{center}
\end{figure}

\item[\textbf{Situation 2.}] If for at least one of the cylinders $Cyl_1$ or $Cyl_2$ the corresponding segment $O_{0}O_{1}$ or $O_{0}O_{2}$ does not intersect ${P}^{+}_{0}{P}^{-}_{0}{P}^{+}_{1}{P}^{-}_{1}$ or ${P}^{+}_{0}{P}^{-}_{0}{P}^{+}_{2}{P}^{-}_{2}$ (see Fig.~\ref{dist_bb_situation2}), then we will prove that the hyperbolic length of the segment ${P}^{+}{P}^{-}\subset Cyl_1\cap Cyl_2$ (and, hence, the distance between $\mathcal{S}^{+}$ and $\mathcal{S}^{-}$) is necessarily bounded by a constant depending on either ${l}^{+}_{1}$ and ${l}^{-}_{1}$, or ${l}^{+}_{2}$ and ${l}^{-}_{2}$.
\end{itemize}

\begin{figure}[!h]
\begin{center}
\input{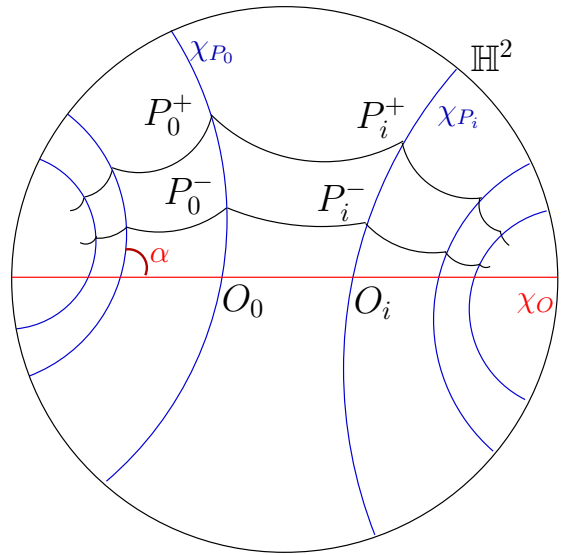}
\caption{The quadrilateral ${P}^{+}_{0}{P}^{-}_{0}{P}^{+}_{i}{P}^{-}_{i}$, $i=1,2$, in Situation 2.}\label{dist_bb_situation2}
\end{center}
\end{figure}

It is now time to study

\subsection{Distance between boundary components of a cylinder of the type $Cyl$}
\label{chbb_sec_proof_of_main_theorem_in_general_case}

Let a quadrilateral ${R}^{+}_{0}{R}^{-}_{0}{R}^{+}_{1}{R}^{-}_{1}\subset\mathbb{H}^2$ with $h\stackrel{\mathrm{def}}{=}\mathrm{d}_{\mathbb{H}^2}({R}^{+}_{0},{R}^{-}_{0})=\mathrm{d}_{\mathbb{H}^2}({R}^{+}_{1},{R}^{-}_{1})$, $l^{+}\stackrel{\mathrm{def}}{=}\mathrm{d}_{\mathbb{H}^2}({R}^{+}_{0},{R}^{+}_{1})$, and $l^{-}\stackrel{\mathrm{def}}{=}\mathrm{d}_{\mathbb{H}^2}({R}^{-}_{0},{R}^{-}_{1})$ be a fundamental domain in $\mathbb{H}^2$ of a cylinder $Cyl_0$ of the type $Cyl$. Denote by $\chi_{{R}_{0}}$ and $\chi_{{R}_{1}}$ the hyperbolic straight lines in $\mathbb{H}^2$ containing the segments ${R}^{+}_{0}{R}^{-}_{0}$ and ${R}^{+}_{1}{R}^{-}_{1}$ correspondingly. Then, by Lemma~\ref{chbb_lemma_existence_of chi} applied to the points ${R}^{+}_{0}\in\chi_{{R}_{0}}$ and ${R}^{+}_{1}\in\chi_{{R}_{1}}$ there is a a unique hyperbolic straight line $\chi_{O}\subset\mathbb{H}^2$ intersecting $\chi_{{R}_{0}}$ at a point $O_{0}$, $\chi_{{R}_{1}}$ at a point $O_{1}$, such that ${R}^{+}_{0}$ and ${R}^{+}_{1}$ lie in the same half-plane with respect to $\chi_{O}$, $h^{+}\stackrel{\mathrm{def}}{=}{\mathrm{d}}_{\mathbb{H}^2}({R}^{+}_{0},O_{0})={\mathrm{d}}_{\mathbb{H}^2}({R}^{+}_{1},O_{1})$, and the angles of intersection $\angle(\chi_{O},\chi_{{R}_{0}})$ and $\angle(\chi_{O},\chi_{{R}_{1}})$ are equal to some $\alpha\in(0,\pi/2)$. Denote also $h^{-}\stackrel{\mathrm{def}}{=}{\mathrm{d}}_{\mathbb{H}^2}({R}^{-}_{0},O_{0})={\mathrm{d}}_{\mathbb{H}^2}({R}^{-}_{1},O_{1})$ and $l_{O}\stackrel{\mathrm{def}}{=}{\mathrm{d}}_{\mathbb{H}^2}(O_{0},O_{1})$.

Let the hyperbolic isometry $\bar{\chi}_{O}$ of $\mathbb{H}^2$ send $O_{0}$ to $O_{1}$ leaving the geodesic $\chi_{O}$ invariant. Note that $\bar{\chi}_{O}$ sends also ${R}^{+}_{0}$ to ${R}^{+}_{1}$ and ${R}^{-}_{0}$ to ${R}^{-}_{1}$. We define points ${R}^{+}_{i}\stackrel{\mathrm{def}}{=}{\bar{\chi}}^{i}_{O}.{R}^{+}_{0}$, ${R}^{-}_{i}\stackrel{\mathrm{def}}{=}{\bar{\chi}}^{i}_{O}.{R}^{-}_{0}$, and $O_{i}\stackrel{\mathrm{def}}{=}{\bar{\chi}}^{i}_{O}.{O}_{0}$ for $i\in\mathbb{Z}$, where the symbol ${\bar{\chi}}^{i}_{O}$ stands for the isometry $\bar{\chi}_{O}$ applied $i$ times when $i$ is a positive integer, and for the inverse isometry ${\bar{\chi}}^{-1}_{O}$ applied $-i$ times when $i<0$. Denote by $\chi_{{R}_{i}}$ the hyperbolic straight line containing the segment ${R}^{+}_{i}{R}^{-}_{i}$, $i\in\mathbb{Z}$. Construct the curves ${\nu}_{+}\stackrel{\mathrm{def}}{=}{\bigcup}_{i\in\mathbb{Z}}{R}^{+}_{i}{R}^{+}_{i+1}$ and ${\nu}_{-}\stackrel{\mathrm{def}}{=}{\bigcup}_{i\in\mathbb{Z}}{R}^{-}_{i}{R}^{-}_{i+1}$ of the geodesic segments ${R}^{+}_{i}{R}^{+}_{i+1}$ and ${R}^{-}_{i}{R}^{-}_{i+1}$, $i\in\mathbb{Z}$. Remark that for each $i\in\mathbb{Z}$ the quadrilateral ${R}^{+}_{i}{R}^{-}_{i}{R}^{+}_{i+1}{R}^{-}_{i+1}\subset\mathbb{H}^2$ serves as a fundamental domain of the cylinder $Cyl_0$ in $\mathbb{H}^2$, and the connected domain between the curves ${\nu}_{+}$ and ${\nu}_{-}$ of the hyperbolic plane is a universal covering of $Cyl_0$ in $\mathbb{H}^2$. By construction, $\mathrm{d}_{\mathbb{H}^2}({R}^{+}_{i},{R}^{-}_{i})=h$, ${\mathrm{d}}_{\mathbb{H}^2}({R}^{+}_{i},O_{i})=h^{+}$, ${\mathrm{d}}_{\mathbb{H}^2}({R}^{-}_{i},O_{i})=h^{-}$, $\mathrm{d}_{\mathbb{H}^2}({R}^{+}_{i},{R}^{+}_{i+1})=l^{+}$, $\mathrm{d}_{\mathbb{H}^2}({R}^{-}_{i},{R}^{-}_{i+1})=l^{-}$, $\angle(\chi_{O},\chi_{{R}_{i}})=\alpha$, $i\in\mathbb{Z}$.

Let us construct a family of hyperbolic straight lines $\chi^{+}_{i}$ passing through ${R}^{+}_{i}$ and orthogonal to $\chi_{O}$, $i\in\mathbb{Z}$. Define the points of intersection $O^{+}_{i}\stackrel{\mathrm{def}}{=}\chi^{+}_{i}\cap\chi_{O}$, $T^{-}_{i}\stackrel{\mathrm{def}}{=}\chi^{+}_{i}\cap\nu_{-}$, $i\in\mathbb{Z}$. Note that, by construction, the connected sets $\Xi^{+}_{i}$ bounded by $\chi^{+}_{i+1}$, $\nu_{+}$, $\chi^{+}_{i}$, and $\nu_{-}$ are fundamental domains of the cylinder $Cyl_0$ in $\mathbb{H}^2$, $i\in\mathbb{Z}$.

\begin{remark}\label{chbb_remark1_two_fundamental_domains_of_Cyl_0}
The geodesic segment ${R}^{+}_{i+1}{R}^{-}_{i+1}$ lies inside the fundamental domain $\Xi^{+}_{i}\subset\mathbb{H}^2$ of a cylinder $Cyl_0$ of the type $Cyl$; on the other hand, the geodesic segment ${R}^{+}_{i}T^{-}_{i}$ lies inside the fundamental domain ${R}^{+}_{i}{R}^{-}_{i}{R}^{+}_{i+1}{R}^{-}_{i+1}\subset\mathbb{H}^2$ of the same cylinder $Cyl_0$, $i\in\mathbb{Z}$.
\end{remark}

\begin{proof}
Since for every integer $i$ the hyperbolic straight lines $\chi^{+}_{i}$ are orthogonal to the geodesic $\chi_{O}$ corresponding to the closed geodesic $\chi^{\circ}$ of the unbounded cylinder $Cyl^{\circ}_{0}=\mathbb{H}^2/\langle\bar{\chi}_{O}\rangle$ which contains $Cyl_0$ (see also Section~\ref{chbb_sec_properties_of_the_cylinders_of_the_type_Cyl}), the projection on $Cyl_0$ of a path $\xi\subset\Xi^{+}_{i}$ connecting any point $P^{u}$ of the upper boundary $\partial\Xi^{+}_{i}\cap\nu_{+}(={R}^{+}_{i}{R}^{+}_{i+1})$ of $\Xi^{+}_{i}$ with any point $P^{l}$ of its lower boundary $\partial\Xi^{+}_{i}\cap\nu_{-}$ does not make a full turn around $Cyl_0$.

Let us fix $i\in\mathbb{Z}$. As $\Xi^{+}_{i}\subset\mathbb{H}^2$ is a fundamental domain of $Cyl_0$, the lower boundary $\partial\Xi^{+}_{i}\cap\nu_{-}$ of $\Xi^{+}_{i}$ must contain at least one and at most two points of the family $\{{R}^{-}_{j}\in\mathbb{H}^2|j\in\mathbb{Z}\}$ corresponding to one point on $Cyl_0$. Consider the point ${R}^{-}_{i+1}$ of this family. By Remark~\ref{chbb_remark_P+P-_is_short}, the length of the segment ${R}^{+}_{i+1}{R}^{-}_{i+1}$ is the smallest one among the lengths of all the segments ${R}^{+}_{i+1}{R}^{-}_{j}$, $j\in\mathbb{Z}$. Hence, the projection on $Cyl_0$ of ${R}^{+}_{i+1}{R}^{-}_{i+1}$  does not make a full turn around $Cyl_0$ (otherwise, there would be a path shorter than ${R}^{+}_{i+1}{R}^{-}_{i+1}$ among the segments ${R}^{+}_{i+1}{R}^{-}_{j}$, $j\in\mathbb{Z}$). Since $\alpha\in(0,\pi/2)$, we conclude that ${R}^{+}_{i+1}{R}^{-}_{i+1}\subset\Xi^{+}_{i}$. Similarly, ${R}^{+}_{i}{R}^{-}_{i}\subset\Xi^{+}_{i-1}$. Hence, ${R}^{+}_{i}T^{-}_{i}\subset{R}^{+}_{i}{R}^{-}_{i}{R}^{+}_{i+1}{R}^{-}_{i+1}$.
\end{proof}

Similarly, we construct a family of hyperbolic straight lines $\chi^{-}_{i}$ passing through ${R}^{-}_{i}$ and orthogonal to $\chi_{O}$, $i\in\mathbb{Z}$, and define the points of intersection $O^{-}_{i}\stackrel{\mathrm{def}}{=}\chi^{-}_{i}\cap\chi_{O}$, $T^{+}_{i}\stackrel{\mathrm{def}}{=}\chi^{-}_{i}\cap\nu_{+}$, $i\in\mathbb{Z}$. By construction, the connected sets $\Xi^{-}_{i}$ bounded by $\chi^{-}_{i+1}$, $\nu_{+}$, $\chi^{-}_{i}$, and $\nu_{-}$ are fundamental domains of the cylinder $Cyl_0$ in $\mathbb{H}^2$ and, by analogy with Remark~\ref{chbb_remark1_two_fundamental_domains_of_Cyl_0}, the following statement holds true.

\begin{remark}\label{chbb_remark2_two_fundamental_domains_of_Cyl_0}
The geodesic segment ${R}^{+}_{i}{R}^{-}_{i}$ lies inside the fundamental domain $\Xi^{-}_{i}\subset\mathbb{H}^2$ of a cylinder $Cyl_0$ of the type $Cyl$; on the other hand, the geodesic segment ${R}^{-}_{i+1}T^{+}_{i+1}$ lies inside the fundamental domain ${R}^{+}_{i}{R}^{-}_{i}{R}^{+}_{i+1}{R}^{-}_{i+1}\subset\mathbb{H}^2$ of the same cylinder $Cyl_0$, $i\in\mathbb{Z}$.
\end{remark}

Also, define $h^{+}_{O}\stackrel{\mathrm{def}}{=}\mathrm{d}_{\mathbb{H}^2}({R}^{+}_{i},{O}^{+}_{i})$, $h^{-}_{O}\stackrel{\mathrm{def}}{=}\mathrm{d}_{\mathbb{H}^2}({R}^{-}_{i},{O}^{-}_{i})$, and note that ${\mathrm{d}}_{\mathbb{H}^2}(O_{i},O_{i+1})={\mathrm{d}}_{\mathbb{H}^2}(O^{+}_{i},O^{+}_{i+1})
={\mathrm{d}}_{\mathbb{H}^2}(O^{-}_{i},O^{-}_{i+1})=l_{O}$, $i\in\mathbb{Z}$.

\subsubsection{Consideration of Situation~1}
\label{chbb_sec_situation_1_general_case}

In this section, we demonstrate

\begin{lemma}\label{chbb_lemma_situation1_gen}
Let a cylinder of the type $Cyl$ contain a closed geodesic and possess a fundamental domain ${R}^{+}_{0}{R}^{+}_{1}{R}^{-}_{0}{R}^{-}_{0}\subset\mathbb{H}^2$. Define by $l^{+}$ and $l^{-}$ the lengths of the sides ${R}^{+}_{0}{R}^{+}_{1}$ and ${R}^{-}_{0}{R}^{-}_{1}$, and by $h$ the length of ${R}^{+}_{0}{R}^{-}_{0}$ and ${R}^{+}_{1}{R}^{-}_{1}$. Then the condition
\begin{equation*}\label{chbb_frm1_h_condition-sit_1_gen}
h\geq2\max\bigg{\{}\arch\bigg{[}\cosinh{l}^{+}\cosinh \bigg{(}l^{+}+\arch\frac{e^{l^{+}}(l^{+})^{2}}{\varepsilon_{3}^{2}}\bigg{)}\bigg{]},
\end{equation*}
\begin{equation}\label{chbb_frm2_h_condition-sit_1_gen}
\arch\bigg{[}\cosinh{l}^{-}\cosinh \bigg{(}l^{-}+\arch\frac{e^{l^{-}}(l^{-})^{2}}{\varepsilon_{3}^{2}}\bigg{)}\bigg{]}\bigg{\}}.
\end{equation}
guarantees that there is a path in ${R}^{+}_{0}{R}^{+}_{1}{R}^{-}_{0}{R}^{-}_{0}$ connecting the midpoints of ${R}^{+}_{0}{R}^{-}_{0}$ and ${R}^{+}_{1}{R}^{-}_{1}$, and such that its length is smaller than the Margulis constant $\varepsilon_{3}$.
\end{lemma}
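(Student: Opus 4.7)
The strategy is to construct the desired short curve as a hypercycle arc around the axis $\chi_O$ of the translation $\bar\chi_O$ (the lift of the core closed geodesic of the cylinder) and to bound its length by reducing the problem to the orthogonal Saccheri setup already handled by Lemma~\ref{chbb_lemma_h_ort_int}.

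First I would identify the candidate path. Let $M_0$ and $M_1$ denote the midpoints of $R^+_0 R^-_0$ and $R^+_1 R^-_1$. Since $\bar\chi_O(M_0)=M_1$, these two points are equidistant from $\chi_O$; write $h^M_O := \mathrm{d}_{\mathbb{H}^2}(M_0,\chi_O)$ for this common perpendicular distance. They lie on a common hypercycle $\eta$ around $\chi_O$, and the arc of $\eta$ joining them has length $l_O \cosinh h^M_O$, where $l_O := \mathrm{d}_{\mathbb{H}^2}(O_0,O_1)$ is the translation length. By the reasoning of Remark~\ref{chbb_remark_P'Q'_in_or_out_of_quadrangle}, in the Situation~1 geometry this arc stays inside the quadrilateral, so it is enough to prove $l_O \cosinh h^M_O < \varepsilon_3$.

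By the symmetry between $(h^+,l^+)$ and $(h^-,l^-)$ in the hypothesis I may assume $h^+ \geq h^-$, so that $h^+ \geq h/2$ and $M_0$ sits on $\chi_{R_0}$ between $R^+_0$ and $O_0$. I would then apply Lemma~\ref{chbb_lemma_h_ort_int} to the upper Saccheri quadrilateral $O^+_0 O^+_1 R^+_1 R^+_0$, where $O^+_i$ is the foot of the perpendicular from $R^+_i$ to $\chi_O$: its top $R^+_0 R^+_1$ has length $l^+$, its perpendicular sides have common length $h^+_O := \mathrm{d}_{\mathbb{H}^2}(R^+_0,\chi_O)$, and its base $O^+_0 O^+_1$ has length $l_O$. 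That lemma yields $l_O \cosinh(h^+_O/2) < \varepsilon_3$ provided $h^+_O \geq l^+ + \arch(e^{l^+}(l^+)^2/\varepsilon_3^2)$. The computational bridge between the two setups is the hyperbolic Pythagorean identity $\cosinh c = \cosinh a \cdot \cosinh b$ in right triangles: the quantity $\arch[\cosinh l^+ \cdot \cosinh(l^+ + \arch(e^{l^+}(l^+)^2/\varepsilon_3^2))]$ appearing in the hypothesis is exactly the hypotenuse of the right triangle with legs $l^+$ and $l^+ + \arch(e^{l^+}(l^+)^2/\varepsilon_3^2)$; applied to the right triangle obtained by dropping the perpendicular from $R^+_0$ onto $\chi_O$ at $O^+_0$ and comparing with the slanted geodesic $\chi_{R_0}$, the assumed bound $h/2 \geq \arch[\cosinh l^+ \cdot \cosinh(l^+ + \arch(e^{l^+}(l^+)^2/\varepsilon_3^2))]$ translates directly into $h^+_O \geq l^+ + \arch(e^{l^+}(l^+)^2/\varepsilon_3^2)$, which is the hypothesis needed by Lemma~\ref{chbb_lemma_h_ort_int}. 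Finally, since $M_0$ lies between $R^+_0$ and $O_0$ on $\chi_{R_0}$, a monotonicity computation for the perpendicular projection to $\chi_O$ along $\chi_{R_0}$ gives $h^M_O \leq h^+_O/2$, so $l_O \cosinh h^M_O \leq l_O \cosinh(h^+_O/2) < \varepsilon_3$.

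The main obstacle is the precise trigonometric bookkeeping in the previous paragraph: identifying the correct auxiliary right triangle that realizes the Pythagorean identity linking the slanted distance $h/2$ along $\chi_{R_0}$ to the orthogonal Saccheri height $h^+_O$, and verifying $h^M_O \leq h^+_O/2$ across all sub-positions of $M_0$ — especially when $h^-$ is small so that $M_0$ is close to $R^+_0$ and $h^M_O$ is close to $h^+_O$ rather than to $0$. The symmetric case $h^+ < h^-$ is handled by the analogous argument applied to the lower Saccheri quadrilateral $O^-_0 O^-_1 R^-_1 R^-_0$, which is why the statement takes a maximum over the two sides.
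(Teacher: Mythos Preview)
Your approach is essentially the same as the paper's: the candidate path is the hypercycle arc $\widehat{M_0M_1}$ at distance $h^M_O$ from $\chi_O$, its length is $l_O\cosh h^M_O$, and one reduces to Lemma~\ref{chbb_lemma_h_ort_int} applied to the Saccheri quadrilateral $O^+_0O^+_1R^+_1R^+_0$ via the Pythagorean identity in the right triangle $O_0O^+_0R^+_0$.

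Two places where your sketch glosses over the precise mechanism: (i) The Pythagorean identity gives $\cosh h^+ = \cosh h^+_O\cdot\cosh(\mathrm{d}_{\mathbb{H}^2}(O_0,O^+_0))$, so to extract $h^+_O\geq l^+ + \arch(e^{l^+}(l^+)^2/\varepsilon_3^2)$ from the bound on $h^+\geq h/2$ you must argue that the \emph{other} leg satisfies $\mathrm{d}_{\mathbb{H}^2}(O_0,O^+_0)\leq l^+$. The paper gets this in two steps: $O^+_0\in O_0O_1$ (which is the content of Remark~\ref{chbb_remark1_two_fundamental_domains_of_Cyl_0}, itself relying on the minimality clause in the definition of a cylinder of type $Cyl$), hence $\mathrm{d}_{\mathbb{H}^2}(O_0,O^+_0)\leq l_O\leq l^+$. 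This is not quite automatic from ``dropping a perpendicular,'' so it deserves a sentence. (ii) The inequality $h^M_O\leq h^+_O/2$ is not a pure monotonicity statement: along $\chi_{R_0}$, one has $\sinh(\text{dist to }\chi_O)=\sin\alpha\cdot\sinh(\text{dist to }O_0)$, and the needed bound comes from the doubling identity $\sinh(2x)=2\sinh x\cosh x$ together with $h^+_O\leq h^+$ (this is the paper's Remark~\ref{chbb_remark_d_neighborhood_inside_h_O/2_neighborhood}). Your worry in the last paragraph is slightly miscalibrated: when $h^-$ is small, $M_0$ sits near the midpoint of $R^+_0O_0$, not near $R^+_0$; that is exactly the borderline case where the doubling argument bites.
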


As we consider Situation~1, we suppose that $O_{i}\in{R}^{-}_{i}{R}^{+}_{i}$ for $i\in\mathbb{Z}$ and, consequently,
\begin{equation}\label{chbb_frm_h=h-_+_h+_-sit_1_gen}
h=h^{-}+h^{+}.
\end{equation}

For all $i\in\mathbb{Z}$, let us denote the midpoint of the segment ${R}^{+}_{i}{R}^{-}_{i}$ by ${R}^{mid}_{i}$, the midpoints of ${R}^{+}_{i}{O}_{i}$ and ${R}^{-}_{i}{O}_{i}$ by ${R}^{mid+}_{i}$ and ${R}^{mid-}_{i}$, the midpoints of ${R}^{+}_{i}{O}^{+}_{i}$ and ${R}^{-}_{i}{O}^{-}_{i}$ by ${O}^{mid+}_{i}$ and ${O}^{mid-}_{i}$. Denote the distances from the points ${R}^{mid}_{i}$ to the straight hyperbolic line $\chi_{O}$ by $d$, from ${R}^{mid+}_{i}$ to $\chi_{O}$ by ${d}^{+}$, from ${R}^{mid-}_{i}$ to $\chi_{O}$ by ${d}^{-}$ and note that, by construction, the distances from the points ${O}^{mid+}_{i}$ to $\chi_{O}$ are equal to $h^{+}_{O}/2$ and from the points ${O}^{mid-}_{i}$ to $\chi_{O}$ are equal to $h^{-}_{O}/2$, $i\in\mathbb{Z}$.

Denote by $\hat{\chi}$ a curve in $\mathbb{H}^2$ at the distance $d$ from $\chi_{O}$ and passing through the points ${R}^{mid}_{i}$ for all $i$ integers; by $\hat{\chi}^{+}_{R}$ a curve in $\mathbb{H}^2$ at the distance $d^{+}$ from $\chi_{O}$ and passing through the points ${R}^{mid+}_{i}$; by $\hat{\chi}^{-}_{R}$ a curve in $\mathbb{H}^2$ at the distance $d^{-}$ from $\chi_{O}$ and passing through the points ${R}^{mid-}_{i}$; by $\hat{\chi}^{+}_{O}$ a curve in $\mathbb{H}^2$ at the distance $h^{+}_{O}/2$ from $\chi_{O}$ and passing through the points ${O}^{mid+}_{i}$; by $\hat{\chi}^{-}_{O}$ a curve in $\mathbb{H}^2$ at the distance $h^{-}_{O}/2$ from $\chi_{O}$ and passing through the points ${O}^{mid-}_{i}$, $i\in\mathbb{Z}$.

\begin{remark}\label{chbb_remark_d_neighborhood_inside_h_O/2_neighborhood}
In the notation defined above, the inequalities
\begin{equation}\label{chbb_frm_d_<_h_O/2-sit_1_gen}
{d}^{+}\leq\frac{h^{+}_{O}}{2}\quad\text{and}\quad{d}^{-}\leq\frac{h^{-}_{O}}{2}
\end{equation}
hold true.
\end{remark}

\begin{proof}
Define by $\hat{R}^{mid+}_{0}$ the orthogonal projection of the point ${R}^{mid+}_{0}$ on $\chi_{O}\subset\mathbb{H}^2$ and consider the hyperbolic triangles $\triangle{O}_{0}{O}^{+}_{0}{R}^{+}_{0}$ and $\triangle{O}_{0}\hat{R}^{mid+}_{0}{R}^{mid+}_{0}$. Recall that $\mathrm{d}_{\mathbb{H}^2}({R}^{+}_{0},{O}^{+}_{0})=h^{+}_{O}$, $\mathrm{d}_{\mathbb{H}^2}({R}^{mid+}_{0},\hat{R}^{mid+}_{0})=d^{+}$, ${\mathrm{d}}_{\mathbb{H}^2}({R}^{+}_{0},O_{0})=h^{+}$,
${\mathrm{d}}_{\mathbb{H}^2}({R}^{mid+}_{0},O_{0})=h^{+}/2$,
$\angle{R}^{+}_{0}{O}_{0}{O}^{+}_{0}=\angle{R}^{mid+}_{0}{O}_{0}\hat{R}^{mid+}_{0}=\alpha$, and
$\angle{O}_{0}{O}^{+}_{0}{R}^{+}_{0}=\angle{O}_{0}\hat{R}^{mid+}_{0}{R}^{mid+}_{0}=\pi/2$.

Applying Hyperbolic Law of Sines to $\triangle{O}_{0}{O}^{+}_{0}{R}^{+}_{0}$ and $\triangle{O}_{0}\hat{R}^{mid+}_{0}{R}^{mid+}_{0}$, we obtain the formulas
\begin{equation*}\label{chbb_frm1_law_of_sines_h_O_+_by_h_+_-sit_1_gen}
\frac{\sin\alpha}{\sinush h^{+}_{O}}=\frac{\sin\frac{\pi}{2}}{\sinush h^{+}}
\end{equation*}
and
\begin{equation*}\label{chbb_frm1_law_of_sines_d_+_by_h_+_-sit_1_gen}
\frac{\sin\alpha}{\sinush d^{+}}=\frac{\sin\frac{\pi}{2}}{\sinush\frac{h^{+}}{2}},
\end{equation*}
or, after simplification,
\begin{equation}\label{chbb_frm2_law_of_sines_h_O_+_by_h_+_-sit_1_gen}
\sinush h^{+}_{O}=\sin\alpha \sinush h^{+}
\end{equation}
and
\begin{equation}\label{chbb_frm2_law_of_sines_d_+_by_h_+_-sit_1_gen}
\sinush d^{+}=\sin\alpha\sinush\frac{h^{+}}{2}.
\end{equation}

Note that when the formula
\begin{equation}\label{chbb_frm_sinh_d_<sinh_h_O/2-sit_1_gen}
\sinush{d}^{+}\leq\sinush\frac{h^{+}_{O}}{2}
\end{equation}
holds true, the first relation in~(\ref{chbb_frm_d_<_h_O/2-sit_1_gen}) is satisfied.

By~(\ref{chbb_frm2_law_of_sines_d_+_by_h_+_-sit_1_gen}), (\ref{chbb_frm_sinh_d_<sinh_h_O/2-sit_1_gen}) is equivalent to
\begin{equation}\label{chbb_frm_sinh_h_+_by_sinh_h_O/2_and_alpha-sit_1_gen}
\sin\alpha\sinush\frac{h^{+}}{2}\leq\sinush\frac{h^{+}_{O}}{2}.
\end{equation}
Due to the following property of the hyperbolic sine: $\sinush 2x=2 \sinush x \cosinh x$, from~(\ref{chbb_frm2_law_of_sines_h_O_+_by_h_+_-sit_1_gen}) we get
\begin{equation}\label{chbb_frm3_law_of_sines_h_O_+_by_h_+_-sit_1_gen}
2\sinush\frac{h^{+}_{O}}{2}\cosinh\frac{h^{+}_{O}}{2}=2\sin\alpha\sinush\frac{h^{+}}{2}\cosinh\frac{h^{+}}{2}
\end{equation}
As $h^{+}_{O}\leq h^{+}$ by construction and the function $\cosinh x$ is monotonically increasing for $x\geq0$, then it is true that $\cosinh(h^{+}_{O}/2)\leq\cosinh(h^{+}/2)$ and, by~(\ref{chbb_frm2_law_of_sines_h_O_+_by_h_+_-sit_1_gen}), we obtain
\begin{equation}\label{chbb_frm4_law_of_sines_h_O_+_by_h_+_-sit_1_gen}
\sinush\frac{h^{+}_{O}}{2}\cosinh\frac{h^{+}}{2}\geq\sin\alpha\sinush\frac{h^{+}}{2}\cosinh\frac{h^{+}}{2}.
\end{equation}
Simplifying~(\ref{chbb_frm4_law_of_sines_h_O_+_by_h_+_-sit_1_gen}), we see that the condition~(\ref{chbb_frm_sinh_h_+_by_sinh_h_O/2_and_alpha-sit_1_gen}) is satisfied. Hence, the first inequality in~(\ref{chbb_frm_d_<_h_O/2-sit_1_gen}) holds true.

The validity of the second relation in~(\ref{chbb_frm_d_<_h_O/2-sit_1_gen}) we prove by the same method.
\end{proof}

Together with constructions made above, Remark~\ref{chbb_remark_d_neighborhood_inside_h_O/2_neighborhood} means geometrically that the curve $\hat{\chi}$ lies inside the connected domain of the hyperbolic plane bounded by the curves $\hat{\chi}^{+}_{R}$ and $\hat{\chi}^{-}_{R}$ which is embedded into the connected domain bounded by $\hat{\chi}^{+}_{O}$ and $\hat{\chi}^{-}_{O}$ which is embedded, in its turn, into the connected domain bounded by ${\nu}_{+}$ and ${\nu}_{-}$.

By Remark~\ref{chbb_remark_length_of_TT'}, the length of the path $\widehat{{R}^{mid}_{i}{R}^{mid}_{i+1}}$ connecting the points ${R}^{mid}_{i}$ and ${R}^{mid}_{i+1}$ on the curve $\hat{\chi}$ is $\hat{l}=l_{O}\cosinh d$, the length of the path $\widehat{{R}^{mid+}_{i}{R}^{mid+}_{i+1}}\subset\hat{\chi}^{+}_{R}$ connecting the points ${R}^{mid+}_{i}$ and ${R}^{mid+}_{i+1}$ is $\hat{l}^{+}_{R}=l_{O}\cosinh d^{+}$, the length of the path $\widehat{{R}^{mid-}_{i}{R}^{mid-}_{i+1}}\subset\hat{\chi}^{-}_{R}$ connecting the points ${R}^{mid-}_{i}$ and ${R}^{mid-}_{i+1}$ is $\hat{l}^{-}_{R}=l_{O}\cosinh d^{-}$, the length of the path $\widehat{{O}^{mid+}_{i}{O}^{mid+}_{i+1}}\subset\hat{\chi}^{+}_{O}$ connecting the points ${O}^{mid+}_{i}$ and ${O}^{mid+}_{i+1}$ is $\hat{l}^{+}_{O}=l_{O}\cosinh (h^{+}_{O}/2)$, and the length of the path $\widehat{{O}^{mid-}_{i}{O}^{mid-}_{i+1}}\subset\hat{\chi}^{-}_{O}$ connecting the points ${O}^{mid-}_{i}$ and ${O}^{mid-}_{i+1}$ is $\hat{l}^{-}_{O}=l_{O}\cosinh (h^{-}_{O}/2)$, $i\in\mathbb{Z}$.

Assume that ${R}^{mid}_{i}\in{R}^{+}_{i}{O}_{i}$, $i\in\mathbb{Z}$. According to Remark~\ref{chbb_remark_d_neighborhood_inside_h_O/2_neighborhood}, we have
\begin{equation}\label{chbb_frm+_range_of_l_-sit_1_gen}
l_{O}\leq\hat{l}\leq\hat{l}^{+}_{R}\leq\hat{l}^{+}_{O}\leq{l}^{+}.
\end{equation}
Otherwise ${R}^{mid}_{i}\in{R}^{-}_{i}{O}_{i}$, $i\in\mathbb{Z}$ and
\begin{equation}\label{chbb_frm-_range_of_l_-sit_1_gen}
l_{O}\leq\hat{l}\leq\hat{l}^{-}_{R}\leq\hat{l}^{-}_{O}\leq{l}^{-}
\end{equation}
(remind that we consider Situation~1).
Hence, if we prove that for $h$ big enough $\hat{l}^{+}_{O}<\varepsilon_{3}$ and $\hat{l}^{-}_{O}<\varepsilon_{3}$, then $\hat{l}<\varepsilon_{3}$ and the projection of the path $\widehat{{R}^{mid}_{i}{R}^{mid}_{i+1}}\subset\mathbb{H}^2$ on the cylinder $Cyl_0$ is a closed curve which is shorter than the Margulis constant $\varepsilon_{3}$ and which passes through the midpoint ${R}^{mid}$ of the segment ${R}^{+}{R}^{-}\subset Cyl_0$ corresponding to ${R}^{+}_{i}{R}^{-}_{i}\subset\mathbb{H}^2$, $i\in\mathbb{Z}$.

First, fixing $l^{+}$ let us find a condition on $h^{+}$ which will guarantee $\hat{l}^{+}_{O}$ to be less than $\varepsilon_{3}$.

By Remark~\ref{chbb_remark1_two_fundamental_domains_of_Cyl_0}, the geodesic segment ${R}^{+}_{0}T^{-}_{0}$ lies inside the fundamental domain ${R}^{+}_{0}{R}^{-}_{0}{R}^{+}_{1}{R}^{-}_{1}\subset\mathbb{H}^2$. Hence, the point ${O}^{+}_{0}$ of intersection of ${R}^{+}_{0}T^{-}_{0}$ with $\chi_{O}$ belongs to the geodesic segment ${O}_{0}{O}_{1}$.

Denote $l_{{O}^{+}_{0}{O}_{0}}\stackrel{\mathrm{def}}{=}\mathrm{d}_{\mathbb{H}^2}({O}^{+}_{0},{O}_{0})$ and consider the right-angled triangle $\triangle{O}_{0}{O}^{+}_{0}{R}^{+}_{0}$. Hyperbolic Pythagorean Theorem implies:
\begin{equation}\label{chbb_frm_Pythagorean_thm_O0_O0+_-sit_1_gen}
\cosinh h^{+}=\cosinh h^{+}_{O}\cosinh l_{{O}^{+}_{0}{O}_{0}}.
\end{equation}
Since ${O}_{0}{O}^{+}_{0}\subset{O}_{0}{O}_{1}$, the inequality $l_{{O}^{+}_{0}{O}_{0}}\leq l_{O}$ holds true and, together with~(\ref{chbb_frm_Pythagorean_thm_O0_O0+_-sit_1_gen}) gives us
\begin{equation*}\label{chbb_frm1_cosh_h+_inequality_-sit_1_gen}
\cosinh h^{+}\leq\cosinh h^{+}_{O}\cosinh l_{O},
\end{equation*}
and, by~(\ref{chbb_frm+_range_of_l_-sit_1_gen}),
\begin{equation*}\label{chbb_frm2_cosh_h+_inequality_-sit_1_gen}
\cosinh h^{+}\leq\cosinh h^{+}_{O}\cosinh {l}^{+},
\end{equation*}
or, in other form,
\begin{equation}\label{chbb_frm3_cosh_h+_inequality_-sit_1_gen}
\cosinh h^{+}_{O}\geq\frac{\cosinh h^{+}}{\cosinh {l}^{+}}.
\end{equation}

It means that, once we take $h^{+}$ to satisfy the condition
\begin{equation}\label{chbb_frm_condition_on_cosh_h+_by_l+_-sit_1_gen}
\cosinh h^{+}\geq\cosinh{l}^{+}\cosinh \bigg{(}l^{+}+\arch\frac{e^{l^{+}}(l^{+})^{2}}{\varepsilon_{3}^{2}}\bigg{)},
\end{equation}
then, according to~(\ref{chbb_frm3_cosh_h+_inequality_-sit_1_gen}),
\begin{equation*}\label{chbb_frm_condition_on_cosh_hO+_by_l+_-sit_1_gen}
h^{+}_{O}\geq l^{+}+\arch\frac{e^{l^{+}}(l^{+})^{2}}{\varepsilon_{3}^{2}},
\end{equation*}
and, by Lemma~\ref{chbb_lemma_h_ort_int} applied to the quadrilateral ${O}^{+}_{0}{O}^{+}_{1}{R}^{+}_{0}{R}^{+}_{1}$, we conclude that
\begin{equation}\label{chbb_frm_hat_l+O_<_margulis-sit_1_gen}
\hat{l}^{+}_{O}\leq\varepsilon_{3}.
\end{equation}

Similarly, if we take $h^{-}$ to verify the inequality
\begin{equation}\label{chbb_frm_condition_on_cosh_h-_by_l-_-sit_1_gen}
\cosinh h^{-}\geq\cosinh{l}^{-}\cosinh \bigg{(}l^{-}+\arch\frac{e^{l^{-}}(l^{-})^{2}}{\varepsilon_{3}^{2}}\bigg{)},
\end{equation}
then
\begin{equation}\label{chbb_frm_hat_l-O_<_margulis-sit_1_gen}
\hat{l}^{-}_{O}\leq\varepsilon_{3}.
\end{equation}

Finally, let the condition~(\ref{chbb_frm2_h_condition-sit_1_gen}) be satisfied.
Supposing $h^{+}\geq h^{-}$, we have $\widehat{{R}^{mid}_{0}{R}^{mid}_{1}}\subset{O}^{+}_{0}{O}^{+}_{1}{R}^{+}_{0}{R}^{+}_{1}$ and, by~(\ref{chbb_frm_h=h-_+_h+_-sit_1_gen}), the inequality~(\ref{chbb_frm_condition_on_cosh_h+_by_l+_-sit_1_gen}) holds true, which implies (\ref{chbb_frm_hat_l+O_<_margulis-sit_1_gen}) and, due to~(\ref{chbb_frm+_range_of_l_-sit_1_gen}), leads as to the validity of the condition
\begin{equation}\label{chbb_frm_hat_l<_margulis-sit_1_gen}
\hat{l}\leq\varepsilon_{3}.
\end{equation}
On the other hand, if $h^{+}<h^{-}$ then $\widehat{{R}^{mid}_{0}{R}^{mid}_{1}}\subset{O}^{-}_{0}{O}^{-}_{1}{R}^{-}_{0}{R}^{-}_{1}$ and, by~(\ref{chbb_frm_h=h-_+_h+_-sit_1_gen}), the inequality~(\ref{chbb_frm_condition_on_cosh_h-_by_l-_-sit_1_gen}) holds true,
which implies (\ref{chbb_frm_hat_l-O_<_margulis-sit_1_gen}) and, due to~(\ref{chbb_frm-_range_of_l_-sit_1_gen}), leads as to the validity of~(\ref{chbb_frm_hat_l<_margulis-sit_1_gen}).

Lemma~\ref{chbb_lemma_situation1_gen} is proved.

\subsubsection{Consideration of Situation~2}
\label{chbb_sec_situation_2_general_case}

\begin{lemma}\label{chbb_lemma_situation2_gen}
Let a cylinder of the type $Cyl$ do not contain a closed geodesic and possess a fundamental domain ${R}^{+}_{0}{R}^{+}_{1}{R}^{-}_{0}{R}^{-}_{0}\subset\mathbb{H}^2$. Define by $l^{+}$ and $l^{-}$ the lengths of the sides ${R}^{+}_{0}{R}^{+}_{1}$ and ${R}^{-}_{0}{R}^{-}_{1}$, and by $h$ the length of ${R}^{+}_{0}{R}^{-}_{0}$ and ${R}^{+}_{1}{R}^{-}_{1}$. Then
\begin{equation*}\label{chbb_frm_from_lemma_h_by l+-_-sit_2_gen}
h<\max\bigg{\{}\bigg{(}l^{+}+l^{-}+\ln\frac{2l^{+}}{l^{-}}\bigg{)}, \bigg{(}l^{+}+l^{-}+\ln\frac{2l^{-}}{l^{+}}\bigg{)}\bigg{\}}.
\end{equation*}
\end{lemma}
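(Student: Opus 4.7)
Plan: Work in the universal cover $\mathbb{H}^{2}$ in Fermi coordinates $(s,y)$ around the axis $\chi_{O}$ of the generating isometry $\bar\chi_{O}$, so that the four vertices of the fundamental quadrilateral become $R^{\pm}_{0}=(s_{\pm},y^{\pm})$ and $R^{\pm}_{1}=(s_{\pm}+l_{O},y^{\pm})$, satisfying the Saccheri-type identities $\sinh(l^{\pm}/2)=\cosh(y^{\pm})\sinh(l_{O}/2)$ and $\sinh y^{\pm}=\sin\alpha\,\sinh h^{\pm}$, with the additional elementary relations $\tanh s_{\pm}=\cos\alpha\,\tanh h^{\pm}$. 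The absence of a closed geodesic inside $Cyl_{0}$ (Situation~2) means that the quadrilateral lies entirely in one half-plane with respect to $\chi_{O}$, so $y^{+},y^{-}>0$. After possibly interchanging $+$ and $-$, which produces the second branch of the $\max$ in the conclusion, I assume $l^{+}\geq l^{-}$, whence $y^{+}\geq y^{-}\geq 0$, $h^{+}\geq h^{-}\geq 0$, and $h=h^{+}-h^{-}\leq h^{+}$.

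Step~1 (direct bound on $h^{+}$). From $\cosh y^{+}=\sinh(l^{+}/2)/\sinh(l_{O}/2)$ together with $\cosh t\geq e^{t}/2$ one reads off $y^{+}\leq l^{+}/2-\ln\sinh(l_{O}/2)+\ln 2$. The function $t\mapsto \sinh^{-1}(\sinh t/\sin\alpha)-t$ is monotonically increasing on $[0,\infty)$ with supremum $-\ln\sin\alpha$, so $h^{+}\leq y^{+}-\ln\sin\alpha$, giving
\begin{equation*}
h\leq l^{+}/2-\ln\sinh(l_{O}/2)-\ln\sin\alpha+\ln 2.
\end{equation*}

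Step~2 (lower bound on $\sin\alpha$ from the minimality condition). The minimality condition of Remark~\ref{chbb_remark_P+P-_is_short}, asserting that $R^{-}_{0}$ realizes the minimum distance to $R^{+}_{0}$ among all $\langle\bar\chi_{O}\rangle$-translates of $R^{-}$, translates in Fermi coordinates to $|s_{+}-s_{-}|\leq l_{O}/2$, and after applying the subtraction formula for $\arctanh$ this rewrites as
\begin{equation*}
\frac{\cos\alpha\,\sinh h}{\cosh h+\sinh y^{+}\sinh y^{-}}\leq\tanh(l_{O}/2).
\end{equation*}
Combined with $l_{O}\leq l^{-}$ (forced by $\cosh y^{-}\geq 1$) and elementary inequalities such as $\tanh(x/2)\geq x/(x+2)$ and $\sinh x \ge x$, this forces $-\ln\sin\alpha$ and $-\ln\sinh(l_{O}/2)$ to be jointly controlled by a quantity of size $O\bigl(l^{-}+\ln(2l^{+}/l^{-})\bigr)$.

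Step~3 (conclusion). Substituting the bounds of Step~2 into the inequality of Step~1 and coarsening with routine logarithmic manipulations yields $h<l^{+}+l^{-}+\ln(2l^{+}/l^{-})$; interchanging the roles of $+$ and $-$ produces the second branch of the $\max$. The main obstacle is Step~2: the minimality inequality involves all four unknowns $\alpha,l_{O},h^{+},h^{-}$ nonlinearly, and one has to balance the small-$l_{O}$ and large-$h^{+}$ regimes to land on the correct combination of logarithmic terms. Fortunately the Lemma's bound is rather loose (direct analysis suggests the true supremum of $h$ behaves like $l^{+}/2$ plus logarithmic corrections), leaving generous slack for the absorption of constants and lower-order terms.
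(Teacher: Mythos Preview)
Your Fermi-coordinate setup is correct and the identities in the preamble are fine, but the argument has a genuine gap at exactly the point you flag as ``the main obstacle.'' Step~2 is not a proof: you write down the minimality inequality
\[
\frac{\cos\alpha\,\sinh h}{\cosh h+\sinh y^{+}\sinh y^{-}}\le\tanh(l_{O}/2)
\]
and then assert that it ``forces $-\ln\sin\alpha$ and $-\ln\sinh(l_{O}/2)$ to be jointly controlled'' without doing the work. This inequality still contains $h$, $\alpha$, $l_{O}$, $y^{+}$, $y^{-}$ all at once, and you give no mechanism for extracting a bound. Worse, the bound from Step~1, namely $h\le l^{+}/2-\ln\sinh(l_{O}/2)-\ln\sin\alpha+\ln 2$, genuinely blows up as $l_{O}\to 0$ while $h$ can remain bounded (take $\alpha$ fixed and let $y^{\pm}\to\infty$ with $y^{+}-y^{-}$ bounded; then $h\approx y^{+}-y^{-}$ stays finite and the minimality constraint $|s_{+}-s_{-}|\le l_{O}/2$ is satisfied since both sides tend to zero). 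So Step~1 is not merely loose but useless in this regime, and Step~2 must supply an \emph{independent} bound on $h$, not just control on the two logarithmic terms. You have not done this, and ``routine logarithmic manipulations'' in Step~3 cannot rescue an inequality that has not been established.

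The paper's route avoids this trap entirely. Rather than bounding $h^{+}$ (which can be arbitrarily larger than $h$), it bounds the difference $h_{O}=h_{O}^{+}-h_{O}^{-}=y^{+}-y^{-}$ directly. The key geometric step is a triangle inequality: drop the perpendicular from $R_{0}^{+}$ to $\chi_{O}$, let $K_{0}^{-}$ be where this perpendicular meets the equidistant curve through $R_{0}^{-}$, and observe (via Remark~\ref{chbb_remark1_two_fundamental_domains_of_Cyl_0}, which is where the minimality condition is actually used) that $K_{0}^{-}$ lies inside the fundamental quadrilateral, so $d(R_{0}^{-},K_{0}^{-})\le l^{-}$. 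Then $h\le l^{-}+h_{O}$. Bounding $h_{O}$ is now a clean two-line exercise with the equidistant-curve length formula (Remarks~\ref{chbb_remark_TT'_outside_OO'PP'} and~\ref{chbb_remark_TT'_inside_OO'PP'}): from $l_{O}\cosh h_{O}^{-}>l^{-}$ one gets $e^{h_{O}^{-}}>l^{-}/l_{O}$, and from $l_{O}\cosh(h_{O}^{+}-l^{+})<l^{+}$ one gets $e^{h_{O}^{-}}e^{h_{O}}e^{-l^{+}}<2l^{+}/l_{O}$; dividing yields $h_{O}<l^{+}+\ln(2l^{+}/l^{-})$. This argument never separates $-\ln\sin\alpha$ from $-\ln\sinh(l_{O}/2)$ and never needs to.
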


\begin{proof}
We will use notation developed in Section~\ref{chbb_sec_proof_of_main_theorem_in_general_case}. In these terms, the fact that a cylinder of the type $Cyl$ does not contain a closed geodesic means that the segment $O_{0}O_{1}$ lies outside the fundamental domain ${R}^{+}_{0}{R}^{+}_{1}{R}^{-}_{0}{R}^{-}_{0}\subset\mathbb{H}^2$ of the cylinder.

First, we suppose that $h^{+}\geq h^{-}$, then
\begin{equation}\label{chbb_frm_h=h+_-h-_-sit_2_gen}
h=h^{+}-h^{-},
\end{equation}
which distinguishes Situation~2 from Situation~1 (compare~(\ref{chbb_frm_h=h+_-h-_-sit_2_gen}) with~(\ref{chbb_frm_h=h-_+_h+_-sit_1_gen})).

Denote
\begin{equation}\label{chbb_frm_hO=hO+_-hO-_-sit_2_gen}
h_{O}\stackrel{\mathrm{def}}{=}h^{+}_{O}-h^{-}_{O},
\end{equation}
construct a curve $\hat{\chi}^{-}\subset\mathbb{H}^2$ at the distance $h^{-}_{O}$ from $\chi_{O}$ and passing through the points ${R}^{-}_{i}$, and define the points of intersection $K^{-}_{i}\stackrel{\mathrm{def}}{=}\chi^{+}_{i}\cap\hat{\chi}^{-}$, $i\in\mathbb{Z}$. By construction, the lengths $l_{{R}^{+}_{i}K^{-}_{i}}$ and $l_{{O}^{+}_{i}K^{-}_{i}}$ of the segments ${R}^{+}_{i}K^{-}_{i}\subset{R}^{+}_{i}{O}^{+}_{i}$ and ${O}^{+}_{i}K^{-}_{i}\subset{R}^{+}_{i}{O}^{+}_{i}$ are equal to
\begin{equation}\label{chbb_frm_lengths_segments_of_chi+i_-sit_2_gen}
l_{{R}^{+}_{i}K^{-}_{i}}=h_{O}\quad\text{and}\quad l_{{O}^{+}_{i}K^{-}_{i}}=h^{-}_{O},
\end{equation}
$i\in\mathbb{Z}$. Define also the path $\widehat{{R}^{-}_{i}{K}^{-}_{i}}$ connecting the points ${R}^{-}_{i}$ and ${K}^{-}_{i}$ on the curve $\hat{\chi}^{-}$, $i\in\mathbb{Z}$.

By Remark~\ref{chbb_remark1_two_fundamental_domains_of_Cyl_0}, the geodesic segment ${R}^{+}_{0}K^{-}_{0}\subset{R}^{+}_{0}T^{-}_{0}$ lies inside the fundamental domain ${R}^{+}_{0}{R}^{-}_{0}{R}^{+}_{1}{R}^{-}_{1}\subset\mathbb{H}^2$. Hence, the path $\widehat{{R}^{-}_{i}{K}^{-}_{i}}$ is contained in the hyperbolic ball $B_{{R}^{-}_{0}}(l^{-})$ (also, we see that the segment ${R}^{-}_{0}{R}^{-}_{1}$ is a radius of $B_{{R}^{-}_{0}}(l^{-})$), and the length $l_{{R}^{-}_{0}K^{-}_{0}}$ of the segment ${R}^{-}_{0}K^{-}_{0}\subset{R}^{-}_{0}{R}^{-}_{1}$ satisfies the following inequality:
\begin{equation}\label{chbb_frm_up_bound_of_length_of_R-0_K-0-sit_2_gen}
l_{{R}^{-}_{0}K^{-}_{0}}\leq l^{-}.
\end{equation}

Applying the triangle inequality to $\triangle {R}^{+}_{0}{R}^{-}_{0}K^{-}_{0}$, we get:
\begin{equation*}\label{chbb_frm1_triangle_inequality-sit_2_gen}
h\leq l_{{R}^{-}_{0}K^{-}_{0}}+l_{{R}^{+}_{0}K^{-}_{0}},
\end{equation*}
and, by (\ref{chbb_frm_lengths_segments_of_chi+i_-sit_2_gen}) and~(\ref{chbb_frm_up_bound_of_length_of_R-0_K-0-sit_2_gen}),
\begin{equation}\label{chbb_frm2_triangle_inequality-sit_2_gen}
h\leq l^{-}+h_{O}.
\end{equation}

Let us now estimate the parameter $h_{O}$ from above.

Given the quadrilateral ${O}^{-}_{0}{O}^{-}_{1}{R}^{-}_{0}{R}^{-}_{1}$, Remarks~\ref{chbb_remark_length_of_TT'} and~\ref{chbb_remark_TT'_outside_OO'PP'} imply
\begin{equation*}\label{chbb_frm2_hO-_by_l_O_and_l-_-sit_2_gen}
l_{O}\cosinh h^{-}_{O}>l^{-},
\end{equation*}
then, by the definition of the hyperbolic cosine~(\ref{chbb_frm_sinh_and_cosh_defs-sit_1_orth}), we have
\begin{equation*}\label{chbb_frm3_hO-_by_l_O_and_l-_-sit_2_gen}
\frac{e^{h^{-}_{O}}+e^{-h^{-}_{O}}}{2}>\frac{l^{-}}{l_{O}},
\end{equation*}
and, as $e^{h^{-}_{O}}\geq e^{-h^{-}_{O}}$ for $h^{-}_{O}\geq0$, we obtain
\begin{equation}\label{chbb_frm4_hO-_by_l_O_and_l-_-sit_2_gen}
e^{h^{-}_{O}}>\frac{l^{-}}{l_{O}}.
\end{equation}

If $h^{+}_{O}\leq l^{+}$ then, by~(\ref{chbb_frm_hO=hO+_-hO-_-sit_2_gen}),
\begin{equation}\label{chbb_frm_hO<l+_-sit_2_gen}
h_{O}\leq l^{+}
\end{equation}
as well.

Assume that $h^{+}_{O}>l^{+}$. By Remarks~\ref{chbb_remark_length_of_TT'} and~\ref{chbb_remark_TT'_inside_OO'PP'} applied to the quadrilateral ${O}^{+}_{0}{O}^{+}_{1}{R}^{+}_{0}{R}^{+}_{1}$, we get
\begin{equation*}\label{chbb_frm_hO+_by_l_O_and_l+_-sit_2_gen}
l_{O}\cosinh (h^{+}_{O}-l^{+})<l^{+},
\end{equation*}
and, by~(\ref{chbb_frm_hO=hO+_-hO-_-sit_2_gen}),
\begin{equation*}\label{chbb_frm1_hO_and_hO-_by_l_O_and_l+_-sit_2_gen}
l_{O}\cosinh (h^{-}_{O}+h_{O}-l^{+})<l^{+},
\end{equation*}
then the definition of the hyperbolic cosine~(\ref{chbb_frm_sinh_and_cosh_defs-sit_1_orth}) gives us
\begin{equation*}\label{chbb_frm2_hO_and_hO-_by_l_O_and_l+_-sit_2_gen}
e^{h^{-}_{O}}e^{h_{O}}e^{-l^{+}}+e^{-h^{-}_{O}}e^{-h_{O}}e^{l^{+}}<\frac{2l^{+}}{l_{O}}.
\end{equation*}
Let us weaken the obtained inequality:
\begin{equation*}\label{chbb_frm2'_hO_and_hO-_by_l_O_and_l+_-sit_2_gen}
e^{h^{-}_{O}}e^{h_{O}}e^{-l^{+}}<\frac{2l^{+}}{l_{O}},
\end{equation*}
and, together with~(\ref{chbb_frm4_hO-_by_l_O_and_l-_-sit_2_gen}), we get
\begin{equation*}\label{chbb_frm3_hO_and_hO-_by_l_O_and_l+_-sit_2_gen}
\frac{l^{-}}{l_{O}}e^{h_{O}}e^{-l^{+}}<\frac{2l^{+}}{l_{O}},
\end{equation*}
\begin{equation*}\label{chbb_frm4_hO_and_hO-_by_l_O_and_l+_-sit_2_gen}
e^{h_{O}}<\frac{2l^{+}}{l^{-}}e^{l^{+}},
\end{equation*}
\begin{equation}\label{chbb_frm5_hO_and_hO-_by_l_O_and_l+_-sit_2_gen}
h_{O}<l^{+}+\ln\frac{2l^{+}}{l^{-}}.
\end{equation}

Note that the inequality~(\ref{chbb_frm_hO<l+_-sit_2_gen}) is stronger than~(\ref{chbb_frm5_hO_and_hO-_by_l_O_and_l+_-sit_2_gen}). Mixing~(\ref{chbb_frm2_triangle_inequality-sit_2_gen}) and (\ref{chbb_frm5_hO_and_hO-_by_l_O_and_l+_-sit_2_gen}) we get:
\begin{equation}\label{chbb_frm1_h_up_bound-sit_2_gen}
h<l^{-}+l^{+}+\ln\frac{2l^{+}}{l^{-}}.
\end{equation}

Supposing $h^{+}< h^{-}$, we just need to interchange the upper indices $+$ and $-$ in the formula~(\ref{chbb_frm1_h_up_bound-sit_2_gen}):
\begin{equation*}\label{chbb_frm2_h_up_bound-sit_2_gen}
h<l^{-}+l^{+}+\ln\frac{2l^{-}}{l^{+}}.
\end{equation*}
\end{proof}

\subsection{Finalizing the proof of Theorem~\ref{chbb_theorem_distance_between_boundaries}}
\label{chbb_sec_finalizing_the_proof_of_main_theorem}

Consider some points $P^{+}\in c^{+}_{1}\cap c^{+}_{2}$ and $P^{-}\in c^{-}_{1}\cap c^{-}_{2}$. As in Section~\ref{chbb_sec_construction_of_the_cylinders_Cyl_1_and_Cyl_2}, construct the cylinders $Cyl_1$ and $Cyl_2$ of the type $Cyl$ homotopically equivalent to the pairs of curves $(c^{+}_{1},c^{-}_{1})$ and $(c^{+}_{2},c^{-}_{2})$, with the upper boundaries of the lengths $l^{+}_{1}$ and $l^{+}_{2}$, with the lower boundaries of the lengths $l^{-}_{1}$ and $l^{-}_{2}$, and such that the hyperbolic geodesic segment $P^{+}P^{-}\subset\mathcal{M}^{\circ}$ lies in the intersection $Cyl_1 \cap Cyl_2$.

If Situation~2 is realized for at least one of the cylinders $Cyl_1$ and $Cyl_2$, than Lemma~\ref{chbb_lemma_situation2_gen} implies that
\begin{equation*}\label{chbb_frm_dist_S+_S-_sit2-finalizing}
d(\mathcal{S}^{+},\mathcal{S}^{-})<\max\bigg{\{}\bigg{(}l^{+}_{1}+l^{-}_{1}+\ln\frac{2l^{+}_{1}}{l^{-}_{1}}\bigg{)}, \bigg{(}l^{+}_{1}+l^{-}_{1}+\ln\frac{2l^{-}_{1}}{l^{+}_{1}}\bigg{)},\bigg{(}l^{+}_{2}+l^{-}_{2}+
\ln\frac{2l^{+}_{2}}{l^{-}_{2}}\bigg{)}, \bigg{(}l^{+}_{2}+l^{-}_{2}+\ln\frac{2l^{-}_{2}}{l^{+}_{2}}\bigg{)}\bigg{\}}.
\end{equation*}

Otherwise, Situation~1 is realized for both cylinders $Cyl_1$ and $Cyl_2$ and, once we suppose
\begin{equation*}\label{chbb_frm1_dist_S+_S-_sit1-finalizing}
d(\mathcal{S}^{+},\mathcal{S}^{-})<2\max\bigg{\{}\arch\bigg{[}\cosinh{l}^{+}_{1}\cosinh \bigg{(}l^{+}_{1}+\arch\frac{e^{l^{+}_{1}}(l^{+}_{1})^{2}}{\varepsilon_{3}^{2}}\bigg{)}\bigg{]},
\end{equation*}
\begin{equation*}\label{chbb_frm2_dist_S+_S-_sit1-finalizing}
\arch\bigg{[}\cosinh{l}^{-}_{1}\cosinh \bigg{(}l^{-}_{1}+\arch\frac{e^{l^{-}_{1}}(l^{-}_{1})^{2}}{\varepsilon_{3}^{2}}\bigg{)}\bigg{]},
\arch\bigg{[}\cosinh{l}^{+}_{2}\cosinh \bigg{(}l^{+}_{2}+\arch\frac{e^{l^{+}_{2}}(l^{+}_{2})^{2}}{\varepsilon_{3}^{2}}\bigg{)}\bigg{]},
\end{equation*}
\begin{equation*}\label{chbb_frm3_dist_S+_S-_sit1-finalizing}
\arch\bigg{[}\cosinh{l}^{-}_{2}\cosinh \bigg{(}l^{-}_{2}+\arch\frac{e^{l^{-}_{2}}(l^{-}_{2})^{2}}{\varepsilon_{3}^{2}}\bigg{)}\bigg{]}\bigg{\}},
\end{equation*}
by Lemma~\ref{chbb_lemma_situation1_gen}, there are curves $cur_1 \subset Cyl_1$ and $cur_2 \subset Cyl_2$ with the lengths less than the Margulis constant $\varepsilon_{3}$, both passing through the midpoint of the segment $P^{+}P^{-}$. Thus, we come to a contradiction with Margulis Lemma.

Theorem~\ref{chbb_theorem_distance_between_boundaries} is proved. $\square$

\section*{Acknowledgements}
First of all, the author expresses his sincere gratitude to his PhD co-advisor Jean-Marc Schlenker who stated the problem studied in this paper, and guided the research with great patience and attention.

As well, the author would like to thank very much Cyril Lecuire for fruitful discussions around Margulis lemma; Gregory McShane for an attentive reading of the PhD thesis, for the valuable comments and suggestions, as well as for the idea of generalization of Theorem~\ref{chaaa_theorem_manifolds_with_polyhedral_metric_on_boundary} to the case of all Alexandrov metrics of curvature $K\geq1$; and also, Olivier Guichard and Pierre Py for their advice in the approximation of Alexandrov metrics by regular ones.

\bibliographystyle{alpha}
\bibliography{slutskiy2014}

%\bibliographystyle{amsplain}
%\begin{thebibliography}{10}

%\bibitem {A} T. Aoki, \textit{Calcul exponentiel des op\'erateurs
%microdifferentiels d'ordre infini.} I, Ann. Inst. Fourier (Grenoble)
%

%\bibitem {B} R. Brown, \textit{On a conjecture of Dirichlet},
%Amer. Math. Soc., Providence, RI, 1993.

%\bibitem {D} R. A. DeVore, \textit{Approximation of functions},
%Proc. Sympos. Appl. Math., vol. 36,
%Amer. Math. Soc., Providence, RI, 1986, pp. 34--56.

%\end{thebibliography}

\end{document}